\numberwithin{equation}{section}
\newtheorem{definition}{Definition}[section]
\newtheorem{remark}[definition]{Remark}
\newtheorem{example}[definition]{Example}
\newtheorem{theorem}[definition]{Theorem}
\newtheorem{proposition}[definition]{Proposition}
\newtheorem{lemma}[definition]{Lemma}
\newtheorem{corollary}[definition]{Corollary}
\theoremstyle{remark}
\newcommand{\Ext}{\mathrm{Ext}}
\newcommand{\Hom}{\mathrm{Hom}}
\newcommand{\A}{\mathcal{A}}
\newcommand{\Le}{\mathcal{L}}
\newcommand{\X}{\mathcal{X}}
\newcommand{\Y}{\mathcal{Y}}
\newcommand{\pd}{\mathrm{pd}}
\newcommand{\id}{\mathrm{id}}
\newcommand{\Gid}{\mathrm{Gid}}
\newcommand{\resdim}{\mathrm{resdim}}
\newcommand{\coresdim}{\mathrm{coresdim}}
\newcommand{\Ker}{\mathrm{Ker}}
\newcommand{\GF}{\mathcal{GF}}
\newcommand{\Gfd}{\mathrm{Gfd}}
\newcommand{\Tor}{\mathrm{Tor}}
\newcommand{\GI}{\mathcal{GI}}
\newcommand{\Coker}{\mathrm{CoKer}}
\newcommand{\Ch}{\mathsf{Ch}}
\numberwithin{equation}{section}
\begin{document}
\title[Relative Gorenstein flat modules]{Homological and homotopical aspects of Gorenstein flat modules and complexes relative to duality pairs}
%\thanks{}
\author{V\'ictor Becerril}
\address[V. Becerril]{Centro de Ciencias Matem\'aticas. Universidad Nacional Aut\'onoma de M\'exico. 
 CP58089. Morelia, Michoac\'an, M\'EXICO}
\email{victorbecerril@matmor.unam.mx}

\author{Marco A. P\'erez}
\address[M. A. P\'erez]{Instituto de Matem\'atica y Estad\'istica ``Prof. Ing. Rafael Laguardia''. Facultad de Ingenier\'ia. Universidad de la Rep\'ublica. CP11300. Montevideo, URUGUAY}
\email{mperez@fing.edu.uy}
%\date{}
\thanks{2020 MSC: 18G10, 18G20, 18G25.}
\thanks{Key Words: Relative Gorenstein flat modules, relative Gorenstein flat dimensions, duality pairs, cotorsion pairs, model category structures, recollements.}

%%%%%%%%%%%%%%%%%%%%%%%%%%%%%%%%%%%%
%%%%%%%%%%%%%%%%%%%%%%%%%%%%%%%%%%%%
%%%%%%%%%%%%%%%%%%%%%%%%%%%%%%%%%%%%
%%%%%%%%%%%%%%%%%%%%%%%%%%%%%%%%%%%%

\begin{abstract} 
We study homological and homotopical aspects of Gorenstein flat modules over a ring with respect to a duality pair $(\mathcal{L,A})$. These modules are defined as cycles of exact chain complexes with components in $\mathcal{L}$ which remain exact after tensoring by objects in 
\[
\mathcal{A} \cap \Big( \bigcap_{i \in \mathbb{Z}_{> 0}} {\rm Ker}({\rm Ext}^i_{R^{\rm o}}(-,\mathcal{A})) \Big).
\] 
In the case where $(\mathcal{A,L})$ is also a duality pair and $\mathcal{L}$ is closed under extensions, (co)products, $R \in \mathcal{L}$, and $\mathcal{A}$ is the right half of a hereditary complete cotorsion pair, we prove that these relative Gorenstein flat modules are closed under extensions, and that the corresponding Gorenstein flat dimension is well behaved in the sense that it recovers many of the properties and characterizations of its (absolute) Gorenstein flat counterpart. The latter in turn is a consequence of a Pontryagin duality relation that we show between these relative Gorenstein flat modules and certain Gorenstein injective modules relative to $\mathcal{A}$. We also find several hereditary and cofibrantly generated abelian model structures from these Gorenstein flat modules and complexes relative to $(\mathcal{L,A})$. At the level of chain complexes, we find three recollements between the homotopy categories of these model structures, along with several derived adjunctions connecting these recollements. 
\end{abstract}  
\maketitle
%\centerline{}

%%%%%%%%%%%%%%%%%%%%%%%%%%%%%%%%%%%%%
%%%%%%%%%%%%%%%%%%%%%%%%%%%%%%%%%%%%%
%%%%%%%%%%%%%%%%%%%%%%%%%%%%%%%%%%%%%
%%%%%%%%%%%%%%%%%%%%%%%%%%%%%%%%%%%%%

\section{Introduction}

Let $R$ be an associative ring with identity. Gorenstein flat $R$-modules were introduced by Enochs, Jenda and Torrecillas in \cite{EJT93}. These are defined as cycles of exact chain complexes of flat left $R$-modules, which remains exact after tensoring by injective right $R$-modules. These modules and some of its properties were studied by Holm in \cite{Holm04}. In particular, he proved that over a right coherent ring, Gorenstein flat left $R$-modules are closed under extensions. The validity of this closure property over an arbitrary ring was an open problem until 2020, when it was settled in the positive by \v{S}aroch and \v{S}\v{t}ov\'{\i}\v{c}ek in \cite{SS}. Furthermore, concerning homotopical aspects, these authors also proved that the Gorenstein flat left $R$-modules form the class of cofibrant objects in a cofibrantly generated abelian model structure on the category of modules over an arbitrary ring, generalizing thus the cases for Iwanaga-Gorenstein rings (see Gillespie and Hovey \cite[Coroll. 3.13]{GillespieHovey10}) and for coherent rings (see Gillespie \cite[Thm. 3.3]{GillespieStable17}). 

Gorenstein flat modules have then turned out to be key in Gorenstein homological algebra, for being a good analog for flat modules, but also for their rich interactions with Gorenstein projective and Gorenstein injective modules. Recently, there has been an increasing interest for generalizations of Gorenstein flat modules, such as the Gorenstein AC-flat modules presented by Bravo, Estrada and Iacob in \cite{BEI}. The latter, as one should expect, have appealing interactions with the Gorenstein AC-projective modules, defined by Bravo, Gillespie and Hovey in \cite{BGH14}. Further generalizations have also been defined and studied in \cite{EIP20} and \cite{WangYangZhu19}. In the former reference, Estrada, Iacob and the second named author of the present article study the Gorenstein $\mathcal{B}$-flat modules, that is, cycles of complexes of flat left $R$-modules which remain exact after tensoring by modules in a class $\mathcal{B}$ of right $R$-modules. Under certain conditions for $\mathcal{B}$ (namely, that $\mathcal{B}$ is closed under products and contains an elementary cogenerator of its definable closure), these modules are closed under extensions, and also have many interesting homological properties such as the construction of Gorenstein $\mathcal{B}$-flat covers. In the latter reference, Wang, Yang and Zhu investigate Gorenstein flat modules relative to a duality pair $(\mathcal{L,A})$. The required conditions that $(\mathcal{L,A})$ needs to fulfill so that these relative Gorenstein flat modules are closed under extensions are that $(\mathcal{L,A})$ is a perfect duality, $\mathcal{L}$ is closed under epikernels, $\mathcal{A}$ is closed under products, and  $\Tor^{R}_i(A,L) = 0$ for every $A \in \mathcal{A}$, $L \in \mathcal{L}$ and $i \in \mathbb{Z}_{> 0}$. 

It is then natural to think that if one is looking for a relativization for Gorenstein flat modules with the aim of showing results within the context of homotopical algebra (such as the construction of relative versions of the flat stable module category \cite[Coroll. 3.4]{GillespieStable17}), then it is important to look for sufficient conditions under which these modules are closed under extensions, since this is a minimal requirement if one wants to obtain abelian model category structures. Indeed, the present article is in part motivated by the following related question, posted by Estrada and Iacob during a research stay Luminy, France, while working on \cite{EIP20} with the second named author: if we consider the class of cycles of exact complexes of level modules which remain exact after tensoring by absolutely clean modules (see Bravo, Gillespie and Hovey \cite[Def. 2.6]{BGH14}), is this class closed under extensions? We shall give a partial answer to this (see Example \ref{ex:FPinfty} and Corollary \ref{resolvente}), by using the fact that (level modules, absolutely clean modules) is a duality pair of a certain type. 

In this article, we aim to continue with the study of Gorenstein flat modules relative to duality pairs. Specifically, we shall consider duality pairs $(\mathcal{L,A})$ such that $\mathcal{L}$ is closed under extensions, (co)products, $R \in \mathcal{L}$, $(\mathcal{A,L})$ is also a duality pair, and $\mathcal{A}$ is the right half of a hereditary complete cotorsion pair. We shall refer to such duality pairs as \emph{product closed bicomplete}. As an example, we have the duality pair mentioned in the previous paragraph. On the other hand, our relative version of Gorenstein flat modules is defined as follows: cycles of exact chain complexes with components in $\mathcal{L}$ which remain exact after tensoring by objects in $$\mathcal{A} \cap {}^\perp\mathcal{A} = \mathcal{A} \cap \Big( \bigcap_{i \in \mathbb{Z}_{> 0}} {\rm Ker}({\rm Ext}^i_{R^{\rm o}}(-,\mathcal{A})) \Big).$$ Under these assumptions, we prove that these relative Gorenstein flat modules are closed under extensions, and that the corresponding Gorenstein flat dimension is well behaved in the sense that it recovers many of the properties and characterizations of its (absolute) Gorenstein flat counterpart (for instance, it can be described in terms of torsion functors). The previous will be a consequence of a Pontryagin duality relation that we show between these relative Gorenstein flat modules and certain Gorenstein injective modules relative to $\mathcal{A}$ (in the sense of \cite{BMS}). The condition $\Tor^{R}_i(A,L) = 0$ for every $A \in \mathcal{A}$, $L \in \mathcal{L}$ and $i \in \mathbb{Z}_{> 0}$, will not be required in our approach. We also define Gorenstein flat complexes relative to product closed bicomplete duality pairs, and study homotopical aspects of these complexes and their module counterparts. Specifically, we obtain several hereditary and cofibrantly generated abelian model structures and compute their homotopy categories: one structure on the category of modules over $R$, and eight on the category of chain complexes. At the level complexes, we find three recollements between the homotopy categories of these model structures, along with several derived adjunctions connecting these recollements.

%%%%%%%%%%%%%%%%%%%%%%%%%%%%%%%%%%%%%%%%%%%%%%%
%%%%%%%%%%%%%%%%%%%%%%%%%%%%%%%%%%%%%%%%%%%%%%%

\subsection*{Organization}

In Section \ref{sec:prelim} we recall the necessary background on relative homological algebra, such as the definitions and notations for approximations, homological dimensions, cotorsion pairs, duality pairs, purity and model structures. 

Section \ref{sec:relative_G-flat} is devoted to investigate the class of cycles of exact chain complexes with components in a class $\mathcal{L}$ of left $R$-modules, which remain exact after tensoring by objects in a class $\mathcal{A}$ of right $R$-modules which are also left orthogonal to $\mathcal{A}$. The latter class will be denoted by $\nu$, and the cycles of such complexes will be called Gorenstein $(\mathcal{L},\nu)$-flat $R$-modules, which form a class that we denote by $\mathcal{GF}_{(\mathcal{L},\nu)}$. Many interesting properties are obtained in the case where $(\mathcal{L,A})$ is a product closed bicomplete duality pair. The first important result about $\mathcal{GF}_{(\mathcal{L},\nu)}$ is a Pontryagin duality relation with the class $\mathcal{GI}_{(\nu,\mathcal{A})}$ of $(\nu,\mathcal{A})$-Gorenstein injective right $R$-modules (defined as cycles of exact chain complexes with components in $\mathcal{A}$ which remain exact after applying the functor $\Hom_{R^{\rm o}}(\nu,-)$). Specifically, we show in Theorem \ref{dualidad} that $M \in \mathcal{GF}_{(\mathcal{L},\nu)}$ if, and only if, its Pontryagin dual $M^+ := \Hom_{\mathbb{Z}}(M,\mathbb{Q / Z})$ belongs to $\mathcal{GI}_{(\nu,\mathcal{A})}$. Several remarkable outcomes of this result are comprised in Corollary \ref{resolvente}, where we show that $\GF_{(\Le, \nu)}$ is a Kaplansky class which is also resolving, closed under direct summands and direct limits, and that $(\GF_{(\Le, \nu)},\GI_{(\nu, \A)})$ is a perfect duality pair. In particular, $\GF_{(\Le, \nu)}$ is closed under extensions, and every left $R$-module has a Gorenstein $(\mathcal{L},\nu)$-flat cover. 

In Section \ref{sec:relative_G-flat_dimensions} we define and study homological dimensions relative to $\GF_{(\Le, \nu)}$. Another consequence of Theorem \ref{dualidad}, in the case where $(\mathcal{L,A})$ is a product closed bicomplete duality pair, is a duality relation between the Gorenstein $(\mathcal{L},\nu)$-flat and the $(\nu,\mathcal{A})$-Gorenstein injective dimensions, namely, that the equality $\Gfd_{(\mathcal{L},\nu)}(M) = \Gid_{(\nu,\mathcal{A})}(M^+)$ holds for every left $R$-module $M$, as proved in Proposition \ref{iguales}. On the other hand, we characterize in Theorem \ref{theo:finiteness_GFdim} the Gorenstein $(\mathcal{L},\nu)$-flat dimension of $M$ in terms of the vanishing of the torsion functors $\Tor^R_i(-,M)$ at the class $\nu$. Moreover, we also study global relative Gorenstein flat dimensions of the ground ring $R$. Specifically, we introduce the left Gorenstein $(\mathcal{L},\nu)$-weak global dimension and the left Gorenstein $(\mathcal{L},\nu)$-weak finitistic dimension of $R$, denoted by ${\rm l.Gwgdim}_{(\mathcal{L},\nu)}(R)$ and $l.\mathrm{GF}_{(\mathcal{L},\nu)}\mbox{-}\mathrm{findim}(R)$, respectively. In the case where $(\mathcal{L,A})$ is a product closed bicomplete duality pair, we characterize in Proposition \ref{prop:finiteness_global_dim} the finiteness of ${\rm l.Gwgdim}_{(\mathcal{L},\nu)}(R)$ in terms of the flat dimension of modules in $\nu$. Furthermore, under the same conditions we show in Proposition \ref{prop:finiteness_relative_Gflat_findim} that $l.\mathrm{GF}_{(\mathcal{L},\nu)}\mbox{-}\mathrm{findim}(R)$ coincides with the global $\mathcal{L}$-resolution dimension of $R$.

Homotopical aspects of Gorenstein $(\mathcal{L},\nu)$-flat modules are studied in Section \ref{sec:relative_G-flat_models}. For every $n \in \mathbb{Z}_{\geq 0}$, if we let $\mathcal{GF}_{(\mathcal{L},\nu)}{}^\wedge_n$ denote the class of $R$-modules $M$ with $\Gfd_{(\mathcal{L},\nu)}(M) \leq n$, we show in Theorem \ref{theo:GF_model_structure} that $\mathcal{GF}_{(\mathcal{L},\nu)}{}^\wedge_n$  is the class of cofibrant objects of a hereditary and cofibrantly generated abelian model structure, provided that $(\mathcal{L,A})$ is a product closed bicomplete duality pair. 

We close our article in Section \ref{appx:complexes}, where we show that the notions and properties of Gorenstein $(\mathcal{L},\nu)$-flat and $(\nu,\mathcal{A})$-Gorenstein injective modules carry over to the category of chain complexes. We characterize a particular family of these complexes in terms of $\GF_{(\Le, \nu)}$ and $\GI_{(\nu, \A)}$, so that the duality relation in the context of modules in inherited by the chain complex counterparts of $\GF_{(\Le, \nu)}$ and $\GI_{(\nu, \A)}$. As a consequence, the closure properties, the existence of relative approximations and the construction of homological dimensions relative to $\GF_{(\Le, \nu)}$ are also valid for relative Gorenstein flat complexes. The model structure from Theorem \ref{theo:GF_model_structure} has also a chain complex counterpart in Theorem \ref{theo:GF_model_structure_Ch}. If we have a product closed bicomplete duality pair $(\mathcal{L,A})$ of modules, and set the duality pair $(\mathcal{L}\text{-complexes},\mathcal{A}\text{-complexes})$ in this theorem, we obtain one hereditary and cofibrantly generated abelian model structure where the cofibrant objects are given by the complexes with components in $\mathcal{GF}_{(\mathcal{L},\nu)}$ (see Corollary \ref{coro:last_model}). Other seven model structures related to $\mathcal{GF}_{(\mathcal{L},\nu)}$ are constructed using several methods of Gillespie \cite{GillespieCh,GillespieHereditary,GillespieMock} (see Corollaries \ref{coro:derived}, \ref{coro:rel_derived}, \ref{coro:last_model} and \ref{coro:seriously_this_is_the_last}). Moreover, in Corollary \ref{coro:reco1} we construct three different recollements between the homotopy categories of these structures, which in turn are related via derived adjunctions found in Proposition \ref{prop:derived_ad}.

%%%%%%%%%%%%%%%%%%%%%%%%%%%%%%%%%%%%
%%%%%%%%%%%%%%%%%%%%%%%%%%%%%%%%%%%%
%%%%%%%%%%%%%%%%%%%%%%%%%%%%%%%%%%%%
%%%%%%%%%%%%%%%%%%%%%%%%%%%%%%%%%%%%

\section{Preliminaries}\label{sec:prelim}

In what follows, we shall work with categories of modules and chain complexes over an associative ring $R$ with identity. Most of the concepts recalled below are presented within the framework of Grothendieck categories. 

%%%%%%%%%%%%%%%%%%%%%%%%%%%%%%%%%%%%
%%%%%%%%%%%%%%%%%%%%%%%%%%%%%%%%%%%%

\subsection*{Notations}

We denote by $\mathsf{Mod}(R)$ and $\mathsf{Mod}(R^{\rm o})$ the categories of left and right $R$-modules. For simplicity, we shall refer to these two classes of modules as $R$-modules and $R^{\rm o}$-modules, respectively.

Given a Grothendieck category $\mathcal{G}$, by $\Ch(\mathcal{G})$ we shall denote the category of complexes of objects in $\mathcal{G}$. If $\mathcal{G} = \mathsf{Mod}(R)$ or $\mathcal{G} = \mathsf{Mod}(R^{\rm o})$, then the categories of complexes of $R$-modules and $R^{\rm o}$-modules will be denoted by $\Ch(R)$ and $\Ch(R^{\rm o})$, for simplicity.  Objects in $\Ch(\mathcal{G})$ are sequences 
\[
X_\bullet = \cdots \to X_{m+1} \xrightarrow{\partial^{X_\bullet}_{m+1}} X_{m} \xrightarrow{\partial^{X_\bullet}_m} X_{m-1} \to \cdots
\] 
such that $\partial^{X_\bullet}_m \circ \partial^{X_\bullet}_{m+1} = 0$ for every integer $m \in \mathbb{Z}$. The cycles of $X_\bullet$, denoted $Z_m(X_\bullet)$, are defined as the kernel of $\partial^{X_\bullet}_m$. 

If $\mathcal{X}$ is a class of objects in $\mathcal{G}$, denoted $\mathcal{X} \subseteq \mathcal{G}$, then $\Ch(\mathcal{X})$ denotes the class of chain complexes of objects in $\mathcal{X}$, that is, $X_\bullet \in \Ch(\mathcal{X})$ if $X_m \in \mathcal{X}$ for every $m \in \mathbb{Z}$. 

Among the most important subcategories of $\mathsf{Mod}(R)$, we mainly consider the classes of projective, injective and flat $R$-modules, which will be denoted by $\mathcal{P}(R)$, $\mathcal{I}(R)$ and $\mathcal{F}(R)$, respectively.  

Concerning functors defined on modules, we let 
\[
\Ext^i_R(-,-) \colon \mathsf{Mod}(R) \times \mathsf{Mod}(R) \longrightarrow \mathsf{Mod}(\mathbb{Z})
\] 
denote the right $i$-th derived functor of 
\[
\Hom_R(-,-) \colon \mathsf{Mod}(R) \times \mathsf{Mod}(R) \longrightarrow \mathsf{Mod}(\mathbb{Z}),
\] 
where $\mathsf{Mod}(\mathbb{Z})$ denotes the category of abelian groups. In the more general setting of a Grothendieck category $\mathcal{G}$ (with the possible absence of enough projective objects), extension functors $\Ext^i_{\mathcal{G}}(-,-)$ may be defined in the sense of Yoneda. 

If $M \in \mathsf{Mod}(R^{\rm o})$ and $N \in \mathsf{Mod}(R)$, then $M \otimes_R N$ denotes the tensor product of $M$ and $N$. Recall that the construction of this tensor product defines a bifunctor 
\[
- \otimes_R - \colon \mathsf{Mod}(R^{\rm o}) \times \mathsf{Mod}(R) \longrightarrow \mathsf{Mod}(\mathbb{Z}).
\] 
The left derived functors of $- \otimes_R -$ are denoted by 
\[
\Tor^R_i(-,-) \colon \mathsf{Mod}(R^{\rm o}) \times \mathsf{Mod}(R) \longrightarrow \mathsf{Mod}(\mathbb{Z}).
\]

%%%%%%%%%%%%%%%%%%%%%%%%%%%%%%%%%%%%
%%%%%%%%%%%%%%%%%%%%%%%%%%%%%%%%%%%%

\subsection*{Orthogonality}

In what follows, we let $\mathbb{Z}_{> 0}$ and $\mathbb{Z}_{\geq 0}$ denote the sets of positive and nonnegative integers, respectively. For a Grothendieck category $\mathcal{G}$, $\mathcal{X}, \mathcal{Y} \subseteq \mathcal{G}$ and $i \in \mathbb{Z}_{> 0}$, the notation 
\[
\Ext^i_{\mathcal{G}}(\mathcal{X,Y}) = 0
\] 
means that $\Ext^i_{\mathcal{G}}(X,Y) = 0$ for every $X \in \mathcal{X}$ and $Y \in \mathcal{Y}$. In the case where $\mathcal{Y}$ is the singleton $\{ Y \}$ for some $Y \in \mathcal{G}$, we simply write $\Ext^i_{\mathcal{G}}(\mathcal{X},Y) = 0$. The notation $\Ext^i_{\mathcal{G}}(X,\mathcal{Y}) = 0$ for $X \in \mathcal{G}$ has a similar meaning. Moreover, by 
\[
\Ext^{\geq 1}_{\mathcal{G}}(\mathcal{X,Y}) = 0
\] 
we shall mean that $\Ext^i_{\mathcal{G}}(\mathcal{X,Y}) = 0$ for every $i \in \mathbb{Z}_{> 0}$. One also has similar meanings for $\Ext^{\geq 1}_{\mathcal{G}}(\mathcal{X},N) = 0$, $\Ext^{\geq 1}_{\mathcal{G}}(N,\mathcal{Y}) = 0$ and $\Ext^{\geq 1}_{\mathcal{G}}(X,Y) = 0$. We can also replace $\Ext$ by $\Tor$ in order to obtain similar notations for $\Tor$-orthogonality. 

The right Ext-orthogonal complements of $\mathcal{X}$ will be denoted by
\begin{align*}
\mathcal{X}^{\perp_i} & = \{ M \in \mathcal{G} {\rm \ : \ } \Ext^i_{\mathcal{G}}(\mathcal{X},M) = 0 \} & \text{and} & & \mathcal{X}^{\perp} & = \bigcap_{i \in \mathbb{Z}_{> 0}} \mathcal{X}^{\perp_i}.
\end{align*}
The left orthogonal complements, on the other hand, are defined similarly. If $\mathcal{X} \subseteq \mathsf{Mod}(R^{\rm o})$, the right Tor-orthogonal complements of $\mathcal{X}$ will be denoted by
\begin{align*}
\mathcal{X}^{\top_i} & = \{ M \in \mathsf{Mod}(R) {\rm \ : \ } \Tor_i^R(\mathcal{X},M) = 0 \} & \text{and} & & \mathcal{X}^{\top} & = \bigcap_{i \in \mathbb{Z}_{> 0}} \mathcal{X}^{\top_i}.
\end{align*}
Left Tor-orthogonal complements ${}^{\top_i}\mathcal{Y}$ and ${}^\top\mathcal{Y}$ are defined for classes $\mathcal{Y} \subseteq \mathsf{Mod}(R)$ of $R$-modules in a similar fashion.

%%%%%%%%%%%%%%%%%%%%%%%%%%%%%%%%%%%%%
%%%%%%%%%%%%%%%%%%%%%%%%%%%%%%%%%%%%% 

\subsection*{Relative homological dimensions} 

There are homological dimensions defined in terms of extension functors. Let $M$ be an object in a Grothendieck category $\mathcal{G}$, and $\mathcal{X}, \mathcal{Y} \subseteq \mathcal{G}$. The \emph{injective dimensions of $M$\footnote{Where $\id_{\mathcal{X}}(M) = \infty$ if $ \{ m \in \mathbb{Z}_{\geq 0} \text{ : } \Ext^{\geq m+1}_{\mathcal{G}}(\mathcal{X},M) = 0  \} = \emptyset$.} and $\mathcal{Y}$ relative to $\mathcal{X}$} are defined by
\begin{align*}
\id_{\mathcal{X}}(M) & := \inf \{ m \in \mathbb{Z}_{\geq 0} \text{ : } \Ext^{\geq m+1}_{\mathcal{G}}(\mathcal{X},M) = 0  \}, \\ 
\id_{\mathcal{X}}(\mathcal{Y}) & := \sup \{ \id_{\mathcal{X}}(Y) \text{ : } Y \in \mathcal{Y} \}.
\end{align*}
%The \emph{injective dimensions of $M$ and $\mathcal{X}$ relative to $\mathcal{Y}$}, denoted by $\id_{\mathcal{Y}}(M)$ and $\id_{\mathcal{Y}}(\mathcal{X})$, are defined dually. 
In the case where $\mathcal{X} = \mathcal{G}$, we write 
\begin{align*}
\id_{\mathcal{G}}(M) & = \id(M) & & \text{and} & \id_{\mathcal{G}}(\mathcal{Y}) & = \id(\mathcal{Y})
\end{align*} 
for the (absolute) injective dimensions of $M$ and $\mathcal{Y}$. 

By an \emph{$\mathcal{X}$-resolution of $M$} we mean an exact complex 
\[
\cdots \to X_m \to X_{m-1} \to \cdots \to X_1 \to X_0 \twoheadrightarrow M
\]
with $X_k \in \mathcal{X}$ for every $k \in \mathbb{Z}_{\geq 0}$. For simplicity, any $k$-th syzygy in a $\mathcal{X}$-resolution of $M$ will be denoted by $\Omega^{\mathcal{X}}_k(M) = {\rm Ker}(X_{k-1} \to X_{k-2})$ for $k > 1$, and $\Omega^{\mathcal{X}}_1(M) = {\rm Ker}(X_0 \to M)$. If $X_k = 0$ for $k > m$, we say that the previous resolution has \emph{length} $m$. The \emph{resolution dimension relative to $\mathcal{X}$} (or the \emph{$\mathcal{X}$-resolution dimension}) of $M$\footnote{Where $\resdim_{\mathcal{X}}(M) = \infty$ if $\{ m \in \mathbb{Z}_{\geq 0} \ \mbox{ : } \ \text{there exists an $\mathcal{X}$-resolution of $C$ of length $m$} \} = \emptyset$.} is defined as the value
\[
\resdim_{\mathcal{X}}(M) := \min \{ m \in \mathbb{Z}_{\geq 0} \ \mbox{ : } \ \text{there exists an $\mathcal{X}$-resolution of $M$ of length $m$} \}.
\]
Moreover, if $\mathcal{Y} \subseteq \mathcal{G}$ then
\[
\resdim_{\mathcal{X}}(\mathcal{Y}) := \sup \{ \resdim_{\mathcal{X}}(Y) \ \mbox{ : } \ \text{$Y \in \mathcal{Y}$} \}
\]
defines the \emph{resolution dimension of $\mathcal{Y}$ relative to $\mathcal{X}$}. The classes of objects with bounded (by some $n \in \mathbb{Z}_{\geq 0}$) and finite $\mathcal{X}$-resolution dimensions will be denoted by
\begin{align*}
\mathcal{X}^\wedge_n & := \{ M \in \mathcal{G} \text{ : } \resdim_{\mathcal{X}}(M) \leq n \} & \text{and} & & \mathcal{X}^\wedge & := \bigcup_{n \in \mathbb{Z}_{\geq 0}} \mathcal{X}^\wedge_n.
\end{align*}
Dually, we can define \emph{$\mathcal{X}$-coresolutions} and the \emph{coresolution dimension of $M$ and $\mathcal{Y}$ relative to $\mathcal{X}$} (denoted $\coresdim_{\mathcal{X}}(M)$ and $\coresdim_{\mathcal{X}}(\mathcal{Y})$). We also have the dual notations $\mathcal{X}^\vee_n$ and $\mathcal{X}^\vee$ for the classes of objects with bounded and finite $\mathcal{X}$-coresolution dimension. 

The resolution and coresolution dimensions define functions
\begin{align*}
\resdim_{\mathcal{X}}(-) & \colon \mathcal{G} \to \mathbb{Z}_{\geq 0} \cup \{ \infty \} & & \text{and} & \coresdim_{\mathcal{X}}(-) & \colon \mathcal{G}\to \mathbb{Z}_{\geq 0} \cup \{ \infty \},
\end{align*}
for which one can consider the following notion of stability.

\begin{definition}\label{def:stable}
We say that $\resdim_{\mathcal{X}}(-)$ is \textbf{stable} if for every $M \in \mathcal{G}$ and $n \in \mathbb{Z}_{\geq 0}$, the following two assertions are equivalent:
\begin{enumerate}
\item[(a)] $\resdim_{\mathcal{X}}(M) \leq n$.

\item[(b)] Any $(n\mbox{-}1)$-th $\mathcal{X}$-syzygy of $M$ belongs to $\mathcal{X}$. 
\end{enumerate}
The stable description for $\coresdim_{\mathcal{X}}(-)$ is defined dually. 
\end{definition}

We can note the following property concerning stability.

\begin{proposition}\label{prop:stability_in_ses}
Let $\mathcal{X} \subseteq \mathcal{G}$ such that $\resdim_{\mathcal{X}}(-)$ is stable. For every short exact sequence $A \rightarrowtail X \twoheadrightarrow C$ with $X \in \mathcal{X}$ and $A, C \in \mathcal{X}^\wedge$ and every $n \in \mathbb{Z}_{> 0}$, one has that $\resdim_{\mathcal{X}}(C) = n$ if, and only if, $\resdim_{\mathcal{X}}(A) = n-1$.
\end{proposition}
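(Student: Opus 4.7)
The plan is to exploit the given sequence $A \rightarrowtail X \twoheadrightarrow C$ as the first step of an $\mathcal{X}$-resolution of $C$ that continues with any chosen $\mathcal{X}$-resolution of $A$. Concretely, pick a resolution $\cdots \to X_1 \to X_0 \twoheadrightarrow A$ and compose its augmentation with the monomorphism $A \hookrightarrow X \in \mathcal{X}$ to obtain the exact sequence
\[
\cdots \to X_1 \to X_0 \to X \to C \to 0,
\]
which is an $\mathcal{X}$-resolution of $C$. Reading off the cycles of this spliced complex, one finds $\Omega^{\mathcal{X}}_1(C) \cong A$ and, for every $k \geq 2$, $\Omega^{\mathcal{X}}_k(C) \cong \Omega^{\mathcal{X}}_{k-1}(A)$, so that in effect the syzygies of $C$ are those of $A$ shifted by one. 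Conversely, truncating any $\mathcal{X}$-resolution of $C$ at the first kernel produces an $\mathcal{X}$-resolution of the same first syzygy of $C$, which in the present situation can be compared with $A$ via the given sequence.

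With the syzygy shift in hand, the argument reduces to an application of the stability hypothesis in Definition \ref{def:stable}. Applied at level $n$ to $C$, stability characterises $\resdim_{\mathcal{X}}(C) \leq n$ via membership of an appropriate syzygy of $C$ in $\mathcal{X}$; through the identification above, that syzygy is precisely the one appearing when stability is applied at level $n-1$ to $A$. This yields the key equivalence
\[
\resdim_{\mathcal{X}}(C) \leq n \iff \resdim_{\mathcal{X}}(A) \leq n-1.
\]
Running the same mechanism one step lower gives $\resdim_{\mathcal{X}}(C) \leq n-1 \iff \resdim_{\mathcal{X}}(A) \leq n-2$ (with the convention that ``$\resdim_{\mathcal{X}}(A) \leq -1$'' is false for nonzero $A$, matching the edge case $n=1$). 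Since $\resdim_{\mathcal{X}}(M) = m$ is simply the conjunction of $\resdim_{\mathcal{X}}(M) \leq m$ and $\resdim_{\mathcal{X}}(M) \not\leq m-1$, combining the two ``at most'' equivalences delivers the desired $\resdim_{\mathcal{X}}(C) = n \iff \resdim_{\mathcal{X}}(A) = n-1$.

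The main obstacle is the careful indexing of syzygies, especially at the boundary $n=1$, where $A$ itself (rather than a deeper syzygy of $A$) plays the role of the relevant syzygy of $C$, and one has to verify that stability correctly detects the passage from $\resdim_{\mathcal{X}}(C) = 1$ to $A \in \mathcal{X}$ and back. Once this bookkeeping is made explicit, the proof amounts to a single splice of resolutions followed by a direct invocation of the stability condition; no auxiliary tools such as Schanuel's lemma or closure properties of $\mathcal{X}$ under summands or extensions are needed.
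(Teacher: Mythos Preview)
The paper does not include a proof of this proposition; it is stated in the preliminaries and invoked later (e.g., in the proof of Proposition~\ref{sequences}). Your approach---splicing an $\mathcal{X}$-resolution of $A$ onto the given short exact sequence to obtain an $\mathcal{X}$-resolution of $C$ in which $A$ appears as the first syzygy and $\Omega_k^{\mathcal{X}}(C) \cong \Omega_{k-1}^{\mathcal{X}}(A)$ for $k \geq 2$, then invoking stability---is the natural one and correctly establishes the equivalence $\resdim_{\mathcal{X}}(C) \leq n \Longleftrightarrow \resdim_{\mathcal{X}}(A) \leq n-1$ for every $n \geq 1$. Combining this with the same equivalence at level $n-1$ gives the desired equality for all $n \geq 2$, and also the forward implication at $n=1$.

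There is, however, a genuine gap at the boundary $n = 1$ in the $\Leftarrow$ direction. You assert that ``running the same mechanism one step lower'' yields $\resdim_{\mathcal{X}}(C) \leq 0 \Longleftrightarrow \resdim_{\mathcal{X}}(A) \leq -1$, but your mechanism (splice, then apply stability at the relevant level) only operates for levels $\geq 1$; stability at level $0$ says nothing about $A$. And in fact the equivalence you need here is false: take $\mathcal{X} = \mathcal{P}(R)$, which has stable resolution dimension by Proposition~\ref{prop:Frobenius_stable}, and any split exact sequence $P \rightarrowtail P \oplus P' \twoheadrightarrow P'$ of nonzero projectives. Then $\resdim_{\mathcal{X}}(A) = 0$ while $\resdim_{\mathcal{X}}(C) = 0 \neq 1$, so the implication $\resdim_{\mathcal{X}}(A) = 0 \Rightarrow \resdim_{\mathcal{X}}(C) = 1$ fails. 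Thus the proposition as literally stated is already problematic at $n = 1$; your argument is sound exactly where the statement is, and the apparent gap in your proof reflects a boundary issue in the proposition itself rather than a defect in your strategy.
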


Some of the sufficient conditions that a class $\mathcal{X} \subseteq \mathcal{G}$ needs to fulfill so that $\resdim_{\mathcal{X}}(-)$ is stable are comprised in the concept of left Frobenius pair. If one lets $\omega \subseteq \mathcal{X}$, one says that $(\mathcal{X},\omega)$ is a \emph{left Frobenius pair} if:
\begin{enumerate}
\item $\mathcal{X}$ is left thick, that is, it is closed under extensions, under taking kernels of epimorphisms between its objects (epikernels, for short) and direct summands. 

\item $\id_{\mathcal{X}}(\omega) = 0$.

\item $\omega$ is a \emph{relative cogenerator} in $\mathcal{X}$, that is, for every object $X \in \mathcal{X}$ there is an embedding into an object of $\omega$ with cokernel in $\mathcal{X}$.
\end{enumerate}
%The dual concept is called \emph{right Frobenius pair}.
The following result can be found in \cite[Prop. 2.14]{BMS}, and was originally proved in \cite[Prop. 3.3]{AB89}.

\begin{proposition}\label{prop:Frobenius_stable}
If $(\mathcal{X},\omega)$ is a left Frobenius pair, then $\resdim_{\mathcal{X}}(-)$ is stable. 
\end{proposition}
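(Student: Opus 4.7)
The plan is to establish (a) $\Leftrightarrow$ (b) for any $M \in \mathcal{G}$ and any $n \in \mathbb{Z}_{\geq 0}$, using two main ingredients: the generalized Schanuel Lemma and the Auslander-Buchweitz closure results that follow from the Frobenius pair structure. The direction (b) $\Rightarrow$ (a) is a direct truncation: if an $(n-1)$-th $\mathcal{X}$-syzygy $K$ of $M$ lies in $\mathcal{X}$, coming from an $\mathcal{X}$-resolution $\cdots \to X_1 \to X_0 \to M \to 0$, then
\[
0 \to K \to X_{n-2} \to \cdots \to X_0 \to M \to 0
\]
is a finite $\mathcal{X}$-resolution of $M$ of length at most $n$, yielding $\resdim_{\mathcal{X}}(M) \leq n$.

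For the converse (a) $\Rightarrow$ (b), the plan is to exploit Schanuel-invariance of syzygies together with the Frobenius structure. From $\resdim_{\mathcal{X}}(M) \leq n$, fix a length-$n$ $\mathcal{X}$-resolution $0 \to Y_n \to Y_{n-1} \to \cdots \to Y_0 \to M \to 0$ and call its $(n-1)$-th syzygy $K_0$. For any other $\mathcal{X}$-resolution of $M$ with $(n-1)$-th syzygy $K$, the generalized Schanuel Lemma applied to the two length-$(n-1)$ partial resolutions furnishes an isomorphism
\[
K \oplus A \;\cong\; K_0 \oplus B,
\]
where $A$ and $B$ are alternating finite direct sums of modules drawn from the two resolutions. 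Each of $A$ and $B$ belongs to $\mathcal{X}$ by extension closure (which implies closure under finite direct sums). Hence, provided $K_0 \in \mathcal{X}$, the right-hand side lies in $\mathcal{X}$ by extension closure, and direct-summand closure forces $K \in \mathcal{X}$.

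The main obstacle is therefore to verify $K_0 \in \mathcal{X}$. This does not follow from the left-thick axioms alone, since $K_0$ sits in a short exact sequence $0 \to Y_n \to Y_{n-1} \to K_0 \to 0$ that amounts to a cokernel of a monomorphism between objects of $\mathcal{X}$, whereas left thickness only provides extension, epikernel and direct-summand closures. Here the remaining two axioms of the Frobenius pair intervene decisively: the relative cogenerator role of $\omega$ and the vanishing $\id_{\mathcal{X}}(\omega) = 0$. The strategy is to use the cogenerator property to embed $Y_n \hookrightarrow W$ with $W \in \omega$ and cokernel in $\mathcal{X}$, form the pushout of $Y_n \to Y_{n-1}$ and $Y_n \to W$, and analyze the resulting commutative diagram. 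The pushout object $P$ sits in an exact sequence $0 \to Y_{n-1} \to P \to W/Y_n \to 0$ with both end terms in $\mathcal{X}$, so extension closure gives $P \in \mathcal{X}$; the vanishing $\Ext^{\geq 1}_{\mathcal{G}}(\mathcal{X},\omega) = 0$ then splits the parallel row $0 \to W \to P \to K_0 \to 0$, exhibiting $K_0$ as a direct summand of $P \in \mathcal{X}$. Summand closure concludes $K_0 \in \mathcal{X}$, and then the Schanuel argument above finishes the proof. This is exactly the manipulation formalized in the Auslander-Buchweitz lemma, and I would invoke it to conclude.
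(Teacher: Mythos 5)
There is a genuine gap, and the argument as written does not work.

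The first problem is that Schanuel's lemma is not available for $\mathcal{X}$-resolutions. The classical proof pulls back two epimorphisms onto $M$ and then uses the projectivity of the resolution terms to split the resulting short exact sequences. For a general left Frobenius pair the objects of $\mathcal{X}$ are not projective: the only Ext-vanishing supplied by the axioms is $\Ext^{\geq 1}_{\mathcal{G}}(\mathcal{X},\omega)=0$, and the syzygies $K,K'$ lie nowhere near $\omega$, so neither pullback sequence splits. The isomorphism $K\oplus A\cong K_0\oplus B$ is therefore asserted without justification, and it fails in general.

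The second problem is that your verification of $K_0\in\mathcal{X}$ is circular. To split $0\to W\to P\to K_0\to 0$ you must kill its class in $\Ext^1_{\mathcal{G}}(K_0,W)$. The hypothesis $\id_{\mathcal{X}}(\omega)=0$ makes $\Ext^1_{\mathcal{G}}(X,W)$ vanish for $X\in\mathcal{X}$ and $W\in\omega$; invoking it here presupposes $K_0\in\mathcal{X}$, which is exactly what you are trying to prove. (That $P\in\mathcal{X}$ and hence $\Ext^1_{\mathcal{G}}(P,W)=0$ is true but irrelevant to the extension of $K_0$ by $W$.) Worse, the claim is simply false: with $K_0=\mathrm{Coker}(Y_n\to Y_{n-1})$ and $\resdim_{\mathcal{X}}(M)=n$ exactly, one has $\resdim_{\mathcal{X}}(K_0)=1$, so $K_0\notin\mathcal{X}$. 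Concretely, take $\mathcal{X}=\omega=\mathcal{P}(R)$ over $R=k[x,y]$ and $M=k$; then $K_0=(x,y)$, which is not projective. (With the paper's own syzygy indexing it is $\Omega^{\mathcal{X}}_n(M)$, not $\Omega^{\mathcal{X}}_{n-1}(M)$, that should land in $\mathcal{X}$ — the ``$(n\mbox{-}1)$-th'' in Definition \ref{def:stable} looks like a misprint — but for the fixed minimal resolution that syzygy is simply $Y_n\in\mathcal{X}$ and requires no argument; the entire difficulty is the transfer to an arbitrary resolution, which is precisely the broken Schanuel step.)

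The paper does not give its own proof, deferring to \cite[Prop.~2.14]{BMS} and \cite[Prop.~3.3]{AB89}. The argument there does not go through Schanuel. It proceeds via Auslander--Buchweitz approximations: from the Frobenius-pair axioms one proves that every $N\in\mathcal{X}^\wedge$ with $\Ext^{\geq 1}_{\mathcal{G}}(N,\omega)=0$ lies in $\mathcal{X}$ (the special $\mathcal{X}$-precover of $N$ has kernel in $\omega^\wedge$, and the sequence splits \emph{because} the Ext-vanishing is against the kernel, not against $\mathcal{X}$). One then dimension-shifts with $\id_{\mathcal{X}}(\omega)=0$ to get $\Ext^{\geq 1}_{\mathcal{G}}(\Omega^{\mathcal{X}}_n(M),\omega)\cong\Ext^{\geq n+1}_{\mathcal{G}}(M,\omega)=0$, and separately verifies that syzygies of objects in $\mathcal{X}^\wedge$ remain in $\mathcal{X}^\wedge$. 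If you want to repair your write-up, replacing the Schanuel/splitting mechanism by this Ext-characterization is the right move.
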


%%%%%%%%%%%%%%%%%%%%%%%%%%%%%%%%%%%%
%%%%%%%%%%%%%%%%%%%%%%%%%%%%%%%%%%%%

\subsection*{Approximations}

Given an object $M$ in a Grothendieck category $\mathcal{G}$, and $\mathcal{X} \subseteq \mathcal{G}$, recall that a morphism $\varphi \colon X \to M$ with $X \in \mathcal{X}$ is an \emph{$\mathcal{X}$-precover} (or \emph{right approximation by $\mathcal{X}$}) \emph{of $M$} if for every morphism $\varphi' \colon X' \to M$ with $X' \in \mathcal{X}$, there exists a morphism $h \colon X' \to X$ such that $\varphi' = \varphi \circ h$. If in addition, in the case where $X' = X$ and $\varphi' = \varphi$, the equality $\varphi = \varphi \circ h$ can only be completed by automorphisms $h$ of $X$, then one says that $\varphi$ is an \emph{$\mathcal{X}$-cover of $M$}. An $\mathcal{X}$-precover is \emph{special} if it is epic and its kernel belongs to $\mathcal{X}^{\perp_1}$. We shall say that $\mathcal{X}$ is \emph{precovering} if every object in $\mathcal{G}$ has an $\mathcal{X}$-precover. Similarly, one has the notions of (special) (\emph{pre})\emph{envelopes} and \emph{special precovering}, \emph{covering}, (\emph{special}) \emph{preenveloping} and \emph{enveloping} classes.

%%%%%%%%%%%%%%%%%%%%%%%%%%%%%%%%%%%%
%%%%%%%%%%%%%%%%%%%%%%%%%%%%%%%%%%%%

\subsection*{Cotorsion pairs}

Two classes $\mathcal{X}$ and $\mathcal{Y}$ of objects in a Grothendieck category $\mathcal{G}$ form a \emph{cotorsion pair} $(\mathcal{X,Y})$ if $\mathcal{X} = {}^{\perp_1}\mathcal{Y}$ and $\mathcal{Y} = \mathcal{X}^{\perp_1}$. We shall refer to the intersection $\mathcal{X} \cap \mathcal{Y}$ as the \emph{core} of $(\mathcal{X,Y})$. The cotorsion pair $(\mathcal{X,Y})$ is said to be \emph{complete} if $\mathcal{X}$ is special precovering and $\mathcal{Y}$ is special preenveloping. In the case where $\mathcal{G}$ has enough projective objects, the latter two conditions are equivalent. If $\mathcal{X}$ is covering and $\mathcal{Y}$ is enveloping, one says that $(\mathcal{X,Y})$ is a \emph{perfect} cotorsion pair. 

A class $\mathcal{X} \subseteq \mathcal{G}$ is \emph{resolving} if every projective object of $\mathcal{G}$ belongs to $\mathcal{X}$ and $\mathcal{X}$ is closed under extensions and epikernels (that is, given a short exact sequence $A \rightarrowtail B \twoheadrightarrow C$ with $C \in \mathcal{X}$, then $A \in \mathcal{X}$ if and only if $B \in \mathcal{X}$). \emph{Coresolving} classes are defined dually. A cotorsion pair $(\mathcal{X,Y})$ is \emph{hereditary} if $\mathcal{X}$ is resolving and $\mathcal{Y}$ is coresolving. The latter two conditions are equivalent if $\mathcal{G}$ has enough projective objects, and they are in turn equivalent to saying that $\Ext^{\geq 1}_{\mathcal{G}}(\mathcal{X,Y}) = 0$. Note that $\mathcal{X} = {}^\perp\mathcal{Y}$ and $\mathcal{Y} = \mathcal{X}^\perp$ if $(\mathcal{X,Y})$ is a hereditary cotorsion pair. 

One way to show that a cotorsion pair $(\mathcal{X,Y})$ in a Grothendieck category with enough projective objects is complete by means of finding a \emph{cogenerating set}, that is, a set $\mathcal{S} \subseteq \mathcal{X}$ such that $\mathcal{Y} = \mathcal{S}^{\perp_1}$. By a result of Eklof and Trlifaj \cite[Thm. 10]{EklofTrlifaj}, every cotorsion pair of $R$-modules cogenerated by a set is complete. This result is known to be true in Grothendieck categories with enough projective objects (see for instance Aldrich et al. \cite[Corolls. 2.11 and 2.12]{Aldrich}). 

In order to find cotorsion pairs cogenerated by a set or to construct approximations, a useful concept is that of a Kaplansky class. For instance, any Kaplansky class in a Grothendieck category closed under direct limits and direct products is preenveloping. Recall from Gillespie’s \cite[Def. 5.1]{GillespieDW} that a class $\mathcal{X}$ of objects in $\mathcal{G}$ is a \emph{Kaplansky class} if there is a regular cardinal $\kappa$ for which the following condition is satisfied: For any subobject $S \subseteq M$, with $M \in \mathcal{X}$ nonzero and $S$ $\kappa$-generated, there exists a $\kappa$-presentable object $N \neq 0$ such that $S \subseteq N \subseteq M$ and $N, M / N \in \mathcal{X}$. In the case $\mathcal{G} = \mathsf{Mod}(R)$, this can be restated as follows: $\mathcal{X}$ is a Kaplansky class if there exists a regular cardinal $\kappa$ such that for every $M \in \mathcal{X}$ and for any subset $S \subseteq M$ with ${\rm Card}(S) \leq \kappa$, there exists a submodule $N \subseteq M$ such that $S \subseteq N$, ${\rm Card}(N) \leq \kappa$, $N, M / N \in \mathcal{X}$. Kaplansly classes in $\Ch(R)$ can also be defined in a similar way. Under certain conditions, if we are given a cotorsion pair $(\mathcal{X,Y})$ where $\mathcal{X}$ is a Kaplansky class, it is possible to find a cogenerating set for $(\mathcal{X,Y})$. Moreover, $(\mathcal{X,Y})$ is complete or, in some cases, perfect. We summarize this fact in the following result.

\begin{proposition}\label{prop:cotorsion_Kaplansky}
Let $\mathcal{X}$ be a Kaplansky class in a Grothendieck category $\mathcal{G}$. If $\mathcal{X}$ is closed under extensions, direct limits and direct summands, and contains a generator for $\mathcal{G}$, then $(\mathcal{X},\mathcal{X}^{\perp_1})$ is a cotorsion pair cogenerated by a set. Moreover, if $\mathcal{G}$ has enough projective objects, then $(\mathcal{X},\mathcal{X}^{\perp_1})$ is perfect. 
\end{proposition}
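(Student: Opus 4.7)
The plan is to use the Kaplansky hypothesis to manufacture a cogenerating set $\mathcal{S}$ and then invoke the Grothendieck-category version of Eklof--Trlifaj quoted in the paragraph preceding the statement. Concretely, let $\kappa$ be a regular cardinal witnessing the Kaplansky property of $\mathcal{X}$, fix a generator $G \in \mathcal{X}$ (which exists by hypothesis), and let $\mathcal{S}$ consist of $G$ together with a set of representatives of the isomorphism classes of $\kappa$-presentable objects that lie in $\mathcal{X}$. The goal is then to show $\mathcal{X}^{\perp_1} = \mathcal{S}^{\perp_1}$ and $\mathcal{X} = {}^{\perp_1}(\mathcal{S}^{\perp_1})$; once established, this simultaneously identifies $(\mathcal{X}, \mathcal{X}^{\perp_1})$ as a cotorsion pair cogenerated by $\mathcal{S}$.

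The core step is to show that every $M \in \mathcal{X}$ admits a continuous well-ordered filtration $\{M_\alpha\}_{\alpha \leq \lambda}$ with $M_0 = 0$, $M_\lambda = M$, each consecutive quotient $M_{\alpha+1}/M_\alpha$ lying in $\mathcal{S}$, and $M_\alpha = \varinjlim_{\beta<\alpha} M_\beta$ at limit ordinals. I would construct this by transfinite recursion: at a successor stage $\alpha$, pick any element of $M$ not yet covered by $M_\alpha$, and iterate the Kaplansky property countably many times (alternating with the quotient $M/M_\alpha$ which again lies in $\mathcal{X}$ by closure under direct limits of filtrations and extensions) to produce $M_{\alpha+1}$ with both $M_{\alpha+1}$ and $M_{\alpha+1}/M_\alpha$ in $\mathcal{X}$, the latter being $\kappa$-presentable and hence in $\mathcal{S}$. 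Closure under extensions (to keep $M_{\alpha+1}$ in $\mathcal{X}$) and under direct limits (at limit stages) are precisely what makes this construction run.

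Once the filtration is in place, Eklof's lemma (valid in Grothendieck categories, see \cite{Aldrich}) gives $\mathcal{X} \subseteq {}^{\perp_1}(\mathcal{S}^{\perp_1})$; conversely, the Eklof--Trlifaj construction identifies ${}^{\perp_1}(\mathcal{S}^{\perp_1})$ with the class of direct summands of $\mathcal{S}$-filtered objects, and these all lie in $\mathcal{X}$ by closure under extensions, direct limits and direct summands. Hence $\mathcal{X} = {}^{\perp_1}(\mathcal{S}^{\perp_1})$, and taking right $\perp_1$ on both sides yields $\mathcal{X}^{\perp_1} = \mathcal{S}^{\perp_1}$. When $\mathcal{G}$ has enough projective objects, the cited form of Eklof--Trlifaj upgrades \emph{cogenerated by a set} to \emph{complete}.

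For the final clause, perfection follows from a standard theorem of Enochs: a complete cotorsion pair whose left class is closed under direct limits is automatically perfect, i.e.\ the left class is covering and the right class is enveloping. The main technical obstacle is the continuous filtration argument: in a Grothendieck category one must carefully manage the simultaneous interplay of the Kaplansky bound $\kappa$, closure of $\mathcal{X}$ under the relevant operations, and the ``union step'' at limit ordinals. The presence of a generator inside $\mathcal{X}$ is exactly what allows this cardinality bookkeeping to close up.
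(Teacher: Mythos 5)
Your argument is correct and matches the approach underlying the references the paper cites for this proposition (Aldrich et al.\ and Gillespie's Kaplansky-class machinery): build a continuous $\mathcal{S}$-filtration of each $M \in \mathcal{X}$ via the Kaplansky property, deduce $\mathcal{X} = {}^{\perp_1}(\mathcal{S}^{\perp_1})$ from Eklof's lemma together with closure under extensions, direct limits and summands, then invoke Eklof--Trlifaj for completeness and Enochs' theorem (left class closed under direct limits) for perfection. One small simplification: with the Kaplansky definition adopted in the paper (Gillespie's Def.~5.1), a \emph{single} application already yields a $\kappa$-presentable $N$ with both $N$ and $M/N$ in $\mathcal{X}$, so the ``iterate countably many times, alternating with the quotient'' step in your successor-stage construction is unnecessary --- one simply applies the property once to $M/M_\alpha$ (which lies in $\mathcal{X}$ by the closure hypotheses) and pulls back.
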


\begin{proof}
Follows by \cite[Corolls. 2.11, 2.12 \& 2.13]{Aldrich} and \cite[Prop. 4.8]{GillespieKaplansky}.
\end{proof}

%%%%%%%%%%%%%%%%%%%%%%%%%%%%%%%%%%%%
%%%%%%%%%%%%%%%%%%%%%%%%%%%%%%%%%%%%

\subsection*{Purity}

A short exact sequence $\varepsilon \colon A \rightarrowtail B \twoheadrightarrow C$ of $R$-modules is called \emph{pure} if the induced sequence $\Hom_R(F,\varepsilon)$ of abelian groups is exact for every finitely presented $R$-module $F$. In this situation, one says that $A$ is a pure submodule of $B$, and that $C$ is a pure quotient of $B$. A class $\mathcal{X}$ of $R$-modules is \emph{closed under pure submodules} (resp., \emph{quotients}) if for every $B \in \mathcal{X}$ and every pure exact sequence $\varepsilon$ as before, one has that $A \in \mathcal{X}$ (resp., $C \in \mathcal{X}$). All of these concepts have their counterpart in $\Ch(R)$, and are defined in a similar way. 

One important fact about purity is that it provides a quick way to determine that a class of modules or complexes is a Kaplansky class, by only checking closure under pure subobjects and pure quotients. More specifically, we have the following result, which was first proved by Holm and J{\o}rgensen in \cite[Prop. 3.2]{HJ08} for the module setting, and later generalized by Estrada and Gillespie in \cite[Lem. 3.3 \& proof of Prop. 3.4]{EstradaGillespie19} for chain complexes.

\begin{proposition}\label{prop:purity_and_Kaplansky}
Let $\mathcal{X}$ be a class of $R$-modules (resp., chain complexes in $\Ch(R)$). If $\mathcal{X}$ is closed under pure submodules (resp., pure subcomplexes) and pure quotients, then $\mathcal{X}$ is a Kaplansky class.
\end{proposition}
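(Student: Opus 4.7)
The plan is to reduce the Kaplansky condition to a Löwenheim–Skolem style statement about pure subobjects. Fix a regular cardinal $\kappa \geq |R| + \aleph_0$ (in the complex setting, take $\kappa$ also bounding the size of the indexing set $\mathbb{Z}$, so any $\kappa \geq |R| + \aleph_0$ still works). It suffices to show that for every nonzero $M \in \mathcal{X}$ and every subset $S \subseteq M$ with ${\rm Card}(S) \leq \kappa$, there exists a pure submodule (resp., pure subcomplex) $N \subseteq M$ with $S \subseteq N$ and ${\rm Card}(N) \leq \kappa$. Once such an $N$ is produced, the short exact sequence $N \rightarrowtail M \twoheadrightarrow M/N$ is pure, so by the two closure hypotheses, $N \in \mathcal{X}$ and $M/N \in \mathcal{X}$; this is exactly the Kaplansky condition from Gillespie's \cite[Def. 5.1]{GillespieDW} as rewritten in the preliminaries.

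For the module case, I would construct $N$ by the classical small-object-style iteration: set $N_0 := \langle S \rangle_R$, and given $N_n \subseteq M$ of cardinality at most $\kappa$, build $N_{n+1} \supseteq N_n$ by throwing in, for every finite system of $R$-linear equations $\sum_j r_{ij} x_j = y_i$ with coefficients $r_{ij} \in R$ and constants $y_i \in N_n$ that admits a solution in $M$, a chosen solution from $M$. There are at most $\kappa$ such finite systems because $|R|, |N_n| \leq \kappa$ and $\kappa$ is regular, so ${\rm Card}(N_{n+1}) \leq \kappa$. Setting $N := \bigcup_{n \in \mathbb{Z}_{\geq 0}} N_n$, one checks that any finite system of equations over $N$ with a solution in $M$ already has a solution in some $N_{n+1}$, which is the standard reformulation of purity (via the finitely presented module test in the definition of pure exact sequence).

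For the chain complex case, I would adapt the same iterative construction performed simultaneously in all degrees: replace $N_0 := \langle S \rangle_R$ by the subcomplex of $M$ generated by $S$ together with the boundaries and preimages needed to make it a subcomplex, and in the passage from $N_n$ to $N_{n+1}$ include solutions in $M$ of every finite system of equations in the chain-complex category that has a solution in $M$, as in Estrada–Gillespie \cite[Lem. 3.3]{EstradaGillespie19}. The categorical notion of purity in $\Ch(R)$ (equivalent to degreewise purity plus purity of the induced sequences of cycles) is still witnessed by finitely presented chain complexes, so the same cardinal arithmetic guarantees ${\rm Card}(N) \leq \kappa$ and purity of $N$ in $M$.

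The main obstacle is the careful bookkeeping in the chain complex version: one must ensure the pure subcomplex constructed is closed under the differential, contains enough preimages of the generating set $S$ to remain a subcomplex, and that the resulting inclusion is pure in the sense that makes the closure hypothesis on $\mathcal{X}$ applicable. Since both of these constructions are worked out in the cited references, I would simply invoke those for the cardinal arithmetic and cite the precise form of pure-submodule existence, and then spend the bulk of the proof on the clean logical step that purity plus closure under pure submodules and quotients yields the Kaplansky condition.
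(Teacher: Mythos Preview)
Your proposal is correct and follows exactly the strategy of the references the paper cites (Holm--J{\o}rgensen \cite[Prop.~3.2]{HJ08} for modules and Estrada--Gillespie \cite[Lem.~3.3 \& proof of Prop.~3.4]{EstradaGillespie19} for complexes); note that the paper itself gives no proof of this proposition but merely records it with attribution, so there is nothing further to compare.
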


%%%%%%%%%%%%%%%%%%%%%%%%%%%%%%%%%%%%
%%%%%%%%%%%%%%%%%%%%%%%%%%%%%%%%%%%%

\subsection*{Gorenstein injective modules relative to admissible pairs}

Given two classes $\mathcal{X}$ and $\mathcal{Y}$ of objects in a Grothendieck category $\mathcal{G}$, an object $C \in \mathcal{G}$ is \emph{$(\mathcal{X,Y})$-Gorenstein injective} if there exists an exact and $\Hom_{\mathcal{G}}(\mathcal{X},-)$-acyclic complex 
\[
Y_\bullet = \cdots \to Y_m \xrightarrow{\partial^{Y_\bullet}_m} Y_{m-1} \to \cdots
\] 
in $\Ch(\mathcal{Y})$ such that $M \simeq Z_0(Y_\bullet)$. By \emph{$\Hom_{\mathcal{G}}(\mathcal{X},-)$-acyclic} we mean that 
\[
\Hom_{\mathcal{G}}(X,Y_\bullet) := \cdots \to \Hom_{\mathcal{G}}(X,Y_m) \xrightarrow{\Hom_{\mathcal{G}}(X,\partial^{Y_\bullet}_{m})} \Hom_{\mathcal{G}}(X,Y_{m-1}) \to \cdots
\] 
is an exact complex of abelian groups for every $X \in \mathcal{X}$. 

The class of $(\mathcal{X,Y})$-Gorenstein injective objects is denoted by $\mathcal{GI}_{(\mathcal{X,Y})}$. In order to have nice homological properties for $\mathcal{GI}_{(\mathcal{X,Y})}$ (for example, to be a coresolving class closed under direct summands) one needs a minimal set of conditions for $\mathcal{X}$ and $\mathcal{Y}$. These conditions are comprised in the concept of \emph{GI-admissible pair}, that is, pairs $(\mathcal{X,Y})$ of classes of objects in $\mathcal{G}$ such that:
\begin{enumerate}
\item $\Ext^{\geq 1}_{\mathcal{G}}(\mathcal{X,Y}) = 0$.

\item $\mathcal{X}$ and $\mathcal{Y}$ are closed under finite coproducts.

\item $\mathcal{Y}$ is closed under extensions.

\item $\mathcal{X} \cap \mathcal{Y}$ is a relative generator in $\mathcal{Y}$. 

\item $\mathcal{Y}$ is a relative cogenerator in $\mathcal{G}$.
\end{enumerate}

%%%%%%%%%%%%%%%%%%%%%%%%%%%%%%%%%%%%%
%%%%%%%%%%%%%%%%%%%%%%%%%%%%%%%%%%%%% 

\subsection*{Model structures on abelian categories} 

Following \cite{HoveyModel}, an \emph{abelian model structure} on a Grothendieck category $\mathcal{G}$ is formed by three classes of morphisms, called fibrations, cofibrations and weak equivalences, such that: a morphism $f$ is a cofibration (resp., fibration) if, and only if, it is a monomorphism (resp., an epimorphism) such that ${\rm CoKer}(f)$ is a cofibrant object (resp., ${\rm Ker}(f)$ is a fibrant object). Recall that an object $C$ is (co)fibrant if $C \to 0$ (resp., $0 \to C$) is a (co)fibration. 

There is an appealing one-to-one correspondence between abelian model structures and cotorsion pairs, proved by Hovey in \cite[Thm. 2.2]{HoveyModel}. We shall only be interested in the implication that asserts that if $\mathcal{Q}$, $\mathcal{R}$ and $\mathcal{W}$ are three classes of objects in $\mathcal{G}$ such that $\mathcal{W}$ is \emph{thick} (that is, left thick and right thick simultaneosly), and $(\mathcal{Q} \cap \mathcal{W},\mathcal{R})$ and $(\mathcal{Q},\mathcal{R} \cap \mathcal{W})$ are complete cotorsion pairs, then there exists a unique abelian model structure on $\mathcal{G}$ such that $\mathcal{Q}$, $\mathcal{R}$ and $\mathcal{W}$ are the classes of cofibrant, fibrant and trivial objects, respectively. So we shall denote abelian model structures as the triples 
\[
\mathcal{M} = (\mathcal{Q,W,R})
\]
formed by their classes of cofibrant, trivial and fibrant objects. Note that the cotorsion pairs $(\mathcal{Q},\mathcal{R} \cap \mathcal{W})$ and $(\mathcal{Q} \cap \mathcal{W}, \mathcal{R})$ have the same core, and it will be called the \emph{core} of the model structure $\mathcal{M}$. In \cite[Thm. 1.2]{GillespieHowTo}, Gillespie proved the following method to construct abelian model structures from two hereditary complete cotorsion pairs.

\begin{proposition}\label{prop:HoveyTriple}
If $(\mathcal{Q},\mathcal{R}')$ and $(\mathcal{Q}',\mathcal{R})$ are complete and hereditary cotorsion pairs in $\mathcal{G}$ such that $\mathcal{Q}' \subseteq \mathcal{Q}$, $\mathcal{R}' \subseteq \mathcal{R}$ and $\mathcal{Q} \cap \mathcal{R}' = \mathcal{Q}' \cap \mathcal{R}$ (in other words, they are \textbf{compatible}), then there exists a thick class $\mathcal{W}$ and a unique abelian model structure on $\mathcal{G}$ such that $\mathcal{Q}$, $\mathcal{R}$ and $\mathcal{W}$ are the classes of cofibrant, fibrant and trivial objects, respectively. Moreover, $\mathcal{W}$ has the following two descriptions:
\begin{align*}
\mathcal{W} & = \{ W \in \mathcal{G} {\rm \ : \ } \text{there is a short exact sequence } R \rightarrowtail Q \twoheadrightarrow W \text{ with } R \in \mathcal{R}' \text{ and } Q \in \mathcal{Q}' \} \\
& = \{ W \in \mathcal{G} {\rm \ : \ } \text{there is a short exact sequence } W \rightarrowtail R' \twoheadrightarrow Q' \text{ with } R \in \mathcal{R}' \text{ and } Q \in \mathcal{Q}' \}.
\end{align*}
\end{proposition}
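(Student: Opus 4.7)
The plan is to define $\mathcal{W}$ as the first of the two candidate classes in the statement, i.e.\ the class of objects $W$ admitting a short exact sequence $R \rightarrowtail Q \twoheadrightarrow W$ with $R \in \mathcal{R}'$ and $Q \in \mathcal{Q}'$, and then to verify the hypotheses of Hovey's correspondence for the two modified cotorsion pairs $(\mathcal{Q} \cap \mathcal{W}, \mathcal{R})$ and $(\mathcal{Q}, \mathcal{R} \cap \mathcal{W})$. Concretely, I would show (i) $\mathcal{Q}' \cup \mathcal{R}' \subseteq \mathcal{W}$; (ii) $\mathcal{Q} \cap \mathcal{W} = \mathcal{Q}'$ and $\mathcal{R} \cap \mathcal{W} = \mathcal{R}'$; (iii) $\mathcal{W}$ is thick; and (iv) $\mathcal{W}$ agrees with the second description. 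Items (i)--(iii) identify the modified cotorsion pairs with the original ones $(\mathcal{Q}', \mathcal{R})$ and $(\mathcal{Q}, \mathcal{R}')$, which are complete and hereditary by hypothesis, so Hovey's bijection \cite[Thm. 2.2]{HoveyModel} yields the desired abelian model structure.

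For (i), given $Q_0 \in \mathcal{Q}'$ I apply the completeness of $(\mathcal{Q}', \mathcal{R})$ to obtain a special precover $K \rightarrowtail Q^* \twoheadrightarrow Q_0$ with $Q^* \in \mathcal{Q}'$ and $K \in \mathcal{R}$; the hereditary property of $(\mathcal{Q}', \mathcal{R})$ forces $K \in \mathcal{Q}'$, and then the compatibility condition places $K \in \mathcal{Q}' \cap \mathcal{R} = \mathcal{Q} \cap \mathcal{R}' \subseteq \mathcal{R}'$, witnessing $Q_0 \in \mathcal{W}$. Dually, for $R_0 \in \mathcal{R}'$ I use the completeness of $(\mathcal{Q}, \mathcal{R}')$; the resulting special precover $K \rightarrowtail Q^* \twoheadrightarrow R_0$ splits because $\Ext^1_{\mathcal{G}}(Q^*, R_0) = 0$, so $Q^* \cong K \oplus R_0 \in \mathcal{R}'$, which gives $Q^* \in \mathcal{Q} \cap \mathcal{R}' \subseteq \mathcal{Q}'$ and hence $R_0 \in \mathcal{W}$. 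For the nontrivial inclusion in (ii), if $W \in \mathcal{Q} \cap \mathcal{W}$ then any defining sequence $R \rightarrowtail Q \twoheadrightarrow W$ splits (since $\Ext^1_{\mathcal{G}}(W,R) = 0$), exhibiting $W$ as a direct summand of $Q \in \mathcal{Q}'$; $\mathcal{Q}'$ is closed under summands because it is the left half of a cotorsion pair. The inclusion $\mathcal{R} \cap \mathcal{W} \subseteq \mathcal{R}'$ is handled symmetrically using $\Ext^1_{\mathcal{G}}(Q,W) = 0$ followed by the compatibility identity.

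For (iii), closure of $\mathcal{W}$ under extensions follows from a horseshoe construction: given $A \rightarrowtail B \twoheadrightarrow C$ and defining sequences $R_A \rightarrowtail Q_A \twoheadrightarrow A$, $R_C \rightarrowtail Q_C \twoheadrightarrow C$, the vanishing $\Ext^1_{\mathcal{G}}(Q_C, R_A) = 0$ assembles a defining sequence $R_B \rightarrowtail Q_A \oplus Q_C \twoheadrightarrow B$ with $R_B$ an extension of $R_A$ by $R_C$, which remains in $\mathcal{R}'$ by the coresolving property. Closure under direct summands and the two-out-of-three property in short exact sequences are produced by pullback/pushout constructions that manufacture defining sequences for the relevant subobjects and quotients, using repeatedly the $\Ext^1$-vanishings coming from the two cotorsion pairs together with the hereditary and compatibility hypotheses. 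For (iv), once (ii) is established the two descriptions of $\mathcal{W}$ can be identified either by uniqueness of the trivial class in a Hovey triple, or directly by dualizing the argument of (i): start from a special preenvelope $W \rightarrowtail R^* \twoheadrightarrow Q^*$ relative to $(\mathcal{Q}, \mathcal{R}')$ and then promote $Q^*$ into $\mathcal{Q}'$ by the splitting trick above.

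The principal technical obstacle is verifying the thickness of $\mathcal{W}$, and in particular closure under direct summands together with the two-out-of-three property. Although the Ext-vanishing inputs follow formally from the cotorsion pair axioms and the compatibility hypothesis, weaving them together to construct explicit defining sequences for the pieces of interest typically requires several snake-lemma or $3 \times 3$ diagram chases, and one must be careful that the manipulated kernels and cokernels remain inside $\mathcal{Q}'$ or $\mathcal{R}'$ at every step, which is exactly where the hereditary property of both cotorsion pairs is crucial.
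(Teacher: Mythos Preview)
The paper does not give its own proof of this proposition; it simply cites Gillespie \cite[Thm.~1.2]{GillespieHowTo} as the source. Your sketch is essentially a reconstruction of Gillespie's original argument, so the overall strategy is correct and matches the cited source.

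One small slip: in step (i), for $R_0 \in \mathcal{R}'$ you claim that the special precover $K \rightarrowtail Q^* \twoheadrightarrow R_0$ splits because $\Ext^1_{\mathcal{G}}(Q^*, R_0) = 0$. This vanishing is true but irrelevant; splitting of that sequence would require $\Ext^1_{\mathcal{G}}(R_0, K) = 0$, which you have not established. Fortunately the splitting is unnecessary: since $K, R_0 \in \mathcal{R}'$ and $\mathcal{R}'$ is closed under extensions (being the right half of a hereditary cotorsion pair), you get $Q^* \in \mathcal{R}'$ directly, and then $Q^* \in \mathcal{Q} \cap \mathcal{R}' = \mathcal{Q}' \cap \mathcal{R} \subseteq \mathcal{Q}'$ as you wanted. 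A parallel remark applies to your treatment of $\mathcal{R} \cap \mathcal{W} \subseteq \mathcal{R}'$ in (ii): the phrase ``using $\Ext^1_{\mathcal{G}}(Q,W) = 0$'' is again a splitting red herring; the clean argument is that $R, W \in \mathcal{R}$ forces $Q \in \mathcal{R}$, hence $Q \in \mathcal{Q}' \cap \mathcal{R} = \mathcal{Q} \cap \mathcal{R}' \subseteq \mathcal{R}'$, and then coresolving of $\mathcal{R}'$ gives $W \in \mathcal{R}'$.
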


%%%%%%%%%%%%%%%%%%%%%%%%%%%%%%%%%%%%
%%%%%%%%%%%%%%%%%%%%%%%%%%%%%%%%%%%%

\subsection*{Duality pairs}

Consider the Pontryagin duality functor
\[
(-)^+ := \Hom_{\mathbb{Z}}(-,\mathbb{Q / Z}) \colon \mathsf{Mod}(R) \to \mathsf{Mod}(R^{\rm o}),
\]
which is exact since $\mathbb{Q / Z}$ is an injective $\mathbb{Z}$-module. The notion of duality pair was introduced by Holm and J{\o}rgensen in \cite{HJ09}, in the following way: two classes $\mathcal{L} \subseteq \mathsf{Mod}(R)$ and $\mathcal{A} \subseteq \mathsf{Mod}(R^{\rm o})$ form a \emph{duality pair} $(\mathcal{L,A})$ if:
\begin{enumerate}
\item $L \in \mathcal{L}$ if, and only if, $L^+ \in \mathcal{A}$. 

\item $\mathcal{A}$ is closed under direct summands and finite direct sums. 
\end{enumerate}
A duality pair $(\mathcal{L,A})$ is called:
\begin{itemize}
\item \emph{(co)product-closed} if $\mathcal{L}$ is closed under (co)products. 

\item \emph{perfect} if it is coproduct closed, $\mathcal{L}$ is closed under extensions and contains $R$ (regarded as an $R$-module). 
\end{itemize}

\begin{remark}\label{rmk:duality_pair}
One can also consider the Pontryagin duality functor 
\[
(-)^+ := \Hom_{\mathbb{Z}}(-,\mathbb{Q / Z}) \colon \mathsf{Mod}(R^{\rm o}) \to \mathsf{Mod}(R),
\]
mapping $R^{\rm o}$-modules to $R$-modules, and thus one gets a similar notion of duality pair $(\mathcal{L,A})$ in the case where $\mathcal{L} \subseteq \mathsf{Mod}(R^{\rm o})$ and $\mathcal{A} \subseteq \mathsf{Mod}(R)$.
\end{remark}

The previous two types of duality pairs are important since they induce approximations by the classes $\mathcal{L}$ and $\mathcal{A}$. Indeed, it is proved in \cite[Thm. 3.1]{HJ09} by Holm and J{\o}rgensen, and in \cite[Prop. 2.3]{GillespieDuality} by Gillespie that:
\begin{itemize}
\item $\mathcal{L}$ is closed under pure submodules and pure quotients. In particular, $\mathcal{L}$ is a Kaplansky class by Proposition \ref{prop:purity_and_Kaplansky}.

\item If $(\mathcal{L,A})$ is coproduct closed, then $\mathcal{L}$ is covering.

\item If $(\mathcal{L,A})$ is product closed, then $\mathcal{L}$ is preenveloping.

\item If $(\mathcal{L,A})$ is perfect, then $(\mathcal{L},\mathcal{L}^{\perp_1})$ is a perfect cotorsion pair and $\mathcal{L}$ is closed under direct limits.
\end{itemize}
In \cite[Prop. 2.3]{GillespieDuality} it is proved that if $(\mathcal{L,A})$ is a perfect duality pair, then $\mathcal{P}(R) \subseteq \mathcal{L}$ and $\mathcal{I}(R^{\rm o}) \subseteq \mathcal{A}$. Moreover, $\mathcal{L}$ is closed under direct limits and so $\mathcal{F}(R) \subseteq \mathcal{L}$ by Lazard-Govorov's Theorem (see for instance Lam's \cite[Thm. II.4.34]{Lam99}).

Duality pairs were also studied by Bravo, Gillespie and Hovey in \cite{BGH14}. In \cite[Appx. A]{BGH14} and \cite[Def. 2.4]{GillespieDuality}, a duality pair $(\mathcal{L,A})$ is called \emph{symmetric} if $(\mathcal{A,L})$ is also a duality pair (in the sense of Remark \ref{rmk:duality_pair}). A \emph{complete} duality pair is a symmetric duality pair $(\mathcal{L,A})$ such that $(\mathcal{L,A})$ is a perfect duality pair.

%%%%%%%%%%%%%%%%%%%%%%%%%%%%%%%%%%%%
%%%%%%%%%%%%%%%%%%%%%%%%%%%%%%%%%%%%
%%%%%%%%%%%%%%%%%%%%%%%%%%%%%%%%%%%%
%%%%%%%%%%%%%%%%%%%%%%%%%%%%%%%%%%%%

\section{Relative Gorenstein flat modules}\label{sec:relative_G-flat} 

In what follows, we shall consider classes 
\begin{align*}
\mathcal{L} & \subseteq \mathsf{Mod}(R) & & \text{and} & \mathcal{A} & \subseteq \mathsf{Mod}(R^{\rm o}).
\end{align*} 
The following definition of Gorenstein flat $R$-modules relative to $(\mathcal{L,A})$ is due to Wang, Yang and Zhu  \cite[Def. 2.1]{WangYangZhu19}.

\begin{definition}\label{def:relativeGF}
An $R$-module $M$ is \textbf{Gorenstein $\bm{(\mathcal{L,A})}$-flat} if there exists an exact and $(\mathcal{A} \otimes _R -)$-acyclic complex $L_\bullet \in \Ch(\mathcal{L})$ such that $M \simeq Z_0(L_\bullet)$. By \textbf{$\bm{(\mathcal{A} \otimes _R -)}$-acyclic} we mean that $A \otimes _R L_\bullet$ is an exact complex of abelian groups for every $A \in \mathcal{A}$. The class of Gorenstein $(\mathcal{L,A})$-flat $R$-modules will be denoted by $\mathcal{GF}_{(\mathcal{L,A})}$. 

We shall denote the orthogonal complement $\mathcal{GF}_{(\mathcal{L,A})}^{\perp_1}$ by $\mathcal{GC}_{(\mathcal{L,A})}$, and its elements will be referred to as \textbf{Gorenstein $\bm{(\mathcal{L,A})}$-cotorsion modules}. 
\end{definition}

\begin{remark}
{} \
\begin{enumerate}
\item The containment $\mathcal{L} \subseteq \mathcal{GF} _{(\mathcal{L,A})}$ is clear. 

\item The Gorenstein $(\mathcal{F}(R),\mathcal{I}(R^{\rm o}))$-flat $R$-modules are precisely the Gorenstein flat $R$-modules. 

\item If $\mathcal{F}(R) \subseteq \mathcal{L}$ and $\mathcal{A} \subseteq \mathcal{I}(R^{\rm o})$, then every Gorenstein flat $R$-module is Gorenstein $(\mathcal{L,A})$-flat. 

\item Every cycle of an exact and $(\mathcal{A} \otimes_R -)$-acyclic complex in $\Ch(\mathcal{L})$ is Gorenstein $(\mathcal{L,A})$-flat. 
\end{enumerate}
\end{remark}

In the main results of the present article, we shall consider certain duality pairs $(\mathcal{L,A})$ with some additional conditions, and Gorenstein flat modules relative to the pair $(\mathcal{L},{}^\perp\mathcal{A} \cap \mathcal{A})$. So in what follows, we set the notation
\[
\nu := {}^{\perp}\mathcal{A} \cap \mathcal{A}.
\]
Let us show some characterizations and general properties of the class $\mathcal{GF}_{(\mathcal{L},\nu)}$.

\begin{lemma}\label{lem:ort}
If $\mathcal{L}^+ \subseteq \mathcal{A}$, then $\Tor_{\geq 1}^{R}({}^{\perp}\mathcal{A},\mathcal{L}) = 0$.
\end{lemma}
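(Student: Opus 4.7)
The plan is to exploit the Pontryagin duality's interaction with $\Tor$ and $\Ext$, which is the standard bridge for these kinds of orthogonality transfers. Concretely, I would recall the Hom-tensor adjunction
\[
\Hom_{\mathbb{Z}}(N \otimes_R L, \mathbb{Q / Z}) \cong \Hom_{R^{\rm o}}(N, \Hom_{\mathbb{Z}}(L, \mathbb{Q / Z})) = \Hom_{R^{\rm o}}(N, L^+),
\]
for $N \in \mathsf{Mod}(R^{\rm o})$ and $L \in \mathsf{Mod}(R)$, that is, $(N \otimes_R L)^+ \cong \Hom_{R^{\rm o}}(N, L^+)$. Since $\mathbb{Q / Z}$ is injective as a $\mathbb{Z}$-module, $(-)^+$ is exact, so deriving the adjunction in the variable $N$ (or equivalently, using a projective resolution of $N$ in $\mathsf{Mod}(R^{\rm o})$ together with the fact that the tensor and Hom resolutions give isomorphic values after $(-)^+$) yields the natural isomorphism
\[
\Tor^R_i(N, L)^+ \cong \Ext^i_{R^{\rm o}}(N, L^+) \quad \text{for all } i \in \mathbb{Z}_{\geq 0}.
\]

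With this identification in hand, the verification is short. Take $N \in {}^{\perp}\mathcal{A}$ and $L \in \mathcal{L}$. The assumption $\mathcal{L}^+ \subseteq \mathcal{A}$ gives $L^+ \in \mathcal{A}$, and the definition of ${}^{\perp}\mathcal{A}$ then forces $\Ext^i_{R^{\rm o}}(N, L^+) = 0$ for every $i \in \mathbb{Z}_{> 0}$. The displayed isomorphism therefore yields $\Tor^R_i(N, L)^+ = 0$ for every $i \geq 1$.

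Finally, I would close the argument by invoking the faithfulness of the Pontryagin dual: since $\mathbb{Q / Z}$ is an injective cogenerator of $\mathsf{Mod}(\mathbb{Z})$, an abelian group $G$ with $G^+ = 0$ is necessarily zero. Applied to $G = \Tor^R_i(N, L)$, this gives $\Tor^R_i(N, L) = 0$ for all $i \in \mathbb{Z}_{> 0}$, $N \in {}^{\perp}\mathcal{A}$ and $L \in \mathcal{L}$, which is exactly $\Tor^R_{\geq 1}({}^{\perp}\mathcal{A}, \mathcal{L}) = 0$. There is no real obstacle here; the only subtlety worth flagging is keeping track of the sides (left vs. right modules) so that the hypothesis $N \in {}^{\perp}\mathcal{A} \subseteq \mathsf{Mod}(R^{\rm o})$ matches the variable in which $\Ext^i_{R^{\rm o}}(-, \mathcal{A})$ is computed.
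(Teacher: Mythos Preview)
Your proof is correct and follows essentially the same approach as the paper: both use the natural isomorphism $\Tor^R_i(N,L)^+ \cong \Ext^i_{R^{\rm o}}(N,L^+)$ together with the hypothesis $\mathcal{L}^+ \subseteq \mathcal{A}$ to kill the Ext side, then conclude $\Tor^R_i(N,L)=0$. The only cosmetic difference is the final step: the paper passes to the double dual and uses that $\Tor^R_i(N,L)$ is a pure subgroup of $\Tor^R_i(N,L)^{++}$, whereas you invoke directly that $\mathbb{Q}/\mathbb{Z}$ is an injective cogenerator so $(-)^+$ reflects zero objects---your route is slightly more direct.
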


\begin{proof}
For every $M \in \mathsf{Mod}(R^{\rm o})$ and $L \in \mathsf{Mod}(R)$, it is well known the existence of a natural isomorphism $\Tor^R_{i}(M,L)^{+} \cong \Ext^{i}_{R^{\rm o}}(M, L^{+})$ for every $i \geq 1$ (see for instance G\"{o}bel and Trlifaj's \cite[Lem. 1.2.11 (b)]{GT}, or \cite[Thm. 3.2.1]{EJ00} for a version with $\Ext^i_R$). Now in the case where $M \in {}^\perp \A$ and $L \in \Le$, we have that $L^+ \in \mathcal{A}$, and so $\Ext^{i}_{R^{\rm o}}(M, L^{+}) = 0$. It follows that $\Tor^R_{i}(M,L)^{++} = 0$. Since $\Tor^R_i(M,L)$ is a pure abelian subgroup of $\Tor^R_i(M,L)^{++}$, we can conclude $\Tor^R_i(M, L) = 0$. 
\end{proof}

The next result follows the spirit of Bennis' \cite[Lem. 2.4]{GFclosed}.

\begin{proposition}\label{Bennis} 
Consider the following assertions for $M \in \mathsf{Mod}(R)$:
\begin{enumerate}
\item[(a)] $M \in \mathcal{GF}_{(\mathcal{L},\nu)}$.

\item[(b)] $M \in \nu^\top$ and admits a $(\nu \otimes _R -)$-acyclic $\mathcal{L}$-coresolution. 

\item[(c)] There is an exact sequence $M \rightarrowtail L \twoheadrightarrow G$ with $L \in \mathcal{L}$ and $G \in \GF_{(\mathcal{L},\nu)}$.
\end{enumerate}
Then, the following assertions hold:
\begin{enumerate}
\item (a) $\Rightarrow$ (c).

\item If $\mathcal{L}^+ \subseteq \mathcal{A}$, then (a) $\Rightarrow$ (b) $\Leftarrow$ (c). If in addition $\mathcal{L}$ is a relative generator in $\mathsf{Mod}(R)$, then (a) $\Leftarrow$ (b) $\Rightarrow$ (c).\footnote{Note that if $\mathcal{P}(R) \subseteq \mathcal{L}$, then $\mathcal{L}$ is trivially a relative generator in $\mathsf{Mod}(R)$. Moreover, the the implications (a) $\Leftarrow$ (b) $\Rightarrow$ (c) are also valid in the case where $\mathcal{P}(R) \subseteq \mathcal{L}$, and without assuming $\mathcal{L}^+ \subseteq \mathcal{A}$.}
\end{enumerate}
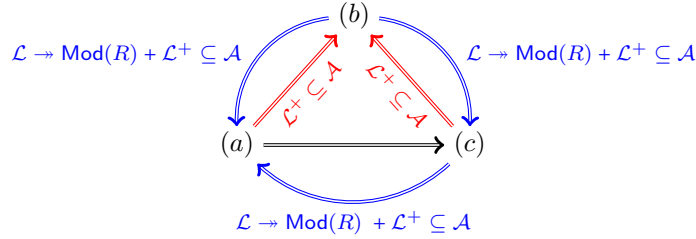
\begin{figure}[h!]
\begin{tikzpicture}[description/.style={fill=white,inner sep=2pt}]
\matrix (m) [matrix of math nodes, row sep=3.5em, column sep=2.5em, text height=1.25ex, text depth=0.25ex]
{ 
 & (b) & \\
(a) & & (c) \\
};
\path[->]
(m-2-1) edge [double] (m-2-3)
(m-2-3) edge [double, bend left = 1.5cm, blue] node[below] {\footnotesize$\mathcal{L} \twoheadrightarrow \mathsf{Mod}(R)$ \text{ + }$\mathcal{L}^+ \subseteq \mathcal{A}$} (m-2-1)
(m-2-1) edge [double, red] node[below,sloped] {\footnotesize$\mathcal{L}^+ \subseteq \mathcal{A}$} (m-1-2)
(m-2-3) edge [double, red] node[below,sloped] {\footnotesize$\mathcal{L}^+ \subseteq \mathcal{A}$} (m-1-2)
(m-1-2) edge [double, bend right = 1.5cm, blue] node[left] {\footnotesize$\mathcal{L} \twoheadrightarrow \mathsf{Mod}(R) \text{ + } \mathcal{L}^+ \subseteq \mathcal{A} \mbox{ \ }$} (m-2-1)
(m-1-2) edge [double, bend left = 1.5cm, blue] node[right] {\footnotesize$\mbox{ \ } \mathcal{L} \twoheadrightarrow \mathsf{Mod}(R) \text{ + } \mathcal{L}^+ \subseteq \mathcal{A} \mbox{ \ }$} (m-2-3)
;
\end{tikzpicture}
\caption{Implications in Proposition \ref{Bennis}.}
\end{figure}
\end{proposition}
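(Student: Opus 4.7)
My plan is to prove the five implications separately, since each uses different subsets of the hypotheses. A common tool will be Lemma~\ref{lem:ort}, which under $\mathcal{L}^+\subseteq\mathcal{A}$ upgrades to $\mathcal{L}\subseteq\nu^\top$ (since $\nu\subseteq{}^\perp\mathcal{A}$); this is the bridge between tensor-acyclicity of complexes in $\Ch(\mathcal{L})$ and $\Tor$-orthogonality of their cycles to $\nu$.

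The implication (a)$\Rightarrow$(c) is immediate and needs no hypothesis: from the defining exact $(\nu\otimes_R-)$-acyclic complex $L_\bullet\in\Ch(\mathcal{L})$ with $M=Z_0(L_\bullet)$, extract the short exact sequence $M\rightarrowtail L_0\twoheadrightarrow Z_{-1}(L_\bullet)$ and note that the cycle $Z_{-1}(L_\bullet)$ is itself in $\mathcal{GF}_{(\mathcal{L},\nu)}$, via the appropriate shift of the same complex.

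For (a)$\Rightarrow$(b) and (c)$\Rightarrow$(b), assuming $\mathcal{L}^+\subseteq\mathcal{A}$, the $(\nu\otimes_R-)$-acyclic $\mathcal{L}$-coresolution is the easy part: take the right half of $L_\bullet$ in the first case, and splice $L$ with the coresolution of $G$ (obtained by applying (a)$\Rightarrow$(b) to $G$) via $L\twoheadrightarrow G\hookrightarrow L_0$ in the second, checking degreewise that kernels and images of the $N$-tensored complex match. The substantive point is $M\in\nu^\top$: in (a)$\Rightarrow$(b), use the left half of $L_\bullet$ as a left $\mathcal{L}$-resolution of $M$, noting that $\mathcal{L}\subseteq\nu^\top$ makes it $\Tor$-computing, and read off vanishing from the exactness of $N\otimes_R L_\bullet$; in (c)$\Rightarrow$(b), the long $\Tor$-sequence of $M\rightarrowtail L\twoheadrightarrow G$ together with $L,G\in\nu^\top$ dispatches the claim.

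The main obstacle is (b)$\Rightarrow$(a), which requires both extra hypotheses. The right half of the desired complex is the given coresolution; the left half must be built by iteration. Starting from $M_0:=M$, pick surjections $L_{i+1}\twoheadrightarrow M_i$ with $L_{i+1}\in\mathcal{L}$ (available since $\mathcal{L}$ is a relative generator) and set $M_{i+1}:=\ker(L_{i+1}\to M_i)$. The inductive step $M_{i+1}\in\nu^\top$ follows from the long $\Tor$-sequence of $M_{i+1}\rightarrowtail L_{i+1}\twoheadrightarrow M_i$, using $L_{i+1}\in\nu^\top$ (Lemma~\ref{lem:ort}) and $M_i\in\nu^\top$. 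To verify that the spliced full complex is $(\nu\otimes_R-)$-acyclic, I would check at each degree that both $\ker(N\otimes L_{i+1}\to N\otimes L_i)$ and $\mathrm{im}(N\otimes L_{i+2}\to N\otimes L_{i+1})$ coincide with $N\otimes M_{i+1}$; the critical injectivity $N\otimes M_{i+1}\hookrightarrow N\otimes L_{i+1}$ is equivalent to $\Tor^R_1(N,M_i)=0$. In the footnote case $\mathcal{P}(R)\subseteq\mathcal{L}$, a projective resolution of $M$ supplies the $L_{i+1}$ for free and the hypothesis $\mathcal{L}^+\subseteq\mathcal{A}$ becomes unnecessary, since projectives automatically lie in $\nu^\top$. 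Finally, (b)$\Rightarrow$(c) is the formal composition (b)$\Rightarrow$(a)$\Rightarrow$(c).
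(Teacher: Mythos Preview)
Your proposal is correct and follows essentially the same route as the paper. The only cosmetic difference is in (a)$\Rightarrow$(b): you invoke the standard fact that a resolution by $N$-acyclic modules computes $\Tor^R_*(N,-)$ and then read off vanishing from the exactness of $N\otimes_R L_\bullet$, whereas the paper unpacks this via an explicit one-step dimension shift along $\Omega^{\mathcal{L}}_1(M)\rightarrowtail L_0\twoheadrightarrow M$ (using Lemma~\ref{lem:ort} for $\Tor^R_1(A,L_0)=0$ and the $(\nu\otimes_R-)$-acyclicity for the injectivity of $A\otimes_R\Omega^{\mathcal{L}}_1(M)\to A\otimes_R L_0$) and then iterates; the two arguments are equivalent.
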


\begin{proof}
The implication (a) $\Rightarrow$ (c) follows directly from the definition of $\mathcal{GF}_{(\mathcal{L},\nu)}$. Now assume $\mathcal{L}^+ \subseteq \mathcal{A}$ for the rest of the proof. 
\begin{itemize}
\item  (a) $\Rightarrow$ (b): We only show that $M \in \nu^\top$, as the rest of the assertion is part of the definition of Gorenstein $(\mathcal{L},\nu)$-flat $R$-modules. Since $M \in \GF _{(\mathcal{L},\nu)}$, we can consider an exact and $(\nu \otimes _R -)$-acyclic sequence $\Omega^{\mathcal{L}}_1(M) \rightarrowtail L_0 \twoheadrightarrow M$ with $L_0 \in \mathcal{L}$ and $\Omega^{\mathcal{L}}_1(M) \in \mathcal{GF}_{(\mathcal{L},\nu)}$. Then for every $A \in \nu$, we have the exact sequence 
\[
\Tor ^R _1 (A, L_0) \to \Tor ^R _1 (A,M) \to A \otimes _R \Omega^{\mathcal{L}}_1(M) \to A \otimes _R L_0,
\] 
where $\Tor^R_{\geq 1}(A, L_0) = 0$ by Lemma \ref{lem:ort}, and $A \otimes _R \Omega^{\mathcal{L}}_1(M) \to A \otimes _R L_0$ is a monomorphism. Hence, it follows that $\Tor^R_1(A,M) = 0$. Similarly, we obtain $\Tor^R_1(A,\Omega^{\mathcal{L}}_1(M)) = 0$, and thus one can show inductively that $\Tor^R_{\geq 1}(A,M) = 0$.

\item (c) $\Rightarrow$ (b): Let us assume that there is an exact sequence $M \rightarrowtail L \twoheadrightarrow G$ with $L \in \Le$ and $G \in \GF_{(\mathcal{L},\nu)}$. For $A \in \nu$ consider the exact sequence 
\[
\Tor^R_{i+1}(A,G) \to \Tor^R_i (A,M) \to \Tor^R_i(A,L).
\] 
From the implication (a) $\Rightarrow$ (b) we have that $\Tor^R_{i+1}(A, G) = 0$. Also, by Lemma \ref{lem:ort} we have $\Tor^R_i(A,L) = 0$. It follows that the sequence $M \rightarrowtail L \twoheadrightarrow G$ is $(\nu \otimes _R -)$-acyclic and that $M \in \nu^\top$. Now consider for $G$ a $(\nu \otimes _R -)$-acyclic $\mathcal{L}$-coresolution $G \rightarrowtail L^0 \to L^1 \to \cdots$. Splicing at $G$ this complex and the previous short exact sequence yields a $(\nu \otimes _R -)$-acyclic $\mathcal{L}$-coresolution $M \rightarrowtail L \to L^0 \to L^1 \to \cdots$.

\item Now assume that $\mathcal{L}$ is a relative generator in $\mathsf{Mod}(R)$. Note that (b) $\Rightarrow$ (c) will follow after showing (b) $\Rightarrow$ (a). So suppose (b) holds, that is, there exists a $(\nu \otimes _R -)$-acyclic $\mathcal{L}$-coresolution $M \rightarrowtail L^0 \to L^1 \to \cdots$. On the other hand, there exists an exact sequence $\Omega^{\mathcal{L}}_1(M) \rightarrowtail L_0 \twoheadrightarrow M$ with $L_0 \in \Le$. Applying $A \otimes _R -$ to this sequence, with $A \in \nu$, yields the exact sequence 
\[
\Tor^R_{i+1}(A,M) \to \Tor^R_i(A,\Omega^{\mathcal{L}}_1(M)) \to \Tor^R_i(A, L_0),
\] 
where $\Tor^R_i(A, L_0) = 0$ by  Lemma \ref{lem:ort}, and $\Tor^R_{i+1}(A,M) = 0$ from the assumption. Then, $\Tor^R_{\geq 1}(A,\Omega^{\mathcal{L}}_1(M)) = 0$. Furthermore, the exact sequence $\Omega^{\mathcal{L}}_1(M) \rightarrowtail L_0 \twoheadrightarrow M$ is also $(\nu \otimes _R -)$-acyclic since $M \in \nu^\top$. Hence, glueing at $M$ the previous sequence and the $\mathcal{L}$-coresolution of $M$ yields a $(\nu \otimes _R -)$-acyclic $\mathcal{L}$-coresolution $\Omega^{\mathcal{L}}_1(M) \rightarrowtail L_0 \to L ^0 \to L^1 \to \cdots$ with $\Omega^{\mathcal{L}}_1(M) \in \nu^\top$. We can repeat this procedure for $\Omega^{\mathcal{L}}_1(M)$, and for any $\mathcal{L}$-syzygy, in order to obtain a $(\nu \otimes _R -)$-acyclic $\mathcal{L}$-resolution of $M$. Therefore, $M \in \GF_{(\mathcal{L},\nu)}$.
\end{itemize}
\end{proof}

In order to have interesting homological and homotopical properties from relative Gorenstein flat modules, one needs $\mathcal{GF}_{(\mathcal{L}, \nu)}$ to be closed under extensions. We are not aware if this closure property holds in general. A similar problem has been tackled previously. For example, in \cite[Thm. 2.14]{EIP20} Estrada, Iacob and the second author showed that Gorenstein $(\mathcal{F}(R),\mathcal{B})$-flat $R$-modules are closed under extensions, provided that $\mathcal{B}$ is closed under products and contains an elementary cogenerator of its definable closure. Another approach is the one studied by Wang, Yang and Zhu in \cite[Coroll. 2.19]{WangYangZhu19}, where they show that if $(\Le,\A)$ is a complete duality pair with $\Le$ closed under epikernels and $\Tor^R_{\geq 1}(\mathcal{A,L}) = 0$, then $\mathcal{GF}_{(\mathcal{L,A})}$ is closed under extensions. This motivates the following concept.

\begin{definition}
We shall say that the pair $(\mathcal{L,A})$ is \textbf{Tor-orthogonal} if $\Tor^R_{\geq 1}(\mathcal{A,L}) = 0$.
\end{definition}

We realize that the Tor-orthogonality relation $\Tor^R_{\geq 1}(\mathcal{A,L}) = 0$ is very restrictive, as it seems that duality pairs satisfying it are scarce in the literature. Of course this is trivial for the classical duality pair $(\mathcal{F}(R),\mathcal{I}(R^{\rm o}))$, but in general duality pairs obtained from flat and injective modules relative to modules of finite type, for instance, are not $\Tor$-orthogonal. The following example shows this in a precise way.

\begin{example}\label{ex:FPinfty}
Let $\mathcal{FP}_{\infty}(R^{\rm o})$ denote the class of $R^{\rm o}$-modules \textbf{of type $\bm{\text{FP}_\infty}$}, defined as those $M \in \mathsf{Mod}(R^{\rm o})$ for which there is a finitely generated projective resolution. The classes of \textbf{level} $R$-modules and \textbf{absolutely clean} $R^{\rm o}$-modules are defined as the orthogonal complements
\begin{align*}
\mathcal{LV}(R) & := (\mathcal{FP}_\infty(R^{\rm o}))^{\top_1} & & \text{and} & \mathcal{AC}(R^{\rm o}) & = (\mathcal{FP}_\infty(R^{\rm o}))^{\perp_1},
\end{align*}
respectively. By \cite[Thms. 2.12 \& 2.14]{BGH14}, it is known that $(\mathcal{L}(R),\mathcal{AC}(R^{\rm o}))$ is a complete duality pair. Moreover, by \cite[Coroll. 4.2]{BP17} this duality pair is product closed and bicomplete (in the sense of Definition \ref{def:bicomplete} below). However, it is not true in general that $\Tor^R_{\geq 1}(\mathcal{AC}(R^{\rm o}),\mathcal{L}(R)) = 0$. 

Consider for instance the commutative quotient ring $R = k[x_1,x_2,\dots] / (x_i x_j)_{i,j \geq 1}$ where $k$ is a field. This is an example of a (non coherent) 2-coherent ring, where $\mathcal{FP}_\infty(R^{\rm o})$ coincides with the class of finitely generated projective $R^{\rm o}$-modules. This is turn implies that $\mathsf{Mod}(R) = \mathcal{LV}(R) = \mathcal{AC}(R^{\rm o})$ (see \cite[Prop. 2.5]{BGH14} and \cite[Ex. 1.4]{BP17} for details). One can see that there are modules $M$ and $N$ over this ring such that $\Tor^R_{\geq 1}(M,N) \neq 0$. Indeed, suppose that $\Tor^R_{\geq 1}(\mathsf{Mod}(R),N) = 0$ for every $N \in \mathsf{Mod}(R)$. Then, every $R$-module is flat, and so $\mathcal{LV}(R) \subseteq \mathcal{F}(R)$. This in turn implies that $R$ is a coherent ring by \cite[Coroll. 2.11]{BGH14}, getting thus a contradiction. 
\end{example}

\begin{example}
Some particular interesting families of Gorenstein $(\mathcal{L,A})$-flat $R$-modules are obtained without requiring that $(\mathcal{L,A})$ is a complete duality pair. This is the case of:
\begin{itemize} 
\item Gorenstein flat $R$-modules, which are Gorenstein $(\mathcal{F}(R),\mathcal{I}(R^{\rm o}))$-flat;

\item \v{S}aroch and \v{S}\v{t}ov\'{\i}\v{c}ek's \cite[\S \ 4]{SS}: projectively coresolved Gorenstein flat $R$-modules, which are Gorenstein $(\mathcal{P}(R),\mathcal{I}(R^{\rm o}))$-flat; and 

\item \cite[Defs. 2.1 \& 2.6]{EIP20}: Gorenstein $\mathcal{B}$-flat $R$-modules and projectively coresolved Gorenstein $\mathcal{B}$-flat $R$-modules, which are Gorenstein $(\mathcal{F}(R),\mathcal{B})$-flat and Gorenstein $(\mathcal{P}(R),\mathcal{B})$-flat, respectively, where $\mathcal{B}$ is a class of $R^{\rm o}$-modules closed under products and containing an elementary cogenerator of its definable closure.
\end{itemize}
For all of the previous choices of $(\mathcal{L,A})$, the class $\mathcal{GF}_{(\mathcal{L,A})}$ is closed under extensions and $(\mathcal{L,A})$ is Tor-orthogonal. 
\end{example}

In view of Example \ref{ex:FPinfty}, Lemma \ref{lem:ort} and Proposition \ref{Bennis}, it is worth studying Gorenstein flat $R$-modules relative to $(\mathcal{L},\nu)$. Indeed, one of the main results proved below is that $\mathcal{GF}_{(\mathcal{L},\nu)}$ is closed under extensions whenever $(\mathcal{L,A})$ is a complete duality pair satisfying certain approximation properties, but dropping the Tor-orthogonality condition $\Tor^R_{\geq 1}(\mathcal{A,L}) = 0$. This particular family of duality pairs is specified in the following definition, introduced in \cite[Def. 3.2]{WangDi}.

\begin{definition}\label{def:bicomplete}
We shall say that $(\mathcal{L,A})$ is a \textbf{bicomplete duality pair} if the following conditions are satisfied:
\begin{enumerate}
\item $(\mathcal{L,A})$ is a complete duality pair.

\item $({}^{\perp}\mathcal{A},\mathcal{A})$ is a hereditary complete cotorsion pair in $\mathsf{Mod}(R^{\rm o})$. 
\end{enumerate}
\end{definition}

\begin{remark}\label{rmk:hereditarysym}
From \cite[Prop. 2.3]{GillespieDuality} and Wang and Di \cite[Lem. 3.4]{WangDi}, we have that if $(\mathcal{L,A})$ is a bicomplete duality pair, the $\mathcal{L}$ is closed under epikernels, and hence $(\mathcal{L},\mathcal{L}^\perp)$ is a hereditary perfect cotorsion pair. These observations are also valid for duality pairs of classes of chain complexes (see Section \ref{appx:complexes}).
\end{remark}

\begin{example}\label{ex:weakn}
Besides the product closed bicomplete duality pair $(\mathcal{LV}(R),\mathcal{AC}(R^{\rm o}))$ mentioned earlier, we can find other examples of these duality pairs scattered in the literature, along with methods to induce them:
\begin{enumerate}
\item For each $n \in \mathbb{Z}_{\geq 0}$, consider the class $\mathcal{FP}_\infty(R^{\rm o}) \cap \mathcal{P}(R^{\rm o})^\wedge_n$ of $R^{\rm o}$-modules of type $\text{FP}_\infty$ with projective dimension at most $n$. The Tor and Ext orthogonal complements 
\begin{align*}
\hspace{1cm}\mathcal{WF}_n(R) & := (\mathcal{FP}_\infty(R^{\rm o}) \cap \mathcal{P}(R^{\rm o})^\wedge_n)^{\top_1}, \\ 
\mathcal{WI}_n(R^{\rm o}) & := (\mathcal{FP}_\infty(R^{\rm o}) \cap \mathcal{P}(R^{\rm o})^\wedge_n)^{\perp_1},
\end{align*}
were defined and studied by Arunachalam, Raja, Chelliah and Annadevasahaya Mani in \cite{weakn}, and are known as \textbf{weak $\bm{n}$-flat} $R$-modules and \textbf{weak $\bm{n}$-injective} $R^{\rm o}$-modules, respectively (see \cite[Def. 4.1]{weakn}). From \cite[Props. 4.3, 4.5, 4.7 \& Thm. 4.1]{weakn}, we have that these classes form a product closed bicomplete duality pair $(\mathcal{WF}_n(R),\mathcal{WI}_n(R^{\rm o}))$. 

\item The following notions and results generalize Example \ref{ex:FPinfty}, and are due to Amini, Amzil and Bennis (see \cite{AminiAmzilBennis21}). For each $n \in \mathbb{Z}_{\geq 0}$, we say that an $R^{\rm o}$-module $U$ is $\bm{n}$\textbf{-super finitely presented} if there exists a projective resolution 
\[
\cdots \to F_{n+1} \to F_n \to \cdots \to F_1 \to F_0 \twoheadrightarrow U
\] 
such that $F_k$ is finitely generated for every $k \geq n$ (see \cite[Def. 2.1]{AminiAmzilBennis21}). In particular, when $n = 0$, one obtains the class of $R^{\rm o}$-modules of type $\text{FP}_\infty$. Let $\mathsf{Pres}^\infty_n(R^{\rm o})$ denote the class of $n$-super finitely presented $R^{\rm o}$-modules. The orthogonal complements
\begin{align*}
\hspace{1cm}\mathcal{WF}^n(R) & := (\mathsf{Pres}^\infty_n(R^{\rm o}))^{\top_{n+1}}, \\ 
\mathcal{WI}^n(R^{\rm o}) & := (\mathsf{Pres}^\infty_n(R^{\rm o}))^{\perp_{n+1}},
\end{align*}
are known as the classes of $\bm{n}$\textbf{-weak flat} $R$-modules and $\bm{n}$\textbf{-weak injective} $R^{\rm o}$-modules, respectively (see \cite[Def. 2.2]{AminiAmzilBennis21}). It is clear that $\mathcal{WF}^n(R)$ is closed under extensions and that $R \in \mathcal{WF}^n(R)$. Moreover, from \cite[Thm. 2.12, Props. 2.9, 4.1 \& 4.2, Rmk. 4.14]{AminiAmzilBennis21} we have that $(\mathcal{WF}^n(R), \mathcal{WI}^n(R^{\rm o}))$ is a product closed bicomplete duality pair. 

\item By Holm and J{\o}rgensen \cite[Lem. 2.5]{HJ09}, we know that if $R$ is a commutative noetherian ring with a dualizing complex, then $(\mathcal{GF}(R),\mathcal{GI}(R^{\rm o}))$ is a product closed complete duality pair. Moreover, by \v{S}aroch and \v{S}\v{t}ov\'{\i}\v{c}ek \cite[Thm. 5.6]{SS} we have that $({}^\perp(\mathcal{GI}(R^{\rm o})),\mathcal{GI}(R^{\rm o}))$ is a hereditary complete cotorsion pair\footnote{This result is actually valid for arbitrary rings.}, and so the previous duality pair is bicomplete. 

\item One method to obtain new product closed bicomplete duality pairs from old ones is by means of resolution and coresolution dimensions. More precisely, if $(\mathcal{L,A})$ is a complete duality pair of modules where $(^\perp\mathcal{A},\mathcal{A})$ is a hereditary cotorsion pair cogenerated by a set, then for every $m \in \mathbb{Z}_{>0}$ we have from \cite[Prop. 3.11]{WangDi} that $(\mathcal{L}^\wedge_m,\mathcal{A}^\vee_m)$ is also a complete duality pair where $({}^\perp(\mathcal{A}^\vee_m), \mathcal{A}^\vee_m)$ is a hereditary cotorsion pair cogenerated by a set. Moreover, it is clear that if $\mathcal{L}$ is closed under products, then so is $\mathcal{L}^\wedge_m$. 

So for each $m \in \mathbb{Z}_{>0}$, the previous three examples give rise to two new product closed bicomplete duality pairs of modules over an arbitrary ring, namely, $((\mathcal{WF}_n(R))^\wedge_m,(\mathcal{WI}_n(R^{\rm o}))^\vee_m)$ and $((\mathcal{WF}^n(R))^\wedge_m, (\mathcal{WI}^n(R^{\rm o}))^\vee_m)$, and the product closed bicomplete duality pair $((\mathcal{GF}(R))^\wedge_m,(\mathcal{GI}(R^{\rm o}))^\vee_m)$ in the case where $R$ is a commutative noetherian ring with a dualizing complex.

\item Another method consists on intersecting duality pairs in the following way. Suppose $(\mathcal{L}_\lambda,\mathcal{A}_\lambda)$ is a family of product closed complete duality pairs of modules such that $({}^\perp\mathcal{A}_\lambda,\mathcal{A}_\lambda)$ is a hereditary cotorsion pair cogenerated by a set, with $\lambda$ running over an index set $\Lambda$. If we set 
\begin{align*}
\mathcal{L} & := \bigcap_{\lambda \in \Lambda} \mathcal{L}_\lambda & \text{and} & & \mathcal{A} & := \bigcap_{\lambda \in \Lambda} \mathcal{A}_\lambda,
\end{align*} 
then $(\mathcal{L,A})$ is a product closed complete duality pair where $(^\perp\mathcal{A},\mathcal{A})$ is also a hereditary cotorsion pair cogenerated by a set.

Indeed, it is straightforward to check that $(\mathcal{L,A})$ is a product closed complete duality pair. Concerning the statement about $(^\perp\mathcal{A},\mathcal{A})$, suppose $\mathcal{S}_\lambda$ is a cogenerating set for $({}^\perp\mathcal{A}_\lambda,\mathcal{A}_\lambda)$, that is, $\mathcal{A}_\lambda = (\mathcal{S}_\lambda)^{\perp_1}$. Then, one can note that 
\[
\bigcap_{\lambda \in \Lambda} \mathcal{A}_\lambda = \bigcap_{\lambda \in \Lambda} (\mathcal{S}_\lambda)^{\perp_1} = \Big(\bigcup_{\lambda \in \Lambda} \mathcal{S}_\lambda \Big)^{\perp_1},
\] 
and hence the set $\bigcup_{\lambda \in \Lambda} \mathcal{S}_\lambda$ cogenerates the (complete) cotorsion pair $({}^{\perp_1}\mathcal{A},\mathcal{A})$. Finally, if each $\mathcal{A}_\lambda$ is coresolving for every $\lambda \in \Lambda$, it is clear that the same property holds for $\mathcal{A}$.  
\end{enumerate}
\end{example}

The closure under extensions of $\mathcal{GF}_{(\mathcal{L},\nu)}$ will be possible thanks to the Pontryagin duality relation between $\mathcal{GF}_{(\mathcal{L},\nu)}$ and $\mathcal{GI}_{(\nu,\mathcal{A})}$ described in Theorem \ref{dualidad} below. We shall need the following result.

\begin{proposition} \label{criterio}
Let $(\mathcal{X,Y})$ be a GI-admissible pair in a Grothendieck category $\mathcal{G}$. If we are given a short exact sequence $A \rightarrowtail B \twoheadrightarrow C$ in $\mathcal{G}$ with $A \in (\mathcal{X} \cap \mathcal{Y})^{\perp_1}$ and $B, C \in \GI_{(\X, \Y)}$, then $A \in \GI_{(\mathcal{X,Y})}$.
\end{proposition}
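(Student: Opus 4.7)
My plan is to show $A \in \GI_{(\mathcal{X,Y})}$ by exhibiting an exact, $\Hom_{\mathcal{G}}(\mathcal{X},-)$-acyclic complex in $\mathcal{Y}$ whose zeroth cycle is $A$. To achieve this I produce two iterable ``building blocks''
\[ A \rightarrowtail Y^0 \twoheadrightarrow A' \qquad \text{and} \qquad A'' \rightarrowtail Y_0 \twoheadrightarrow A, \]
both $\Hom_{\mathcal{G}}(\mathcal{X},-)$-exact, with $Y^0, Y_0 \in \mathcal{Y}$ and $A', A'' \in \GI_{(\mathcal{X,Y})}$; iterating on $A'$ and $A''$ (which themselves lie in $\GI_{(\mathcal{X,Y})}$) splices them into the required two-sided complex. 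Along the way I freely use the standard consequences of GI-admissibility: the inclusion $\GI_{(\mathcal{X,Y})} \subseteq \mathcal{X}^\perp$ by dimension shifting along a defining complex, closure of $\GI_{(\mathcal{X,Y})}$ under extensions, the inclusion $\mathcal{Y} \subseteq \GI_{(\mathcal{X,Y})}$ given by the trivial acyclic complex $\cdots \to 0 \to Y \xrightarrow{\mathrm{id}} Y \to 0 \to \cdots$, and the fact that the middle term of any left or right piece of an $M \in \GI_{(\mathcal{X,Y})}$ can be refined to lie in $\mathcal{X} \cap \mathcal{Y}$ by combining the relative generator property of $\mathcal{X} \cap \mathcal{Y}$ in $\mathcal{Y}$ with closure under extensions.

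The main obstacle, and the only place where the hypothesis $A \in (\mathcal{X} \cap \mathcal{Y})^{\perp_1}$ enters decisively, is the Ext-vanishing $\Ext_{\mathcal{G}}^{\geq 1}(\mathcal{X}, A) = 0$: once secured, the $\Hom_{\mathcal{G}}(\mathcal{X},-)$-exactness of every short exact sequence with $A$ on the left and a $\mathcal{Y}$-object in the middle becomes automatic. Applying $\Hom_{\mathcal{G}}(X,-)$ for $X \in \mathcal{X}$ to $A \rightarrowtail B \twoheadrightarrow C$ and using $B, C \in \mathcal{X}^\perp$, dimension shifting kills $\Ext^{\geq 2}_{\mathcal{G}}(X, A)$ and reduces matters to the lifting claim that every morphism $\phi \colon X \to C$ factors through $B$. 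To prove this, choose a refined left piece $C_1 \rightarrowtail W \twoheadrightarrow C$ of $C$ with $W \in \mathcal{X} \cap \mathcal{Y}$, $C_1 \in \GI_{(\mathcal{X,Y})}$, and $\Hom_{\mathcal{G}}(\mathcal{X},-)$-exact, and form the pullback $Q := B \times_C W$, obtaining short exact sequences $A \rightarrowtail Q \twoheadrightarrow W$ and $C_1 \rightarrowtail Q \twoheadrightarrow B$. The hypothesis gives $\Ext^1_{\mathcal{G}}(W, A) = 0$, so the first sequence splits; the resulting section $W \to Q$ composed with $Q \to B$ is a lift $b \colon W \to B$ of the projection $W \twoheadrightarrow C$. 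Given any $\phi \colon X \to C$, the $\Hom_{\mathcal{G}}(\mathcal{X},-)$-exactness of $C_1 \rightarrowtail W \twoheadrightarrow C$ produces $\psi \colon X \to W$ with $\phi = (W \twoheadrightarrow C) \circ \psi$, and $b \circ \psi \colon X \to B$ then lifts $\phi$ as required.

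With the Ext-vanishing in hand, the right building block is immediate: composing $A \rightarrowtail B$ with a right piece $B \rightarrowtail Y^0 \twoheadrightarrow B'$ of $B$, the cokernel $A' := Y^0/A$ fits, by the $3 \times 3$ lemma, in a short exact sequence $C \rightarrowtail A' \twoheadrightarrow B'$ whose outer terms lie in $\GI_{(\mathcal{X,Y})}$, so $A' \in \GI_{(\mathcal{X,Y})}$ by closure under extensions; the sequence $A \rightarrowtail Y^0 \twoheadrightarrow A'$ is $\Hom_{\mathcal{G}}(\mathcal{X},-)$-exact by the Ext-vanishing. The left building block is obtained by a dual pullback: using a refined left piece $B_1 \rightarrowtail Y_0 \twoheadrightarrow B$ of $B$ with $Y_0 \in \mathcal{X} \cap \mathcal{Y}$, pulling back along $A \rightarrowtail B$ produces short exact sequences $B_1 \rightarrowtail P \twoheadrightarrow A$ and $P \rightarrowtail Y_0 \twoheadrightarrow C$. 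Since $A, B_1 \in \mathcal{X}^\perp$, we get $P \in \mathcal{X}^\perp$, and the second pullback sequence again fits the hypotheses of the proposition (with $Y_0 \in \mathcal{Y} \subseteq \GI_{(\mathcal{X,Y})}$, $C \in \GI_{(\mathcal{X,Y})}$, and $P \in \mathcal{X}^\perp \subseteq (\mathcal{X} \cap \mathcal{Y})^{\perp_1}$), which yields $P \in \GI_{(\mathcal{X,Y})}$ by a recursive application or, more directly, by building the defining complex for $P$ from the defining complex of $C$. A right piece of $P$ in $\mathcal{Y}$ then composed with $P \twoheadrightarrow A$ delivers the required left block $A_1 \rightarrowtail Y_0' \twoheadrightarrow A$, whose kernel $A_1$ is an extension of two objects of $\GI_{(\mathcal{X,Y})}$. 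Iterating the two building blocks on $A'$ and $A''$ concludes the construction.
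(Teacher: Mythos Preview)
Your argument for the $\Ext$-vanishing $\Ext^{\geq 1}_{\mathcal{G}}(\mathcal{X}, A) = 0$ is correct, and in fact your pullback $Q := B \times_C W$ already contains the paper's entire proof, which you overlook. From the column $C_1 \rightarrowtail Q \twoheadrightarrow B$ with $C_1, B \in \GI_{(\X,\Y)}$ and closure of $\GI_{(\X,\Y)}$ under extensions (which you list among your standard facts), one gets $Q \in \GI_{(\X,\Y)}$; since the row $A \rightarrowtail Q \twoheadrightarrow W$ splits, $A$ is a direct summand of $Q$, and closure of $\GI_{(\X,\Y)}$ under direct summands (another standard consequence of GI-admissibility, the dual of \cite[Coroll.~3.33]{BMS}) finishes immediately. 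This is exactly what the paper does.

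Your longer route has a genuine gap in the left building block. The claim $P \in \GI_{(\X,\Y)}$ is circular: the sequence $P \rightarrowtail Y_0 \twoheadrightarrow C$ with $Y_0, C \in \GI_{(\X,\Y)}$ and $P \in (\X\cap\Y)^{\perp_1}$ is precisely another instance of the proposition you are proving, so ``recursive application'' is not legitimate. The alternative ``build $P$'s defining complex from that of $C$'' only supplies the \emph{right} half (splicing $P \rightarrowtail Y_0$ with a $\Y$-coresolution of $C$) and says nothing about the left half; GI-admissibility gives $\Y$ as a relative cogenerator in $\mathcal{G}$ but provides no relative generator in $\mathcal{G}$, so there is no evident direct source of $\Hom_{\mathcal{G}}(\X,-)$-acyclic $\Y$-resolutions for an arbitrary object of $\X^\perp$. (Minor: ``a right piece of $P$ composed with $P \twoheadrightarrow A$'' should read ``a left piece'', since you need a map \emph{onto} $P$ to compose with $P \twoheadrightarrow A$.) The clean fix is the one you nearly wrote: conclude from your own splitting that $A$ is a summand of $Q \in \GI_{(\X,\Y)}$.
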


\begin{proof}
Since $(\X, \Y)$ is a GI-admissible pair, we have by the dual of \cite[Coroll. 3.25]{BMS} that $\X \cap \Y$ is a relative generator in $\GI _{(\X, \Y)}$. So we can consider a short exact sequence $C' \rightarrowtail W \twoheadrightarrow C$ with $W \in \X \cap \Y$ and $C' \in \GI_{(\X, \Y)}$. On the other hand, the pullback of $B \twoheadrightarrow C \twoheadleftarrow W$ yields the following commutative exact diagram
\[
%\parbox{1.5in}{
\begin{tikzpicture}[description/.style={fill=white,inner sep=2pt}] 
\matrix (m) [ampersand replacement=\&, matrix of math nodes, row sep=2.5em, column sep=2.5em, text height=1.25ex, text depth=0.25ex] 
{ 
{} \& C' \& C' \\
A \& N \& W \\
A \& B \& C \\
}; 
\path[->] 
(m-2-2)-- node[pos=0.5] {\footnotesize$\mbox{\bf pb}$} (m-3-3) 
;
\path[>->]
(m-1-2) edge (m-2-2) (m-1-3) edge (m-2-3)
(m-2-1) edge (m-2-2) (m-3-1) edge (m-3-2)
;
\path[->>]
(m-2-2) edge (m-3-2) edge (m-2-3) (m-2-3) edge (m-3-3) (m-3-2) edge (m-3-3)
;
\path[-,font=\scriptsize]
(m-1-2) edge [double, thick, double distance=2pt] (m-1-3)
(m-2-1) edge [double, thick, double distance=2pt] (m-3-1)
;
\end{tikzpicture}. 
%} 
\]
By the dual of \cite[Coroll. 3.3]{BMS} we know that $\GI_{(\X, \Y)}$ is closed under extensions and direct summands, and thus we have $N \in \GI_{(\X, \Y)}$. We also know that $A \in (\mathcal{X} \cap \mathcal{Y})^{\perp_1}$, which in turn implies that $A$ is a direct summand of $N \in \GI_{(\X, \Y)}$, and hence $A \in \GI_{(\X, \Y)}$. 
\end{proof}

\begin{theorem}\label{dualidad}
Consider the following assertions for $M \in \mathsf{Mod}(R)$ and a pair $(\mathcal{L,A})$ such that $\mathcal{L}^+ \subseteq \A$:
\begin{enumerate}
\item[(a)] $M \in \GF_{(\mathcal{L},\nu)}$.

\item[(b)] $M^+ \in \GI_{(\nu, \A)}$.

\item[(c)] $M \in \nu^\top$ and admits a $\Hom_R(-,\mathcal{L})$-acyclic $\mathcal{L}$-coresolution. 
\end{enumerate}
The following statements hold true:
\begin{enumerate}
\item (a) $\Rightarrow$ (b). 

\item Suppose in addition that $\mathcal{A}^+ \subseteq \mathcal{L}$:
\begin{enumerate}
\item[(i)] If $\mathcal{L}$ is a relative generator in $\mathsf{Mod}(R)$, then (c) $\Rightarrow$ (a).\footnote{This implication also holds if we assume $\mathcal{P}(R) \subseteq \mathcal{L}$ instead.} 

\item[(ii)] If $\mathcal{L}$ is preenveloping and $(\nu,\mathcal{A})$ is a GI-admissible pair, then (b) $\Rightarrow$ (c).
\end{enumerate}
\end{enumerate}
In particular, if $(\mathcal{L,A})$ is a product closed bicomplete duality pair, then (a), (b) and (c) are equivalent. 
\begin{figure}[h!]
\begin{tikzpicture}[description/.style={fill=white,inner sep=2pt}]
\matrix (m) [matrix of math nodes, row sep=3.5em, column sep=2.5em, text height=1.25ex, text depth=0.25ex]
{ 
{} & (b) & {} \\
(a) & {} & (c) \\
};
\path[->]
(m-2-1) edge [double] (m-1-2)
(m-2-3) edge [double,blue] node[below,sloped] {\footnotesize$\mathcal{L} \twoheadrightarrow \mathsf{Mod}(R)$} (m-2-1)
(m-1-2) edge [double,red] node[right] {\footnotesize$\mbox{ \ } \mathcal{L} \text{ is preenveloping } + (\nu,\mathcal{A}) \text{ is GI-admissible}$} (m-2-3)
;
\end{tikzpicture}
\caption{At each implication, the containment $\mathcal{L}^+ \subseteq \mathcal{A}$ is assumed.}
\end{figure}
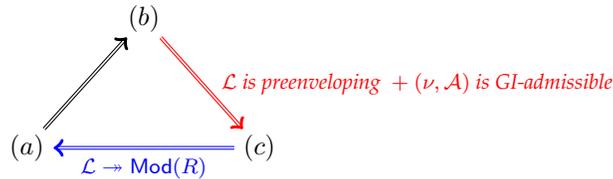
\end{theorem}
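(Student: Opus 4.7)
The overall plan is to exploit the Pontryagin duality functor $(-)^+$, the Hom-Tensor adjunction $\Hom_{R^{\rm o}}(A, L^+) \cong (A \otimes_R L)^+$, and the natural isomorphism $\Tor^R_i(A, M)^+ \cong \Ext^i_{R^{\rm o}}(A, M^+)$ used in the proof of Lemma~\ref{lem:ort}, combined with the exactness and faithfulness of $(-)^+$, in order to convert between the tensor-based description of $\GF_{(\mathcal{L}, \nu)}$ and the Hom-based description of $\GI_{(\nu, \mathcal{A})}$. For (a)$\Rightarrow$(b), I would take an exact $(\nu \otimes_R -)$-acyclic complex $L_\bullet \in \Ch(\mathcal{L})$ with $M \simeq Z_0(L_\bullet)$ and apply $(-)^+$ after reindexing. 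The hypothesis $\mathcal{L}^+ \subseteq \mathcal{A}$ together with exactness of $(-)^+$ produces an exact complex in $\Ch(\mathcal{A})$; the Hom-Tensor adjunction gives $\Hom_{R^{\rm o}}(X, L_\bullet^+) \cong (X \otimes_R L_\bullet)^+$, which is exact for each $X \in \nu$; and dualizing the short exact sequences of cycles of $L_\bullet$ identifies $M^+$ as a cycle of the dualized complex. For (c)$\Rightarrow$(a), Proposition~\ref{Bennis} reduces matters to upgrading the given $\Hom_R(-, \mathcal{L})$-acyclic $\mathcal{L}$-coresolution $L^\bullet$ to a $(\nu \otimes_R -)$-acyclic one: for $X \in \nu \subseteq \mathcal{A}$, the assumption $\mathcal{A}^+ \subseteq \mathcal{L}$ places $X^+$ in $\mathcal{L}$, so $\Hom_R(L^\bullet, X^+)$ is exact; Hom-Tensor rewrites this as $(X \otimes_R L^\bullet)^+$, and faithful exactness of $(-)^+$ transfers exactness back to $X \otimes_R L^\bullet$.

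For (b)$\Rightarrow$(c), I would first establish $M \in \nu^\top$ by a dimension-shifting argument. Starting from an exact, $\Hom_{R^{\rm o}}(\nu, -)$-acyclic complex $A_\bullet \in \Ch(\mathcal{A})$ with $M^+ \simeq Z_0(A_\bullet)$, the GI-admissibility forces $\Ext^{\geq 1}_{R^{\rm o}}(\nu, \mathcal{A}) = 0$; iterating the long exact sequences of the cycle short exact sequences $0 \to Z_{n+1} \to A_{n+1} \to Z_n \to 0$ produces isomorphisms $\Ext^i_{R^{\rm o}}(X, Z_n) \cong \Ext^{i+1}_{R^{\rm o}}(X, Z_{n+1})$ for $i \geq 1$, and the Hom-acyclicity kills the residual $\Ext^1$ obstruction at the base, yielding $\Ext^{\geq 1}_{R^{\rm o}}(\nu, M^+) = 0$. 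The natural isomorphism and faithfulness then give $\Tor^R_{\geq 1}(\nu, M) = 0$. To construct the coresolution, the key observation is that the short exact sequence $0 \to Z_1(A_\bullet) \to A_1 \to M^+ \to 0$ dualizes to a monomorphism $M^{++} \hookrightarrow A_1^+$ with $A_1^+ \in \mathcal{A}^+ \subseteq \mathcal{L}$; composing with the canonical embedding $M \hookrightarrow M^{++}$ embeds $M$ into an object of $\mathcal{L}$, so that any $\mathcal{L}$-preenvelope of $M$ is automatically monic. Iterating the standard preenvelope-of-cokernel construction (at each stage the analogous Pontryagin-dual embedding argument applies) produces the coresolution, and its $\Hom_R(-, \mathcal{L})$-acyclicity is an immediate consequence of the defining property of a preenvelope at each step.

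For the final assertion, a product closed bicomplete duality pair $(\mathcal{L, A})$ supplies all running hypotheses: symmetry yields both $\mathcal{L}^+ \subseteq \mathcal{A}$ and $\mathcal{A}^+ \subseteq \mathcal{L}$; perfectness gives $\mathcal{P}(R) \subseteq \mathcal{L}$ (so $\mathcal{L}$ is a relative generator); product closure gives $\mathcal{L}$ preenveloping; and $(\nu, \mathcal{A})$ is GI-admissible because $({}^\perp\mathcal{A}, \mathcal{A})$ is a hereditary complete cotorsion pair, the only nontrivial item being that $\nu$ is a relative generator in $\mathcal{A}$, which follows by taking the special ${}^\perp\mathcal{A}$-precover $X \twoheadrightarrow A$ of any $A \in \mathcal{A}$: its source $X$ lies in $\mathcal{A}$ by coresolvingness of $\mathcal{A}$ and hence in $\nu = {}^\perp\mathcal{A} \cap \mathcal{A}$. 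The main obstacle I anticipate is the inductive propagation of the Pontryagin-dual embedding argument through the coresolution construction in (b)$\Rightarrow$(c), namely showing that each successive cokernel continues to admit a monic $\mathcal{L}$-preenvelope; by contrast, the dimension-shifting for $M \in \nu^\top$ and the index bookkeeping in (a)$\Rightarrow$(b) are routine once the right diagrams are in place.
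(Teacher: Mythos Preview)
Your treatment of (a)$\Rightarrow$(b) and (c)$\Rightarrow$(a) matches the paper's argument exactly, and your handling of the ``in particular'' clause is correct and in fact more detailed than the paper's one-line justification.

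The gap you yourself flag in (b)$\Rightarrow$(c) is real, and your proposal does not contain the idea needed to close it. Saying ``at each stage the analogous Pontryagin-dual embedding argument applies'' presupposes that $C_1^+$ is again in $\mathcal{GI}_{(\nu,\mathcal{A})}$, but this is not automatic: the short exact sequence $C_1^+ \rightarrowtail (L^0)^+ \twoheadrightarrow M^+$ has $(L^0)^+ \in \mathcal{A} \subseteq \mathcal{GI}_{(\nu,\mathcal{A})}$ and $M^+ \in \mathcal{GI}_{(\nu,\mathcal{A})}$, but $\mathcal{GI}_{(\nu,\mathcal{A})}$ is only known to be closed under extensions and cokernels of monomorphisms, not under kernels of epimorphisms. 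The paper resolves this by invoking Proposition~\ref{criterio}: it suffices to show $\Ext^1_{R^{\rm o}}(\nu, C_1^+) = 0$. This vanishing is obtained from the commutative square identifying $\Hom_{R^{\rm o}}(A',(L^0)^+) \to \Hom_{R^{\rm o}}(A',M^+)$ with $\Hom_R(L^0,(A')^+) \to \Hom_R(M,(A')^+)$ for $A' \in \nu$; the latter map is surjective precisely because $\phi^1 \colon M \to L^0$ is an $\mathcal{L}$-preenvelope and $(A')^+ \in \mathcal{A}^+ \subseteq \mathcal{L}$, and combined with $\Ext^1_{R^{\rm o}}(A',(L^0)^+) = 0$ (since $(L^0)^+ \in \mathcal{A}$) this forces $\Ext^1_{R^{\rm o}}(A',C_1^+) = 0$. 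So the preenvelope property is used twice: once to get $\Hom_R(-,\mathcal{L})$-acyclicity of each step, and once more---via the hypothesis $\mathcal{A}^+ \subseteq \mathcal{L}$---to feed Proposition~\ref{criterio} and propagate membership of the dual cokernel in $\mathcal{GI}_{(\nu,\mathcal{A})}$. Your outline captures only the first use.
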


\begin{proof}
{} \
\begin{enumerate}
\item It is straightforward from the assumption and the fact that $(-)^+$ is an exact functor and the natural isomorphism $(A \otimes _R L_\bullet)^+ \cong \Hom_{R^{\rm o}} (A, L^+_\bullet)$. 

\item 
\begin{enumerate}
\item[(i)] Let $M \in \nu^\top$ with a $\Hom_R(-,\Le)$-acyclic $\mathcal{L}$-coresolution 
\[
\eta \colon M \rightarrowtail L^0 \to L^1 \to \cdots.
\] 
Now for any $A \in \nu$ we have a natural isomorphism $(A \otimes_R \eta)^+ \cong \Hom_R(\eta, A^+)$, where $\Hom_R(\eta, A^+)$ is exact since $\mathcal{A}^+ \subseteq \mathcal{L}$. It follows that $A \otimes_R \eta$ is exact. In other words, we have that $M$ admits a $(\nu \otimes_R -)$-acyclic $\mathcal{L}$-coresolution. The result then follows by Proposition \ref{Bennis} (2).
 
\item[(ii)] Suppose $M^+ \in \mathcal{GI}_{(\nu,\mathcal{A})}$. Firstly, since $\GI_{(\nu, \A)} \subseteq \nu^\perp$, we have that $\Tor^R_i(\nu,M)^+ \cong \Ext_{R^{\rm o}}^i(\nu,M^+) = 0$ for every $i \geq 1$, and so $M \in \nu^\top$. On the other hand, there exists a short exact sequence $N \rightarrowtail A \twoheadrightarrow M^+$ with $A \in \mathcal{A}$ and $N \in \mathcal{GI}_{(\nu,\mathcal{A})}$, which yields a short exact sequence $M^{++} \rightarrowtail A^+ \twoheadrightarrow N^+$ with $A^+ \in \mathcal{L}$. Now since $\mathcal{L}$ is preenveloping, there is an $\mathcal{L}$-preenvelope $\phi^1 \colon M \to L^0$ and an arrow $L^0 \to A^+$ making the following triangle commute:
\[
%\parbox{1.5in}{
\begin{tikzpicture}[description/.style={fill=white,inner sep=2pt}] 
\matrix (m) [ampersand replacement=\&, matrix of math nodes, row sep=2.5em, column sep=2.5em, text height=1.25ex, text depth=0.25ex] 
{ 
M \& M^{++} \& A^+ \\
L^0 \& {} \& {} \\
}; 
\path[dotted,->] 
(m-2-1) edge (m-1-3) 
;
\path[->]
(m-1-1) edge node[left] {\footnotesize$\phi^1$} (m-2-1)
;
\path[>->]
(m-1-1) edge (m-1-2) (m-1-2) edge (m-1-3)
;
\end{tikzpicture} 
%}
\]
It follows that $\phi^1$ is monic, and thus we obtain a short exact and $\Hom_R(-,\mathcal{L})$-acyclic sequence $M \rightarrowtail L^0 \twoheadrightarrow C_1$ with $L^0 \in  \Le$. Then, we have the exact sequence of the form $C_1^+ \rightarrowtail (L^0)^+ \twoheadrightarrow M^+$ where $(L^0)^+ \in \mathcal{A}$ and $M^+ \in \mathcal{GI}_{(\nu, \mathcal{A})}$. We shall prove that $\Ext_R^1(\nu, C_1^+) = 0$ in order to apply Proposition \ref{criterio}. So let us take $A' \in \nu$ and consider the following commutative diagram with exact rows:
\[
%\parbox{1.5in}{
\hspace{2cm}\begin{tikzpicture}[description/.style={fill=white,inner sep=2pt}] 
\matrix (m) [ampersand replacement=\&, matrix of math nodes, row sep=2.5em, column sep=1.25em, text height=1.25ex, text depth=0.25ex] 
{ 
\Hom_{R^{\rm o}}(A', (L^0)^+) \& \Hom_{R^{\rm o}}(A', M^+) \& \Ext^1_{R^{\rm o}}(A', C_1^+) \& \Ext^1_{R^{\rm o}}(A', (L^0)^+) \\
\Hom_R(L^0, (A')^+) \& \Hom_R(M, (A')^+) \& {} \& {} \\
}; 
\path[->]
(m-1-1) edge (m-1-2) (m-1-2) edge (m-1-3) (m-1-3) edge (m-1-4)
(m-1-1) edge node[left] {\footnotesize$\cong$} (m-2-1)
(m-1-2) edge node[left] {\footnotesize$\cong$} (m-2-2)
;
\path[->>]
(m-2-1) edge (m-2-2)
;
\end{tikzpicture} 
%}
\]
The arrow $\Hom_R(\phi^1, (A')^+) \colon \Hom_R(L^0, (A')^+) \to \Hom_R(M, (A')^+)$ is epic since $(A')^+ \in \mathcal{L}$, which in turn implies that $\Hom_{R^{\rm o}}(A', (L^0)^+) \to \Hom_{R^{\rm o}}(A', M^+)$ is epic. On the other hand, $(L^0)^+ \in \A$ together with $A' \in \nu$ implies that $\Ext_R^1(A', (L^0)^+) = 0$, and hence $\Ext_R^1(A', C_1^+) = 0$. It then follows by Proposition \ref{criterio} that $C^+_1 \in \mathcal{GI}_{(\nu, \A)}$. By repeating this process we can construct a $\Hom_R(-,\mathcal{L})$-acyclic $\mathcal{L}$-coresolution of $M$. 
\end{enumerate}
\end{enumerate}
For the last assertion, it suffices to note that if $({}^{\perp}\mathcal{A},\mathcal{A})$ is a hereditary complete cotorsion pair, then $(\nu,\mathcal{A})$ is a GI-admissible pair. 
\end{proof}

One important consequence from the previous theorem is that it implies that Gorenstein flat $R$-modules relative to a product closed bicomplete duality pair $(\mathcal{L,A})$ are closed under extensions. Before showing this, let us prove a relation between this closure property and a characterization of the $R$-modules in $\mathcal{GF}_{(\mathcal{L},\nu)}$, which is dual to Proposition \ref{criterio}. The following result is a generalization of \cite[Thm. 2.3]{GFclosed}. For pedagogical reasons, we provide a proof using arguments similar to those appearing in \cite{GFclosed}, in order to understand the minimal assumptions required for the relative case.

\begin{proposition}\label{prop:GF_closed_under_exts}
Consider the following assertions:
\begin{enumerate}
\item[(a)] Given a short exact sequence $G_1 \rightarrowtail G_0 \twoheadrightarrow M$ with $G_0, G_1 \in \mathcal{GF} _{(\mathcal{L},\nu)}$ and $M \in \nu^{\top_1}$, then $M \in \mathcal{GF} _{(\mathcal{L},\nu)}$.

\item[(b)] $\mathcal{GF} _{(\mathcal{L},\nu)}$ is closed under extensions.

\item[(c)] $\mathcal{GF} _{(\mathcal{L},\nu)}$ is preresolving. 
\end{enumerate}
If $\mathcal{L}$ is a relative generator in $\mathsf{Mod}(R)$\footnote{One may assume that $\mathcal{P}(R) \subseteq \mathcal{L}$ instead.}, then the following hold:
\begin{enumerate}
\item If $\mathcal{L}$ is closed under extensions with $\mathcal{L}^+ \subseteq \mathcal{A}$, then (a) $\Rightarrow$ (b) $\Rightarrow$ (c).

\item If $(\mathcal{L,A})$ is a duality pair such that $\mathcal{A}$ is closed under extensions and $\nu$ is a relative generator in $\mathcal{A}$, then (c) $\Rightarrow$ (a). 
\end{enumerate}
In particular, if $(\mathcal{L,A})$ is a product closed bicomplete duality pair, then (a), (b) and (c) are equivalent. 
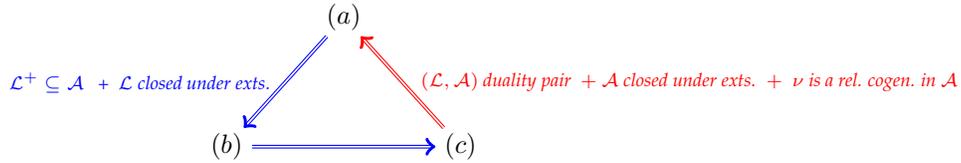
\begin{figure}[h!]
\begin{tikzpicture}[description/.style={fill=white,inner sep=2pt}]
\matrix (m) [matrix of math nodes, row sep=3.5em, column sep=2.5em, text height=1.25ex, text depth=0.25ex]
{ 
{} & (a) & {} \\
(b) & {} & (c) \\
};
\path[->]
(m-1-2) edge [double, blue] node[left] {\scriptsize$\mathcal{L}^+ \subseteq \mathcal{A}$ \text{ } + \text{ $\mathcal{L}$ closed under exts. }} (m-2-1)
(m-2-1) edge [double, blue] (m-2-3)
(m-2-3) edge [double, red] node[right] {\scriptsize$\mbox{ \ } (\mathcal{L},\mathcal{A}) \text{ duality pair } + \mathcal{A} \text{ closed under exts. } + \text{ } \nu \text{ is a rel. cogen. in } \mathcal{A}$} (m-1-2)
;
\end{tikzpicture}
\caption{At each implication, it is assumed that $\mathcal{L}$ is a relative generator in $\mathsf{Mod}(R)$.}
\end{figure}
\end{proposition}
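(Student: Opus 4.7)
The proof handles the three implications separately: $(\text{a}) \Rightarrow (\text{b})$ and $(\text{b}) \Rightarrow (\text{c})$ under hypothesis (1), and $(\text{c}) \Rightarrow (\text{a})$ under hypothesis (2). The ``in particular'' statement then follows by verifying that a product closed bicomplete duality pair $(\mathcal{L,A})$ simultaneously satisfies the hypotheses of both (1) and (2).

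For $(\text{a}) \Rightarrow (\text{b})$, let $A \rightarrowtail B \twoheadrightarrow C$ be a short exact sequence with $A, C \in \mathcal{GF}_{(\mathcal{L},\nu)}$. The plan is to produce an auxiliary module $P \in \mathcal{GF}_{(\mathcal{L},\nu)}$ fitting in a short exact sequence $K \rightarrowtail P \twoheadrightarrow B$ with $K \in \mathcal{GF}_{(\mathcal{L},\nu)}$, so that (a) applied to this sequence yields $B \in \mathcal{GF}_{(\mathcal{L},\nu)}$ (the condition $B \in \nu^{\top_1}$ is automatic from the Tor long exact sequence and the fact that $A, C \in \nu^\top$, as granted by Proposition \ref{Bennis} under $\mathcal{L}^+ \subseteq \mathcal{A}$). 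Concretely, choose via the definition of $\mathcal{GF}_{(\mathcal{L},\nu)}$ a short exact sequence $K \rightarrowtail L_0 \twoheadrightarrow C$ with $L_0 \in \mathcal{L}$ and $K \in \mathcal{GF}_{(\mathcal{L},\nu)}$, and form the pullback $P = B \times_C L_0$. This supplies the desired $K \rightarrowtail P \twoheadrightarrow B$ and an auxiliary $A \rightarrowtail P \twoheadrightarrow L_0$. To see $P \in \mathcal{GF}_{(\mathcal{L},\nu)}$, pick a coresolution $A \rightarrowtail L^A \twoheadrightarrow A'$ via Proposition \ref{Bennis} with $L^A \in \mathcal{L}$ and $A' \in \mathcal{GF}_{(\mathcal{L},\nu)}$, and push it out along $A \hookrightarrow P$. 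The pushout produces a short exact sequence $L^A \rightarrowtail P' \twoheadrightarrow L_0$, so $P' \in \mathcal{L}$ by closure of $\mathcal{L}$ under extensions, together with a short exact sequence $P \rightarrowtail P' \twoheadrightarrow A'$ matching exactly Proposition \ref{Bennis}(c), hence $P \in \mathcal{GF}_{(\mathcal{L},\nu)}$.

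The implication $(\text{b}) \Rightarrow (\text{c})$ amounts to a definition unpacking: being preresolving reduces to closure under extensions (provided by (b)) together with containment of $\mathcal{L}$, which is immediate. For $(\text{c}) \Rightarrow (\text{a})$, the natural tool is Pontryagin duality. Apply $(-)^+$ to $G_1 \rightarrowtail G_0 \twoheadrightarrow M$ to obtain a short exact sequence $M^+ \rightarrowtail G_0^+ \twoheadrightarrow G_1^+$; Theorem \ref{dualidad}(1) places $G_0^+$ and $G_1^+$ in $\mathcal{GI}_{(\nu,\mathcal{A})}$, and the natural isomorphism $\Ext^1_{R^{\rm o}}(\nu, M^+) \cong \Tor^R_1(\nu, M)^+$ turns the hypothesis $M \in \nu^{\top_1}$ into $M^+ \in \nu^{\perp_1} = (\nu \cap \mathcal{A})^{\perp_1}$. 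The next step is to verify that $(\nu,\mathcal{A})$ is a GI-admissible pair from (2)'s hypotheses (using $\nu \subseteq {}^\perp\mathcal{A}$ to get $\Ext^{\geq 1}_{R^{\rm o}}(\nu,\mathcal{A}) = 0$, closure of $\mathcal{A}$ under extensions and finite direct sums from the duality pair and (2), and the relative generator condition for $\nu$ in $\mathcal{A}$); Proposition \ref{criterio} then gives $M^+ \in \mathcal{GI}_{(\nu,\mathcal{A})}$. Finally, the reverse direction of Theorem \ref{dualidad} recovers $M \in \mathcal{GF}_{(\mathcal{L},\nu)}$.

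For the ``in particular'' statement, a product closed bicomplete duality pair provides every ingredient: $\mathcal{L}$ is closed under extensions by definition, $R \in \mathcal{L}$ together with coproduct closure forces $\mathcal{L}$ to be a relative generator in $\mathsf{Mod}(R)$, $\mathcal{L}^+ \subseteq \mathcal{A}$ is the duality pair axiom, $\mathcal{A}$ is closed under extensions as the right class of a hereditary complete cotorsion pair, and $\nu$ is a relative generator in $\mathcal{A}$ because any special ${}^\perp\mathcal{A}$-precover $X \twoheadrightarrow A$ of $A \in \mathcal{A}$ has kernel in $\mathcal{A}$, forcing $X \in {}^\perp\mathcal{A} \cap \mathcal{A} = \nu$. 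The most delicate part of the whole argument is the bookkeeping for $(\text{c}) \Rightarrow (\text{a})$: assembling GI-admissibility of $(\nu,\mathcal{A})$ and all remaining ingredients required by the reverse direction of Theorem \ref{dualidad} out of (2)'s comparatively modest hypotheses.
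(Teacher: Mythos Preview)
Your argument for $(\text{a}) \Rightarrow (\text{b})$ matches the paper's pullback--pushout construction and is fine. There are, however, two genuine gaps in the rest.

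First, $(\text{b}) \Rightarrow (\text{c})$ is not just definition unpacking. In this paper ``preresolving'' means closed under extensions \emph{and} under epikernels (so that preresolving plus $\mathcal{P}(R) \subseteq \mathcal{GF}_{(\mathcal{L},\nu)}$ gives resolving; cf.\ the proof of Theorem~\ref{theo:finiteness_GFdim}(2)). Closure under epikernels requires an argument---essentially one more pushout, as in \cite[Thm.~2.3]{GFclosed} combined with Proposition~\ref{Bennis}(2)---which you have omitted.

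Second, and more seriously, your strategy for $(\text{c}) \Rightarrow (\text{a})$ does not go through under hypothesis (2). You want to invoke the implication $(\text{b}) \Rightarrow (\text{a})$ of Theorem~\ref{dualidad} to pass from $M^+ \in \mathcal{GI}_{(\nu,\mathcal{A})}$ back to $M \in \mathcal{GF}_{(\mathcal{L},\nu)}$, but that direction requires $\mathcal{L}$ to be preenveloping (and $\mathcal{A}$ to be a relative cogenerator in $\mathsf{Mod}(R^{\rm o})$, for GI-admissibility). Hypothesis (2) assumes only that $(\mathcal{L,A})$ is a duality pair with $\mathcal{A}$ closed under extensions and $\nu$ a relative generator in $\mathcal{A}$; there is no product closure on $\mathcal{L}$, so preenveloping is unavailable, and your ``bookkeeping'' cannot be completed. (Notice also that your argument never uses assertion (c) itself, which is a warning sign.) The paper proceeds differently: starting from $G_1 \rightarrowtail G_0 \twoheadrightarrow M$, two successive pushouts---using (c) to keep the middle term in $\mathcal{GF}_{(\mathcal{L},\nu)}$---produce a short exact sequence $L_1 \rightarrowtail L_0 \twoheadrightarrow M'$ with $L_0, L_1 \in \mathcal{L}$ and $M \rightarrowtail M' \twoheadrightarrow G''_0$ with $G''_0 \in \mathcal{GF}_{(\mathcal{L},\nu)}$. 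Dualizing only this $\mathcal{L}$-sequence and using the relative generator $\nu$ in $\mathcal{A}$ together with $M' \in \nu^{\top_1}$ shows $(M')^+$ is a summand of an object of $\mathcal{A}$, whence $M' \in \mathcal{L}$ by the duality pair axiom; Proposition~\ref{Bennis}(2) then gives $M \in \mathcal{GF}_{(\mathcal{L},\nu)}$. This route needs neither preenveloping of $\mathcal{L}$ nor the full reverse direction of Theorem~\ref{dualidad}.
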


\begin{proof} \
\begin{enumerate}
\item 
\begin{itemize}
\item (a) $\Rightarrow$ (b): Consider a short exact sequence $M_1 \rightarrowtail M_2 \twoheadrightarrow M_3$ with $M_1, M_3 \in \mathcal{GF}_{(\mathcal{L},\nu)}$. We show that $M_2 \in \mathcal{GF}_{(\mathcal{L},\nu)}$ from (a). By Proposition \ref{Bennis} (2), we have that $M_1, M_3 \in \nu^\top$, and so $M_2 \in \nu^\top$ since $\nu^\top$ is closed under extensions. On the other hand, there is an exact sequence $M_3' \rightarrowtail L \twoheadrightarrow M_3$ with $L \in \mathcal{L}$ and $M'_3 \in \mathcal{GF}_{(\mathcal{L},\nu)}$. Taking the pullback of $M_2 \twoheadrightarrow M_3 \twoheadleftarrow L$ yields the following commutative exact diagram: 
\[
%\parbox{1.5in}{
\hspace{1.5cm}\begin{tikzpicture}[description/.style={fill=white,inner sep=2pt}] 
\matrix (m) [ampersand replacement=\&, matrix of math nodes, row sep=2.5em, column sep=2.5em, text height=1.25ex, text depth=0.25ex] 
{ 
{} \& M'_3 \& M'_3 \\
M_1 \& N \& L \\
M_1 \& M_2 \& M_3 \\
}; 
\path[->] 
(m-2-2)-- node[pos=0.5] {\footnotesize$\mbox{\bf pb}$} (m-3-3) 
;
\path[>->]
(m-1-2) edge (m-2-2) (m-1-3) edge (m-2-3)
(m-2-1) edge (m-2-2) (m-3-1) edge (m-3-2)
;
\path[->>]
(m-2-2) edge (m-3-2) edge (m-2-3) (m-2-3) edge (m-3-3) (m-3-2) edge (m-3-3)
;
\path[-,font=\scriptsize]
(m-1-2) edge [double, thick, double distance=2pt] (m-1-3)
(m-2-1) edge [double, thick, double distance=2pt] (m-3-1)
;
\end{tikzpicture}. 
%}
\]
Now for $M_1$ there is also an exact sequence $M_1 \rightarrowtail L' \twoheadrightarrow M'_1$ with $L \in \mathcal{L}$ and $M'_1 \in \mathcal{GF}_{(\mathcal{L},\nu)}$. Taking the pushout of $L' \leftarrowtail M_1 \rightarrowtail N$ yields the following commutative exact diagram:
\[
%\parbox{1.5in}{
\hspace{1.5cm}\begin{tikzpicture}[description/.style={fill=white,inner sep=2pt}] 
\matrix (m) [ampersand replacement=\&, matrix of math nodes, row sep=2.5em, column sep=2.5em, text height=1.25ex, text depth=0.25ex] 
{ 
M_1 \& N \& L \\
L' \& N' \& L \\
M'_1 \& M'_1 \\
}; 
\path[->] 
(m-1-1)-- node[pos=0.5] {\footnotesize$\mbox{\bf po}$} (m-2-2) 
;
\path[>->]
(m-1-1) edge (m-1-2) edge (m-2-1)
(m-1-2) edge (m-2-2)
(m-2-1) edge (m-2-2)
;
\path[->>]
(m-1-2) edge (m-1-3)
(m-2-2) edge (m-2-3)
(m-2-1) edge (m-3-1)
(m-2-2) edge (m-3-2)
;
\path[-,font=\scriptsize]
(m-1-3) edge [double, thick, double distance=2pt] (m-2-3)
(m-3-1) edge [double, thick, double distance=2pt] (m-3-2)
;
\end{tikzpicture}. 
%}
\]
Since $\mathcal{L}$ is closed under extensions, one has that $N' \in \mathcal{L}$. Then from the central column and Proposition \ref{Bennis} (2) we have that $N \in \mathcal{GF}_{(\mathcal{L},\nu)}$. Finally, from the central column of the first pullback diagram and the assumption (a), we have that $M_2 \in \mathcal{GF}_{(\mathcal{L},\nu)}$. 

\item (b) $\Rightarrow$ (c): It follows in the same way as the proof of (1) {$\Rightarrow$} (2) in \cite[Thm. 2.3]{GFclosed}, and by using Proposition \ref{Bennis} (2). 
\end{itemize}

\item Suppose we are given a short exact sequence $G_1 \rightarrowtail G_0 \twoheadrightarrow M$, where $G_0, G_1 \in \mathcal{GF}_{(\mathcal{L},\nu)}$ and $M \in \nu^{\top_1}$. First, there is a short exact sequence $G_1 \rightarrowtail L_1 \twoheadrightarrow G'_1$ where $L_1 \in \mathcal{L}$ and $G'_1 \in \mathcal{GF}_{(\mathcal{L},\nu)}$. Taking the pushout of $L_1 \leftarrowtail G_1 \rightarrowtail G_0$ yields the following commutative exact diagram:
\[
%\parbox{1.5in}{
\hspace{1.5cm}\begin{tikzpicture}[description/.style={fill=white,inner sep=2pt}] 
\matrix (m) [ampersand replacement=\&, matrix of math nodes, row sep=2.5em, column sep=2.5em, text height=1.25ex, text depth=0.25ex] 
{ 
G_1 \& G_0 \& M \\
L_1 \& G'_0 \& M \\
G'_1 \& G'_1 \\
}; 
\path[->] 
(m-1-1)-- node[pos=0.5] {\footnotesize$\mbox{\bf po}$} (m-2-2) 
;
\path[>->]
(m-1-1) edge (m-1-2) edge (m-2-1)
(m-1-2) edge (m-2-2)
(m-2-1) edge (m-2-2)
;
\path[->>]
(m-1-2) edge (m-1-3)
(m-2-2) edge (m-2-3)
(m-2-1) edge (m-3-1)
(m-2-2) edge (m-3-2)
;
\path[-,font=\scriptsize]
(m-1-3) edge [double, thick, double distance=2pt] (m-2-3)
(m-3-1) edge [double, thick, double distance=2pt] (m-3-2)
;
\end{tikzpicture}. 
%}
\]
Since $\mathcal{GF}_{(\mathcal{L},\nu)}$ is closed under extensions by (c), we have that $G'_0 \in \mathcal{GF}_{(\mathcal{L},\nu)}$. So there is a short exact sequence $G'_0 \rightarrowtail L_0 \twoheadrightarrow G''_0$ with $L_0 \in \mathcal{L}$ and $G''_0 \in \mathcal{GF}_{(\mathcal{L},\nu)}$. Now taking the pushout of $L_0 \leftarrowtail G'_0 \twoheadrightarrow M$ yields the following commutative exact diagram:
\[
%\parbox{1.5in}{
\hspace{1.5cm}\begin{tikzpicture}[description/.style={fill=white,inner sep=2pt}] 
\matrix (m) [ampersand replacement=\&, matrix of math nodes, row sep=2.5em, column sep=2.5em, text height=1.25ex, text depth=0.25ex] 
{ 
L_1 \& G'_0 \& M \\
L_1 \& L_0 \& M' \\
{} \& G''_0 \& G''_0 \\
}; 
\path[->] 
(m-1-2)-- node[pos=0.5] {\footnotesize$\mbox{\bf po}$} (m-2-3) 
;
\path[>->]
(m-1-1) edge (m-1-2)
(m-2-1) edge (m-2-2)
(m-1-2) edge (m-2-2)
(m-1-3) edge (m-2-3)
;
\path[->>]
(m-1-2) edge (m-1-3)
(m-2-2) edge (m-2-3)
(m-2-2) edge (m-3-2)
(m-2-3) edge (m-3-3)
;
\path[-,font=\scriptsize]
(m-1-1) edge [double, thick, double distance=2pt] (m-2-1)
(m-3-2) edge [double, thick, double distance=2pt] (m-3-3)
;
\end{tikzpicture}. 
%}
\]
From this point, our proof differs from the one appearing in \cite{GFclosed}. Consider the induced short exact sequence of $R^{\rm o}$-modules $(M')^+ \rightarrowtail L^+_0 \twoheadrightarrow L^+_1$, where $L^+_0, L^+_1 \in \mathcal{A}$. Since $\nu$ is a relative generator in $\mathcal{A}$, there is an exact sequence $A \rightarrowtail N \twoheadrightarrow L^+_1$ with $A \in \mathcal{A}$ and $N \in \nu$. Now take the pullback of $L^+_0 \twoheadrightarrow L^+_1 \twoheadleftarrow N$ to obtain the following commutative exact diagram:
\[
%\parbox{1.5in}{
\hspace{1.5cm}\begin{tikzpicture}[description/.style={fill=white,inner sep=2pt}] 
\matrix (m) [ampersand replacement=\&, matrix of math nodes, row sep=2.5em, column sep=2.5em, text height=1.25ex, text depth=0.25ex] 
{ 
{} \& A \& A \\
(M')^+ \& A' \& N \\
(M')^+ \& L^+_0 \& L^+_1 \\
}; 
\path[->] 
(m-2-2)-- node[pos=0.5] {\footnotesize$\mbox{\bf pb}$} (m-3-3) 
;
\path[>->]
(m-1-2) edge (m-2-2)
(m-1-3) edge (m-2-3)
(m-2-1) edge (m-2-2)
(m-3-1) edge (m-3-2)
;
\path[->>]
(m-2-2) edge (m-2-3) edge (m-3-2)
(m-2-3) edge (m-3-3)
(m-3-2) edge (m-3-3)
;
\path[-,font=\scriptsize]
(m-1-2) edge [double, thick, double distance=2pt] (m-1-3)
(m-2-1) edge [double, thick, double distance=2pt] (m-3-1)
;
\end{tikzpicture}. 
%}
\]
Since $\mathcal{A}$ is closed under extensions, we have that $A' \in \mathcal{A}$. Moreover, $\Ext^1_{R^{\rm o}}(N,(M')^+) \cong \Tor^R_1(N,M')^+$, where $\Tor^R_1(N,M') = 0$ since $M \in \nu^{\top_1}$ from the assumption, $G''_0 \in \nu^\top$ by Proposition \ref{Bennis} (2), and $\nu^{\top_1}$ is closed under extensions. It follows that the central row in the previous diagram splits, and so $(M')^+ \in \mathcal{A}$ since $\mathcal{A}$ is closed under direct summands. The latter also implies that $M' \in \mathcal{L}$, and thus $M \in \mathcal{GF}_{(\mathcal{L},\nu)}$ by Proposition \ref{Bennis} (2). 
\end{enumerate}
\end{proof}

Let us point out in the following result some consequences of Theorem \ref{dualidad}, which summarizes the homological aspects of Gorenstein $(\mathcal{L},\nu)$-flat $R$-modules.

\begin{corollary} \label{resolvente} 
Let $(\mathcal{L,A})$ be a product closed bicomplete duality pair. The following assertions hold:
\begin{enumerate}
\item $\mathcal{GF}_{(\mathcal{L},\nu)}$ is a resolving class closed under direct summands. 

\item $(\mathcal{GF}_{(\mathcal{L},\nu)},\mathcal{GI}_{(\nu,\mathcal{A})})$ is a perfect duality pair. Furthermore, $(\mathcal{GF}_{(\mathcal{L},\nu)},\mathcal{GC}_{(\mathcal{L},\nu)})$ is a hereditary perfect cotorsion pair in $\mathsf{Mod}(R)$. In particular, every $R$-module has a Gorenstein $(\mathcal{L},\nu)$-flat cover. If in addition, $\mathcal{GF}_{(\mathcal{L}, \nu)}$ is closed under arbitrary direct products, then every $R$-module has a Gorenstein $(\mathcal{L},\nu)$-flat preenvelope. 

\item $\mathcal{GF}_{(\mathcal{L},\nu)}$ is a Kaplansky class closed under direct limits.

\item If $G_1 \rightarrowtail G_0 \twoheadrightarrow M$ is a short exact sequence with $G_0, G_1 \in \mathcal{GF}_{(\mathcal{L},\nu)}$ and $M \in \nu^{\top_1}$, then $M \in \mathcal{GF}_{(\mathcal{L},\nu)}$. 
%\item $\GF_{(\mathcal{GF}_{(\mathcal{L},\nu)},\nu)} = \GF_{(\mathcal{L},\nu)}$. 
\end{enumerate}
\end{corollary}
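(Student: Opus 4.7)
The plan is to leverage the Pontryagin duality of Theorem~\ref{dualidad}, which under the present hypotheses yields the equivalence $M \in \GF_{(\Le,\nu)} \iff M^+ \in \GI_{(\nu,\A)}$, and to transfer homological closure properties from $\GI_{(\nu,\A)}$ to $\GF_{(\Le,\nu)}$. The class $\GI_{(\nu,\A)}$ has good structure because, with $({}^\perp\A,\A)$ being a hereditary complete cotorsion pair in $\mathsf{Mod}(R^{\rm o})$, the pair $(\nu,\A)$ is GI-admissible; hence by the dual of \cite[Coroll.~3.3]{BMS} it is coresolving and closed under direct summands.

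I would prove (1) first. Since $(-)^+$ is exact, direct-summand preserving, and faithful, closure of $\GI_{(\nu,\A)}$ under extensions, cokernels of monomorphisms, and direct summands transfers under the duality to closure of $\GF_{(\Le,\nu)}$ under the same operations; the containment $\mathcal{P}(R) \subseteq \Le \subseteq \GF_{(\Le,\nu)}$ is automatic because $(\Le,\A)$ is perfect, finishing the resolving property. For (3), closure under direct limits is immediate from Definition~\ref{def:relativeGF}, using that $\Le$ is closed under direct limits (a consequence of perfectness), that direct limits in $\mathsf{Mod}(R)$ are exact, and that $\nu \otimes_R -$ commutes with them. For the Kaplansky property, by Proposition~\ref{prop:purity_and_Kaplansky} it suffices to check closure under pure submodules and pure quotients: a pure short exact sequence $A \rightarrowtail B \twoheadrightarrow C$ dualises to a split one $C^+ \rightarrowtail B^+ \twoheadrightarrow A^+$, so if $B \in \GF_{(\Le,\nu)}$ then $A^+$ and $C^+$ are direct summands of $B^+ \in \GI_{(\nu,\A)}$, hence lie in $\GI_{(\nu,\A)}$, and Theorem~\ref{dualidad} returns $A, C \in \GF_{(\Le,\nu)}$.

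Statement (4) then follows directly from the implication (c) $\Rightarrow$ (a) of Proposition~\ref{prop:GF_closed_under_exts}, whose hypotheses all hold: $\Le$ is a relative generator (since $\mathcal{P}(R) \subseteq \Le$), $\A$ is closed under extensions (being coresolving in the hereditary cotorsion pair $({}^\perp\A,\A)$), and $\nu$ is a relative generator in $\A$ by GI-admissibility; note that (c) itself is ensured by (1), which is strictly stronger than preresolvingness. For (2), I would feed the closure data from (1) and (3) into Proposition~\ref{prop:cotorsion_Kaplansky} with $R \in \GF_{(\Le,\nu)}$ playing the role of generator, obtaining the perfect cotorsion pair $(\GF_{(\Le,\nu)},\GC_{(\Le,\nu)})$; heredity is immediate from the resolving property. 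The duality pair assertion combines Theorem~\ref{dualidad} with the closure of $\GI_{(\nu,\A)}$ under direct summands and finite direct sums, and perfectness follows from coproduct closure (a case of direct limit closure), extension closure, and $R \in \Le$. Covers fall out of perfectness of the cotorsion pair, while the preenvelope statement under the additional product-closure hypothesis invokes the standard fact that a Kaplansky class closed under direct limits and arbitrary products is preenveloping. I do not expect a serious obstacle once Theorem~\ref{dualidad} is in hand; the most delicate bookkeeping is the purity argument for the Kaplansky property, which uses Theorem~\ref{dualidad} in both directions.
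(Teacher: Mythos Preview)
Your approach aligns with the paper's: both transfer closure properties from $\GI_{(\nu,\A)}$ via Theorem~\ref{dualidad} and invoke Proposition~\ref{prop:GF_closed_under_exts} for (4). The paper orders things differently, establishing (2) before (3): it proves coproduct closure of $\GF_{(\Le,\nu)}$ directly (the coproduct of the witnessing complexes remains exact, lies in $\Ch(\Le)$, and is $(\nu\otimes_R -)$-acyclic since tensor commutes with coproducts), concludes that $(\GF_{(\Le,\nu)},\GI_{(\nu,\A)})$ is a perfect duality pair, and then obtains both the Kaplansky property and direct-limit closure in (3) from the general theory of perfect duality pairs. Your purity argument for the Kaplansky property is essentially the Holm--J{\o}rgensen argument spelled out, so the difference is largely organizational.

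There is, however, a gap in your direct-limit step for (3). Saying it is ``immediate from Definition~\ref{def:relativeGF}'' presupposes that the witnessing exact $(\nu\otimes_R -)$-acyclic complexes for the terms $M_i$ of a direct system can be chosen compatibly with the transition maps, which is not automatic. The fix is easy with ingredients you already have: first show coproduct closure directly (as the paper does in its proof of (2)), and then observe that any directed colimit $\varinjlim M_i$ is a pure quotient of $\bigoplus M_i$, so your purity argument finishes it. This also removes the mild circularity between your derivation of coproduct closure in (2) (``a case of direct limit closure'') and your proof of direct-limit closure in (3).
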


\begin{proof} \
\begin{enumerate}
\item By Proposition \ref{prop:GF_closed_under_exts} and the fact that $\mathcal{P}(R) \subseteq \Le \subseteq \GF _{(\Le, \nu)}$, it suffices to show that $\mathcal{GF}_{(\mathcal{L},\nu)}$ is closed under direct summands and extensions. Both closure properties will follow from the duality relation between $\mathcal{GF}_{(\mathcal{L},\nu)}$ and $\mathcal{GI}_{(\nu,\mathcal{A})}$ proved in Theorem \ref{dualidad}. Indeed, suppose that $N \in \mathsf{Mod}(R)$ is a direct summand of $M \in \mathcal{GF}_{(\mathcal{L},\nu)}$, that is, there is a split monomorphism $N \rightarrowtail M$. Applying the (contravariant) functor $(-)^+$ yields a split epimorphism $M^+ \twoheadrightarrow N^+$, and so $N^+$ is a direct summand of $M^+ \in \mathcal{GI}_{(\nu,\mathcal{A})}$ (by Theorem \ref{dualidad}). Being $(\nu,\mathcal{A})$ a GI-admissible pair, we know from the dual of \cite[Coroll. 3.33]{BMS} that $\mathcal{GI}_{(\nu,\mathcal{A})}$ is closed under direct summands, and so $N^+ \in \mathcal{GI}_{(\nu,\mathcal{A})}$. Again by Theorem \ref{dualidad}, we conclude that $N \in \mathcal{GF}_{(\mathcal{L},\nu)}$. The remaining closure property follows similarly, using the fact that $(-)^+$ is an exact functor along with Theorem \ref{dualidad} and the dual of \cite[Coroll. 3.33]{BMS}. 
  
\item First, the equivalence between $M \in \mathcal{GF}_{(\mathcal{L},\nu)}$ and $M^+ \in \mathcal{GI}_{(\nu,\mathcal{A})}$ follows by Theorem \ref{dualidad}. On the other hand, $\mathcal{GI}_{(\nu,\mathcal{A})}$ is closed under direct summands and finite direct sums by the dual of \cite[Coroll. 3.33]{BMS}, since $(\nu,\mathcal{A})$ is a GI-admissible pair. Hence, $(\mathcal{GF}_{(\mathcal{L},\nu)},\mathcal{GI}_{(\nu,\mathcal{A})})$ is a duality pair. Clearly, $\mathcal{GF}_{(\mathcal{L},\nu)}$ contains $R$, and is closed under extensions by part (1). Finally, consider a family $\{ M_i \text{ : } i \in I \}$ of $R$-modules in $\mathcal{GF}_{(\mathcal{L},\nu)}$. For the coproduct $\bigoplus_{i \in I} M_i$, one can easily note that $\bigoplus_{i \in I} M_i \in \nu^\top$. On the other hand, each $M_i$ admits a $(\nu \otimes_R -)$-acyclic $\mathcal{L}$-coresolution 
\[
\eta_i \colon M_i \rightarrowtail L^0_i \to L^1_i \to \cdots.
\] 
Since $\mathcal{L}$ is closed under coproducts, and the coproduct of exact complexes of $R$-modules is again exact, we have that 
\[
\bigoplus \eta_i \colon \bigoplus_{i \in I} M_i \rightarrowtail \bigoplus_{i \in I} L^0_i \to \bigoplus_{i \in I} L^1_i \to \cdots
\] 
is an $\mathcal{L}$-coresolution of $\bigoplus_{i \in I} M_i$. Moreover, since tensor products preserve coproducts, the complexes $A \otimes_R (\bigoplus \eta_i)$ and $\bigoplus (A \otimes_R \eta_i)$ are isomorphic, where each complex $A \otimes_R \eta_i$ is exact for every $A \in \nu$. It then follows that $\bigoplus \eta_i$ is $(\nu \otimes_R -)$-acyclic. Again by Proposition \ref{Bennis}, we obtain that $\bigoplus_{i \in I} M_i \in \mathcal{GF}_{(\mathcal{L},\nu)}$. 

\item It is a consequence of part (2) and Proposition \ref{prop:purity_and_Kaplansky}.

\item Follows by Proposition \ref{prop:GF_closed_under_exts}. 
%\item Follows by \cite[Thm. 2.7]{WangDi} since $\mathcal{GF}_{(\mathcal{L}, \nu)}$ is closed under extensions. 
\end{enumerate}
\end{proof}

%%%%%%%%%%%%%%%%%%%%%%%%%%%%%%%%%%%%%%%%%%%%%%%%
%%%%%%%%%%%%%%%%%%%%%%%%%%%%%%%%%%%%%%%%%%%%%%%%
%%%%%%%%%%%%%%%%%%%%%%%%%%%%%%%%%%%%%%%%%%%%%%%%
%%%%%%%%%%%%%%%%%%%%%%%%%%%%%%%%%%%%%%%%%%%%%%%%

\section{Gorenstein flat dimensions relative to duality pairs}\label{sec:relative_G-flat_dimensions} 

We are now interested in studying homological dimensions constructed from Gorenstein $(\mathcal{L,A})$-flat modules.

\begin{definition}
Given $M \in \mathsf{Mod}(R)$, the \textbf{Gorenstein $\bm{(\mathcal{L,A})}$-flat dimension} of $M$ is defined as 
\[
\Gfd _{(\mathcal{L,A})}(M) :=  \resdim_{\mathcal{GF}_{(\mathcal{L,A})}}(M).
\]
\end{definition}

In this section, we focus on Gorenstein flat dimensions relative to $(\mathcal{L},\nu)$.

\begin{proposition}\label{sequences}
If $\GF _{(\Le,\nu)}$ is closed under extensions, then for every $M \in \mathsf{Mod}(R)$ with finite Gorenstein $(\mathcal{L},\nu)$-flat dimension, the following statements are true:
\begin{enumerate}
\item There exists a short exact sequence $K \rightarrowtail G \twoheadrightarrow M$ with $G \in \GF_{(\Le,\nu)}$ and $\resdim_{\Le}(K) = \Gfd_{(\mathcal{L},\nu)}(M) - 1$. 

\item Suppose in addition that $\mathcal{L}$ is a relative generator in $\mathsf{Mod}(R)$ closed under extensions. If either $\Le$ is closed under epikernels or $\mathcal{L}^+ \subseteq \mathcal{A}$, then there exists a short exact sequence $M \rightarrowtail H \twoheadrightarrow G'$ with $\resdim_{\Le}(H) = \Gfd_{(\mathcal{L},\nu)}(M)$ and $G' \in \GF_{(\Le,\nu)}$. 
\end{enumerate}
\end{proposition}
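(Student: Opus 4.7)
My plan is to prove both parts by induction on $n := \Gfd_{(\mathcal{L},\nu)}(M)$, relying on Proposition~\ref{Bennis}(1) (which unconditionally provides, for any $G \in \mathcal{GF}_{(\mathcal{L},\nu)}$, a short exact sequence $G \rightarrowtail L \twoheadrightarrow G''$ with $L \in \mathcal{L}$ and $G'' \in \mathcal{GF}_{(\mathcal{L},\nu)}$) together with a single pushout construction. The extensions-closure of $\mathcal{GF}_{(\mathcal{L},\nu)}$ and the containment $\mathcal{L} \subseteq \mathcal{GF}_{(\mathcal{L},\nu)}$ carry the dimension-shifting arguments.

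For part (1), the base case $n=0$ is trivial (take $G=M$ and $K=0$). For $n \geq 1$, I start from a short exact sequence $Z \rightarrowtail G_0 \twoheadrightarrow M$ coming from the first step of a $\mathcal{GF}_{(\mathcal{L},\nu)}$-resolution of $M$; dimension shifting together with the minimality of $n$ forces $\Gfd_{(\mathcal{L},\nu)}(Z) = n-1$. The induction hypothesis supplies $K' \rightarrowtail \widetilde{G} \twoheadrightarrow Z$ with $\widetilde{G} \in \mathcal{GF}_{(\mathcal{L},\nu)}$ and $\resdim_{\mathcal{L}}(K')=n-2$, and Proposition~\ref{Bennis}(1) applied to $\widetilde{G}$ produces $\widetilde{G} \rightarrowtail L \twoheadrightarrow G''$ with $L \in \mathcal{L}$ and $G'' \in \mathcal{GF}_{(\mathcal{L},\nu)}$. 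I then form the pushout $P$ of the monomorphism $\widetilde{G} \rightarrowtail L$ along the composite $\widetilde{G} \twoheadrightarrow Z \hookrightarrow G_0$ (whose kernel is precisely $K'$). The routine pushout calculation yields simultaneously the short exact sequences $G_0 \rightarrowtail P \twoheadrightarrow G''$ (so $P \in \mathcal{GF}_{(\mathcal{L},\nu)}$ by extensions-closure) and $L/K' \rightarrowtail P \twoheadrightarrow M$. Setting $G := P$ and $K := L/K'$, the sequence $K' \rightarrowtail L \twoheadrightarrow K$, combined with an $\mathcal{L}$-resolution of $K'$, gives $\resdim_{\mathcal{L}}(K) \leq n-1$; equality is forced by contrapositive, since $\resdim_{\mathcal{L}}(K) \leq n-2$ together with $K \rightarrowtail G \twoheadrightarrow M$ would splice into a $\mathcal{GF}_{(\mathcal{L},\nu)}$-resolution of $M$ of length $n-1$.

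For part (2), apply part (1) to obtain $K \rightarrowtail G \twoheadrightarrow M$ with $G \in \mathcal{GF}_{(\mathcal{L},\nu)}$ and $\resdim_{\mathcal{L}}(K)=n-1$, then Proposition~\ref{Bennis}(1) applied to $G$ gives $G \rightarrowtail L \twoheadrightarrow G'$ with $L \in \mathcal{L}$ and $G' \in \mathcal{GF}_{(\mathcal{L},\nu)}$. The composite $K \rightarrowtail G \rightarrowtail L$ is a monomorphism, and taking cokernels by $K$ delivers the desired $M \rightarrowtail H \twoheadrightarrow G'$ with $H := L/K$; the sequence $K \rightarrowtail L \twoheadrightarrow H$ immediately gives $\resdim_{\mathcal{L}}(H) \leq n$.

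The main obstacle will be the reverse inequality $\resdim_{\mathcal{L}}(H) \geq n$: one must show that a strict bound $\resdim_{\mathcal{L}}(H) < n$ forces $\Gfd_{(\mathcal{L},\nu)}(M) < n$, which is a kernel-direction dimension shift through $M \rightarrowtail H \twoheadrightarrow G'$. This is equivalent to $\mathcal{GF}_{(\mathcal{L},\nu)}$ being closed under kernels of epimorphisms between its own objects, and is not automatic from extensions-closure alone; this is exactly where the dichotomy in the hypotheses of part (2) enters. Under $\mathcal{L}$ closed under epikernels, I can splice an $\mathcal{L}$-resolution of $H$ with $M \rightarrowtail H \twoheadrightarrow G'$ (using the relative-generator assumption to replace $G'$ by a short $\mathcal{L}$-resolution and stay inside $\mathcal{L}$ when taking epikernels) to exhibit a bounded $\mathcal{GF}_{(\mathcal{L},\nu)}$-resolution of $M$. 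Under $\mathcal{L}^+ \subseteq \mathcal{A}$, the Pontryagin duality of Theorem~\ref{dualidad} transports the entire question to the coresolving class $\mathcal{GI}_{(\nu,\mathcal{A})}$, where the dual dimension shift is automatic from the GI-admissible-pair formalism already in place.
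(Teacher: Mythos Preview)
Your inductive pushout argument for part (1) is correct and is essentially a self-contained proof of the result the paper simply cites from \cite[Thm.~2.8(a)]{BMS}. Your construction of the sequence $M \rightarrowtail H \twoheadrightarrow G'$ in part (2), together with the bound $\resdim_{\mathcal{L}}(H) \leq n$, is also fine and matches what the paper obtains by the same citation.

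The gap is in the reverse inequality $\resdim_{\mathcal{L}}(H) \geq n$, and it affects both branches of your dichotomy. For the branch $\mathcal{L}^+ \subseteq \mathcal{A}$, you propose to transport the problem to $\mathcal{GI}_{(\nu,\mathcal{A})}$ via Theorem~\ref{dualidad}. But the implication you need there is $M^+ \in \mathcal{GI}_{(\nu,\mathcal{A})} \Rightarrow M \in \mathcal{GF}_{(\mathcal{L},\nu)}$, and that direction of Theorem~\ref{dualidad} requires $\mathcal{A}^+ \subseteq \mathcal{L}$, that $\mathcal{L}$ be preenveloping, and that $(\nu,\mathcal{A})$ be GI-admissible --- none of which is assumed in Proposition~\ref{sequences}. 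The paper does not go through duality here at all: it invokes Proposition~\ref{prop:GF_closed_under_exts} (the implication (b) $\Rightarrow$ (c)), whose hypotheses are \emph{exactly} the ones available ($\mathcal{L}$ a relative generator closed under extensions, $\mathcal{L}^+ \subseteq \mathcal{A}$, and $\mathcal{GF}_{(\mathcal{L},\nu)}$ closed under extensions), to conclude directly that $\mathcal{GF}_{(\mathcal{L},\nu)}$ is closed under epikernels. Once you have that, a single pullback of $M \rightarrowtail H$ along the first step $L_0 \twoheadrightarrow H$ of a short $\mathcal{L}$-resolution of $H$ produces $L'_0 \in \mathcal{GF}_{(\mathcal{L},\nu)}$ sitting over $M$ with kernel of $\mathcal{L}$-resolution dimension $< n-1$, contradicting $\Gfd_{(\mathcal{L},\nu)}(M) = n$.

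For the branch where $\mathcal{L}$ is closed under epikernels, your sketch (``splice an $\mathcal{L}$-resolution of $H$ with $M \rightarrowtail H \twoheadrightarrow G'$ \dots\ replace $G'$ by a short $\mathcal{L}$-resolution'') does not parse: $G'$ is only in $\mathcal{GF}_{(\mathcal{L},\nu)}$ and need not have finite $\mathcal{L}$-resolution dimension, so there is no short $\mathcal{L}$-resolution to substitute. The paper handles this branch by adapting the argument of \cite[Lem.~2.2]{Bennis10}, which constructs the sequence and its exact dimension simultaneously; the epikernel closure of $\mathcal{L}$ is used there to make $\resdim_{\mathcal{L}}(-)$ behave well under syzygies in $\mathcal{L}$ (in the spirit of Proposition~\ref{prop:stability_in_ses}), rather than to pass through epikernel closure of $\mathcal{GF}_{(\mathcal{L},\nu)}$.
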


\begin{proof} 
Part (1) follows from \cite[Thm. 2.8 (a)]{BMS}. We split the proof of Part (2) into two cases:
\begin{itemize}
\item In the case where $\mathcal{L}$ is a relative generator in $\mathsf{Mod}(R)$ closed under epikernels, note that the proof given in \cite[Lem. 2.2]{Bennis10} can be adapted replacing $\resdim_{\Le}(-)$ by the flat dimension $ \mathrm{fd}(-)$, and using the fact that for every exact sequence $K' \rightarrowtail F \twoheadrightarrow K$ with $F \in \Le$ and $\resdim_{\Le}(K') < \infty$, one has that $\resdim_{\Le}(K) = \resdim_{\Le}(K') +1$.

\item Now suppose that $\mathcal{L}$ is a relative generator in $\mathsf{Mod}(R)$ closed under extensions with $\mathcal{L}^+ \subseteq \mathcal{A}$. By \cite[Thm. 2.8 (a)]{BMS}, there is a short exact sequence $M \rightarrowtail H \twoheadrightarrow G'$ with $G' \in \mathcal{GF}_{(\mathcal{L},\nu)}$ and $\resdim_{\mathcal{L}}(H) \leq \Gfd_{(\mathcal{L},\nu)}(M)$. So it is only left to show that $\resdim_{\mathcal{L}}(H) = \Gfd_{(\mathcal{L},\nu)}(M)$. Let us suppose that $\resdim_{\mathcal{L}}(H) < \Gfd_{(\mathcal{L},\nu)}(M) = n$. Then, there is an exact sequence 
\[
\hspace{2.25cm} L_k \rightarrowtail L_{k-1} \to \cdots \to L_1 \to L_0 \twoheadrightarrow H,
\]      
where $k < n$ and with $L_i \in \mathcal{L}$ for every $0 \leq i \leq k$. Now taking the pullback of $M \rightarrowtail H \twoheadleftarrow L_0$ yields the following commutative exact diagram:
\[
\hspace{3cm}\begin{tikzpicture}[description/.style={fill=white,inner sep=2pt}]
\matrix (m) [matrix of math nodes, row sep=2.5em, column sep=2.5em, text height=1.25ex, text depth=0.25ex]
{ 
L_k & L_k & {} \\
L_{k-1} & L_{k-1} & {} \\
\vdots & \vdots & {} \\
L_1 & L_1 & {} \\
L'_0 & L_0 & G' \\
M & H & G' \\
};
\path[->]
(m-5-1)-- node[pos=0.5] {\footnotesize$\mbox{\bf pb}$} (m-6-2)
(m-2-1) edge (m-3-1) (m-2-2) edge (m-3-2)
(m-3-1) edge (m-4-1) (m-3-2) edge (m-4-2)
(m-4-1) edge (m-5-1) (m-4-2) edge (m-5-2)
;
\path[>->]
(m-1-1) edge (m-2-1)
(m-1-2) edge (m-2-2)
(m-5-1) edge (m-5-2)
(m-6-1) edge (m-6-2)
;
\path[->>]
(m-5-1) edge (m-6-1)
(m-5-2) edge (m-6-2)
(m-5-2) edge (m-5-3)
(m-6-2) edge (m-6-3)
;
\path[-,font=\scriptsize]
(m-1-1) edge [double, thick, double distance=2pt] (m-1-2)
(m-2-1) edge [double, thick, double distance=2pt] (m-2-2)
(m-4-1) edge [double, thick, double distance=2pt] (m-4-2)
(m-5-3) edge [double, thick, double distance=2pt] (m-6-3)
;
\end{tikzpicture}.
\]
By Proposition \ref{prop:GF_closed_under_exts}, we have that $\mathcal{GF}_{(\mathcal{L},\nu)}$ is closed under epikernels, since $\mathcal{GF}_{(\mathcal{L},\nu)}$ is closed under extensions. Then $L'_0 \in \mathcal{GF}_{(\mathcal{L},\nu)}$. It follows that $\Gfd_{(\mathcal{L},\nu)}(M) < n$, getting a contradiction. 
\end{itemize}
\end{proof}

The following result extends the duality between $\mathcal{GF}_{(\mathcal{L},\nu)}$ and $\mathcal{GI}_{(\nu,\mathcal{A})}$ (Theorem \ref{dualidad}) to their corresponding dimensions. Recall that if $(\mathcal{X,Y})$ is a GI-admissible pair, the \emph{$(\mathcal{X,Y})$-Gorenstein injective dimension} of $N \in \mathsf{Mod}(R^{\rm o})$ is defined as
\[
\Gid_{(\mathcal{X,Y})}(N) := \coresdim_{\mathcal{GI}_{(\mathcal{X,Y})}(R^{\rm o})}(N).
\]

\begin{proposition} \label{iguales}
For every $M \in \mathsf{Mod}(R)$ and every product closed bicomplete duality pair $(\mathcal{L}, \mathcal{A})$, the equality $\Gid_{(\nu,\mathcal{A})}(M^+) = \Gfd_{(\mathcal{L},\nu)}(M)$ holds true.
\end{proposition}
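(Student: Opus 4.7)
The proof splits into two inequalities, both leveraging Theorem~\ref{dualidad} (the Pontryagin duality between $\GF_{(\mathcal{L},\nu)}$ and $\GI_{(\nu,\mathcal{A})}$), together with a stability property of the $(\nu,\mathcal{A})$-Gorenstein injective dimension. For the easy direction $\Gid_{(\nu,\mathcal{A})}(M^+) \leq \Gfd_{(\mathcal{L},\nu)}(M)$, I would take a finite $\GF_{(\mathcal{L},\nu)}$-resolution of $M$ (nothing to prove if $\Gfd_{(\mathcal{L},\nu)}(M) = \infty$), apply the exact Pontryagin dual functor, and use Theorem~\ref{dualidad} to recognise each component as $(\nu,\mathcal{A})$-Gorenstein injective, producing a $\GI_{(\nu,\mathcal{A})}$-coresolution of $M^+$ of the desired length.

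For the reverse inequality, set $n := \Gid_{(\nu,\mathcal{A})}(M^+)$ (assuming $n < \infty$), pick any $\mathcal{L}$-resolution $\cdots \to L_1 \to L_0 \twoheadrightarrow M$ (possible since $\mathcal{P}(R) \subseteq \mathcal{L}$), and set $K := \Omega^{\mathcal{L}}_n(M)$. It suffices to prove $K \in \GF_{(\mathcal{L},\nu)}$, which by Theorem~\ref{dualidad} is equivalent to $K^+ \in \GI_{(\nu,\mathcal{A})}$. Dualising the truncation of the resolution yields an exact sequence
$$0 \to M^+ \to L_0^+ \to \cdots \to L_{n-1}^+ \to K^+ \to 0$$
with each $L_i^+ \in \mathcal{A}$. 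I would then establish $\mathcal{A} \subseteq \GI_{(\nu,\mathcal{A})}$ as a preliminary fact: given $A \in \mathcal{A}$, splice a $\nu$-resolution of $A$ (available because the GI-admissible structure of $(\nu,\mathcal{A})$ makes $\nu = \nu \cap \mathcal{A}$ a relative generator in $\mathcal{A}$) with an $\mathcal{A}$-coresolution of $A$ (available because $\mathcal{A}$ is closed under extensions and is a relative cogenerator in $\mathsf{Mod}(R^{\rm o})$) to obtain an exact complex in $\Ch(\mathcal{A})$, whose $\Hom_{R^{\rm o}}(\nu,-)$-acyclicity follows from $\Ext^{\geq 1}_{R^{\rm o}}(\nu,\mathcal{A}) = 0$. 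Hence the displayed sequence is a length-$n$ $\GI_{(\nu,\mathcal{A})}$-coresolution of $M^+$ whose terminal cosyzygy is $K^+$.

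The final step invokes the dual of Proposition~\ref{prop:Frobenius_stable} applied to $(\GI_{(\nu,\mathcal{A})},\nu)$, which is a right Frobenius pair: $\GI_{(\nu,\mathcal{A})}$ is right thick and admits $\nu$ as a relative generator by the duals of \cite[Corolls.~3.3, 3.25 and 3.33]{BMS}, while $\Ext^{\geq 1}_{R^{\rm o}}(\nu,\GI_{(\nu,\mathcal{A})}) = 0$ because every realising complex for an object of $\GI_{(\nu,\mathcal{A})}$ is $\Hom_{R^{\rm o}}(\nu,-)$-acyclic, so $\GI_{(\nu,\mathcal{A})} \subseteq \nu^{\perp}$. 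Stability of $\coresdim_{\GI_{(\nu,\mathcal{A})}}(-) = \Gid_{(\nu,\mathcal{A})}(-)$ then forces $K^+ \in \GI_{(\nu,\mathcal{A})}$, and Theorem~\ref{dualidad} gives $K \in \GF_{(\mathcal{L},\nu)}$, so $\Gfd_{(\mathcal{L},\nu)}(M) \leq n$. The main obstacle is precisely this stability step: one must both secure $\mathcal{A} \subseteq \GI_{(\nu,\mathcal{A})}$ and recast the GI-admissible structure of $(\nu,\mathcal{A})$ as a right Frobenius pair so that \cite[Prop.~2.14]{BMS} dualises to apply here; once these are in place, the remainder of the argument is a direct application of the exactness of $(-)^+$ and Theorem~\ref{dualidad}.
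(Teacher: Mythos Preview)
Your proposal is correct and follows the same overall strategy as the paper: both directions hinge on the exactness of $(-)^+$, Theorem~\ref{dualidad}, and the stability of $\Gid_{(\nu,\mathcal{A})}(-)$. The paper packages the stability step as the dual of \cite[Coroll.~4.10]{BMS}; you unpack it as the right Frobenius pair $(\GI_{(\nu,\mathcal{A})},\nu)$ together with the dual of Proposition~\ref{prop:Frobenius_stable}, which amounts to the same thing.

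The one genuine difference is in the reverse inequality. The paper resolves $M$ by objects of $\GF_{(\mathcal{L},\nu)}$, invoking the perfect cotorsion pair of Corollary~\ref{resolvente} to guarantee such a resolution exists, and then dualizes directly into $\GI_{(\nu,\mathcal{A})}$. You instead resolve $M$ by objects of $\mathcal{L}$ (available simply because $\mathcal{P}(R)\subseteq\mathcal{L}$), dualize into $\mathcal{A}$, and then insert the observation $\mathcal{A}\subseteq\GI_{(\nu,\mathcal{A})}$. Your route is marginally more self-contained, since it does not appeal to Corollary~\ref{resolvente}; the paper's route is marginally shorter, since it skips the verification of $\mathcal{A}\subseteq\GI_{(\nu,\mathcal{A})}$. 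Either way the argument goes through, and your handling of the infinite cases (each inequality is vacuous when the relevant side is $\infty$) is in fact a bit cleaner than the paper's contradiction argument at the end.
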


\begin{proof}
Let us first analyze the case where $M \in \GF_{(\Le,\nu)}^\wedge$, and say $\Gfd_{(\Le,\nu)}(M) = n$. Then, $M$ admits a finite Gorenstein $(\mathcal{L},\nu)$-flat resolution of length $n$. By Theorem \ref{dualidad} and the exactness of $(-)^+$, we can note that $M^+$ admits a finite $(\nu,\mathcal{A})$-Gorenstein injective coresolution of length $n$. Thus, $\Gid_{(\nu,\mathcal{A})}(M^+) \leq n$, and we can set $\Gid_{(\nu,\A)}(M^+) = m$. Since $(\mathcal{GF}_{(\mathcal{L},\nu)},\mathcal{GC}_{(\mathcal{L},\nu)})$ is a perfect cotorsion pair by Corollary \ref{resolvente}, we can construct a partial Gorenstein $(\mathcal{L},\nu)$-flat resolution
\[
\Omega^{\mathcal{GF}_{(\mathcal{L},\nu)}}_{m}(M) \rightarrowtail G_{m-1} \to \cdots \to G_0 \twoheadrightarrow M 
\]
with $G_i \in \GF _{(\Le,\nu)}$ for every $1 \leq i \leq n-1$. Again by Theorem \ref{dualidad} and the exactness of $(-)^+$, we obtain the exact sequence
\[
 M^{+} \rightarrowtail  G_0 ^{+} \to \cdots \to G_{m-1}^{+} \twoheadrightarrow (\Omega^{\mathcal{GF}_{(\mathcal{L},\nu)}}_{m}(M))^{+}, 
\]
with $G_i ^+ \in \GI_{(\nu,\A)}$ for every $1 \leq i \leq m-1$. Moreover, since $\nu$ is closed under direct summands, by the dual of \cite[Coroll. 4.10]{BMS} we have that $(\Omega^{\mathcal{GF}_{(\mathcal{L},\nu)}}_{m}(M))^+ \in \GI_{(\nu,\A)}$. This in turn implies that $\Omega^{\mathcal{GF}_{(\mathcal{L},\nu)}}_{m}(M) \in \GF_{(\Le,\nu)}$, and hence $\Gfd_{(\mathcal{L},\nu)}(M) \leq m$.

Finally, from the previous reasoning, it is clear that $\Gfd _{(\Le,\nu)}(M) = \infty$ and $\Gid_{(\nu,\A)}(M^{+}) < \infty$ (or $\Gid _{(\nu,\A)}(M^+) = \infty$ and $\Gfd _{(\Le,\nu)}(M) < \infty$) combined imply a contradiction. 
\end{proof}

Theorem \ref{theo:finiteness_GFdim} below gives a functorial description of the Gorenstein $(\mathcal{L},\nu)$-flat dimension and shows its stability (recall Definition \ref{def:stable}). In order to prove it, it will be useful to recall the following functorial characterization of relative Gorenstein injective dimensions, which follows from the dual of \cite[Corolls. 4.10 \& 4.11 (a)]{BMS}.

\begin{lemma} \label{GI-admisible}
Let $(\X, \Y) $ be a GI-admissible pair in a Grothendieck category $\mathcal{G}$ such that $\X \cap \Y$ is closed under direct summands. For every $N \in \mathcal{G}$ with $\Gid_{(\mathcal{X,Y})}(N) < \infty$ and $n \in \mathbb{Z}_{\geq 0}$, the following statements are equivalent:
\begin{enumerate}
\item[(a)] $\Gid _{(\X, \Y)}(N) \leq n$.

\item[(b)] $\Ext^{\geq n+1}_{\mathcal{G}}((\X \cap \Y)^{\vee},N) = 0$.

\item[(c)] $\Ext^{\geq n+1}_{\mathcal{G}}(\X \cap \Y,N) = 0$.

\item[(d)] If $N \rightarrowtail G^0 \to G^1  \to \cdots \to G^{n-1} \twoheadrightarrow K^n $ is an exact sequence with $G^i \in \GI_{(\X,\Y)}$ for every $1 \leq i \leq n-1$, then $K^n \in \GI_{(\X, \Y)}$.
\end{enumerate} 
\end{lemma}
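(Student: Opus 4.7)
The plan is to dualize the proofs of \cite[Corolls. 4.10 \& 4.11(a)]{BMS}. All the hypotheses needed for the dualization are in place: $(\X,\Y)$ is a GI-admissible pair, $\X \cap \Y$ is closed under direct summands, and from the dual of \cite[Coroll. 3.33]{BMS} we know that $\GI_{(\X,\Y)}$ is closed under extensions and direct summands, while the dual of \cite[Coroll. 3.25]{BMS} gives that $\X \cap \Y$ is a relative generator in $\GI_{(\X,\Y)}$.

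The implication (b) $\Rightarrow$ (c) is trivial since $\X \cap \Y \subseteq (\X \cap \Y)^\vee$. The equivalence (a) $\Leftrightarrow$ (d) would follow by a syzygy argument: the forward direction uses a generalized Schanuel-type comparison between an arbitrary partial coresolution and a length-$\leq n$ coresolution provided by (a), invoking the closure properties of $\GI_{(\X,\Y)}$; the converse is immediate from the definition of $\Gid_{(\X,\Y)}$. For (a) $\Rightarrow$ (b), I would first upgrade the Ext-orthogonality $\Ext^{\geq 1}_{\mathcal{G}}(\X \cap \Y, \GI_{(\X,\Y)}) = 0$ (which is part of the definition of $\GI_{(\X,\Y)}$) to $\Ext^{\geq 1}_{\mathcal{G}}((\X \cap \Y)^\vee, \GI_{(\X,\Y)}) = 0$ by induction on the $(\X \cap \Y)$-coresolution length of the first argument, together with the long exact sequence of Ext. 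Then, given a $\GI_{(\X,\Y)}$-coresolution of $N$ of length at most $n$, dimension shifting yields $\Ext^{\geq n+1}_{\mathcal{G}}(W,N) = 0$ for every $W \in (\X \cap \Y)^\vee$.

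The main obstacle is (c) $\Rightarrow$ (d), which is where the standing assumption $\Gid_{(\X,\Y)}(N) < \infty$ is essential. Setting $m := \Gid_{(\X,\Y)}(N)$, one may assume $m > n$ (otherwise there is nothing to prove). Given a partial coresolution $N \rightarrowtail G^0 \to \cdots \to G^{n-1} \twoheadrightarrow K^n$ as in the hypothesis of (d), the idea is to extend it to a $\GI_{(\X,\Y)}$-coresolution of $N$ of length $m$; the tail then provides a finite $\GI_{(\X,\Y)}$-coresolution of $K^n$ of length $m-n$. Applying $\Ext^\bullet_{\mathcal{G}}(\X \cap \Y, -)$ to the exact sequence above and dimension shifting against hypothesis (c) yields $\Ext^{\geq 1}_{\mathcal{G}}(\X \cap \Y, K^n) = 0$. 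Finally, the dual of the characterization from \cite{BMS} — namely, that an object of finite $(\X,\Y)$-Gorenstein injective dimension whose $\Ext^{\geq 1}_{\mathcal{G}}(\X \cap \Y, -)$ vanishes lies in $\GI_{(\X,\Y)}$ — forces $K^n \in \GI_{(\X,\Y)}$, so $\Gid_{(\X,\Y)}(N) \leq n$. The delicate point throughout is that one must carefully transport the cogenerator/generator roles across the duality, but because the statements of \cite[Corolls. 4.10 \& 4.11(a)]{BMS} are already formulated in a self-dual abelian framework, no essentially new argument should be required.
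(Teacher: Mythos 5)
Your proposal is correct and follows the same route the paper takes: the authors prove Lemma~\ref{GI-admisible} simply by invoking the duals of \cite[Corolls.~4.10~\&~4.11(a)]{BMS}, and your sketch is exactly a dualization of those arguments, including the correct bookkeeping of where the standing hypothesis $\Gid_{(\mathcal{X,Y})}(N)<\infty$ enters (namely in $(c)\Rightarrow(d)$) and of the supporting facts from \cite[Corolls.~3.25~\&~3.33]{BMS} about $\mathcal{GI}_{(\mathcal{X,Y})}$. No gap.
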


\begin{theorem}\label{theo:finiteness_GFdim}
Consider the statements below for $M \in \mathsf{Mod}(R)$ with $\Gfd_{(\mathcal{L},\nu)}(M) < \infty$ and $n \in \mathbb{Z}_{\geq 0}$:
\begin{enumerate}
\item[(a)] $\Gfd_{(\Le,\nu)}(M) \leq n$.

\item[(b)] $\Tor^R_{\geq n+1}(\nu^\vee, M) = 0$.

\item[(c)] $\Tor^R_{\geq n+1}(\nu, M) = 0$.

\item[(d)] If $K_n \rightarrowtail G_{n-1} \to \cdots \to G_0 \twoheadrightarrow M$ is an exact sequence with $G_i \in \GF_{(\Le,\nu)}$ for every $1 \leq i \leq n-1$, then $K_n \in \GF_{(\Le,\nu)}$.
\end{enumerate}
The following assertions hold true whenever $\mathcal{L}^+ \subseteq \mathcal{A}$:
\begin{enumerate}
\item If $\mathcal{A}^+ \subseteq \mathcal{L}$, $\mathcal{L}$ is preenveloping and $(\nu,\mathcal{A})$ is a GI-admissible pair, then (a) $\Rightarrow$ (b) $\Leftrightarrow$ (c).
 
\item If $\mathcal{L}$ is a relative generator in $\mathsf{Mod}(R)$ closed under extensions, coproducts and direct summands, and $\GF_{(\Le,\nu)}$ is closed under extensions, then (a) $\Leftrightarrow$ (d). If in addition $(\mathcal{L,A})$ is a duality pair and $(\nu,\mathcal{A})$ is a GI-admissible pair, then (b) $\Rightarrow$ (a) as well. 
\end{enumerate} 
In particular, if $(\mathcal{L,A})$ is a product closed bicomplete duality pair, then the four assertions are equivalent and the Gorenstein $(\mathcal{L},\nu)$-flat dimension is stable. 
\end{theorem}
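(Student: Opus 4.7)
The guiding idea is to transfer each statement about $\Gfd_{(\Le,\nu)}(M)$ into a statement about $\Gid_{(\nu,\A)}(M^+)$ through the Pontryagin functor $(-)^+$, so that Lemma \ref{GI-admisible} supplies the analogous four-way characterization on the right-module side. The two reciprocal tools are Theorem \ref{dualidad} (which under $\Le^+ \subseteq \A$ sends any $\GF_{(\Le,\nu)}$-resolution of $M$ to a $\GI_{(\nu,\A)}$-coresolution of $M^+$), and the natural isomorphism $\Tor^R_i(X,M)^+ \cong \Ext^i_{R^{\rm o}}(X,M^+)$ combined with the purity $A \hookrightarrow A^{++}$, which ensures $\Tor^R_i(X,M) = 0$ iff its Pontryagin dual vanishes. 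Together these translate $\Tor$-vanishing conditions in $\mathsf{Mod}(R)$ into $\Ext$-vanishing conditions in $\mathsf{Mod}(R^{\rm o})$.

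For part (1), the implication (a) $\Rightarrow$ (b) proceeds by dualizing a $\GF_{(\Le,\nu)}$-resolution of $M$ of length $\leq n$ via $(-)^+$ and Theorem \ref{dualidad}(1); this yields $\Gid_{(\nu,\A)}(M^+) \leq n$, so Lemma \ref{GI-admisible}(b) (applicable because $(\nu,\A)$ is GI-admissible and $\nu \cap \A = \nu$ is closed under direct summands) gives $\Ext^{\geq n+1}_{R^{\rm o}}(\nu^\vee,M^+) = 0$, which converts back to (b) via the adjunction-plus-purity trick. The equivalence (b) $\Leftrightarrow$ (c) is trivial in one direction from $\nu \subseteq \nu^\vee$ and follows in the other by dimension-shifting induction on the length of a $\nu$-coresolution of a given $X \in \nu^\vee$, splitting it into short exact sequences and running the long $\Tor$-sequence against $M$.

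For part (2), the implication (d) $\Rightarrow$ (a) is immediate, and (a) $\Rightarrow$ (d) follows from the standard kernel-comparison for resolving classes: under the hypotheses of (2), $\GF_{(\Le,\nu)}$ contains $\mathcal{P}(R)$, is closed under extensions by hypothesis, and is closed under epikernels by Proposition \ref{prop:GF_closed_under_exts}, hence is resolving. The delicate implication is (b) $\Rightarrow$ (a), which I would prove by strong induction on $m = \Gfd_{(\Le,\nu)}(M)$ (finite by hypothesis). Proposition \ref{sequences}(1) furnishes $K \rightarrowtail G \twoheadrightarrow M$ with $G \in \GF_{(\Le,\nu)}$ and $\resdim_{\Le}(K) = m-1$, so $\Gfd_{(\Le,\nu)}(K) \leq m-1$. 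Since $G \in \nu^\top$ by Proposition \ref{Bennis}(2), a dimension-shift on any $\nu$-coresolution of $X \in \nu^\vee$ gives $\Tor^R_{\geq 1}(\nu^\vee, G) = 0$, and the long $\Tor$-sequence yields $\Tor^R_i(\nu^\vee, K) \cong \Tor^R_{i+1}(\nu^\vee, M)$ for $i \geq 1$; thus hypothesis (b) descends to $K$ with the index shifted by one. The inductive hypothesis then controls $\Gfd_{(\Le,\nu)}(K)$, and for $n \geq 1$ the standard short-exact-sequence dimension inequality closes the argument; for $n = 0$ one instead uses Proposition \ref{prop:GF_closed_under_exts} to close $M$ into $\GF_{(\Le,\nu)}$ from the extension $K \rightarrowtail G \twoheadrightarrow M$, which is where the additional duality-pair and GI-admissibility hypotheses enter.

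The particular case follows by verifying in the product closed bicomplete setting that all hypotheses of (1) and (2) hold: symmetry of the duality pair yields $\Le^+ \subseteq \A$ and $\A^+ \subseteq \Le$; product-closedness makes $\Le$ preenveloping; $R \in \Le$ plus closure under coproducts gives $\mathcal{P}(R) \subseteq \Le$; $({}^\perp\A,\A)$ being a hereditary complete cotorsion pair makes $(\nu,\A)$ GI-admissible; Remark \ref{rmk:hereditarysym} gives closure of $\Le$ under epikernels; and Corollary \ref{resolvente} gives that $\GF_{(\Le,\nu)}$ is closed under extensions. Stability of $\Gfd_{(\Le,\nu)}(-)$ is then exactly the equivalence (a) $\Leftrightarrow$ (d). The main obstacle is the $n = 0$ case of the induction in (b) $\Rightarrow$ (a): here the ``$+1$'' dimension inequality for a short exact sequence cannot close $\Gfd_{(\Le,\nu)}$ down to $0$, and one is forced to invoke Proposition \ref{prop:GF_closed_under_exts} directly, which is precisely why both the duality-pair assumption on $(\Le,\A)$ and the GI-admissibility of $(\nu,\A)$ are indispensable at exactly this step.
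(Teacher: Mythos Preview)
Your overall strategy is sound and largely parallel to the paper's, but there is one genuine gap and one structural difference worth noting.

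\textbf{Gap in (a) $\Rightarrow$ (d).} You claim that resolving alone suffices for the kernel comparison, but in this paper ``resolving'' means only: contains projectives, closed under extensions and epikernels. The standard Auslander--Bridger argument (and the paper's proof) also needs $\GF_{(\Le,\nu)}$ closed under direct summands. The paper obtains this via Eilenberg's swindle: closure of $\Le$ under coproducts (one of your hypotheses) passes to $\GF_{(\Le,\nu)}$, and then the swindle gives summand closure. You should insert this step; otherwise the comparison of two arbitrary $\GF_{(\Le,\nu)}$-resolutions only yields a Schanuel-type isomorphism $K_n \oplus X \cong K'_n \oplus X'$ with $X, X' \in \GF_{(\Le,\nu)}$, from which $K_n \in \GF_{(\Le,\nu)}$ does not follow without summand closure.

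\textbf{Different route for (b) $\Rightarrow$ (a).} Your inductive descent on $m = \Gfd_{(\Le,\nu)}(M)$ using Proposition~\ref{sequences}(1) is correct but differs from the paper. The paper instead takes a partial \emph{projective} resolution out to some $m > n$, uses the already-established (a) $\Leftrightarrow$ (d) to place the $m$-th projective syzygy in $\GF_{(\Le,\nu)}$, and then steps down one syzygy at a time using Proposition~\ref{prop:GF_closed_under_exts} (each intermediate syzygy lies in $\nu^{\top}$ by dimension shifting). This avoids your $n=0$/$n\geq 1$ case split and the need to quantify the inductive statement over all $n$ simultaneously --- which you are implicitly doing when you apply the hypothesis to $K$ with the shifted index $n-1$, and which you should make explicit. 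Both arguments ultimately rest on Proposition~\ref{prop:GF_closed_under_exts}; the paper's is marginally cleaner, yours is equally valid once the induction is stated for all $n$ at once.
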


\begin{proof} \
\begin{enumerate}
\item The implication (b) $\Rightarrow$ (c) is trivial, (b) $\Leftarrow$ (c) follows by a dimension shifting argument, and (a) $\Rightarrow$ (c) follows by induction on $n$, Theorem \ref{dualidad} (1) and (2)-(ii).

\item By Proposition \ref{prop:GF_closed_under_exts} we have that $\GF_{(\Le,\nu)}$ is a preresolving class. Now let $P$ be a projective $R$-module. Since $\mathcal{L}$ is a relative generator in $\mathsf{Mod}(R)$, we have a split epimorphism $L \twoheadrightarrow P$. So $P$ is a direct summand of $L$, which in turn implies that $P \in \mathcal{L}$. Hence, $\mathcal{P}(R) \subseteq \mathcal{GF}_{(\mathcal{L},\nu)}$ and so $\mathcal{GF}_{(\mathcal{L},\nu)}$ is resolving. On the other hand, the closure under coproducts of $\mathcal{L}$ implies the same property for $\GF_{(\Le,\nu)}$. Then using Eilenberg's swindle one can show that $\GF_{(\Le,\nu)}$ is closed under direct summands. Therefore, Auslander and Bridger's \cite[Lem. 3.12]{AuBri69} implies the equivalence between (a) and (d).

We show (b) $\Rightarrow$ (a). Suppose $\Gfd_{(\Le,\nu)}(M) < \infty$ and $\Tor^R_{\geq n+1}(\nu^\vee, M) = 0$. From the equivalence between (a) and (d), we can construct for some $m > n$ a partial projective resolution
\[
\Omega^{\mathcal{P}(R)}_{m}(M) \rightarrowtail P_{m-1} \cdots \to P_1 \to P_0 \twoheadrightarrow M, 
\]
that is, with $P_k \in \mathcal{P}(R)$ for every $0 \leq k \leq m-1$, and guarantee that $\Omega^{\mathcal{P}(R)}_{m}(M) \in \GF_{(\Le,\nu)}$. Consider $\Omega^{\mathcal{P}(R)}_{n}(M)$ in the previous partial projective resolution, and the short exact sequence 
\[
\Omega^{\mathcal{P}(R)}_{m}(M) \rightarrowtail P_{m-1} \twoheadrightarrow \Omega^{\mathcal{P}(R)}_{m-1}(M).
\] 
By dimension shifting, 
\[
\Tor^R _i(A, \Omega^{\mathcal{P}(R)}_{m-1}(M)) \cong \Tor^R_{m-1 + i}(A, M) = 0
\] 
for every $A \in \nu$ and $i \in \mathbb{Z}_{>0}$, that is, $\Omega^{\mathcal{P}(R)}_{m-1}(M) \in \nu^\perp$. Then, since $\Omega^{\mathcal{P}(R)}_{m}(M), P_{m-1} \in \mathcal{GF}_{(\mathcal{L},\nu)}$, we get from Proposition \ref{prop:GF_closed_under_exts} and the hypothesis that $\Omega^{\mathcal{P}(R)}_{m-1}(M) \in \mathcal{GF}_{(\mathcal{L},\nu)}$. Therefore, continuing this process finitely many times, we can conclude that $\Omega^{\mathcal{P}(R)}_{n}(M) \in \mathcal{GF}_{(\mathcal{L},\nu)}$. 
\end{enumerate}
\end{proof}

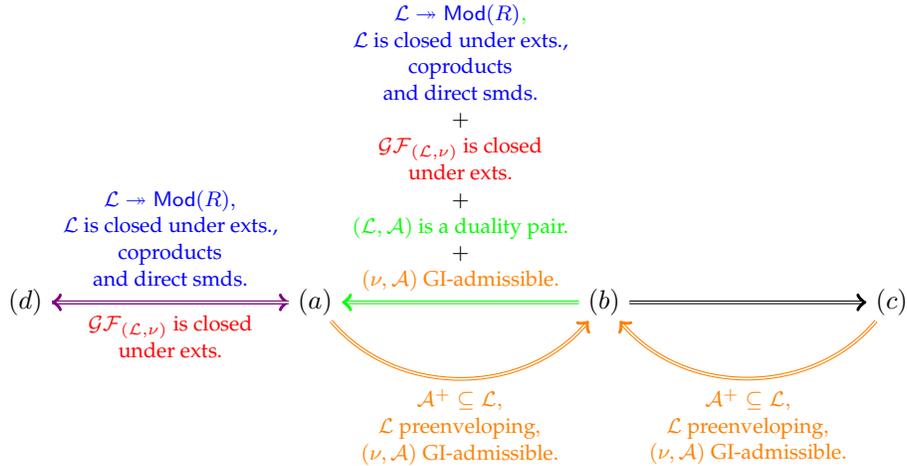
\begin{figure}[h!]
\begin{tikzpicture}[description/.style={fill=white,inner sep=2pt}]
\matrix (m) [matrix of math nodes, row sep=1em, column sep=9em, text height=1.25ex, text depth=0.25ex]
{ 
(d) & (a) & (b) & (c) \\
};
\path[->]
(m-1-1) edge [double, violet] node[above] {{\color{blue}{\footnotesize$\begin{array}{c} \mathcal{L} \twoheadrightarrow \mathsf{Mod}(R), \\ \text{$\mathcal{L}$ is closed under exts.,} \\ \text{coproducts} \\ \text{and direct smds.} \end{array}$}}} (m-1-2)
(m-1-2) edge [double, violet] node[below] {{\color{red}{\footnotesize$\begin{array}{c} \text{$\mathcal{GF}_{(\mathcal{L},\nu)}$ is closed} \\ \text{under exts.} \end{array}$}}} (m-1-1)
(m-1-3) edge [double, green] node[above] {\footnotesize$\begin{array}{c} {\color{blue}{\mathcal{L} \twoheadrightarrow \mathsf{Mod}(R)}}, \\ \text{{\color{blue}{$\mathcal{L}$ is closed under exts.,}}} \\ \text{{\color{blue}{coproducts}}} \\ \text{{\color{blue}{and direct smds.}}} \\ {\color{black}{+}} \\ \text{{\color{red}{$\mathcal{GF}_{(\mathcal{L},\nu)}$ is closed}}} \\ \text{{\color{red}{under exts.}}} \\ {\color{black}{+}} \\ \text{{\color{green}{$(\mathcal{L,A})$ is a duality pair.}}} \\ {\color{black}{+}} \\ \text{{\color{orange}{$(\nu,\mathcal{A})$ GI-admissible.}}} \end{array}$} (m-1-2)
(m-1-2) edge [double, bend right=50, orange] node[below] {\footnotesize$\begin{array}{c} \mathcal{A}^+ \subseteq \mathcal{L}, \\ \text{$\mathcal{L}$ preenveloping}, \\ \text{$(\nu,\mathcal{A})$ GI-admissible.} \end{array}$} (m-1-3)
(m-1-3) edge [double] (m-1-4)
(m-1-4) edge [double, bend left=50, orange] node[below] {\footnotesize$\begin{array}{c} \mathcal{A}^+ \subseteq \mathcal{L}, \\ \text{$\mathcal{L}$ preenveloping}, \\ \text{$(\nu,\mathcal{A})$ GI-admissible.} \end{array}$} (m-1-3)
;
\end{tikzpicture}
\caption{A display of the implications studied in Theorem \ref{theo:finiteness_GFdim} for pairs $(\mathcal{L,A})$ with $\mathcal{L}^+ \subseteq \mathcal{A}$.}
\end{figure}

\begin{remark}\label{rem:dim_Tor_orthogonal}
Proceeding as in the proof of (b) $\Rightarrow$ (a) in the previous theorem, one can note a similar result for Tor-orthogonal duality pairs, which sort of generalizes \cite[Lem. 2.10]{WangYangZhu19}. Specifically, let $(\mathcal{L,A})$ be a Tor-orthogonal duality pair with $\mathcal{P}(R) \subseteq \mathcal{L}$. Consider the following assertions for an $R$-module $M$ with finite Gorenstein $(\mathcal{L,A})$-flat dimension:
\begin{enumerate}
\item[(a)] $\Gfd_{(\mathcal{L,A})}(M) \leq n$.

\item[(b)] $\Tor^R _{\geq n+1}(\mathcal{A},M) = 0$.

\item[(c)] $\Tor^R _{\geq n+1}(\mathcal{A}^\vee,M) = 0$.
\end{enumerate}
Then (a) $\Rightarrow$ (b) $\Rightarrow$ (c). Furthermore, if $\GF_{(\mathcal{L,A})}$ is closed under extensions, then (c) $\Rightarrow$ (a), and all the assertions are equivalent. In particular, the equivalence between the three assertions hold if $(\mathcal{L,A})$ is a perfect symmetric Tor-orthogonal duality pair with $\mathcal{L}$ closed under epikernels. 
\end{remark}

The following result is a generalization of Corollary \ref{resolvente} for the Gorenstein $(\mathcal{L},\nu)$-flat dimension.

\begin{corollary} \label{resolvente_dimensiones} 
Let $(\mathcal{L,A})$ be a product closed bicomplete duality pair. The following assertions hold for every $n \in \mathbb{Z}_{\geq 0}$:
\begin{enumerate}
\item $\mathcal{GF}_{(\mathcal{L},\nu)}{}^\wedge_n$ is a resolving class closed under direct summands. 

\item $(\mathcal{GF}_{(\mathcal{L},\nu)}{}^\wedge_n,\mathcal{GI}_{(\nu,\mathcal{A})}{}^\vee_n)$ is a perfect duality pair. Furthermore, $\mathcal{GF}_{(\mathcal{L},\nu)}{}^\wedge_n$ is the left half of a hereditary perfect cotorsion pair in $\mathsf{Mod}(R)$, and so every $R$-module has a $\mathcal{GF}_{(\mathcal{L},\nu)}{}^\wedge_n$-cover. If in addition, $\mathcal{GF}_{(\mathcal{L}, \nu)}$ is closed under arbitrary direct products, then every $R$-module has a $\mathcal{GF}_{(\mathcal{L},\nu)}{}^\wedge_n$-preenvelope. 

\item $\mathcal{GF}_{(\mathcal{L},\nu)}{}^\wedge_n$ is a Kaplansky class closed under direct limits.
\end{enumerate}
\end{corollary}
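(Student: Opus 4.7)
The plan is to lift each assertion of Corollary~\ref{resolvente} to the bounded-dimension class $\mathcal{GF}_{(\mathcal{L},\nu)}{}^\wedge_n$, using two main tools: the dimension duality $\Gfd_{(\mathcal{L},\nu)}(M) = \Gid_{(\nu,\mathcal{A})}(M^+)$ from Proposition~\ref{iguales}, and the stability of $\Gfd_{(\mathcal{L},\nu)}$ established in Theorem~\ref{theo:finiteness_GFdim}. For part~(1), I would first note $\mathcal{P}(R) \subseteq \mathcal{GF}_{(\mathcal{L},\nu)} \subseteq \mathcal{GF}_{(\mathcal{L},\nu)}{}^\wedge_n$ (from Corollary~\ref{resolvente}(1)), handle closure under extensions via a Horseshoe-type construction applied to Gorenstein $(\mathcal{L},\nu)$-flat resolutions of the outer terms, and closure under epikernels via Theorem~\ref{theo:finiteness_GFdim}(c) combined with the long exact sequence of $\Tor^R$. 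For direct summand closure I would combine stability with the corresponding property of $\mathcal{GF}_{(\mathcal{L},\nu)}$: given $M=M_1\oplus M_2$ with $\Gfd_{(\mathcal{L},\nu)}(M)\leq n$, the $n$-th syzygy of $M$ in the direct sum of projective resolutions of $M_1$ and $M_2$ is $\Omega_n(M_1)\oplus \Omega_n(M_2)\in \mathcal{GF}_{(\mathcal{L},\nu)}$, and summand closure of $\mathcal{GF}_{(\mathcal{L},\nu)}$ (Corollary~\ref{resolvente}(1)) then gives $\Gfd_{(\mathcal{L},\nu)}(M_i)\leq n$.

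For part~(3), my strategy is purity. Given a pure exact sequence $S \rightarrowtail N \twoheadrightarrow Q$ with $N \in \mathcal{GF}_{(\mathcal{L},\nu)}{}^\wedge_n$, the Pontryagin dual sequence $Q^+ \rightarrowtail N^+ \twoheadrightarrow S^+$ splits, exhibiting $S^+$ and $Q^+$ as direct summands of $N^+$; by Proposition~\ref{iguales}, $N^+ \in \mathcal{GI}_{(\nu,\mathcal{A})}{}^\vee_n$, and the dual of the syzygy argument from part~(1) (applied to injective coresolutions and the summand closure of $\mathcal{GI}_{(\nu,\mathcal{A})}$ inherited from the GI-admissible structure of $(\nu,\mathcal{A})$) gives $S^+, Q^+ \in \mathcal{GI}_{(\nu,\mathcal{A})}{}^\vee_n$, whence $S, Q \in \mathcal{GF}_{(\mathcal{L},\nu)}{}^\wedge_n$. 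Proposition~\ref{prop:purity_and_Kaplansky} then yields the Kaplansky property. Closure under direct limits follows as in the proof of Corollary~\ref{resolvente}(3), from the classical fact that $\varinjlim M_i$ is a pure quotient of $\bigoplus M_i$, together with coproduct closure of $\mathcal{GF}_{(\mathcal{L},\nu)}{}^\wedge_n$ (itself transferred from $\mathcal{GF}_{(\mathcal{L},\nu)}$).

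With parts~(1) and~(3) in hand, part~(2) should assemble with little extra effort. The duality $M \in \mathcal{GF}_{(\mathcal{L},\nu)}{}^\wedge_n \Longleftrightarrow M^+ \in \mathcal{GI}_{(\nu,\mathcal{A})}{}^\vee_n$ is immediate from Proposition~\ref{iguales}; closure of $\mathcal{GI}_{(\nu,\mathcal{A})}{}^\vee_n$ under finite coproducts and summands follows from the GI-admissible structure of $(\nu,\mathcal{A})$; and perfectness of the duality pair is then exactly part~(1) together with $R \in \mathcal{L} \subseteq \mathcal{GF}_{(\mathcal{L},\nu)} \subseteq \mathcal{GF}_{(\mathcal{L},\nu)}{}^\wedge_n$ and coproduct closure. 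The hereditary perfect cotorsion pair is obtained by feeding $\mathcal{GF}_{(\mathcal{L},\nu)}{}^\wedge_n$ (Kaplansky, closed under extensions, direct limits and summands, containing the generator $R$) into Proposition~\ref{prop:cotorsion_Kaplansky}, with hereditarity from the resolving property; covers are then immediate from perfectness, and under the additional product-closure hypothesis preenvelopes follow from the standard fact that Kaplansky classes closed under products and direct limits are preenveloping. The hard part I anticipate is the summand closure of $\mathcal{GI}_{(\nu,\mathcal{A})}{}^\vee_n$ invoked in part~(3): Lemma~\ref{GI-admisible} presupposes finite $\Gid_{(\nu,\mathcal{A})}$ of the summand, so I will need the injective-coresolution/syzygy argument sketched above rather than a direct appeal to that lemma. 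Threading this technical point cleanly is what interlocks the three parts and keeps the reasoning free of circularity.
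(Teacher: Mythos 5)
Your plan is sound and reaches the same conclusions as the paper, but it reverses the logical order of parts (2) and (3) and substitutes a hands-on purity argument for a citation. The paper's (very terse) proof establishes part (1) from Theorem \ref{theo:finiteness_GFdim}, then builds the perfect duality pair of part (2) directly from the dimension duality of Proposition \ref{iguales} together with the resolving and coproduct-closure properties, and finally deduces part (3) as a black-box consequence of part (2) via \cite[Prop. 2.3]{GillespieDuality} --- namely, that the left half of a perfect duality pair is automatically Kaplansky and closed under direct limits. You instead prove part (3) directly: you dualize a pure exact sequence to a split one, push $N^+$ into $\mathcal{GI}_{(\nu,\mathcal{A})}{}^\vee_n$ by Proposition \ref{iguales}, invoke summand closure of $\mathcal{GI}_{(\nu,\mathcal{A})}{}^\vee_n$ via the cosyzygy/stability argument, and pull back through the duality; Kaplansky then follows from Proposition \ref{prop:purity_and_Kaplansky}, and direct-limit closure from the pure-quotient-of-coproduct observation. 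You then assemble part (2) from parts (1) and (3) together with Proposition \ref{prop:cotorsion_Kaplansky}. This buys a more self-contained argument at the cost of having to work out the summand closure of $\mathcal{GI}_{(\nu,\mathcal{A})}{}^\vee_n$ by hand (which you correctly flag, and which does go through since the module in question is \emph{already} a summand of something of finite $\mathcal{GI}$-coresolution dimension, so the stability in Lemma \ref{GI-admisible}(d), or the right Frobenius pair structure of $(\nu,\mathcal{GI}_{(\nu,\mathcal{A})})$, applies to the ambient module). The paper's route is shorter because Gillespie's result on perfect duality pairs packages exactly the purity and direct-limit arguments you redo. One minor gap to watch in part (1): when invoking Theorem \ref{theo:finiteness_GFdim}(c) for epikernel closure, you must first secure \emph{finiteness} of $\Gfd_{(\mathcal{L},\nu)}$ for the kernel (the characterization only applies to modules already known to have finite relative dimension); this follows routinely from the resolving property of $\mathcal{GF}_{(\mathcal{L},\nu)}$ established in Corollary \ref{resolvente}, but should be said.
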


\begin{proof}
Part (1) follows by Theorem \ref{theo:finiteness_GFdim}, and part (3) is a consequence of part (2) and \cite[Prop. 2.3]{GillespieDuality}. The conditions that make $(\mathcal{GF}_{(\mathcal{L},\nu)}{}^\wedge_n,\mathcal{GI}_{(\nu,\mathcal{A})}{}^\vee_n)$ and the properties of the Tor functor. 
\end{proof}

\begin{remark}\label{rem:GF_cogenerated}
Given a product closed bicomplete duality pair $(\mathcal{L,A})$, by the previous corollary we can find for each $n \in \mathbb{Z}_{\geq 0}$ a regular cardinal $\kappa$ such that every $R$-module in $\mathcal{GF}_{(\mathcal{L},\nu)}{}^\wedge_n$ is a transfinite extension of $R$-modules in $\mathcal{GF}_{(\mathcal{L},\nu)}{}^\wedge_n$ with cardinality $\leq \kappa$. So one can obtain a set $\mathcal{S}$ of representatives for the isomorphism classes of all $R$-modules in $\mathcal{GF}_{(\mathcal{L},\nu)}{}^\wedge_n$ with cardinality $\leq \kappa$. By Eklof and Trlifaj Theorem, $\mathcal{S}$ cogenerates a complete cotorsion pair $({}^{\perp_1}(\mathcal{S}^{\perp_1}),\mathcal{S}^{\perp_1})$, and by \cite[Coroll. 3.2.4]{GT} the class ${}^{\perp_1}(\mathcal{S}^{\perp_1})$ consists of all direct summands of transfinite extensions of $R$-modules in $\mathcal{S} \cup \{ R \}$. Since $\mathcal{S} \cup \{ R \} \subseteq \mathcal{GF}_{(\mathcal{L},\nu)}{}^\wedge_n$ and $\mathcal{GF}_{(\mathcal{L},\nu)}{}^\wedge_n$ is closed under direct limits and direct summands, it follows that $\mathcal{GF}_{(\mathcal{L},\nu)}{}^\wedge_n = {}^{\perp_1}(\mathcal{S}^{\perp_1})$. Hence, the hereditary and perfect cotorsion pair $(\mathcal{GF}_{(\mathcal{L},\nu)}{}^\wedge_n,(\mathcal{GF}_{(\mathcal{L},\nu)}{}^\wedge_n)^{\perp_1})$ is also  cogenerated by a set. 
\end{remark}

%%%%%%%%%%%%%%%%%%%%%%%%%%%%%%%%%%%%%%%%%%%%
%%%%%%%%%%%%%%%%%%%%%%%%%%%%%%%%%%%%%%%%%%%%

\subsection*{Global and finitistic relative Gorenstein weak dimensions}

We conclude this section defining and pointing out some properties of the Gorenstein $(\mathcal{L,A})$-weak global and finitistic dimensions of $R$.

\begin{definition}
The \textbf{left Gorenstein $\bm{(\mathcal{L,A})}$-weak global dimension} of $R$ is defined as the value 
\[
{\rm l.Gwgdim}_{(\mathcal{L,A})}(R) := \sup \{ \Gfd_{(\mathcal{L,A})}(M) {\rm \ : \ } M \in \mathsf{Mod}(R) \}.
\]
\end{definition}

We have the following result regarding the finiteness of ${\rm l.Ggwdim}_{(\mathcal{L},\nu)}(R)$, as a consequence of Theorem \ref{theo:finiteness_GFdim}.

\begin{proposition}\label{prop:finiteness_global_dim}
Let $(\Le, \A)$ be a product closed bicomplete duality pair in $\mathsf{Mod}(R)$ and suppose that ${\rm l.Gwgdim}_{(\mathcal{L},\nu)}(R) < \infty$. Then, the following statements are equivalent:
\begin{enumerate}
\item[(a)] ${\rm l.Gwgdim}_{(\mathcal{L},\nu)}(R) \leq n < \infty$.

\item[(b)] $\mathrm{fd}(A) \leq n$ for every $A \in \nu$.

\item[(c)] $\mathrm{fd}(N) \leq n$ for every $N \in \nu^\vee$.
\end{enumerate}
\end{proposition}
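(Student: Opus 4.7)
The plan is to deduce all three equivalences as a direct consequence of Theorem \ref{theo:finiteness_GFdim} applied module-by-module. Since $(\mathcal{L},\mathcal{A})$ is assumed to be a product closed bicomplete duality pair, the conclusion of that theorem is at our disposal; and since ${\rm l.Gwgdim}_{(\mathcal{L},\nu)}(R)$ is finite by hypothesis, every $R$-module $M$ satisfies $\Gfd_{(\mathcal{L},\nu)}(M) < \infty$, which is precisely the finiteness assumption needed to invoke Theorem \ref{theo:finiteness_GFdim}. The whole statement is then a matter of translating the vanishing of Tor-functors from an assertion about a single module $M$ to an assertion valid uniformly on all of $\mathsf{Mod}(R)$, and then rereading that uniform vanishing as a statement about flat dimensions of objects in $\nu$ and $\nu^\vee$.

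For the equivalence (a) $\Leftrightarrow$ (b), I would argue as follows. Condition (a) says exactly that $\Gfd_{(\mathcal{L},\nu)}(M) \leq n$ for every $M \in \mathsf{Mod}(R)$. By Theorem \ref{theo:finiteness_GFdim}, the implication (a) $\Leftrightarrow$ (c) there tells us that, for each $M$, $\Gfd_{(\mathcal{L},\nu)}(M) \leq n$ is equivalent to $\Tor^R_{\geq n+1}(\nu, M) = 0$. Quantifying over all $M$, this in turn is equivalent to the statement that for every $A \in \nu$ the functor $\Tor^R_{\geq n+1}(A,-)$ vanishes on $\mathsf{Mod}(R)$, which is by definition the condition $\mathrm{fd}(A) \leq n$, i.e.\ (b).

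For (a) $\Leftrightarrow$ (c), the same recipe works but using the equivalence (a) $\Leftrightarrow$ (b) of Theorem \ref{theo:finiteness_GFdim}: condition (a) is equivalent to $\Tor^R_{\geq n+1}(\nu^\vee, M) = 0$ for every $M \in \mathsf{Mod}(R)$, and this uniform vanishing is nothing but the statement that $\mathrm{fd}(N) \leq n$ for every $N \in \nu^\vee$, which is (c). Alternatively, one can give a direct proof of (b) $\Rightarrow$ (c) by standard dimension shifting along a finite $\nu$-coresolution $N \rightarrowtail V^0 \to \cdots \to V^m \twoheadrightarrow 0$ of an object $N \in \nu^\vee$, splitting it into short exact sequences and using the long exact sequences of the Tor-functors together with the stability of the bound ``$\mathrm{fd}(-) \leq n$'' under extensions; the reverse implication (c) $\Rightarrow$ (b) is trivial since $\nu \subseteq \nu^\vee$.

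There is no real obstacle here: the substantive work has already been done in Theorem \ref{theo:finiteness_GFdim}, and the only point that requires a line of verification is the observation that the hypothesis ${\rm l.Gwgdim}_{(\mathcal{L},\nu)}(R) < \infty$ guarantees that the finiteness assumption of that theorem is met for every $M$, so that its equivalences can be applied uniformly in $M$ to pass from pointwise vanishing of $\Tor^R_{\geq n+1}(\nu,-)$ (respectively, $\Tor^R_{\geq n+1}(\nu^\vee,-)$) to the global bound on the relative weak dimension.
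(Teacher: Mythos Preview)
Your proposal is correct and matches the paper's own treatment: the paper states the proposition explicitly as ``a consequence of Theorem \ref{theo:finiteness_GFdim}'' and gives no further argument, which is precisely the module-by-module translation you spell out.
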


\begin{definition}
Given a class $\mathcal{X} \subseteq \mathsf{Mod}(R)$, the \textbf{left $\bm{\mathcal{X}}$-finitistic dimension} of $R$ is defined as the value
\[
l.\mathcal{X}\mbox{-}\mathrm{findim}(R) := \sup \{ \resdim_{\mathcal{X}}(M) {\rm \ : \ } M \in \mathcal{X}^\wedge \}.
\]
In particular, the \textbf{left Gorenstein $\bm{(\mathcal{L,A})}$-weak finitistic dimension} of $R$ is defined as
\[
l.\mathrm{GF}_{(\mathcal{L,A})}\mbox{-}\mathrm{findim}(R) := \sup \{ \Gfd_{(\mathcal{L,A})}(M) {\rm \ : \ } M \in \GF_{(\mathcal{L,A})}^\wedge \}.
\]
\end{definition}

\begin{lemma}\label{lem:GI_A_cogorro}
If $(\nu,\mathcal{A})$ is a GI-admissible pair in a Grothendieck category with $\mathcal{A}$ and $\nu$ closed under direct summands, then the equality $\mathcal{GI}_{(\nu,\mathcal{A})} \cap \mathcal{A}^\vee = \mathcal{A}$ holds. If in addition, $\mathcal{A}$ is closed under monocokernels (and so $(\nu,\mathcal{A})$ is a right Frobenius pair), then $\mathcal{GI}_{(\nu,\mathcal{A})}{}^\vee_m \cap \mathcal{A}^\vee = \mathcal{A}^\vee_m$ holds for every $m \in \mathbb{Z}_{> 0}$. 
\end{lemma}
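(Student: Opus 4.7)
For the first equality $\mathcal{GI}_{(\nu,\mathcal{A})} \cap \mathcal{A}^\vee = \mathcal{A}$, the inclusion $\supseteq$ is immediate: $\mathcal{A} \subseteq \mathcal{A}^\vee$ trivially, and for $A \in \mathcal{A}$ the contractible two-term complex $\cdots \to 0 \to A \xrightarrow{\id_A} A \to 0 \to \cdots$, with the two copies of $A$ sitting in degrees $1$ and $0$, is exact, lies in $\Ch(\mathcal{A})$, is $\Hom_{\mathcal{G}}(\nu,-)$-acyclic (being split exact) and has $Z_0 = A$, whence $A \in \mathcal{GI}_{(\nu,\mathcal{A})}$.

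For the reverse inclusion I would induct on $n := \coresdim_{\mathcal{A}}(M)$ for $M \in \mathcal{GI}_{(\nu,\mathcal{A})} \cap \mathcal{A}^\vee$. The case $n = 0$ is trivial. For $n \geq 1$, fix an $\mathcal{A}$-coresolution $M \rightarrowtail A^0 \twoheadrightarrow K^1$ with $\coresdim_{\mathcal{A}}(K^1) \leq n-1$; since $\Gid_{(\nu,\mathcal{A})}(M) = 0$ and $A^0 \in \mathcal{A} \subseteq \mathcal{GI}_{(\nu,\mathcal{A})}$, the implication (a) $\Rightarrow$ (d) of Lemma \ref{GI-admisible} forces $K^1 \in \mathcal{GI}_{(\nu,\mathcal{A})}$, and the inductive hypothesis gives $K^1 \in \mathcal{A}$. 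To upgrade this to $M \in \mathcal{A}$, I would invoke the dual of \cite[Coroll.~3.25]{BMS}, by which $\nu$ is a relative generator in $\mathcal{GI}_{(\nu,\mathcal{A})}$, to pick a short exact sequence $L \rightarrowtail N \twoheadrightarrow M$ with $N \in \nu \subseteq \mathcal{A}$ and $L \in \mathcal{GI}_{(\nu,\mathcal{A})}$, and show that it splits. The splitting reduces to $\Ext^1_{\mathcal{G}}(M,L) = 0$; starting from $\Ext^{\geq 1}_{\mathcal{G}}(\nu,L) = 0$ (which holds because $L \in \mathcal{GI}_{(\nu,\mathcal{A})} \subseteq \nu^\perp$), I would cascade the vanishing along the $\mathcal{A}$-coresolution of $M$, whose cosyzygies all lie in $\mathcal{GI}_{(\nu,\mathcal{A})} \cap \mathcal{A}^\vee$ and thus, by strong induction, in $\mathcal{A}$. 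The split sequence then realizes $M$ as a direct summand of $N \in \mathcal{A}$, so $M \in \mathcal{A}$ by direct-summand closure of $\mathcal{A}$.

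For the second equality, assume $(\nu,\mathcal{A})$ is right Frobenius; in particular $\mathcal{A}^\vee$ is closed under cosyzygies. The inclusion $\mathcal{A}^\vee_m \subseteq \mathcal{GI}_{(\nu,\mathcal{A})}{}^\vee_m \cap \mathcal{A}^\vee$ follows from $\mathcal{A} \subseteq \mathcal{GI}_{(\nu,\mathcal{A})}$. Conversely, given $M$ in the intersection with $\Gid_{(\nu,\mathcal{A})}(M) \leq m$, pick any $\mathcal{A}$-coresolution of $M$ and truncate it after step $m$ to obtain $M \rightarrowtail A^0 \to \cdots \to A^{m-1} \twoheadrightarrow K^m$. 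Lemma \ref{GI-admisible} forces $K^m \in \mathcal{GI}_{(\nu,\mathcal{A})}$, cosyzygy-closure of $\mathcal{A}^\vee$ gives $K^m \in \mathcal{A}^\vee$, and the first equality applied to $K^m$ delivers $K^m \in \mathcal{A}$, exhibiting an $\mathcal{A}$-coresolution of $M$ of length $\leq m$. The principal obstacle in the whole argument is the splitting step in the first equality: without any monocokernel closure of $\mathcal{A}$, the vanishing $\Ext^1_{\mathcal{G}}(M,L) = 0$ is not immediate from $L \in \nu^\perp$ alone, and one must carefully propagate the $\nu$-orthogonality through the cosyzygy structure of the $\mathcal{A}$-coresolution, exploiting the strong inductive conclusion that each lower-dimensional cosyzygy is already in $\mathcal{A}$.
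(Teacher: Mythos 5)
Your argument breaks down at exactly the splitting step you flag as ``the principal obstacle'': you cannot propagate the vanishing $\Ext^{\geq 1}_{\mathcal{G}}(\nu,L)=0$ to $\Ext^1_{\mathcal{G}}(M,L)=0$ along an $\mathcal{A}$-coresolution $M \rightarrowtail A^0 \to A^1 \to \cdots$, because each dimension-shift in that cascade needs $\Ext^{\geq 1}_{\mathcal{G}}(A^j,L)=0$, i.e.\ $L \in \mathcal{A}^\perp$. GI-admissibility of $(\nu,\mathcal{A})$ gives $\mathcal{A}\subseteq \nu^\perp$, but there is no reason whatsoever to have $\mathcal{GI}_{(\nu,\mathcal{A})}\subseteq\mathcal{A}^\perp$. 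Your strong induction does correctly establish that the cosyzygies $K^i$ lie in $\mathcal{A}$, but that does not rescue the argument: for the same reason you also cannot split the resulting sequence $M\rightarrowtail A^0\twoheadrightarrow K^1$, since that would need $\Ext^1_{\mathcal{G}}(K^1,M)=0$, and $K^1\in\mathcal{A}$ (rather than $K^1\in\nu^\vee$) does not provide it. The paper avoids the whole difficulty by reversing the direction of the approximation: the dual of \cite[Thm.~2.8]{BMS} furnishes a short exact sequence $N\rightarrowtail A\twoheadrightarrow N'$ with $A\in\mathcal{A}$ and, crucially, $N'\in\nu^\vee$. Because $\Gid_{(\nu,\mathcal{A})}(N)=0$, Lemma~\ref{GI-admisible}(a)$\Leftrightarrow$(b) gives $\Ext^{\geq 1}_{\mathcal{G}}(\nu^\vee,N)=0$, which hits precisely the class where $N'$ lives; the sequence splits, $N$ is a direct summand of $A$, and direct-summand closure of $\mathcal{A}$ finishes it, with no induction on $\coresdim_{\mathcal{A}}$ at all.

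Your treatment of the second equality, by contrast, is correct and actually a bit leaner than the paper's: truncating an $\mathcal{A}$-coresolution of $M$ after step $m$, using Lemma~\ref{GI-admisible}(a)$\Rightarrow$(d) to place $K^m$ in $\mathcal{GI}_{(\nu,\mathcal{A})}$, and then applying the first equality to $K^m$ is a clean shortcut that bypasses the paper's use of the right Frobenius pair $(\nu,\mathcal{GI}_{(\nu,\mathcal{A})})$ and the extension-closure of $\mathcal{A}^\vee$. But it rests on the first equality, so the proposal as a whole remains incomplete until the splitting gap is closed.
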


\begin{proof}
We first show the case $m = 0$. So let $N$ be a $(\nu,\mathcal{A})$-Gorenstein injective object with $\coresdim_{\mathcal{A}}(N) < \infty$. By the dual of \cite[Thm. 2.8]{BMS}, there exists a short exact sequence $N \rightarrowtail A \twoheadrightarrow N'$ with $A \in \mathcal{A}$ and $N' \in \nu^\vee$. This sequence splits by Lemma \ref{GI-admisible}, and hence $N$ is a direct summand of $A$. It follows that $N \in \mathcal{A}$. 

For the rest of the proof, assume that $\mathcal{A}$ is closed under monocokernels. If $N \in \mathcal{GI}_{(\nu,\mathcal{A})}{}^\vee_m \cap \mathcal{A}^\vee$ with $m \in \mathbb{Z}_{> 0}$, since $(\nu,\mathcal{GI}_{(\nu,\mathcal{A})})$ is a right Frobenius pair by the dual of \cite[Thm. 2.8 \& Coroll. 4.10]{BMS}, we can find a short exact sequence $N \rightarrowtail G \twoheadrightarrow N'$ where $G \in \mathcal{GI}_{(\nu,\mathcal{A})}$ and $N' \in \nu^\vee_{m-1}$. Now by the dual of \cite[Thm. 2.1 (a)]{BMPS}, we have that $\mathcal{A}^\vee$ is closed under extensions, and thus $G \in \mathcal{GI}_{(\nu,\mathcal{A})} \cap \mathcal{A}^\vee = \mathcal{A}$ by the case $m = 0$. Hence, $N \in \mathcal{A}^\vee_m$. 
\end{proof}

The following is the dual version of the previous lemma. It is a consequence of the lemma itself, Remark \ref{rmk:hereditarysym} and Proposition \ref{iguales}.

\begin{lemma}\label{lem:GF_L_gorro}
If $(\mathcal{L,A})$ is a product closed bicomplete duality pair, then 
\[
\mathcal{GF}_{(\mathcal{L},\nu)}{}^\wedge_m \cap \mathcal{L}^\wedge = \mathcal{L}^\wedge_m
\] 
for every $m \in \mathbb{Z}_{\geq 0}$. 
\end{lemma}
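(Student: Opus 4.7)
The inclusion $\mathcal{L}^\wedge_m \subseteq \mathcal{GF}_{(\mathcal{L},\nu)}{}^\wedge_m \cap \mathcal{L}^\wedge$ will be immediate from $\mathcal{L} \subseteq \mathcal{GF}_{(\mathcal{L},\nu)}$. The plan for the reverse inclusion is to settle the case $m = 0$ via Pontryagin duality and then deduce the general case by a short dimension-shifting argument on $\mathcal{L}$-syzygies.

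For the base case $m = 0$, I will take $M \in \mathcal{GF}_{(\mathcal{L},\nu)} \cap \mathcal{L}^\wedge$ and use Proposition~\ref{iguales} to obtain $\Gid_{(\nu,\mathcal{A})}(M^+) = \Gfd_{(\mathcal{L},\nu)}(M) = 0$, so that $M^+ \in \mathcal{GI}_{(\nu,\mathcal{A})}$. Because $(\mathcal{L},\mathcal{A})$ is a duality pair, $\mathcal{L}^+ \subseteq \mathcal{A}$, and exactness of $(-)^+$ converts any finite $\mathcal{L}$-resolution of $M$ into a finite $\mathcal{A}$-coresolution of $M^+$, so $M^+ \in \mathcal{A}^\vee$. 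The bicompleteness hypothesis ensures that $(\nu,\mathcal{A})$ is a GI-admissible pair with $\mathcal{A}$ closed under monocokernels and $\nu$ under direct summands, so Lemma~\ref{lem:GI_A_cogorro} applies and forces $M^+ \in \mathcal{GI}_{(\nu,\mathcal{A})} \cap \mathcal{A}^\vee = \mathcal{A}$. The duality pair criterion $L \in \mathcal{L} \Leftrightarrow L^+ \in \mathcal{A}$ will then yield $M \in \mathcal{L}$.

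For $m \geq 1$, given $M \in \mathcal{GF}_{(\mathcal{L},\nu)}{}^\wedge_m \cap \mathcal{L}^\wedge$, I will form any partial $\mathcal{L}$-resolution $\Omega \rightarrowtail L_{m-1} \to \cdots \to L_0 \twoheadrightarrow M$ (possible since $\mathcal{P}(R) \subseteq \mathcal{L}$ by Gillespie's theorem for perfect duality pairs) and prove $\Omega \in \mathcal{L}$; splicing will then exhibit an $\mathcal{L}$-resolution of $M$ of length at most $m$. Two dimension shifts will control $\Omega$: the hereditary cotorsion pair $(\mathcal{GF}_{(\mathcal{L},\nu)}, \mathcal{GC}_{(\mathcal{L},\nu)})$ from Corollary~\ref{resolvente}, applied to $\Ext$ into $\mathcal{GC}_{(\mathcal{L},\nu)}$, transfers the vanishing $\Ext^{\geq m+1}_R(M, \mathcal{GC}_{(\mathcal{L},\nu)}) = 0$ into $\Ext^{\geq 1}_R(\Omega, \mathcal{GC}_{(\mathcal{L},\nu)}) = 0$, i.e.\ $\Omega \in \mathcal{GF}_{(\mathcal{L},\nu)}$; the hereditary cotorsion pair $(\mathcal{L}, \mathcal{L}^\perp)$ from Remark~\ref{rmk:hereditarysym}, applied analogously, shows $\Omega \in \mathcal{L}^\wedge$. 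The base case applied to $\Omega$ will then finish.

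The main obstacle is the base case itself: the identity $\mathcal{GF}_{(\mathcal{L},\nu)} \cap \mathcal{L}^\wedge = \mathcal{L}$ is not at all formal, and the Pontryagin bridge supplied by Proposition~\ref{iguales} seems indispensable for transferring the question to the injective side, where Lemma~\ref{lem:GI_A_cogorro} is available. Once this is in hand, the reduction of arbitrary $m$ to $m = 0$ is a routine dimension-shifting exercise enabled by the two hereditary cotorsion pairs that the product closed bicomplete hypothesis on $(\mathcal{L},\mathcal{A})$ furnishes.
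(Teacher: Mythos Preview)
Your proposal is correct and follows essentially the same route as the paper's one-line proof: the base case $m=0$ is handled by passing to $M^+$ via Proposition~\ref{iguales}, applying Lemma~\ref{lem:GI_A_cogorro}, and returning through the duality pair condition; the general case then reduces to $m=0$ by dimension shifting along an $\mathcal{L}$-syzygy, using the hereditary cotorsion pairs supplied by Remark~\ref{rmk:hereditarysym} and Corollary~\ref{resolvente}. The paper cites exactly these three ingredients (Lemma~\ref{lem:GI_A_cogorro}, Remark~\ref{rmk:hereditarysym}, Proposition~\ref{iguales}) without spelling out the reduction, so your write-up is simply a more detailed version of the intended argument.
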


The following result generalizes \cite[Thm. 3.24]{Holm04} to our setting.

\begin{proposition}\label{prop:finiteness_relative_Gflat_findim}
Suppose $\mathcal{L}$ is a relative generator in $\mathsf{Mod}(R)$ closed under extensions, and that either $\Le$ is closed under epikernels or $\mathcal{L}^+ \subseteq \mathcal{A}$. If $\GF_{(\Le,\nu)}$ is closed under extensions, then the inequality
\begin{align}
l.\mathrm{GF}_{(\mathcal{L},\nu)}\mbox{-}\mathrm{findim}(R) \leq l.\mathcal{L}\mbox{-}\mathrm{findim}(R) \label{GFmenorL}
\end{align}
holds true. Furthermore, the equality holds in the case where $(\mathcal{L,A})$ is a product closed bicomplete duality pair. 
\end{proposition}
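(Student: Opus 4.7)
The plan proceeds in two stages matching the two claims of the proposition. For the inequality \eqref{GFmenorL}, my strategy is to appeal directly to Proposition \ref{sequences}(2). Fix $M \in \GF_{(\mathcal{L},\nu)}^\wedge$ and put $n := \Gfd_{(\mathcal{L},\nu)}(M)$. I observe that the hypotheses listed in the statement we are proving (namely $\mathcal{L}$ is a relative generator in $\mathsf{Mod}(R)$ closed under extensions, either $\mathcal{L}$ is closed under epikernels or $\mathcal{L}^+ \subseteq \mathcal{A}$, and $\GF_{(\mathcal{L},\nu)}$ is closed under extensions) are exactly those required by Proposition \ref{sequences}(2). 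That proposition therefore supplies a short exact sequence $M \rightarrowtail H \twoheadrightarrow G'$ with $G' \in \GF_{(\mathcal{L},\nu)}$ and $\resdim_{\mathcal{L}}(H) = n$. Since $H \in \mathcal{L}^\wedge$ exhibits $\mathcal{L}$-resolution dimension exactly $n$, we get $n \leq l.\mathcal{L}\mbox{-}\mathrm{findim}(R)$, and taking the supremum over all such $M$ delivers \eqref{GFmenorL}.

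For the equality when $(\mathcal{L,A})$ is a product closed bicomplete duality pair, my first step is to verify that the inequality part applies: perfectness of the duality pair forces $R \in \mathcal{L}$ and closure of $\mathcal{L}$ under coproducts and extensions (so $\mathcal{L}$ is automatically a relative generator in $\mathsf{Mod}(R)$), Remark \ref{rmk:hereditarysym} gives closure of $\mathcal{L}$ under epikernels, and Corollary \ref{resolvente}(1) gives closure of $\GF_{(\mathcal{L},\nu)}$ under extensions. Hence \eqref{GFmenorL} holds in this context, and it only remains to establish the reverse inequality. For any $M \in \mathcal{L}^\wedge$, the inclusion $\mathcal{L} \subseteq \GF_{(\mathcal{L},\nu)}$ trivially yields $\Gfd_{(\mathcal{L},\nu)}(M) \leq \resdim_{\mathcal{L}}(M)$. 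Conversely, setting $k := \Gfd_{(\mathcal{L},\nu)}(M)$ places $M$ in $\GF_{(\mathcal{L},\nu)}{}^\wedge_k \cap \mathcal{L}^\wedge$, which by Lemma \ref{lem:GF_L_gorro} coincides with $\mathcal{L}^\wedge_k$; hence $\resdim_{\mathcal{L}}(M) \leq k$. Thus the two dimensions agree on $\mathcal{L}^\wedge$, and taking suprema produces $l.\mathcal{L}\mbox{-}\mathrm{findim}(R) \leq l.\mathrm{GF}_{(\mathcal{L},\nu)}\mbox{-}\mathrm{findim}(R)$.

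No serious obstacle is anticipated: both directions reduce to bookkeeping on top of Proposition \ref{sequences}(2) and Lemma \ref{lem:GF_L_gorro}. The only point requiring a bit of care is the verification that all of the hypotheses of the inequality part follow from the product closed bicomplete duality assumption, which is immediate from the definition plus the previously established Corollary \ref{resolvente} and Remark \ref{rmk:hereditarysym}.
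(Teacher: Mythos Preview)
Your proof is correct and follows essentially the same approach as the paper's: both parts rest on Proposition \ref{sequences}(2) for the inequality and Lemma \ref{lem:GF_L_gorro} for the reverse inequality in the bicomplete case. Your write-up is a bit more explicit than the paper's (in particular, you spell out the verification of the hypotheses in the bicomplete case and the two inequalities giving $\Gfd_{(\Le,\nu)}(M) = \resdim_{\Le}(M)$), but the underlying argument is the same.
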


\begin{proof}
We may assume that $n := l.\mathcal{L}\mbox{-}\mathrm{findim}(R) < \infty$. For any $M \in \GF_{(\Le,\nu)}^\wedge$, we know by Proposition \ref{sequences} (2), that there is a short  exact sequence $M \rightarrowtail  H \twoheadrightarrow G$ with $G \in \GF_{(\Le,\nu)}$ and $\Gfd_{(\Le,\nu)}(M) = \resdim _{\Le}(H) \leq n$. Hence, \eqref{GFmenorL} holds. 

In order to show the remaining inequality, assuming that $(\mathcal{L,A})$ is a product closed bicomplete duality pair, it suffices to note from Lemma \ref{lem:GF_L_gorro} that, for every $N \in \Le^\wedge$, the equality $\Gfd_{(\Le,\nu)}(N) = \resdim_{\Le}(N)$ holds.
\end{proof}

\begin{remark}
As a consequence of Remark \ref{rem:dim_Tor_orthogonal}, we have that the weak Gorenstein global dimension relative to $(\mathcal{L,A})$ is complete determined by the flat dimension of $\mathcal{A}$. Specifically, let $(\mathcal{L,A})$ be a Tor-orthogonal duality pair with $\mathcal{P}(R) \subseteq \mathcal{L}$, and $R$ be a ring with ${\rm l.Gwgdim}_{(\mathcal{L,A})}(R) < \infty$. Consider the following assertions:
\begin{enumerate}
\item[(a)] ${\rm l.Gwgdim}_{(\mathcal{L,A})}(R) \leq n$.

\item[(b)] $\mathrm{fd}(A) \leq n$ for every $A \in  \mathcal{A}$.

\item[(c)] $\mathrm{fd}(N) \leq n$ for every $N \in  \mathcal{A}^\vee$.
\end{enumerate}
Then, (a) $\Rightarrow$ (b) $\Rightarrow$ (c). Furthermore, if $\GF_{(\mathcal{L,A})}$ is closed under extensions, then all the assertions are equivalent, and the equality
\[
{\rm l.Gwgdim}_{(\mathcal{L,A})}(R)  = \mathrm{fd} (\mathcal{A}) = \mathrm{fd} (\mathcal{A}^\vee)
\]
holds true. In particular, the latter holds if $(\mathcal{L,A})$ is a perfect symmetric Tor-orthogonal duality pair with $\mathcal{L}$ closed under epikernels. 
\end{remark}

%%%%%%%%%%%%%%%%%%%%%%%%%%%%%%%%%%%%%%%%%%%%%%%%
%%%%%%%%%%%%%%%%%%%%%%%%%%%%%%%%%%%%%%%%%%%%%%%%
%%%%%%%%%%%%%%%%%%%%%%%%%%%%%%%%%%%%%%%%%%%%%%%%
%%%%%%%%%%%%%%%%%%%%%%%%%%%%%%%%%%%%%%%%%%%%%%%%

\section{Model structures from relative Gorenstein flat modules and dimensions}\label{sec:relative_G-flat_models}

In this section, we construct for each $n \in \mathbb{Z}_{\geq 0}$ a hereditary and cofibrantly generated abelian model structure on $\mathsf{Mod}(R)$ where the cofibrant objects are formed by the class of modules with Gorenstein $(\mathcal{L},\nu)$-flat dimension at most $n$. We assume that $(\mathcal{L,A})$ is a product closed bicomplete duality pair for the whole section. 

Remark \ref{rmk:hereditarysym}, Example \ref{ex:weakn} (3) and Corollary \ref{resolvente_dimensiones} imply that we have two hereditary perfect cotorsion pairs 
\begin{align}
(\mathcal{GF}_{(\mathcal{L},\nu)}{}^\wedge_n,(\mathcal{GF}_{(\mathcal{L},\nu)}{}^\wedge_n)^{\perp}) & & \text{and} & & (\mathcal{L}^\wedge_n,(\mathcal{L}^\wedge_n)^{\perp}). \label{eqn:2hccp}
\end{align}
Moreover, the first pair is cogenerated by a set by Remark \ref{rem:GF_cogenerated}, and a similar argument shows that $(\mathcal{L}^\wedge_n,(\mathcal{L}^\wedge_n)^{\perp})$ is also cogenerated by a set. In order to obtain the mentioned model structure, it remains to show that these pairs are compatible.

\begin{proposition}\label{prop:compatibilidadGF}
For each $n \in \mathbb{Z}_{\geq 0}$, the following equality holds:
\[
\mathcal{GF}_{(\mathcal{L},\nu)}{}^\wedge_n \cap (\mathcal{GF}_{(\mathcal{L},\nu)}{}^\wedge_n)^{\perp} = \mathcal{L}^\wedge_n \cap (\mathcal{L}^\wedge_n)^{\perp}.
\]
\end{proposition}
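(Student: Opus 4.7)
The plan is to verify both inclusions separately, relying on the approximation sequences of Proposition \ref{sequences} and the hereditariness of the two cotorsion pairs in \eqref{eqn:2hccp}.

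For the inclusion $\subseteq$, the orthogonal containment $(\mathcal{GF}_{(\mathcal{L},\nu)}{}^\wedge_n)^\perp \subseteq (\mathcal{L}^\wedge_n)^\perp$ follows at once from $\mathcal{L}^\wedge_n \subseteq \mathcal{GF}_{(\mathcal{L},\nu)}{}^\wedge_n$, so any $X$ in the left intersection automatically lies in $(\mathcal{L}^\wedge_n)^\perp$. To show $X \in \mathcal{L}^\wedge_n$, I would apply Proposition \ref{sequences}(2) to obtain a short exact sequence $X \rightarrowtail H \twoheadrightarrow G'$ with $\resdim_\mathcal{L}(H) = \Gfd_{(\mathcal{L},\nu)}(X) \leq n$ and $G' \in \mathcal{GF}_{(\mathcal{L},\nu)} \subseteq \mathcal{GF}_{(\mathcal{L},\nu)}{}^\wedge_n$. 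The hypothesis forces $\Ext^1_R(G', X) = 0$, so the sequence splits and $X$ is a direct summand of $H \in \mathcal{L}^\wedge_n$; closure of $\mathcal{L}^\wedge_n$ under direct summands (as the left class of a cotorsion pair) then yields $X \in \mathcal{L}^\wedge_n$.

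For the inclusion $\supseteq$, take $X \in \mathcal{L}^\wedge_n \cap (\mathcal{L}^\wedge_n)^\perp$. The containment $X \in \mathcal{GF}_{(\mathcal{L},\nu)}{}^\wedge_n$ is immediate. For the orthogonality, fix $G \in \mathcal{GF}_{(\mathcal{L},\nu)}{}^\wedge_n$ and apply Proposition \ref{sequences}(1) to obtain a short exact sequence $K \rightarrowtail G_0 \twoheadrightarrow G$ with $G_0 \in \mathcal{GF}_{(\mathcal{L},\nu)}$ and $K \in \mathcal{L}^\wedge_{n-1} \subseteq \mathcal{L}^\wedge_n$. The hypothesis $X \in (\mathcal{L}^\wedge_n)^\perp$ gives $\Ext^{\geq 1}_R(K, X) = 0$, so a long-exact-sequence argument reduces the desired vanishing $\Ext^{\geq 1}_R(G, X) = 0$ to the sub-claim that $X \in \mathcal{GC}_{(\mathcal{L},\nu)}$.

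To handle this sub-claim, the plan is to invoke Pontryagin duality. By Proposition \ref{iguales} and the exactness of $(-)^+$ applied to an $\mathcal{L}$-resolution of $X$, we have $\Gid_{(\nu,\mathcal{A})}(X^+) \leq n$ and $X^+ \in \mathcal{A}^\vee_n$; Lemma \ref{lem:GI_A_cogorro} then refines this to $X^+ \in \mathcal{A}$. A further dim-shift argument, using the hereditary complete cotorsion pair $({}^\perp\mathcal{A},\mathcal{A})$ together with the dual translation of the orthogonality $X \in (\mathcal{L}^\wedge_n)^\perp$, would place $X^+$ inside $\nu = {}^\perp\mathcal{A} \cap \mathcal{A}$. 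Once this is achieved, for any $G \in \mathcal{GF}_{(\mathcal{L},\nu)}$ the natural isomorphism $\Ext^i_R(G, X^{++}) \cong \Tor^R_i(X^+, G)^+$ together with the $\nu^\top$-property of $G$ (Proposition \ref{Bennis}(2)) yields $\Ext^{\geq 1}_R(G, X^{++}) = 0$, and the pure embedding $X \hookrightarrow X^{++}$, combined with pure-injectivity of elements of $\mathcal{L} \cap \mathcal{L}^\perp$ (a generalization of Xu's theorem in the bicomplete duality pair setting), would transfer the vanishing down to $\Ext^{\geq 1}_R(G, X) = 0$. The main obstacle is promoting $X^+$ from $\mathcal{A}$ to $\nu$ and verifying the pure-injectivity needed to transport Ext-vanishing through the bidualization; both rely delicately on the interplay between $(\mathcal{L},\mathcal{A})$ and its symmetric counterpart $(\mathcal{A},\mathcal{L})$, and constitute the technical heart of the argument.
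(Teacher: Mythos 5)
Your $\subseteq$ direction is correct and matches the paper exactly (split the preenvelope sequence from Proposition \ref{sequences}(2), using $M \in (\mathcal{GF}_{(\mathcal{L},\nu)}{}^\wedge_n)^{\perp}$). The $\supseteq$ direction, however, takes a route that differs from the paper's and contains genuine gaps.

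First, a technical point in your reduction: applying $\Hom_R(-,X)$ to $K \rightarrowtail G_0 \twoheadrightarrow G$ (from Proposition \ref{sequences}(1)) gives the exact piece $\Hom_R(K,X) \to \Ext^1_R(G,X) \to \Ext^1_R(G_0,X)$, and even assuming the sub-claim $X \in \mathcal{GC}_{(\mathcal{L},\nu)}$ you only learn that $\Hom_R(K,X)$ surjects onto $\Ext^1_R(G,X)$, not that the latter vanishes; vanishing of $\Ext^{\geq 1}_R(K,X)$ handles degrees $\geq 2$ but not degree $1$. The reduction would work if you instead used the sequence $G \rightarrowtail H \twoheadrightarrow G'$ from Proposition \ref{sequences}(2) (with $H \in \mathcal{L}^\wedge_n$, $G' \in \mathcal{GF}_{(\mathcal{L},\nu)}$), which sandwiches $\Ext^i_R(G,X)$ between $\Ext^i_R(H,X)$ and $\Ext^{i+1}_R(G',X)$.

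The more serious gap is in the proof of the sub-claim $X \in \mathcal{GC}_{(\mathcal{L},\nu)}$. From $X \in \mathcal{L}^\wedge_n$, Proposition \ref{iguales} gives $\Gid_{(\nu,\mathcal{A})}(X^+) = \Gfd_{(\mathcal{L},\nu)}(X) \leq n$, so $X^+ \in \mathcal{GI}_{(\nu,\mathcal{A})}{}^\vee_n \cap \mathcal{A}^\vee_n$. Lemma \ref{lem:GI_A_cogorro} applied with $m = n$ then returns $X^+ \in \mathcal{A}^\vee_n$, which is what you started with — it does not refine to $X^+ \in \mathcal{A}$ unless $n = 0$. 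To get $X^+ \in \mathcal{A}$ via the $m = 0$ case you would need $X^+ \in \mathcal{GI}_{(\nu,\mathcal{A})}$, i.e.\ $\Gfd_{(\mathcal{L},\nu)}(X) = 0$, which does not follow from $X \in \mathcal{L}^\wedge_n$. The subsequent step placing $X^+$ in $\nu$ is asserted without a working mechanism (the orthogonality $X \in (\mathcal{L}^\wedge_n)^\perp$ does not dualize to $\Ext$-vanishing in the first variable of $X^+$), and the final pure-injectivity claim for $\mathcal{L} \cap \mathcal{L}^\perp$ is a nontrivial assertion that the paper neither states nor uses. The paper's argument avoids all of this: it applies the special $(\mathcal{GF}_{(\mathcal{L},\nu)}{}^\wedge_n)^\perp$-preenvelope $N \rightarrowtail K \twoheadrightarrow C$, uses closure of $\mathcal{GF}_{(\mathcal{L},\nu)}{}^\wedge_n$ under extensions and the already-proved $\subseteq$ direction to place $K$ in $\mathcal{L}^\wedge_n \cap (\mathcal{L}^\wedge_n)^\perp$, shows $C \in \mathcal{L}^\wedge$ by a pullback along the first step of an $\mathcal{L}$-resolution of $K$, upgrades to $C \in \mathcal{L}^\wedge_n$ via Lemma \ref{lem:GF_L_gorro}, and then the sequence splits because $N \in (\mathcal{L}^\wedge_n)^\perp$, making $N$ a direct summand of $K \in (\mathcal{GF}_{(\mathcal{L},\nu)}{}^\wedge_n)^\perp$.
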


\begin{proof} \
\begin{itemize}
\item[($\subseteq$)] First, since $\mathcal{L} \subseteq \mathcal{GF}_{(\mathcal{L},\nu)}$, we have that $(\mathcal{GF}_{(\mathcal{L},\nu)}{}^\wedge_n)^{\perp} \subseteq (\mathcal{L}^\wedge_n)^{\perp}$. Suppose we are given $M \in \mathcal{GF}_{(\mathcal{L},\nu)}{}^\wedge_n \cap (\mathcal{GF}_{(\mathcal{L},\nu)}{}^\wedge_n)^{\perp}$. It is then clear that $M \in (\mathcal{L}^\wedge_n)^{\perp}$. On the other hand, by Proposition \ref{sequences} (2) there is a short exact sequence $M \rightarrowtail H \twoheadrightarrow G$ with $H \in \mathcal{L}^\wedge_n$ and $G \in \mathcal{GF}_{(\mathcal{L},\nu)}$. Since $M \in (\mathcal{GF}_{(\mathcal{L},\nu)}{}^\wedge_n)^{\perp}$, the previous sequence splits, and so $M$ is a direct summand of $H \in \mathcal{L}^\wedge_n$. Hence, $M \in \mathcal{L}^\wedge_n$. 

\item[$(\supseteq)$] Let $N \in \mathcal{L}^\wedge_n \cap (\mathcal{L}^\wedge_n)^{\perp}$. Then clearly we have $N \in \mathcal{GF}_{(\mathcal{L},\nu)}{}^\wedge_n$. On the other hand, since $(\mathcal{GF}_{(\mathcal{L},\nu)}{}^\wedge_n, (\mathcal{GF}_{(\mathcal{L},\nu)}{}^\wedge_n)^{\perp})$ is a complete cotorsion pair, there is a short exact sequence $N \rightarrowtail K \twoheadrightarrow C$ with $K \in (\mathcal{GF}_{(\mathcal{L},\nu)}{}^\wedge_n)^{\perp}$ and $C \in \mathcal{GF}_{(\mathcal{L},\nu)}{}^\wedge_n$. It follows that $K \in \mathcal{GF}_{(\mathcal{L},\nu)}{}^\wedge_n \cap (\mathcal{GF}_{(\mathcal{L},\nu)}{}^\wedge_n)^{\perp} \subseteq \mathcal{L}^\wedge_n \cap (\mathcal{L}^\wedge_n)^{\perp}$, from the previous part and the fact that $\mathcal{GF}_{(\mathcal{L},\nu)}{}^\wedge_n$ is closed under extensions. Moreover, we can note that $C \in \mathcal{L}^\wedge$. Indeed, let $m := \resdim_{\mathcal{L}}(K)$. If $m = 0$ there is nothing to show. If $m > 0$, we can consider a short exact sequence $K' \rightarrowtail L \twoheadrightarrow K$ with $L \in \mathcal{L}$ and $K' \in \mathcal{L}^\wedge_m \subseteq \mathcal{L}^\wedge_n$. Taking the pullback of $N \rightarrowtail K \twoheadleftarrow L$ yields the following commutative exact diagram: 
\[
\hspace{1cm}\begin{tikzpicture}[description/.style={fill=white,inner sep=2pt}]
\matrix (m) [matrix of math nodes, row sep=2.5em, column sep=2.5em, text height=1.25ex, text depth=0.25ex]
{ 
K' & K' & {} \\
N' & L & C \\ 
N & K & C \\
};
\path[->]
(m-3-1)-- node[pos=0.5] {\footnotesize$\mbox{\bf pb}$} (m-2-2)
;
\path[>->]
(m-3-1) edge (m-3-2)
(m-2-1) edge (m-2-2)
(m-1-1) edge (m-2-1)
(m-1-2) edge (m-2-2)
;
\path[->>]
(m-2-2) edge (m-2-3) (m-3-2) edge (m-3-3)
(m-2-2) edge (m-3-2) (m-2-1) edge (m-3-1)
;
\path[-,font=\scriptsize]
(m-1-1) edge [double, thick, double distance=2pt] (m-1-2)
(m-2-3) edge [double, thick, double distance=2pt] (m-3-3)
;
\end{tikzpicture}.
\]
Since $N, K' \in \mathcal{L}^\wedge_n$ and $\mathcal{L}^\wedge_n$ is closed under extensions, we have that $N' \in \mathcal{L}^\wedge_n$. It then follows that $C \in \mathcal{GF}_{(\mathcal{L},\nu)}{}^\wedge_n \cap \mathcal{L}^\wedge$. By Lemma \ref{lem:GF_L_gorro}, we get $C \in \mathcal{L}^\wedge_n$. Now since $N \in (\mathcal{L}^\wedge_n)^{\perp}$, we have that $N$ is a direct summand of $K \in (\mathcal{GF}_{(\mathcal{L},\nu)}{}^\wedge_n)^{\perp}$. Therefore, $N \in \mathcal{GF}_{(\mathcal{L},\nu)}{}^\wedge_n \cap (\mathcal{GF}_{(\mathcal{L},\nu)}{}^\wedge_n)^{\perp}$. 
\end{itemize}
\end{proof}

The following theorem describes some homotopical aspects of Gorenstein $(\mathcal{L},\nu)$-flat $R$-modules.

\begin{theorem}\label{theo:GF_model_structure}
Let $(\mathcal{L,A})$ be a product closed bicomplete duality pair. Then, for each $n \in \mathbb{Z}_{\geq 0}$, there exists a unique hereditary and cofibrantly generated abelian model structure on $\mathsf{Mod}(R)$ given by
\[
\mathfrak{M}^{{\rm GF}\mbox{-}n}_{(\mathcal{L},\nu)} := (\mathcal{GF}_{(\mathcal{L},\nu)}{}^\wedge_n,\mathcal{W},(\mathcal{L}^\wedge_n)^{\perp}).
\]
Its homotopy category ${\rm Ho}(\mathfrak{M}^{{\rm GF}\mbox{-}n}_{(\mathcal{L},\nu)})$ is triangle equivalent\footnote{In the sense of \cite{Happel}.} to the stable category 
\[
(\mathcal{GF}_{(\mathcal{L},\nu)}{}^\wedge_n \cap (\mathcal{L}^\wedge_n)^{\perp}) / \sim,
\] 
where two morphisms $f, g \colon M \to N$ between $R$-modules in $\mathcal{GF}_{(\mathcal{L},\nu)}{}^\wedge_n \cap (\mathcal{L}^\wedge_n)^{\perp}$ (a Frobenius category) are homotopic (denoted $f \sim g$) if, and only if, $f - g$ factors through an $R$-module in $\mathcal{L}^\wedge_n \cap (\mathcal{L}^\wedge_n)^\perp$. 
\end{theorem}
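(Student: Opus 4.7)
The plan is a direct application of Gillespie's construction of abelian model structures from two compatible hereditary complete cotorsion pairs, namely Proposition \ref{prop:HoveyTriple}. I would set $(\mathcal{Q},\mathcal{R}') := (\mathcal{GF}_{(\mathcal{L},\nu)}{}^\wedge_n, (\mathcal{GF}_{(\mathcal{L},\nu)}{}^\wedge_n)^{\perp})$ and $(\mathcal{Q}',\mathcal{R}) := (\mathcal{L}^\wedge_n, (\mathcal{L}^\wedge_n)^{\perp})$. Both pairs are hereditary and complete by the discussion around \eqref{eqn:2hccp}, the inclusions $\mathcal{Q}' \subseteq \mathcal{Q}$ and $\mathcal{R}' \subseteq \mathcal{R}$ are immediate from $\mathcal{L} \subseteq \mathcal{GF}_{(\mathcal{L},\nu)}$, and the compatibility condition $\mathcal{Q} \cap \mathcal{R}' = \mathcal{Q}' \cap \mathcal{R}$ is precisely the content of Proposition \ref{prop:compatibilidadGF}. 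Hence Proposition \ref{prop:HoveyTriple} produces a thick class $\mathcal{W}$ together with the desired abelian model structure $\mathfrak{M}^{{\rm GF}\mbox{-}n}_{(\mathcal{L},\nu)} = (\mathcal{Q},\mathcal{W},\mathcal{R})$, uniqueness being a consequence of Hovey's correspondence \cite[Thm. 2.2]{HoveyModel}. Heredity is automatic since both generating cotorsion pairs are hereditary.

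To establish cofibrant generation, I would verify that each of the two cotorsion pairs is cogenerated by a set. For the first pair, this is exactly Remark \ref{rem:GF_cogenerated}. For the second, I would run an analogous argument: since $(\mathcal{L},\mathcal{A})$ is a duality pair, $\mathcal{L}$ is closed under pure submodules and pure quotients, hence a Kaplansky class by Proposition \ref{prop:purity_and_Kaplansky}; a transfinite-filtration argument then upgrades this to $\mathcal{L}^\wedge_n$, while closure of $\mathcal{L}^\wedge_n$ under extensions, direct limits, and direct summands follows by standard manipulations together with Lemma \ref{lem:GF_L_gorro} and Corollary \ref{resolvente_dimensiones}. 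With these ingredients in hand, Proposition \ref{prop:cotorsion_Kaplansky} produces a cogenerating set for $(\mathcal{L}^\wedge_n, (\mathcal{L}^\wedge_n)^{\perp})$. Having cogenerating sets for both cotorsion pairs, cofibrant generation of the resulting model structure is standard (see \cite{HoveyModel,GillespieHowTo}).

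For the second assertion, I would invoke the general principle, going back to Happel, that for any hereditary abelian model structure $(\mathcal{Q},\mathcal{W},\mathcal{R})$ the full subcategory $\mathcal{Q} \cap \mathcal{R}$ of bifibrant objects carries a natural Frobenius exact structure whose projective-injective objects are exactly the elements of the common core $\mathcal{Q} \cap \mathcal{W} \cap \mathcal{R}$, and whose associated stable category is triangle equivalent to $\mathrm{Ho}(\mathfrak{M})$. In our situation, the bifibrant class equals $\mathcal{GF}_{(\mathcal{L},\nu)}{}^\wedge_n \cap (\mathcal{L}^\wedge_n)^{\perp}$, while the core is identified via Proposition \ref{prop:compatibilidadGF} as $\mathcal{L}^\wedge_n \cap (\mathcal{L}^\wedge_n)^{\perp}$, yielding precisely the homotopy relation described in the statement. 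The main delicate step in the whole plan is verifying that $\mathcal{L}^\wedge_n$ inherits the Kaplansky property from $\mathcal{L}$, since every other ingredient is either an explicit citation of Proposition \ref{prop:HoveyTriple}, Proposition \ref{prop:compatibilidadGF}, or Remark \ref{rem:GF_cogenerated}, or a routine application of Happel's framework.
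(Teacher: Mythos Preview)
Your proposal is correct and follows essentially the same route as the paper: apply Proposition~\ref{prop:HoveyTriple} to the two hereditary complete cotorsion pairs in \eqref{eqn:2hccp}, with compatibility supplied by Proposition~\ref{prop:compatibilidadGF}, cofibrant generation by Remark~\ref{rem:GF_cogenerated} and its analogue for $\mathcal{L}^\wedge_n$, and the description of the homotopy category via the Frobenius structure on bifibrant objects (the paper cites \cite[Prop.~4.2 \& Thm.~4.3]{GillespieHereditary} here). One minor simplification: the step you flag as delicate---showing $\mathcal{L}^\wedge_n$ is Kaplansky---can be bypassed entirely, since by Example~\ref{ex:weakn}(4) the pair $(\mathcal{L}^\wedge_n,\mathcal{A}^\vee_n)$ is itself a product closed bicomplete duality pair, so $\mathcal{L}^\wedge_n$ is directly closed under pure submodules and pure quotients and the argument of Remark~\ref{rem:GF_cogenerated} applies verbatim.
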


\begin{proof}
The pairs in \eqref{eqn:2hccp} are hereditary and complete, and satisfy $\mathcal{L}^\wedge_n \subseteq \mathcal{GF}_{(\mathcal{L},\nu)}{}^\wedge_n$, $(\mathcal{GF}_{(\mathcal{L},\nu)}{}^\wedge_n)^\perp \subseteq (\mathcal{L}^\wedge_n)^{\perp}$ and $\mathcal{GF}_{(\mathcal{L},\nu)}{}^\wedge_n \cap (\mathcal{GF}_{(\mathcal{L},\nu)}{}^\wedge_n)^\perp = \mathcal{L}^\wedge_n \cap (\mathcal{L}^\wedge_n)^\perp$ by Proposition \ref{prop:compatibilidadGF}. Hence, there is a thick class $\mathcal{W}$ of $R$-modules such that $(\mathcal{GF}_{(\mathcal{L},\nu)}{}^\wedge_n,\mathcal{W},(\mathcal{L}^\wedge_n)^\perp)$ is a Hovey triple. The existence of the model structure then follows by \cite[Thm. 2.2, Lem. 6.7 \& Coroll. 6.8]{HoveyModel}. The last assertion is an application of \cite[Prop. 4.2 \& Thm. 4.3]{GillespieHereditary}.
\end{proof}

%%%%%%%%%%%%%%%%%%%%%%%%%%%%%%%%%%%%%
%%%%%%%%%%%%%%%%%%%%%%%%%%%%%%%%%%%%%
%%%%%%%%%%%%%%%%%%%%%%%%%%%%%%%%%%%%%
%%%%%%%%%%%%%%%%%%%%%%%%%%%%%%%%%%%%%

\section{Relative Gorenstein injective and flat complexes}\label{appx:complexes}

We study the chain complex version of Gorenstein $(\mathcal{L},\nu)$-flat $R$-modules, dimensions and model structures. In what follows, Hom functors in $\Ch(R)$ will be denoted by $\Hom_{\Ch(R)}(-,-)$, and its right derived functors by $\Ext^i_{\Ch(R)}(-,-)$. It is also important that the reader is familiar with the \emph{modified} tensor product and the internal Hom functors (see for instance \cite[\S 4.2]{JRGR} or \cite[pp. 70--71]{libro})
\begin{align*}
- \overline{\otimes} - \colon \Ch(R^{\rm o}) \times \Ch(R) & \to \Ch(\mathbb{Z}) & & \text{and} & \overline{\mathcal{H}{\rm om}}(-,-) \colon \Ch(R) \times \Ch(R) & \to \Ch(\mathbb{Z}). 
\end{align*}
The left derived functors of $(- \overline{\otimes} -)$ will be denoted by $\overline{\Tor}^R_i(-,-)$, while the right derived functors of $\overline{\mathcal{H}{\rm om}}(-,-)$ will be denoted by $\overline{\mathcal{E}{\rm xt}}(-,-)$, for $i \in \mathbb{Z}_{>0}$. 

Pontryagin duality between $\Ch(R)$ and $\Ch(R^{\rm o})$ is defined in terms of $\overline{\mathcal{H}{\rm om}}(-,-)$. Indeed, recall from \cite[Def. 4.4.9 \& Prop. 4.4.10]{libro} that the Pontryagin dual of $X_\bullet$ is the $R^{\rm o}$-complex defined by
\[
X^+ := \overline{\mathcal{H}{\rm om}}(X_\bullet,D^0(\mathbb{Q / Z})).
\]
It can also be defined as the complex
\[
\cdots \to X_{-m-1}^+ \xrightarrow{(-1)^{m-1}\Hom_{\mathbb{Z}}(\partial^{X_\bullet}_{-m})} X^+_{-m} \to \cdots.
\]
Bicomplete, complete, symmetric, perfect and (co)product closed duality pairs of chain complexes are defined similarly as for modules. We only point out that, for perfect duality pairs $(\mathscr{L,A})$ of complexes, one requires that the \emph{disk} complex $\cdots \to 0 \xrightarrow{0} R \xrightarrow{{\rm id}} R \xrightarrow{0} 0 \to \cdots$ belongs to $\mathscr{L}$. 

Now we define the chain complex analogs for $\mathcal{GF}_{(\mathcal{L},\nu)}$ and $\mathcal{GI}_{(\nu,\mathcal{A})}$. In what follows, 
\[
\mathscr{L} \subseteq \Ch(R) \text{ \ and \ } \mathscr{A} \subseteq \Ch(R^{\rm o})
\]
are fixed classes of $R$-complexes and $R^{\rm o}$-complexes, respectively.

\begin{definition}
We say that an $R$-complex $X_\bullet \in \Ch(R)$ is \textbf{Gorenstein $\bm{(\mathscr{L,A})}$-flat} if there is an exact and $(\mathscr{A} \overline{\otimes} -)$-acyclic complex $\mathbb{L}_\square$ in $\Ch(\mathscr{L})$\footnote{Note that with this notation we mean a complex of complexes, in this case a complex whose components are complexes in $\mathscr{L}$.} such that $X_\bullet \simeq Z_0(\mathbb{L}_\square)$.
\end{definition}

The class of Gorenstein $(\mathscr{L,A})$-flat $R$-complexes will be denoted by $\mathcal{GF}_{(\mathscr{L,A})}$.

\begin{remark}\label{rem:validity_for_complexes}
All of the results in Sections \ref{sec:prelim}, \ref{sec:relative_G-flat}, \ref{sec:relative_G-flat_dimensions} and \ref{sec:relative_G-flat_models} are also valid in $\Ch(R)$ for the class $\mathcal{GF}_{(\mathscr{L,A})}$, as the arguments presented for modules carry over to chain complexes. We only mention some particular details where one needs to be careful:
\begin{itemize} 
\item Proof of Lemma \ref{lem:ort}: One has to cite \cite[Prop. 4.4.13 (2)]{libro} instead of \cite[Lem. 1.2.11 (b)]{GT} and \cite[Thm. 3.2.1]{EJ00}. 

\item Proof of Theorem \ref{dualidad}: One needs \cite[Prop. 4.3.3]{libro} for the adjunctions between $\overline{\otimes}$ and $\overline{\mathcal{H}{\rm om}}$.

\item Proof of Proposition \ref{prop:GF_closed_under_exts} (b) $\Rightarrow$ (c): One can check that \cite[Thm. 2.3]{GFclosed} carries over to chain complexes.   

\item Proof of Proposition \ref{sequences}: One can check that \cite[Lem. 2.2]{Bennis10} carries over to chain complexes. 
\end{itemize}
\end{remark}

%%%%%%%%%%%%%%%%%%%%%%%%%%%%%%%%%%%%%%%%%%%%%
%%%%%%%%%%%%%%%%%%%%%%%%%%%%%%%%%%%%%%%%%%%%%

\subsection*{Relations with relative Gorenstein flat modules}

Before studying some relations between relative Gorenstein flat modules and their chain complex counterpart, let us recall first how to construct certain chain complexes from modules and classes of modules. 

Given a class $\mathcal{X} \subseteq \mathsf{Mod}(R)$, we shall consider the following induced classes in $\Ch(R)$, with the terminology and notation borrowed from \cite{GillespieCh}:
\begin{itemize}
\item \emph{$\mathcal{X}$-complexes:} exact complexes $X_\bullet \in \Ch(R)$ such that $Z_m(X_\bullet) \in \mathcal{X}$ for every $m \in \mathbb{Z}$. The class of $\mathcal{X}$-complexes is denoted by $\widetilde{\mathcal{X}}$. 
\end{itemize}
We also need to consider the following special complexes:
\begin{itemize}
\item \emph{Disk complexes:} Given an $R$-module $M \in \mathsf{Mod}(R)$, the $m$-th disk centered at $M$ is defined as the complex $D^m(M)$ with $M$ at degrees $m$ and $m-1$, and zero elsewhere, such that the only nonzero differential $(D^m(M))_m \to (D^m(M))_{m-1}$ is the identity. 

\item \emph{Shifted complexes:} Given an $R$-complex $X_\bullet \in \Ch(R)$, its $n$-th suspension or shift is defined as the complex $X_\bullet[n]$ with components 
\[
(X_\bullet[n])_m := X_{m-n}
\]
for every $m \in \mathbb{Z}$, and whose differentials are given by
\[
\partial^{X_\bullet[n]}_m := (-1)^n \partial^{X_\bullet}_m.
\]
\end{itemize}

\begin{proposition}\label{Thm2}
The following assertions hold true for every $R^{\rm o}$-complex $Y_\bullet$:
\begin{enumerate}
\item If $Y_{\bullet} \in \mathcal{GI}_{(\widetilde{\nu},\widetilde{\A})}$, then $Y_m \in \GI_{(\nu,\A)}$ for every $m \in \mathbb{Z}$, provided that $\mathcal{A}$ is closed under extensions. 

\item If in addition $(\nu,\mathcal{A})$ is a GI-admissible pair with $\mathcal{A}$ closed under direct summands and monocokernels, and containing the injective $R^{\rm o}$-modules, then $Y_m \in \GI_{(\nu,\A)}$ for every $m \in \mathbb{Z}$ implies that $Y_\bullet \in \mathcal{GI}_{(\widetilde{\nu},\widetilde{\A})}$. In other words,
\[
\mathcal{GI}_{(\widetilde{\nu},\widetilde{\A})} = \Ch(\mathcal{GI}_{(\nu,\mathcal{A})}).
\]
\end{enumerate}
\end{proposition}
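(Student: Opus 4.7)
The plan is to transfer between module and complex levels via degreewise evaluation paired with the disk–component adjunction. For Part (1), suppose $Y_\bullet \in \mathcal{GI}_{(\widetilde{\nu},\widetilde{\mathcal{A}})}$ with witness complex $\mathbb{A}_\square = \cdots \to \mathbb{A}_1 \to \mathbb{A}_0 \to \mathbb{A}_{-1} \to \cdots$ in $\Ch(\widetilde{\mathcal{A}})$ that is exact, $\Hom_{\Ch(R^{\rm o})}(\widetilde{\nu},-)$-acyclic, and satisfies $Y_\bullet \simeq Z_0(\mathbb{A}_\square)$. For each $m \in \mathbb{Z}$, evaluation at degree $m$ yields an exact sequence of $R^{\rm o}$-modules $((\mathbb{A}_k)_m)_{k \in \mathbb{Z}}$ whose $0$-th cycle is $Y_m$; each term $(\mathbb{A}_k)_m$ fits in $Z_m(\mathbb{A}_k) \rightarrowtail (\mathbb{A}_k)_m \twoheadrightarrow Z_{m-1}(\mathbb{A}_k)$ with both ends in $\mathcal{A}$ (since $\mathbb{A}_k \in \widetilde{\mathcal{A}}$), so extension closure of $\mathcal{A}$ gives $(\mathbb{A}_k)_m \in \mathcal{A}$. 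For $\Hom_{R^{\rm o}}(\nu,-)$-acyclicity, observe that for $N \in \nu$ the disk $D^m(N)$ is an exact complex whose only nonzero cycle is $N$, hence $D^m(N) \in \widetilde{\nu}$; and the adjunction $\Hom_{\Ch(R^{\rm o})}(D^m(N), X_\bullet) \cong \Hom_{R^{\rm o}}(N, X_m)$ identifies $\Hom_{\Ch(R^{\rm o})}(D^m(N), \mathbb{A}_\square)$ with $\Hom_{R^{\rm o}}(N, (\mathbb{A}_\bullet)_m)$, the former being acyclic by hypothesis. Therefore $Y_m \in \mathcal{GI}_{(\nu, \mathcal{A})}$.

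For Part (2), assume the additional hypotheses. My first step would be to verify that $(\widetilde{\nu}, \widetilde{\mathcal{A}})$ is itself a GI-admissible pair in $\Ch(R^{\rm o})$: closure of $\widetilde{\nu}$ and $\widetilde{\mathcal{A}}$ under finite coproducts and extensions passes degreewise via snake-lemma chases on cycles; the Ext-orthogonality $\Ext^{\geq 1}_{\Ch(R^{\rm o})}(\widetilde{\nu}, \widetilde{\mathcal{A}}) = 0$ reduces to $\Ext^{\geq 1}_{R^{\rm o}}(\nu, \mathcal{A}) = 0$ by dimension shifting through cycle short exact sequences of any $V_\bullet \in \widetilde{\nu}$; and the relative (co)generator conditions follow from the decomposition of every injective chain complex as $\bigoplus_m D^{m+1}(I_m)$ with $I_m \in \mathcal{I}(R^{\rm o}) \subseteq \mathcal{A}$, placing $\mathcal{I}(\Ch(R^{\rm o})) \subseteq \widetilde{\mathcal{A}}$. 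Given GI-admissibility, to produce the witness complex for $Y_\bullet$ I will inductively build its right and left halves and splice at $Y_\bullet$. For the right half, for each $m$ pick a $\Hom_{R^{\rm o}}(\nu,-)$-exact sequence $Y_m \rightarrowtail A^0_m \twoheadrightarrow Y'^0_m$ (with $A^0_m \in \mathcal{A}$ and $Y'^0_m \in \mathcal{GI}_{(\nu,\mathcal{A})}$) and assemble via the disk–evaluation adjunction into a morphism $Y_\bullet \to \bigoplus_m D^{m+1}(A^0_m) \in \widetilde{\mathcal{A}}$, whose degree-$n$ component $(i_n, i_{n-1}\partial^{Y_\bullet}_n)\colon Y_n \to A^0_n \oplus A^0_{n-1}$ is monic. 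A diagram chase identifies the cokernel at degree $n$ as a two-step extension built from $Y'^0_n$, $Y_{n-1}$ and $Y'^0_{n-1}$, hence in $\mathcal{GI}_{(\nu, \mathcal{A})}$ by extension closure (dual of Proposition \ref{criterio}), so the construction iterates. The left half is built dually using the relative generator $\nu \cap \mathcal{A}$ of $\mathcal{GI}_{(\nu, \mathcal{A})}$ and an analogous disk construction.

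The main obstacle is the $\Hom_{\Ch(R^{\rm o})}(\widetilde{\nu}, -)$-acyclicity of the assembled witness. Degreewise $\Hom_{R^{\rm o}}(\nu, -)$-exactness of each short exact sequence $Y^k_\bullet \rightarrowtail A^k_\bullet \twoheadrightarrow Y^{k+1}_\bullet$ is automatic from $\mathcal{GI}_{(\nu,\mathcal{A})} \subseteq \nu^{\perp}$, but lifting it to chain-level $\Hom$-acyclicity requires $\Ext^1_{\Ch(R^{\rm o})}(\widetilde{\nu}, Y^k_\bullet) = 0$ for each $k$; this follows by a dimension-shifting argument along the long exact $\Ext$ sequences attached to the constructed short exact sequences, using the GI-admissibility vanishing $\Ext^{\geq 1}_{\Ch(R^{\rm o})}(\widetilde{\nu}, A^k_\bullet) = 0$ together with a coherent simultaneous treatment of both the right and left halves so that the iterated syzygies remain in the relevant orthogonal class.
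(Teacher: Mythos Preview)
Your Part (1) is essentially the paper's argument: pass to degree $m$ via the disk adjunction $\Hom_{\Ch(R^{\rm o})}(D^m(A),-)\cong\Hom_{R^{\rm o}}(A,(-)_m)$, and use extension closure of $\mathcal{A}$ to see that components of $\widetilde{\mathcal{A}}$-complexes lie in $\mathcal{A}$.

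For Part (2) your route diverges from the paper's and runs into a real gap. The paper does not build the witness degreewise through disks. Instead it invokes the dual of \cite[Prop.\ 4.3(2)]{BMP}: since $\nu$ is a relative generator in $\mathcal{GI}_{(\nu,\mathcal{A})}$ with $\pd_{\mathcal{GI}_{(\nu,\mathcal{A})}}(\nu)=0$, it follows that $\widetilde{\nu}$ is a relative generator in $\Ch(\mathcal{GI}_{(\nu,\mathcal{A})})$ with
\[
\pd_{\Ch(\mathcal{GI}_{(\nu,\mathcal{A})})}(\widetilde{\nu})=0,\quad\text{i.e.}\quad \Ext^{\geq 1}_{\Ch(R^{\rm o})}\bigl(\widetilde{\nu},\,\Ch(\mathcal{GI}_{(\nu,\mathcal{A})})\bigr)=0.
\]
With this vanishing in hand, the left half of the witness is produced by iterating the relative-generator property (each short exact sequence $K^k_\bullet\rightarrowtail A^k_\bullet\twoheadrightarrow Y^k_\bullet$ is then automatically $\Hom_{\Ch(R^{\rm o})}(\widetilde{\nu},-)$-acyclic), while the right half simply uses injective complexes, which lie in $\widetilde{\mathcal{A}}$ because $\mathcal{I}(R^{\rm o})\subseteq\mathcal{A}$, and for which acyclicity is trivial.

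This is precisely the vanishing you flag as the obstacle, and your proposed resolution does not deliver it. Your GI-admissibility check gives $\Ext^{\geq 1}_{\Ch}(\widetilde{\nu},\widetilde{\mathcal{A}})=0$, but what is needed is orthogonality against all of $\Ch(\mathcal{GI}_{(\nu,\mathcal{A})})$, not just $\widetilde{\mathcal{A}}$. Dimension-shifting along your constructed sequences only yields isomorphisms such as $\Ext^1_{\Ch}(V_\bullet,Y^0_\bullet)\cong\Ext^{k+1}_{\Ch}(V_\bullet,Y^{-k}_\bullet)$, which never terminate; the vague ``coherent simultaneous treatment'' does not explain how these groups vanish. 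You must either prove the displayed vanishing directly (e.g.\ by exploiting exactness of $V_\bullet\in\widetilde{\nu}$ to inductively adjust degreewise splittings into a chain-level section---this is the nontrivial content of the cited result), or cite it. Once you have it, your disk construction for the right half becomes unnecessary: injective complexes suffice, and the whole argument collapses to the paper's two lines. (Minor point: your cokernel at degree $n$ is a one-step extension $Y'^0_n\rightarrowtail C_n\twoheadrightarrow A^0_{n-1}$, not a two-step extension through $Y_{n-1}$.)
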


\begin{proof} \
\begin{enumerate}
\item By definition of $\mathcal{GI}_{(\widetilde{\nu},\widetilde{\mathcal{A}})}$ there is an exact and $\Hom_{\Ch(R^{\rm o})}(\widetilde{\nu},-)$-acyclic sequence of complexes in $\widetilde{\mathcal{A}}$, say 
\[
\mathbb{A}_\square = \cdots \to A_{\bullet}^1 \to A_{\bullet}^0 \to A_{\bullet}^{-1} \to A_{\bullet}^{-2} \to \cdots,
\] 
such that $Y_\bullet \simeq \Ker(A_{\bullet}^{0} \to A_{\bullet}^{-1})$. In order to show that $Y_m \in \mathcal{GI}_{(\nu,\mathcal{A})}$, let us note that for $A \in \nu$, one has $D^m(A) \in \widetilde{\nu}$. Moreover, by \cite[Lem. 3.1 (1)]{GillespieCh} we have that the complexes $\Hom_{\Ch(R^{\rm o})}(D^m(A),\mathbb{A}_\square)$ and $\Hom_{R^{\rm o}}(A,\mathbb{A}_m)$ are naturally isomorphic, where 
\[
\mathbb{A}_m = \cdots \to A_m^1 \to A_m^0 \to A_m^{-1} \to A_m^{-2} \to \cdots
\] 
is exact in $\Ch(\mathcal{A})$. Moreover, the exactness of $\Hom_{\Ch(R^{\rm o})}(D^m(A),\mathbb{A}_\square)$ implies the exactness of $\Hom_{R^{\rm o}}(A,\mathbb{A}_m)$. It follows that $\mathbb{A}_m$ is an exact and $\Hom_{R^{\rm o}}(\nu,-)$-acyclic complex in $\Ch(\mathcal{A})$ such that $Z_0(\mathbb{A}_m) \simeq Y_m$. In other words, $Y_m \in \mathcal{GI}_{(\nu,\mathcal{A})}$.

\item First, since $\nu$ is a relative generator for $\GI_{(\nu, \A)}$ with $\pd_{\GI_{(\nu, \A)}(R^{\rm o})}(\nu) = 0$ and $\mathcal{A}$ closed under direct summands, we have by the dual of \cite[Prop. 4.3 (2)]{BMP} that $\widetilde{\nu}$ is a relative generator in $\Ch(\GI_{(\nu,\A)})$ satisfying 
\begin{align}
\pd_{\Ch(\GI_{(\nu,\A)}(R^{\rm o}))}(\widetilde{\nu}) & = 0. \label{pdChGInu}
\end{align} 
Then, for $Y_{\bullet} \in \Ch(\GI _{(\nu, \A)})$ there is a short exact sequence $K_{\bullet}^0 \rightarrowtail A_{\bullet}^0 \twoheadrightarrow Y_{\bullet}$ with $A_{\bullet}^0 \in \widetilde{\nu} \subseteq \widetilde{\mathcal{A}}$ and $K_{\bullet }^0 \in \Ch(\GI _{(\nu, \A)})$. By \eqref{pdChGInu}, we can note that the previous sequence is $\Hom_{\Ch(R^{\rm o})}(\widetilde{\nu},-)$-acyclic. By repeating this procedure, we can obtain a $\Hom_{\Ch(R^{\rm o})}(\widetilde{\nu},-)$-acyclic $\widetilde{\mathcal{A}}$-resolution of $Y_\bullet$. Finally, since $\widetilde{\mathcal{I}(R^{\rm o})}$ is precisely the class of injective $R^{\rm o}$-complexes, $\mathcal{I}(R^{\rm o}) \subseteq \mathcal{A}$, we have that $\widetilde{\mathcal{A}}$ contains the class of injective $R^{\rm o}$-complexes. Thus, we can complete the previous resolution to a $\Hom_{\Ch(R^{\rm o})}(\widetilde{\nu},-)$-acyclic complex in $\Ch(\widetilde{\mathcal{A}})$ whose $0$-th cycle is isomorphic to $Y_\bullet$. 
\end{enumerate}   
\end{proof}

Theorem \ref{dualidad_complejos} characterizes the Gorenstein $(\mathcal{L},\nu)$-flat complexes and extends the duality relation proved in Theorem \ref{dualidad} to the chain complex level.

\begin{theorem}\label{dualidad_complejos}
Let $(\Le, \A)$ be a product closed bicomplete duality pair in $\mathsf{Mod}(R)$. The following statements are equivalent:
\begin{enumerate}
\item[(a)] $X_{\bullet} \in \mathcal{GF}_{(\widetilde{\Le},\widetilde{\nu})}$.

\item[(b)] $X_m \in \GF_{(\Le,\nu)}$ for every $m \in \mathbb{Z}$.

\item[(c)] $X_{\bullet}^+ \in \mathcal{GI}_{(\widetilde{\nu},\widetilde{\A})}$.

\item[(d)] $X_{\bullet}$ admits a $\Hom_{\Ch(R)}(-,\widetilde{\mathcal{L}})$-acyclic $\widetilde{\mathcal{L}}$-coresolution.
\end{enumerate}
In particular, 
\[
\mathcal{GF}_{(\widetilde{\mathcal{L}},\widetilde{\nu})} = \Ch(\mathcal{GF}_{(\mathcal{L},\nu)}).
\]
\end{theorem}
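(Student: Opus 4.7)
The plan is to reduce everything to Theorem~\ref{dualidad} and Proposition~\ref{Thm2} at the module level, together with the chain complex version of Theorem~\ref{dualidad} (valid by Remark~\ref{rem:validity_for_complexes}). The key preliminary step is to show that $(\widetilde{\mathcal{L}},\widetilde{\mathcal{A}})$ is itself a product closed bicomplete duality pair in $\Ch(R)$ satisfying $\widetilde{\nu}= {}^\perp\widetilde{\mathcal{A}} \cap \widetilde{\mathcal{A}}$. For the duality pair property, note that for an exact complex $L_\bullet$, applying $(-)^+$ to the short exact sequences $Z_{m+1}(L_\bullet) \rightarrowtail L_m \twoheadrightarrow Z_m(L_\bullet)$ shows that $L_\bullet^+$ is exact and that its cycles are the Pontryagin duals of the cycles of $L_\bullet$; together with $\mathcal{L}^+ \subseteq \mathcal{A}$, $\mathcal{A}^+ \subseteq \mathcal{L}$ (from symmetry), and the closure of $\mathcal{L}$ and $\mathcal{A}$ under extensions (so that components of complexes in $\widetilde{\mathcal{L}}$ or $\widetilde{\mathcal{A}}$ sit in $\mathcal{L}$ or $\mathcal{A}$), this gives $L_\bullet \in \widetilde{\mathcal{L}}$ if, and only if, $L_\bullet^+ \in \widetilde{\mathcal{A}}$, and symmetrically. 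Product and coproduct closure, closure of $\widetilde{\mathcal{L}}$ under extensions, and the inclusion of $D^0(R)$ in $\widetilde{\mathcal{L}}$ are inherited from $(\mathcal{L},\mathcal{A})$. For bicompleteness, Gillespie's lifting in \cite{GillespieCh} upgrades the hereditary complete cotorsion pair $({}^\perp\mathcal{A},\mathcal{A})$ to the hereditary complete cotorsion pair $(\mathrm{dg}\widetilde{{}^\perp\mathcal{A}},\widetilde{\mathcal{A}}) = ({}^\perp\widetilde{\mathcal{A}},\widetilde{\mathcal{A}})$ in $\Ch(R^{\rm o})$. Using the Gillespie identity $\widetilde{\mathcal{X}}= \mathrm{dg}\widetilde{\mathcal{X}} \cap \mathcal{E}$ for the left half of such a pair (with $\mathcal{E}$ the class of exact complexes), and that $\widetilde{\mathcal{A}} \subseteq \mathcal{E}$, one obtains
\[
{}^\perp\widetilde{\mathcal{A}} \cap \widetilde{\mathcal{A}} = \mathrm{dg}\widetilde{{}^\perp\mathcal{A}} \cap \widetilde{\mathcal{A}} = \widetilde{{}^\perp\mathcal{A}} \cap \widetilde{\mathcal{A}} = \widetilde{{}^\perp\mathcal{A} \cap \mathcal{A}} = \widetilde{\nu}.
\]

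With the setup in place, the chain complex form of Theorem~\ref{dualidad} applied to $(\widetilde{\mathcal{L}},\widetilde{\mathcal{A}})$ yields (a) $\Leftrightarrow$ (c) $\Leftrightarrow$ (c'), where (c') is the strengthening of (d) that additionally requires $X_\bullet \in \widetilde{\nu}^\top$. Clearly (c') implies (d); conversely, the natural isomorphism $(A_\bullet \overline{\otimes} -)^+ \simeq \overline{\mathcal{H}{\rm om}}(A_\bullet,(-)^+)$ combined with $\widetilde{\mathcal{A}}^+ \subseteq \widetilde{\mathcal{L}}$ converts the $\overline{\mathcal{H}{\rm om}}(-,\widetilde{\mathcal{L}})$-acyclic $\widetilde{\mathcal{L}}$-coresolution in (d) into a $(\widetilde{\nu}\overline{\otimes}-)$-acyclic one, and then the chain-level version of Lemma~\ref{lem:ort} (available because $\widetilde{\mathcal{L}}^+ \subseteq \widetilde{\mathcal{A}}$) together with a dimension shift along the cosyzygies of the coresolution forces $X_\bullet \in \widetilde{\nu}^\top$, giving (c'). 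For (b) $\Leftrightarrow$ (c), I would invoke Proposition~\ref{Thm2}: the pair $(\nu,\mathcal{A})$ is GI-admissible (as noted at the end of the proof of Theorem~\ref{dualidad}), $\mathcal{A}$ is coresolving and closed under direct summands, and $\mathcal{I}(R^{\rm o}) \subseteq \nu$ (since $\mathcal{I}(R^{\rm o}) \subseteq \mathcal{A}$ by \cite[Prop. 2.3]{GillespieDuality} and $\mathcal{I}(R^{\rm o}) \subseteq {}^\perp\mathcal{A}$ trivially); hence $\mathcal{GI}_{(\widetilde{\nu},\widetilde{\mathcal{A}})} = \Ch(\mathcal{GI}_{(\nu,\mathcal{A})})$. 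Since the components of $X_\bullet^+$ are Pontryagin duals of the components of $X_\bullet$, the module-level Theorem~\ref{dualidad} translates the componentwise criterion into (b). The final identity $\mathcal{GF}_{(\widetilde{\mathcal{L}},\widetilde{\nu})} = \Ch(\mathcal{GF}_{(\mathcal{L},\nu)})$ is then a restatement of (a) $\Leftrightarrow$ (b).

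The main obstacle I anticipate is the complex-level setup, specifically the verification that $({}^\perp\widetilde{\mathcal{A}},\widetilde{\mathcal{A}})$ is a hereditary complete cotorsion pair in $\Ch(R^{\rm o})$ together with the identification ${}^\perp\widetilde{\mathcal{A}} \cap \widetilde{\mathcal{A}} = \widetilde{\nu}$; once these have been established via Gillespie's lifting machinery, the rest of the proof is a fairly formal application of the module-level results, with the delicate bookkeeping being the passage from $\Hom$-acyclicity to $\overline{\otimes}$-acyclicity (and the resulting Tor vanishing) required to close the gap between (d) and the stronger chain-level version of Theorem~\ref{dualidad}(c).
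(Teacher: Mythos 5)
Your architecture is a valid and somewhat more systematic alternative to the paper's. You first establish (essentially the content of Lemma~\ref{lem:contractible}, which the paper records only afterwards and proves by citation) that $(\widetilde{\mathcal{L}},\widetilde{\mathcal{A}})$ is a product closed bicomplete duality pair of complexes with ${}^\perp\widetilde{\mathcal{A}} \cap \widetilde{\mathcal{A}} = \widetilde{\nu}$, and then funnel (a), (c), and the strengthened condition (c') through the lifted Theorem~\ref{dualidad}, handling (b)~$\Leftrightarrow$~(c) via Proposition~\ref{Thm2}. The paper instead runs the cycle (a)~$\Rightarrow$~(b)~$\Rightarrow$~(c)~$\Rightarrow$~(d)~$\Rightarrow$~(a) directly, with (c)~$\Rightarrow$~(d) worked out inside $\Ch(R)$ by replicating the argument of Theorem~\ref{dualidad} (b)~$\Rightarrow$~(c). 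You also correctly isolate the subtlety the paper elides: the statement of (d) omits the condition $X_\bullet \in \widetilde{\nu}^\top$ that appears in Theorem~\ref{dualidad}(c), so the round trip back to (a) needs its own argument.

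That is exactly where your proposal has a gap: the ``dimension shift along the cosyzygies'' does not extract $X_\bullet \in \widetilde{\nu}^\top$. Write $C^0_\bullet = X_\bullet$ and $C^k_\bullet \rightarrowtail L^k_\bullet \twoheadrightarrow C^{k+1}_\bullet$ for the cosyzygies. The $(\widetilde{\nu}\,\overline{\otimes}\,-)$-acyclicity together with the chain-level Lemma~\ref{lem:ort} yields $\overline{\Tor}^R_1(A_\bullet, C^k_\bullet)=0$ for $k\geq 1$ and the isomorphisms $\overline{\Tor}^R_i(A_\bullet, C^k_\bullet)\cong\overline{\Tor}^R_{i+1}(A_\bullet, C^{k+1}_\bullet)$ for $i\geq 1$; chaining these only gives $\overline{\Tor}^R_i(A_\bullet, X_\bullet)\cong\overline{\Tor}^R_{i+k}(A_\bullet, C^k_\bullet)$ for every $k$, and the index $i+k$ never reaches $1$ once $i\geq 1$ and $k\geq 1$. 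Since objects of $\widetilde{\nu}$ need not have finite flat dimension, there is no bound forcing those groups to vanish, so (d)~$\Rightarrow$~(c') is not established. (The paper's own (d)~$\Rightarrow$~(a) is exposed to the same objection: the remark that $\widetilde{\mathcal{P}(R)}\subseteq\widetilde{\mathcal{L}}$ only licenses the implication from the full Theorem~\ref{dualidad}(c), which carries the $\widetilde{\nu}^\top$ hypothesis, not from (d) as written.) Either (d) should be strengthened to include $X_\bullet \in \widetilde{\nu}^\top$, matching Theorem~\ref{dualidad}(c), or a genuinely different mechanism is required to extract that vanishing from the coresolution alone.
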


\begin{proof} \
\begin{itemize}
\item (a) $\Rightarrow$ (b): Follows by properties of the modified tensor product $- \overline{\otimes} -$, concretely \cite[Prop. 4.2.1 (4)]{JRGR}.

\item (b) $\Rightarrow$ (c): Follows by Theorem \ref{dualidad} and Proposition \ref{Thm2}.

\item (c) $\Rightarrow$ (d): The idea is to replicate the arguments from (b) $\Rightarrow$ (c) in Theorem \ref{dualidad} within the ambient of chain complexes. First of all, note it is clear that $X^{+}_\bullet[n] \in \mathcal{GI}_{(\widetilde{\nu},\widetilde{\A})}$ for every $n \in \mathbb{Z}$. Now for every $A_\bullet \in \widetilde{\nu}$, we have that $\Ext_{\Ch(R^{\rm o})}^{i}(A_\bullet, X^+_\bullet[n]) =0$, and thus by \cite[Props. 4.4.7 \& 4.4.13 (2)]{libro} this yields $\overline{\Tor}_i(A_\bullet, X_{\bullet})^{+} \cong \overline{\mathcal{E}{\rm xt}}^{i} (A_\bullet, X_{\bullet}^+) = 0$ for every $i \in \mathbb{Z}_{> 0}$. In other words, $X_\bullet \in \widetilde{\nu}^\top$. 

Another consequence of having $X^+_\bullet \in \mathcal{GI}_{(\widetilde{\nu},\widetilde{\A})}$ is the existence of an epimorphism $A^0_\bullet \twoheadrightarrow X^+_\bullet$, with $A^0_\bullet \in \widetilde{\mathcal{A}}$. This induces a monomorphism $X^{++}_\bullet \rightarrowtail (A^0_\bullet)^+$, where $(A^0_\bullet)^+ \in \widetilde{\mathcal{L}}$ since $\mathcal{A}^+ \subseteq \mathcal{L}$ and the functor $(-)^+$ is exact. On the other hand, since $(\mathcal{L,A})$ is a product closed duality pair, we have by \cite[\S \ 4.2 \& Thm. 4.2.1]{TZhao}\footnote{The citation corresponds to the arxived version of the article (\texttt{arXiv:1703.10703}), which does not appear in the published version.} that $(\widetilde{\mathcal{L}},\widetilde{\mathcal{A}})$ is a product closed duality pair of complexes. By \cite[Thm. 3.2]{YangProceedings}, the class $\widetilde{\mathcal{L}}$ is preenveloping. We can then consider the following diagram
\[
\hspace{1.5cm}\begin{tikzpicture}[description/.style={fill=white,inner sep=2pt}]
\matrix (m) [matrix of math nodes, row sep=2.5em, column sep=2.5em, text height=1.25ex, text depth=0.25ex]
{ 
X_\bullet & X_{\bullet}^{++} & (A^0_\bullet)^+ \\
L^0_{\bullet} & {} & {} \\
};
\path[->]
(m-1-1) edge node[left] {\footnotesize$\phi^0$} (m-2-1)
;
\path[>->]
(m-1-1) edge (m-1-2) (m-1-2) edge (m-1-3)
;
\path[dotted,->]
(m-2-1) edge (m-1-3)
;
\end{tikzpicture}
\]
where $\phi^0$ is a $\widetilde{\mathcal{L}}$-preenvelope of $X_\bullet$. The dotted arrow exists since $(A^0_\bullet)^+ \in \widetilde{\mathcal{L}}$. It follows that $\phi^0$ is a monomorphism, and so we have an exact sequence $\eta_\bullet \colon X_{\bullet} \rightarrowtail  L_{\bullet}^0 \twoheadrightarrow X_\bullet^1$ with $X_\bullet^1 :=  \Coker(\phi^0)$, is $\Hom_{\Ch(R)}(-,\widetilde{\mathcal{L}})$-acyclic. 

Now we show that $(X_\bullet^1)^+ \in \mathcal{GI}_{(\widetilde{\nu},\widetilde{\A})}$. Let $A_\bullet \in \widetilde{\nu}$. Again by \cite[Thm. 4.2.1]{TZhao}, we have that $A^+_\bullet \in \widetilde{\mathcal{L}}$, and so $A^+_\bullet[n] \in \widetilde{\mathcal{L}}$ for every $n \in \mathbb{Z}$. Now applying the functor $\Hom_{\Ch(R)}(-, A_{\bullet}^+[n])$ to $\eta_\bullet$ yields the exact sequence $\Hom_{\Ch(R)}(\eta_\bullet, A_{\bullet}^+[n])$. By \cite[Props. 4.4.7 \& 4.4.11]{libro}, the complex $\overline{\mathcal{H}{\rm om}}(\eta_\bullet, A_{\bullet}^+)$ is exact and naturally isomorphic to $(A_\bullet \overline{\otimes} \eta_\bullet)^+$. It follows that $A_\bullet \overline{\otimes} \eta_\bullet$ is exact for every $A_\bullet \in \widetilde{\nu}$. In particular, we have the exact sequence
\[
\hspace{1cm}(A_{\bullet} \overline{\otimes} L^0_{\bullet})^+ \twoheadrightarrow (A_{\bullet} \overline{\otimes} X_{\bullet})^+ \to \overline{\Tor}_1(A_{\bullet}, X_{\bullet}^1)^+ \to \overline{\Tor}^R_1(A_{\bullet}, L^0_{\bullet})^{+}
\]
is exact, where $\overline{\Tor}_1(A_\bullet, L^0_\bullet)^+ \cong \overline{\mathcal{E}{\rm xt}}^1 (A_{\bullet}, (L^0_{\bullet})^+) = 0$ since $A_{\bullet} \in \widetilde{\nu}$ and $(L^0_{\bullet})^+ \in \widetilde{\mathcal{A}} \subseteq \mathcal{GI}_{(\widetilde{\nu},\widetilde{\A})}$. By the exactness of the previous sequence we get $\overline{\Tor}_1(A_{\bullet}, X^1_{\bullet})^{+} = 0$, and hence $\overline{\mathcal{E}{\rm xt}}^1(A_{\bullet}, (X^1_{\bullet})^+) = 0$. The latter implies $\Ext^1_{\Ch(R^{\rm o})}(A_{\bullet}, (X^1_{\bullet})^+) = 0$ by \cite[Prop. 4.4.7]{libro}. Degreewise, we then obtain $\Ext^1_{R^{\rm o}}(\nu, (X^1_m)^+) = 0$ for every $m \in \mathbb{Z}$. We also have the short exact sequence $(X^1_m)^+ \rightarrowtail  (L^0_m)^+ \twoheadrightarrow X^+_m$, where $(L^0_m)^+, X^+_m \in \mathcal{GI}_{(\nu,\mathcal{A})}$ by Proposition \ref{Thm2}. Therefore, from Proposition \ref{criterio} we obtain that $(X^1_m)^+ \in \mathcal{GI}_{(\nu,\mathcal{A})}$, and again Proposition \ref{Thm2} allows us to conclude that $(X^1_\bullet)^+ \in \mathcal{GI}_{(\widetilde{\nu},\widetilde{\mathcal{A}})}$. 

Repeating the previous procedure infinitely many times gives rise to a $\Hom_{\Ch(R)}(-,\mathcal{L})$-acyclic $\widetilde{\mathcal{L}}$-coresolution of $X_\bullet$. 
  
\item (d) $\Rightarrow$ (a): It suffices to note that $\mathcal{P}(R) \subseteq \mathcal{L}$, and thus that $\widetilde{\mathcal{L}}$ contains the class $\widetilde{\mathcal{P}(R)}$ of projective $R$-complexes. 
\end{itemize}
\end{proof}

%%%%%%%%%%%%%%%%%%%%%%%%%%%%%%%%%%%%%%%%%%%%%
%%%%%%%%%%%%%%%%%%%%%%%%%%%%%%%%%%%%%%%%%%%%%

\subsection*{Several model structures on chain complexes from product closed bicomplete duality pairs of modules}

As mentioned in Remark \ref{rem:validity_for_complexes}, the results in Section \ref{sec:relative_G-flat_models} are valid in the category of chain complexes, and so in particular Theorem \ref{theo:GF_model_structure}. Below we write the corresponding statement for further reference.

In what follows, if $(\mathscr{L,A})$ is a product closed bicomplete duality pair of complexes, we use the notation
\[
\mathscr{V} := {}^\perp\mathscr{A} \cap \mathscr{A}.
\]

\begin{theorem}\label{theo:GF_model_structure_Ch}
Let $(\mathscr{L,A})$ be a product closed bicomplete duality pair. Then, for each $n \in \mathbb{Z}_{\geq 0}$, there exists a unique hereditary and cofibrantly generated abelian model structure on $\Ch(R)$ given by
\[
\mathfrak{M}^{{\rm GF}\mbox{-}n}_{(\mathscr{L},\mathscr{V})} := (\mathcal{GF}_{(\mathscr{L},\mathscr{V})}{}^\wedge_n,\mathscr{W},(\mathscr{L}^\wedge_n)^{\perp}).
\]
Its homotopy category ${\rm Ho}(\mathfrak{M}^{{\rm GF}\mbox{-}n}_{(\mathscr{L},\mathscr{V})})$ is triangle equivalent to the stable category 
\[
(\mathcal{GF}_{(\mathscr{L,V})}{}^\wedge_n \cap (\mathscr{L}^\wedge_n)^{\perp}) / \sim,
\] 
where $f \sim g$ if, and only if, $f - g$ factors through an $R$-complex in $\mathscr{L}^\wedge_n \cap (\mathscr{L}^\wedge_n)^\perp$. 
\end{theorem}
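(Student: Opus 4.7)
The plan is to mimic verbatim the argument of Theorem \ref{theo:GF_model_structure}, relying on Remark \ref{rem:validity_for_complexes} which guarantees that all the module-theoretic machinery of Sections \ref{sec:prelim}--\ref{sec:relative_G-flat_models} transfers to $\Ch(R)$. So I would first invoke the chain-complex analogs of Remark \ref{rmk:hereditarysym}, Example \ref{ex:weakn}(3) and Corollary \ref{resolvente_dimensiones} in order to produce two hereditary perfect cotorsion pairs in $\Ch(R)$, namely
\[
(\mathcal{GF}_{(\mathscr{L},\mathscr{V})}{}^\wedge_n,\, (\mathcal{GF}_{(\mathscr{L},\mathscr{V})}{}^\wedge_n)^\perp) \quad \text{and} \quad (\mathscr{L}^\wedge_n,\, (\mathscr{L}^\wedge_n)^\perp).
\]
Both are cogenerated by a set thanks to the chain-complex version of Remark \ref{rem:GF_cogenerated} (the Kaplansky and direct-limit closure arguments there only use the duality-pair hypotheses, which by assumption hold for $(\mathscr{L},\mathscr{A})$).

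Second, I would establish the compatibility identity
\[
\mathcal{GF}_{(\mathscr{L},\mathscr{V})}{}^\wedge_n \cap (\mathcal{GF}_{(\mathscr{L},\mathscr{V})}{}^\wedge_n)^\perp \;=\; \mathscr{L}^\wedge_n \cap (\mathscr{L}^\wedge_n)^\perp
\]
by repeating the proof of Proposition \ref{prop:compatibilidadGF}. For $(\subseteq)$, one uses that $\mathscr{L}\subseteq\mathcal{GF}_{(\mathscr{L},\mathscr{V})}$ gives $(\mathcal{GF}_{(\mathscr{L},\mathscr{V})}{}^\wedge_n)^\perp \subseteq (\mathscr{L}^\wedge_n)^\perp$, and applies the $\Ch(R)$-version of Proposition \ref{sequences}(2) to split an object $X$ of the left-hand side as a direct summand of some $H\in\mathscr{L}^\wedge_n$. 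For $(\supseteq)$, the pullback diagram of the original proof goes through unchanged, and the key fact that $C\in\mathcal{GF}_{(\mathscr{L},\mathscr{V})}{}^\wedge_n\cap\mathscr{L}^\wedge$ forces $C\in\mathscr{L}^\wedge_n$ follows from the $\Ch(R)$-version of Lemma \ref{lem:GF_L_gorro}.

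Third, with both cotorsion pairs hereditary, complete, cogenerated by a set, and compatible, Proposition \ref{prop:HoveyTriple} produces a unique thick class $\mathscr{W}$ and the desired abelian model structure $\mathfrak{M}^{{\rm GF}\mbox{-}n}_{(\mathscr{L},\mathscr{V})}$ on $\Ch(R)$. Cofibrant generation is then obtained exactly as in the module case, via \cite[Thm.~2.2, Lem.~6.7 \& Coroll.~6.8]{HoveyModel}. For the description of the homotopy category, I would apply \cite[Prop.~4.2 \& Thm.~4.3]{GillespieHereditary}, which for any hereditary Hovey triple identify its homotopy category with the stable category of the core $\mathcal{Q}\cap\mathcal{R}$ modulo morphisms factoring through the core $\mathcal{Q}'\cap\mathcal{R}=\mathcal{Q}\cap\mathcal{R}'$ of either compatible cotorsion pair; applied here this yields precisely $(\mathcal{GF}_{(\mathscr{L},\mathscr{V})}{}^\wedge_n \cap (\mathscr{L}^\wedge_n)^\perp)/\sim$ with the stated relation.

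The main obstacle is really a bookkeeping one: one must be certain that every ingredient genuinely transfers to $\Ch(R)$ — namely, the perfect cotorsion pair $(\mathcal{GF}_{(\mathscr{L},\mathscr{V})}, \mathcal{GC}_{(\mathscr{L},\mathscr{V})})$ of Corollary \ref{resolvente}, the approximation sequences of Proposition \ref{sequences}, the Kaplansky/set-cogeneration of Remark \ref{rem:GF_cogenerated}, and the coincidence in Lemma \ref{lem:GF_L_gorro}. Remark \ref{rem:validity_for_complexes} and Theorem \ref{dualidad_complejos} together supply these ingredients, so past this verification the proof is a verbatim rewrite of Theorem \ref{theo:GF_model_structure}.
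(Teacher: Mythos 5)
Your proposal is correct and takes the same route as the paper. In fact the paper gives no separate proof for Theorem~\ref{theo:GF_model_structure_Ch} at all: it cites Remark~\ref{rem:validity_for_complexes}, observes that the results of Section~\ref{sec:relative_G-flat_models} carry over to $\Ch(R)$ verbatim, and restates Theorem~\ref{theo:GF_model_structure} ``for further reference'' --- exactly the bookkeeping reduction you carry out, invoking the chain-complex analogs of Corollary~\ref{resolvente_dimensiones}, Remark~\ref{rem:GF_cogenerated}, Proposition~\ref{prop:compatibilidadGF}, Propositions~\ref{prop:HoveyTriple} and~\ref{sequences}, and Lemma~\ref{lem:GF_L_gorro}.
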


For the rest of this section, we shall focus on the case $n = 0$ for a better understanding and display of the results below (although similar statements will be also valid for $n > 0$), and assume that we are given a product closed bicomplete duality pair $(\mathcal{L,A})$ of classes of modules $\mathcal{L} \subseteq \mathsf{Mod}(R)$ and $\mathcal{A} \subseteq \mathsf{Mod}(R^{\rm o})$. At this point, from the previous theorem and other results of Gillespie \cite{GillespieCh,GillespieDW,GillespieHowTo,GillespieHereditary}, Enochs and Jenda \cite{EJ002}, Yang and Ding \cite[Thm. 2.4]{onaquestion}, and Yang and Liu \cite[Thm. 3.5]{YLmodelsCh}), we are able to construct six different model category structures on $\Ch(R)$ from the class $\mathcal{GF}_{(\mathcal{L},\nu)}$. Most of these are models for the derived category $D(R)$ of the ground ring $R$, or for certain derived categories of subcategories of $\Ch(R)$. 

Let us recall how certain cotorsion pairs in $\Ch(R)$ are induced by a cotorsion pair $(\mathcal{X,Y})$ in $\mathsf{Mod}(R)$. 
\begin{itemize}
\item \cite[Prop. 3.6]{GillespieCh}: $({\rm dg} \ \widetilde{\mathcal{X}}, \widetilde{\mathcal{Y}})$ and $(\widetilde{\mathcal{X}}, {\rm dg} \ \widetilde{\mathcal{Y}})$ are cotorsion pairs in $\Ch(R)$, where ${\rm dg} \ \widetilde{\mathcal{X}}$ is the class of $R$-complexes $X_\bullet \in \Ch(\mathcal{X})$ such that every chain map $X_\bullet \to Y_\bullet$ is null homotopic whenever $Y_\bullet \in \widetilde{\mathcal{Y}}$, and ${\rm dg} \ \widetilde{\mathcal{Y}}$ is defined dually (see \cite[Def. 3.3]{GillespieCh}). Moreover, if $(\mathcal{X,Y})$ is hereditary, then these pairs are compatible by \cite[Thm. 3.12]{GillespieCh}, and
\[
\widetilde{\mathcal{X}} = {\rm dg} \ \widetilde{\mathcal{X}} \cap \mathscr{E} \text{ \ and \ } \widetilde{\mathcal{Y}} = {\rm dg} \ \widetilde{\mathcal{Y}} \cap \mathscr{E},
\]
where $\mathscr{E}$ denotes the class of exact $R$-complexes. If $(\mathcal{X,Y})$ is complete (resp., cogenerated by a set), then so are these pairs by \cite[Thm. 2.4]{onaquestion} (resp., \cite[Thm. 3.5]{YLmodelsCh}). 

\item \cite[Prop. 3.2]{GillespieDW}: $(\Ch(\mathcal{X}),(\Ch(\mathcal{X}))^{\perp_1})$ and $({}^{\perp_1}(\Ch(\mathcal{Y})),\Ch(\mathcal{Y}))$ are cotorsion pairs in $\Ch(R)$, where $(\Ch(\mathcal{X}))^{\perp_1}$ coincides with the class of complexes $Y_\bullet \in \Ch(\mathcal{Y})$ such that every chain map $X_\bullet \to Y_\bullet$ is null homotopic whenever $X_\bullet \in \Ch(\mathcal{X})$, and ${}^{\perp_1}(\Ch(\mathcal{Y}))$ has a dual description. If $(\mathcal{X,Y})$ is hereditary, one can easily note that so are these pairs. If $(\mathcal{X,Y})$ is cogenerated by a set, then so are $(\Ch(\mathcal{X}),(\Ch(\mathcal{X}))^{\perp_1})$ and $({}^{\perp_1}(\Ch(\mathcal{Y})),\Ch(\mathcal{Y}))$, by \cite[Thm. 7.2.14]{EJ002} and \cite[Prop. 4.4]{GillespieDW}, respectively. 

\item \cite[Prop. 3.3]{GillespieDW}: $(\Ch(\mathcal{X}) \cap \mathscr{E},(\Ch(\mathcal{X}) \cap \mathscr{E})^{\perp_1})$ and $({}^{\perp_1}(\Ch(\mathcal{Y}) \cap \mathscr{E}),\Ch(\mathcal{Y}) \cap \mathscr{E})$ are cotorsion pairs in $\Ch(R)$, where $(\Ch(\mathcal{X}) \cap \mathscr{E})^{\perp_1}$ coincides with the class of complexes $Y_\bullet \in \Ch(\mathcal{Y})$ such that every chain map $X_\bullet \to Y_\bullet$ is null homotopic whenever $X_\bullet \in \Ch(\mathcal{X}) \cap \mathscr{E}$, and ${}^{\perp_1}(\Ch(\mathcal{Y}) \cap \mathscr{E})$ has a dual description. If $(\mathcal{X,Y})$ is hereditary, one can easily note that so are these pairs. If $(\mathcal{X,Y})$ is cogenerated by a set, then so are $(\Ch(\mathcal{X}) \cap \mathscr{E},(\Ch(\mathcal{X}) \cap \mathscr{E})^{\perp_1})$ and $({}^{\perp_1}(\Ch(\mathcal{Y}) \cap \mathscr{E}),\Ch(\mathcal{Y}) \cap \mathscr{E})$, by \cite[Thm. 7.2.15]{EJ002} and \cite[Prop. 4.6]{GillespieDW}, respectively. 
\end{itemize}
It then follows that if $(\mathcal{X,Y})$ is a hereditary cotorsion pair cogenerated by a set, we have the following hereditary and cofibrantly generated abelian model category structure on $\Ch(R)$:
\begin{align*}
\text{differential graded models: \ } & \mathfrak{M}_{\rm dg} = ({\rm dg} \ \widetilde{\mathcal{X}}, \mathscr{E}, {\rm dg} \ \widetilde{\mathcal{Y}}).
\end{align*}
Since the exact complexes form the class of trivial objects, its homotopy category is triangle equivalent to the derived category $D(R)$. See for instance \cite[Thm. 5.3]{GillespieHereditary}. We can also note the following relations, whose proofs are straightforward.

\begin{proposition}\label{prop:cores_dgdwex_models}
Let $(\mathcal{X,Y})$ be a hereditary cotorsion pair cogenerated by a set. Then, the induced cotorsion pairs 
\[
({\rm dg} \ \widetilde{\mathcal{X}}, \widetilde{\mathcal{Y}}), \ (\widetilde{\mathcal{X}}, {\rm dg} \ \widetilde{\mathcal{Y}}), \ (\Ch(\mathcal{X}),(\Ch(\mathcal{X}))^{\perp}) \ \text{and} \ (\Ch(\mathcal{X}) \cap \mathscr{E},(\Ch(\mathcal{X}) \cap \mathscr{E})^{\perp})
\]
have the same cores. Moreover, if we denote these cores by ${\rm core}_{\rm dg}$, ${\rm core}_{\rm dw}$ and ${\rm core}_{\rm ex}$, respectively, they coincide with the class
\[
\widetilde{\mathcal{X} \cap \mathcal{Y}} = \{ \text{contractible $R$-complexes with components in $\mathcal{X} \cap \mathcal{Y}$} \}.
\] 
\end{proposition}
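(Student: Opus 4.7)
The plan is to show that in each of the four cotorsion pairs a complex lies in the core if and only if its components lie in $\mathcal{X} \cap \mathcal{Y}$ and the identity chain map is null-homotopic. The key trick in every case is to feed $\mathrm{id}_{X_\bullet}$ into the relevant orthogonality condition, which forces $X_\bullet$ to be contractible; combined with summand-closedness of $\mathcal{X}$ and $\mathcal{Y}$ this will pin down the core as $\widetilde{\mathcal{X}\cap\mathcal{Y}}$.

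First I would handle the two dg-cores simultaneously. Gillespie's identities $\widetilde{\mathcal{X}} = \mathrm{dg}\,\widetilde{\mathcal{X}} \cap \mathscr{E}$ and $\widetilde{\mathcal{Y}} = \mathrm{dg}\,\widetilde{\mathcal{Y}} \cap \mathscr{E}$ quoted just before the proposition give
\[
\mathrm{dg}\,\widetilde{\mathcal{X}} \cap \widetilde{\mathcal{Y}} \;=\; \mathrm{dg}\,\widetilde{\mathcal{X}} \cap \mathrm{dg}\,\widetilde{\mathcal{Y}} \cap \mathscr{E} \;=\; \widetilde{\mathcal{X}} \cap \mathrm{dg}\,\widetilde{\mathcal{Y}},
\]
so the two dg-cores coincide at once. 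Any $X_\bullet$ in this common intersection has components in $\mathcal{X}\cap\mathcal{Y}$, and the dg-condition applied to $\mathrm{id}_{X_\bullet}$ (viewed as a chain map from an object of $\widetilde{\mathcal{X}}$ into an object of $\mathrm{dg}\,\widetilde{\mathcal{Y}}$) makes it null-homotopic, so $X_\bullet$ is contractible. The same maneuver, using the explicit descriptions of $(\Ch(\mathcal{X}))^{\perp_1}$ and $(\Ch(\mathcal{X})\cap\mathscr{E})^{\perp_1}$ recalled in the excerpt, deals with ${\rm core}_{\rm dw}$ and ${\rm core}_{\rm ex}$: in each case membership in the right class forces the components to lie in $\mathcal{Y}$, and feeding $\mathrm{id}_{X_\bullet}$ into the null-homotopy condition yields contractibility of $X_\bullet$.

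Next I would identify $\widetilde{\mathcal{X}\cap\mathcal{Y}}$ with the class of contractible complexes whose components lie in $\mathcal{X}\cap\mathcal{Y}$. If $X_\bullet$ is contractible with $X_m \in \mathcal{X}\cap\mathcal{Y}$, then a contracting homotopy produces the standard splitting $X_m \cong Z_m(X_\bullet) \oplus Z_{m-1}(X_\bullet)$, so each cycle is a direct summand of a component and hence lies in $\mathcal{X}\cap\mathcal{Y}$, giving $X_\bullet \in \widetilde{\mathcal{X}\cap\mathcal{Y}}$. Conversely, if $X_\bullet$ is an $(\mathcal{X}\cap\mathcal{Y})$-complex, the fundamental sequences $Z_m(X_\bullet) \rightarrowtail X_m \twoheadrightarrow Z_{m-1}(X_\bullet)$ have both outer terms in $\mathcal{X}\cap\mathcal{Y}$, so heredity of $(\mathcal{X,Y})$ gives $\mathrm{Ext}^1_R(Z_{m-1}(X_\bullet), Z_m(X_\bullet)) = 0$ and each sequence splits. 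Choosing compatible retractions $\rho_m \colon X_m \to Z_m(X_\bullet)$ and sections $\sigma_m \colon Z_m(X_\bullet) \to X_{m+1}$, one defines $s_m := \sigma_m \rho_m$ and verifies $\partial_{m+1} s_m + s_{m-1} \partial_m = \mathrm{id}_{X_m}$ directly from the decomposition $X_m = \iota_m(Z_m(X_\bullet)) \oplus \sigma_{m-1}(Z_{m-1}(X_\bullet))$, exhibiting $X_\bullet$ as contractible with components in $\mathcal{X}\cap\mathcal{Y}$.

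The only slightly technical step is the last one, namely assembling the local splittings into a global contracting homotopy; but this is a routine computation and no input beyond heredity of $(\mathcal{X,Y})$ and Gillespie's structural identities is required. The cogenerated-by-a-set hypothesis is not used in the proof of the proposition itself, being needed only upstream to ensure that the four pairs really are cotorsion pairs with enough completeness to speak of cores in the sense used later.
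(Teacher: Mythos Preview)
Your argument is correct and is precisely the kind of routine verification the paper has in mind: the authors do not give a proof but simply note that the result is ``straightforward'' immediately before the proposition. Your use of Gillespie's identities $\widetilde{\mathcal{X}} = \mathrm{dg}\,\widetilde{\mathcal{X}} \cap \mathscr{E}$, $\widetilde{\mathcal{Y}} = \mathrm{dg}\,\widetilde{\mathcal{Y}} \cap \mathscr{E}$ together with the null-homotopy trick on $\mathrm{id}_{X_\bullet}$ is exactly the standard route; the only thing one might add for completeness is the easy reverse inclusion (a contractible complex with components in $\mathcal{X}\cap\mathcal{Y}$ lies in every one of the four cores, since contractibility makes every chain map into or out of it null-homotopic and its cycles are summands of its components), but this is implicit in your opening ``if and only if'' plan.
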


From the previous proposition and comments, along with Corollary \ref{resolvente_dimensiones}, Remark \ref{rem:GF_cogenerated} and Propositions \ref{prop:compatibilidadGF}, we can note the following.

\begin{corollary}\label{coro:derived}
There exist the following hereditary and cofibrantly generated abelian model structures on $\Ch(R)$,
\begin{align*}
\mathfrak{M}^{\rm GF}_{\rm dg} & = ({\rm dg} \ \widetilde{\mathcal{GF}_{(\mathcal{L},\nu)}}, \mathscr{E}, {\rm dg} \ \widetilde{\mathcal{GC}_{(\mathcal{L},\nu)}}),
\end{align*}
whose homotopy category is triangle equivalent to the derived category $D(R)$ of the ground ring $R$. Its core coincides with the class of contractible complexes with components in $\mathcal{L} \cap \mathcal{L}^\perp$.
\end{corollary}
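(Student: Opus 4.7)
The strategy is to apply the general recipe recalled in the bulleted list preceding Proposition~\ref{prop:cores_dgdwex_models} to a cotorsion pair that we already have in hand, namely $(\mathcal{GF}_{(\mathcal{L},\nu)}, \mathcal{GC}_{(\mathcal{L},\nu)})$ in $\mathsf{Mod}(R)$. First, I would collect the ingredients: Corollary~\ref{resolvente}(2) gives that this pair is hereditary and perfect, while Remark~\ref{rem:GF_cogenerated} (applied with $n=0$, noting $\mathcal{GF}_{(\mathcal{L},\nu)}{}^\wedge_0 = \mathcal{GF}_{(\mathcal{L},\nu)}$) shows it is cogenerated by a set. Plugging $\mathcal{X} = \mathcal{GF}_{(\mathcal{L},\nu)}$ and $\mathcal{Y} = \mathcal{GC}_{(\mathcal{L},\nu)}$ into the first bullet, we obtain two compatible hereditary cotorsion pairs $({\rm dg}\,\widetilde{\mathcal{GF}_{(\mathcal{L},\nu)}}, \widetilde{\mathcal{GC}_{(\mathcal{L},\nu)}})$ and $(\widetilde{\mathcal{GF}_{(\mathcal{L},\nu)}}, {\rm dg}\,\widetilde{\mathcal{GC}_{(\mathcal{L},\nu)}})$ in $\Ch(R)$ which are complete and cogenerated by a set (via \cite[Thm.~2.4]{onaquestion} and \cite[Thm.~3.5]{YLmodelsCh}). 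Hovey's correspondence (as invoked in the proof of Theorem~\ref{theo:GF_model_structure}) then yields a unique hereditary abelian model structure on $\Ch(R)$ with the stated cofibrant and fibrant classes, and it is cofibrantly generated because the two cotorsion pairs are cogenerated by sets.

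Next, for the homotopy category, the class of trivial objects coincides with $\widetilde{\mathcal{GC}_{(\mathcal{L},\nu)}} \cdot ({\rm dg}\,\widetilde{\mathcal{GC}_{(\mathcal{L},\nu)}})^{-1}$ in the sense of the compatibility identity; more directly, the compatibility from \cite[Thm.~3.12]{GillespieCh} in the first bullet gives exactly that the trivial class is the class $\mathscr{E}$ of exact $R$-complexes. Hence ${\rm Ho}(\mathfrak{M}^{\rm GF}_{\rm dg})$ is triangle equivalent to the derived category $D(R)$, as remarked immediately after the bulleted list (and \cite[Thm.~5.3]{GillespieHereditary}).

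Finally, for the core, I would apply Proposition~\ref{prop:cores_dgdwex_models} to the pair $(\mathcal{GF}_{(\mathcal{L},\nu)}, \mathcal{GC}_{(\mathcal{L},\nu)})$: this identifies the core of $\mathfrak{M}^{\rm GF}_{\rm dg}$ with
\[
\widetilde{\mathcal{GF}_{(\mathcal{L},\nu)} \cap \mathcal{GC}_{(\mathcal{L},\nu)}},
\]
i.e. contractible complexes whose components lie in $\mathcal{GF}_{(\mathcal{L},\nu)} \cap \mathcal{GC}_{(\mathcal{L},\nu)}$. The last step is a module-level identification: since the cotorsion pair is hereditary, $\mathcal{GC}_{(\mathcal{L},\nu)} = \mathcal{GF}_{(\mathcal{L},\nu)}^{\perp}$, and Proposition~\ref{prop:compatibilidadGF} with $n=0$ yields
\[
\mathcal{GF}_{(\mathcal{L},\nu)} \cap \mathcal{GF}_{(\mathcal{L},\nu)}^{\perp} = \mathcal{L} \cap \mathcal{L}^{\perp},
\]
giving the advertised description of the core. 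None of the steps presents a genuine obstacle: everything is assembly of already-proved facts, and the only subtlety is remembering that the chain-complex versions of cogeneration-by-a-set and hereditariness pass through the three aforementioned Gillespie/Yang--Ding/Yang--Liu transfer results.
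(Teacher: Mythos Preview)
Your proposal is correct and follows essentially the same route as the paper: the paper simply cites Proposition~\ref{prop:cores_dgdwex_models}, the preceding bulleted discussion, Corollary~\ref{resolvente_dimensiones}, Remark~\ref{rem:GF_cogenerated}, and Proposition~\ref{prop:compatibilidadGF}, which is exactly the assembly of ingredients you spell out (you use Corollary~\ref{resolvente} in place of Corollary~\ref{resolvente_dimensiones}, but for $n=0$ these coincide).
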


Another consequence of Proposition \ref{prop:cores_dgdwex_models} is that the pairs $(\Ch(\mathcal{X}),(\Ch(\mathcal{X}))^\perp)$ and $(\widetilde{\mathcal{X}},{\rm dg} \ \widetilde{\mathcal{Y}})$ are compatible. Then by Proposition \ref{prop:HoveyTriple}, we have that there exists a model structure on $\Ch(R)$ given by
\[
\mathfrak{M}_{\rm dw} := (\Ch(\mathcal{X}),\mathscr{W}_{\rm dw},{\rm dg} \ \widetilde{\mathcal{Y}}).
\]
Similarly, since $(\Ch(\mathcal{X}) \cap \mathscr{E},(\Ch(\mathcal{X}) \cap \mathscr{E})^\perp)$ and $(\widetilde{\mathcal{X}},{\rm dg} \ \widetilde{\mathcal{Y}})$ are compatible, there is a model structure on $\Ch(R)$ given by
\[
\mathfrak{M}_{\rm ex} := (\Ch(\mathcal{X}) \cap \mathscr{E},\mathscr{W}_{\rm ex},{\rm dg} \ \widetilde{\mathcal{Y}}).
\]
Here, 
\begin{align*}
\mathscr{W}_{\rm dw} & = \{ W_\bullet \in \Ch(R) {\rm \ : \ } \exists \text{ a s.e.s } Y_\bullet \rightarrowtail X_\bullet \twoheadrightarrow W_\bullet \text{ with } X_\bullet \in \widetilde{\mathcal{X}} \text{ and } Y_\bullet \in (\Ch(\mathcal{X}))^\perp \} \\
& = \{ W_\bullet \in \Ch(R) {\rm \ : \ } \exists \text{ a s.e.s } W_\bullet \rightarrowtail Y'_\bullet \twoheadrightarrow X'_\bullet \text{ with } X'_\bullet \in \widetilde{\mathcal{X}} \text{ and } Y'_\bullet \in (\Ch(\mathcal{X}))^\perp \}, \\
\mathscr{W}_{\rm ex} & = \{ W_\bullet \in \Ch(R) {\rm \ : \ } \exists \text{ a s.e.s } Y_\bullet \rightarrowtail X_\bullet \twoheadrightarrow W_\bullet \text{ with } X_\bullet \in \widetilde{\mathcal{X}} \text{ and } Y_\bullet \in (\Ch(\mathcal{X}) \cap \mathscr{E})^\perp \} \\
& = \{ W_\bullet \in \Ch(R) {\rm \ : \ } \exists \text{ a s.e.s } W_\bullet \rightarrowtail Y'_\bullet \twoheadrightarrow X'_\bullet \text{ with } X'_\bullet \in \widetilde{\mathcal{X}} \text{ and } Y'_\bullet \in (\Ch(\mathcal{X}) \cap \mathscr{E})^\perp \},
\end{align*}
and these two models have the same core, namely, the class of contractible $R$-complexes with components in $\mathcal{X} \cap \mathcal{Y}$ (again, by Proposition \ref{prop:cores_dgdwex_models}). Concerning homotopy categories, we have the following.

\begin{proposition}\label{prop:homotopy_models_KKac}
The homotopy categories ${\rm Ho}(\mathfrak{M}_{\rm dw})$ and ${\rm Ho}(\mathfrak{M}_{\rm ex})$ are triangle equivalent to the derived categories 
\[
\frac{K(\mathcal{X})}{\widetilde{\mathcal{X}}} \text{ \ and \ } \frac{K_{\rm ac}(\mathcal{X})}{\widetilde{\mathcal{X}}}. 
\]
Here, $K(\mathcal{X})$ (resp., $K_{\rm ac}(\mathcal{X})$) is the full subcategory of the homotopy category $K(R)$ consisting of all (exact) $R$-complexes in $\Ch(\mathcal{X})$.
\end{proposition}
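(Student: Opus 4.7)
The plan is to combine Gillespie's general recognition theorem for homotopy categories of hereditary abelian model structures with a chain-level identification of weak equivalences and homotopies. By \cite[Prop. 4.2 \& Thm. 4.3]{GillespieHereditary}, for any hereditary abelian model structure $(\mathcal{Q,W,R})$, the homotopy category is triangle equivalent to the stable category $(\mathcal{Q} \cap \mathcal{R})/\omega$, with $\omega = \mathcal{Q} \cap \mathcal{W} \cap \mathcal{R}$ the core and morphisms given by chain maps modulo those factoring through objects of $\omega$.

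For $\mathfrak{M}_{\rm dw}$, I would first observe, via the recipe of Proposition \ref{prop:HoveyTriple} applied to the compatible pairs $(\Ch(\mathcal{X}), (\Ch(\mathcal{X}))^{\perp_1})$ and $(\widetilde{\mathcal{X}}, {\rm dg} \ \widetilde{\mathcal{Y}})$, that $\Ch(\mathcal{X}) \cap \mathscr{W}_{\rm dw} = \widetilde{\mathcal{X}}$. As a consequence, a chain map between complexes in $\Ch(\mathcal{X})$ is a weak equivalence in $\mathfrak{M}_{\rm dw}$ exactly when its mapping cone lies in $\widetilde{\mathcal{X}}$, equivalently when it becomes an isomorphism in the Verdier quotient $K(\mathcal{X})/\widetilde{\mathcal{X}}$. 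Proposition \ref{prop:cores_dgdwex_models} identifies the core $\omega$ with $\widetilde{\mathcal{X} \cap \mathcal{Y}}$, whose objects are contractible; a standard disk-complex argument then yields that the model-theoretic homotopy between chain maps of complexes in $\Ch(\mathcal{X}) \cap {\rm dg} \ \widetilde{\mathcal{Y}}$ coincides with ordinary chain homotopy.

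With these identifications in hand, the inclusion $\Ch(\mathcal{X}) \cap {\rm dg} \ \widetilde{\mathcal{Y}} \hookrightarrow \Ch(\mathcal{X})$ descends to a triangle functor from $(\Ch(\mathcal{X}) \cap {\rm dg} \ \widetilde{\mathcal{Y}})/\omega$ to $K(\mathcal{X})/\widetilde{\mathcal{X}}$. Essential surjectivity follows from fibrant replacement in $\mathfrak{M}_{\rm dw}$: any $X_\bullet \in \Ch(\mathcal{X})$ fits in a short exact sequence $X_\bullet \rightarrowtail X'_\bullet \twoheadrightarrow W_\bullet$ with $X'_\bullet \in \Ch(\mathcal{X}) \cap {\rm dg} \ \widetilde{\mathcal{Y}}$ and $W_\bullet \in \widetilde{\mathcal{X}}$, so the map $X_\bullet \to X'_\bullet$ is inverted in $K(\mathcal{X})/\widetilde{\mathcal{X}}$. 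Full faithfulness is immediate from the two identifications above, since both morphism sets coincide with chain maps modulo chain homotopy, localized at maps with mapping cone in $\widetilde{\mathcal{X}}$. The parallel argument with $\Ch(\mathcal{X}) \cap \mathscr{E}$ in place of $\Ch(\mathcal{X})$ handles $\mathfrak{M}_{\rm ex}$ and produces $K_{\rm ac}(\mathcal{X})/\widetilde{\mathcal{X}}$.

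The main obstacle I anticipate is rigorously showing that the model-theoretic homotopy relation restricted to bifibrant objects reduces to chain homotopy, which requires exploiting disk complexes as path objects together with the compatibility of the two cotorsion pairs producing $\mathfrak{M}_{\rm dw}$. A secondary point requiring care is that the fibrant replacement sequence can be chosen so that $W_\bullet$ lies in $\widetilde{\mathcal{X}}$ rather than merely in $\mathscr{W}_{\rm dw}$, which relies on the identification $\Ch(\mathcal{X}) \cap \mathscr{W}_{\rm dw} = \widetilde{\mathcal{X}}$ and the closure of $\Ch(\mathcal{X})$ under extensions.
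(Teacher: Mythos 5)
Your proof is correct and follows essentially the same strategy as the paper, though it fills in considerably more detail. The paper's actual proof is terse: it cites \cite[Prop.\ 5.2 \& Coroll.\ 5.4]{GillespieModelsonexact} to reduce ${\rm Ho}(\mathfrak{M}_{\rm dw})$ to the homotopy category of the restricted exact model structure $\mathfrak{M}_{\rm dw\mbox{-}cof} = (\Ch(\mathcal{X}),\widetilde{\mathcal{X}},\Ch(\mathcal{X}) \cap {\rm dg}\,\widetilde{\mathcal{Y}})$ on the cofibrant subcategory, then directly asserts that this homotopy category is the Verdier quotient of $K(\mathcal{X})$ by $\widetilde{\mathcal{X}}$. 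You instead pass to the bifibrant Frobenius subcategory and construct the equivalence to $K(\mathcal{X})/\widetilde{\mathcal{X}}$ by hand, checking essential surjectivity via fibrant replacement and full faithfulness via the orthogonality $\Hom_{K(R)}(\widetilde{\mathcal{X}}, {\rm dg}\,\widetilde{\mathcal{Y}}) = 0$. Both paths hinge on the same identification $\Ch(\mathcal{X}) \cap \mathscr{W}_{\rm dw} = \widetilde{\mathcal{X}}$ coming from Proposition \ref{prop:HoveyTriple}. One small point you flagged as an ``obstacle'' deserves emphasis: the inversion of the fibrant replacement $X_\bullet \rightarrowtail X'_\bullet$ in $K(\mathcal{X})/\widetilde{\mathcal{X}}$ requires knowing that the mapping cone $C(X_\bullet \to X'_\bullet)$ lies in $\widetilde{\mathcal{X}}$, not merely that the cokernel $W_\bullet$ does; this follows because $C(X_\bullet \to X'_\bullet)$ is an extension (in $\Ch(R)$) of $W_\bullet$ by the contractible complex $C({\rm id}_{X_\bullet})$, both of which lie in $\widetilde{\mathcal{X}}$, and $\widetilde{\mathcal{X}}$ is closed under extensions since $\mathcal{X}$ is. Similarly, your reduction of model-theoretic homotopy to chain homotopy between bifibrant objects uses that a null-homotopic map factors through the cone $C({\rm id}_{X_\bullet}) \in \widetilde{\mathcal{X}} = \mathcal{Q} \cap \mathcal{W}$, which can then be pushed into the core $\omega$ by a fibrant approximation exploiting $\Ext^1(\widetilde{\mathcal{X}},{\rm dg}\,\widetilde{\mathcal{Y}}) = 0$. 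These are precisely the details the paper outsources to Gillespie; your reconstruction is sound.
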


\begin{proof}
We only give a proof for the statement concerning $K(\mathcal{X}) / \widetilde{\mathcal{X}}$. The one about $K_{\rm ac}(\mathcal{X}) / \widetilde{\mathcal{X}}$ follows in a similar fashion. 

We follow some of the arguments in \cite[Prop. 4.2]{GillespieHereditary}. By \cite[Prop. 5.2 \& Coroll. 5.4]{GillespieModelsonexact}, ${\rm Ho}(\mathfrak{M}_{\rm dw})$ is triangle equivalent to the homotopy category of the submodel structure of $\mathfrak{M}_{\rm dw}$ restricted to its cofibrant objects, that is, the triple 
\[
\mathfrak{M}_{\rm dw\mbox{-}cof} := (\Ch(\mathcal{X}),\widetilde{\mathcal{X}}, \Ch(\mathcal{X}) \cap {\rm dg} \ \widetilde{\mathcal{Y}})
\] 
on the subcategory $\Ch(\mathcal{X})$. The homotopy category of the latter triple is in turn triangle equivalent to the Verdier quotient of $K(\mathcal{X})$ by the localizing subcategory $\widetilde{X}$. In other words, the homotopy category $\mathfrak{M}_{\rm dw}$ is obtained by ``killing'' in $K(\mathcal{X})$ the (trivial) objects in $\widetilde{\mathcal{X}}$. %Let us explain this last assertion in more detail to finish the proof. Let $A_\bullet$ be an object in $\widetilde{\mathcal{A}}$. Then, the canonical map $0 \to A_\bullet$ is a trivial cofibration. If we consider the universal functor $\gamma \colon \Ch(\mathcal{A}) \to {\rm Ho}(\mathfrak{M}_{\rm dw\mbox{-}cof})$, then $\gamma(0 \to A_\bullet)$ is an isomorphism in ${\rm Ho}(\mathfrak{M}_{\rm dw\mbox{-}cof})$, that is, $A_\bullet$ is isomorphic to the zero complex in the homotopy category. Now let $X_\bullet$ be an $R$-complex isomorphic to zero in ${\rm Ho}(\mathfrak{M}_{\rm dw\mbox{-}cof})$. If we consider the canonical map $X_\bullet \to 0$, we have that $\gamma(X_\bullet \to 0)$ is an isomorphism in ${\rm Ho}(\mathfrak{M}_{\rm dw\mbox{-}cof})$. By the Fundamental Theorem of Model Categories (see for instance \cite[Thm. 1.2.10 (iv)]{HoveyBook}), we have that $X_\bullet \to 0$ is a weak equivalence in $\Ch(\mathcal{A})$. Then, we can write $X_\bullet \to 0 = (W_\bullet \to 0) \circ f$, where $W_\bullet \to 0$ is a trivial fibration (and so $W_\bullet$ is contractible with components in $\mathcal{A} \cap \mathcal{B}$) and $f \colon X_\bullet \to W_\bullet$ is a trivial cofibration (and so it is a monomorphism with cokernel in $\widetilde{\mathcal{A}}$). Thus, we have a short exact sequence $X_\bullet \rightarrowtail W_\bullet \twoheadrightarrow {\rm CoKer}(f)$. Since $W_\bullet$ and ${\rm CoKer}(f)$ are exact, so is $X_\bullet$. It follows that $Z_m(X_\bullet) \rightarrowtail Z_m(W_\bullet) \twoheadrightarrow Z_m({\rm CoKer}(f))$ is an exact sequence of $R$-modules for every $m \in \mathbb{Z}$, where $Z_m(W_\bullet), Z_m({\rm CoKer}(f)) \in \mathcal{A}$. Since $\mathcal{A}$ is resolving, we get $Z_m(X_\bullet) \in \mathcal{A}$, and hence $X_\bullet \in \widetilde{\mathcal{A}}$.
\end{proof}

\begin{corollary}\label{coro:rel_derived}
There exist the following hereditary and cofibrantly generated abelian model structures on $\Ch(R)$,
\begin{align*}
\mathfrak{M}^{\rm GF}_{\rm dw} & := (\Ch(\mathcal{GF}_{(\mathcal{L},\nu)}),\mathscr{W}^{\rm GF}_{\rm dw},{\rm dg} \ \widetilde{\mathcal{GC}_{(\mathcal{L},\nu)}}), \\
\mathfrak{M}^{\rm GF}_{\rm ex} & := (\Ch(\mathcal{GF}_{(\mathcal{L},\nu)}) \cap \mathscr{E},\mathscr{W}^{\rm GF}_{\rm ex},{\rm dg} \ \widetilde{\mathcal{GC}_{(\mathcal{L},\nu)}}),
\end{align*}
whose homotopy categories are triangle equivalent to the derived categories 
\[
\frac{K(\mathcal{GF}_{(\mathcal{L},\nu)})}{\widetilde{\mathcal{GF}_{(\mathcal{L},\nu)}}} \text{ \ and \ } \frac{K_{\rm ac}(\mathcal{GF}_{(\mathcal{L},\nu)})}{\widetilde{\mathcal{GF}_{(\mathcal{L},\nu)}}}.
\] 
The cores of these model structures coincide with the class of contractible complexes with components in $\mathcal{L} \cap \mathcal{L}^\perp$.
\end{corollary}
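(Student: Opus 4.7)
The strategy is to apply the general construction of the $\mathfrak{M}_{\rm dw}$ and $\mathfrak{M}_{\rm ex}$ model structures, developed in the paragraphs immediately preceding the corollary, to the specific hereditary perfect cotorsion pair $(\mathcal{GF}_{(\mathcal{L},\nu)}, \mathcal{GC}_{(\mathcal{L},\nu)})$ supplied by Corollary \ref{resolvente}. By Remark \ref{rem:GF_cogenerated} at $n=0$, this cotorsion pair is cogenerated by a set. Consequently, each of the three induced cotorsion pairs
\[
(\Ch(\mathcal{GF}_{(\mathcal{L},\nu)}), (\Ch(\mathcal{GF}_{(\mathcal{L},\nu)}))^\perp), \ (\Ch(\mathcal{GF}_{(\mathcal{L},\nu)}) \cap \mathscr{E}, (\Ch(\mathcal{GF}_{(\mathcal{L},\nu)}) \cap \mathscr{E})^\perp)
\]
and $(\widetilde{\mathcal{GF}_{(\mathcal{L},\nu)}}, {\rm dg}\,\widetilde{\mathcal{GC}_{(\mathcal{L},\nu)}})$ is hereditary, complete, and cogenerated by a set in $\Ch(R)$, by the results cited just before Proposition \ref{prop:cores_dgdwex_models}.

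The next step is to verify compatibility in the sense of Proposition \ref{prop:HoveyTriple}. For $\mathfrak{M}^{\rm GF}_{\rm dw}$, take $(\mathcal{Q},\mathcal{R}') = (\Ch(\mathcal{GF}_{(\mathcal{L},\nu)}), (\Ch(\mathcal{GF}_{(\mathcal{L},\nu)}))^\perp)$ and $(\mathcal{Q}',\mathcal{R}) = (\widetilde{\mathcal{GF}_{(\mathcal{L},\nu)}}, {\rm dg}\,\widetilde{\mathcal{GC}_{(\mathcal{L},\nu)}})$. The inclusion $\widetilde{\mathcal{GF}_{(\mathcal{L},\nu)}} \subseteq \Ch(\mathcal{GF}_{(\mathcal{L},\nu)})$ is immediate, and the null-homotopic characterizations of $(\Ch(\mathcal{GF}_{(\mathcal{L},\nu)}))^\perp$ and ${\rm dg}\,\widetilde{\mathcal{GC}_{(\mathcal{L},\nu)}}$ (together with $\widetilde{\mathcal{GF}_{(\mathcal{L},\nu)}} \subseteq \Ch(\mathcal{GF}_{(\mathcal{L},\nu)})$) immediately force $(\Ch(\mathcal{GF}_{(\mathcal{L},\nu)}))^\perp \subseteq {\rm dg}\,\widetilde{\mathcal{GC}_{(\mathcal{L},\nu)}}$. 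The required equality of cores is given by Proposition \ref{prop:cores_dgdwex_models}. The verification for $\mathfrak{M}^{\rm GF}_{\rm ex}$ is identical after replacing $\Ch(\mathcal{GF}_{(\mathcal{L},\nu)})$ by $\Ch(\mathcal{GF}_{(\mathcal{L},\nu)}) \cap \mathscr{E}$. Applying Proposition \ref{prop:HoveyTriple} then yields the two hereditary abelian model structures, with cofibrant generation inherited from the fact that both constituent cotorsion pairs are cogenerated by a set.

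For the homotopy categories, Proposition \ref{prop:homotopy_models_KKac} applied to $(\mathcal{X},\mathcal{Y}) = (\mathcal{GF}_{(\mathcal{L},\nu)},\mathcal{GC}_{(\mathcal{L},\nu)})$ yields the claimed triangle equivalences with $K(\mathcal{GF}_{(\mathcal{L},\nu)})/\widetilde{\mathcal{GF}_{(\mathcal{L},\nu)}}$ and $K_{\rm ac}(\mathcal{GF}_{(\mathcal{L},\nu)})/\widetilde{\mathcal{GF}_{(\mathcal{L},\nu)}}$. To identify the cores, Proposition \ref{prop:cores_dgdwex_models} shows they equal $\widetilde{\mathcal{GF}_{(\mathcal{L},\nu)} \cap \mathcal{GC}_{(\mathcal{L},\nu)}}$; since $(\mathcal{GF}_{(\mathcal{L},\nu)},\mathcal{GC}_{(\mathcal{L},\nu)})$ is hereditary so that $\mathcal{GC}_{(\mathcal{L},\nu)} = \mathcal{GF}_{(\mathcal{L},\nu)}^\perp$, Proposition \ref{prop:compatibilidadGF} at $n=0$ gives $\mathcal{GF}_{(\mathcal{L},\nu)} \cap \mathcal{GC}_{(\mathcal{L},\nu)} = \mathcal{L} \cap \mathcal{L}^\perp$, and $\widetilde{\mathcal{L} \cap \mathcal{L}^\perp}$ is by definition the class of contractible $R$-complexes with components in $\mathcal{L} \cap \mathcal{L}^\perp$.

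Since the proof is a specialization of machinery already assembled in the preceding material, there is no substantial obstacle. The only delicate point is the verification that the compatibility hypotheses of Proposition \ref{prop:HoveyTriple} are met for the induced pairs in $\Ch(R)$, but this reduces, as explained above, to the inclusion of ${\rm dg}$-classes obtained from the null-homotopic descriptions and to the core-equality statement in Proposition \ref{prop:cores_dgdwex_models}, both of which are readily available from the preparatory results.
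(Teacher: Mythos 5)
Your proof is correct and matches the paper's intended argument: the paper builds the general $\mathfrak{M}_{\rm dw}$ and $\mathfrak{M}_{\rm ex}$ machinery for an arbitrary hereditary cotorsion pair cogenerated by a set and then (implicitly, without a separate proof) specializes to $(\mathcal{GF}_{(\mathcal{L},\nu)},\mathcal{GC}_{(\mathcal{L},\nu)})$, exactly as you do, using Corollary \ref{resolvente}, Remark \ref{rem:GF_cogenerated} at $n=0$, the compatibility furnished by Proposition \ref{prop:cores_dgdwex_models}, Proposition \ref{prop:HoveyTriple}, Proposition \ref{prop:homotopy_models_KKac}, and the core identification $\mathcal{GF}_{(\mathcal{L},\nu)} \cap \mathcal{GC}_{(\mathcal{L},\nu)} = \mathcal{L} \cap \mathcal{L}^\perp$ from Proposition \ref{prop:compatibilidadGF} at $n=0$.
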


We present the last of our models in Corollary \ref{coro:last_model} below. First, we need the following lemma.

\begin{lemma}\label{lem:contractible}
The pair $(\widetilde{\mathcal{L}},\widetilde{\mathcal{A}})$ is a product closed bicomplete duality pair of complexes where 
\begin{align*}
{}^{\perp}\widetilde{\mathcal{A}} \cap \widetilde{\mathcal{A}} & = \widetilde{\nu} = \{\text{contractible $R^{\rm o}$-complexes with components in } \nu\}.
\end{align*}
\end{lemma}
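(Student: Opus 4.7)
The plan is to prove the three assertions of the lemma in order: that $(\widetilde{\mathcal{L}},\widetilde{\mathcal{A}})$ is a product closed bicomplete duality pair, that its core equals $\widetilde{\nu}$, and that $\widetilde{\nu}$ consists of contractible $R^{\rm o}$-complexes with components in $\nu$.

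First, I would verify that $(\widetilde{\mathcal{L}},\widetilde{\mathcal{A}})$ is a product closed duality pair of complexes. The key observation is that, since $\mathbb{Q/Z}$ is injective, Pontryagin duality is exact; combined with the fact that $Z_m(L_\bullet^{+}) \cong (Z_{-m-1}(L_\bullet))^{+}$ (up to sign) in an exact complex, the equivalence $L_\bullet \in \widetilde{\mathcal{L}} \iff L_\bullet^{+} \in \widetilde{\mathcal{A}}$ reduces to the degreewise duality $L \in \mathcal{L} \iff L^{+}\in \mathcal{A}$. Closure of $\widetilde{\mathcal{A}}$ under direct summands and finite coproducts is inherited componentwise and cyclewise from $\mathcal{A}$. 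This was already cited from \cite{TZhao} in the proof of Theorem \ref{dualidad_complejos}. Product closure transfers because products in $\Ch(R)$ are degreewise and preserve exactness and cycles.

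Next, I would show bicompleteness. Perfectness: $\widetilde{\mathcal{L}}$ is closed under extensions via the snake lemma on cycles together with $\mathcal{L}$ being closed under extensions; it is closed under coproducts by the coproduct closure of $\mathcal{L}$ and the exactness of coproducts in $\Ch(R)$; and it contains $D^{0}(R)$ because this disk is exact with cycles $0$ and $R$, and $R \in \mathcal{L}$. Symmetry follows by the same argument applied to the duality pair $(\mathcal{A,L})$. For the hereditary complete cotorsion pair $({}^{\perp}\widetilde{\mathcal{A}}, \widetilde{\mathcal{A}})$, I would invoke the lifting results recalled just before Proposition \ref{prop:cores_dgdwex_models}: since $({}^{\perp}\mathcal{A},\mathcal{A})$ is a hereditary complete cotorsion pair cogenerated by a set (by Remark \ref{rem:GF_cogenerated}-style arguments applied to $(\mathcal{A}, {}^\perp\mathcal{A})$ in $\mathsf{Mod}(R^{\rm o})$), the lifted pair $(\mathrm{dg}\,\widetilde{{}^{\perp}\mathcal{A}},\widetilde{\mathcal{A}})$ is a hereditary complete cotorsion pair in $\Ch(R^{\rm o})$. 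By completeness and heredity, its left class coincides with ${}^{\perp}\widetilde{\mathcal{A}}$.

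For the core, by Proposition \ref{prop:cores_dgdwex_models} applied to $({}^{\perp}\mathcal{A},\mathcal{A})$, the intersection ${}^{\perp}\widetilde{\mathcal{A}} \cap \widetilde{\mathcal{A}}$ equals the class of contractible complexes with components in $\nu = {}^{\perp}\mathcal{A} \cap \mathcal{A}$. It remains to argue the equality with $\widetilde{\nu}$. The inclusion from contractible-components-in-$\nu$ into $\widetilde{\nu}$ is immediate: such a complex is exact, and its cycles are direct summands of its components, hence lie in $\nu$ since $\nu$ is closed under direct summands (this follows from $\mathcal{A}$ and ${}^{\perp}\mathcal{A}$ both being closed under summands). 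For the converse, let $X_\bullet \in \widetilde{\nu}$, so each short exact sequence $Z_{m}(X_\bullet) \rightarrowtail X_m \twoheadrightarrow Z_{m-1}(X_\bullet)$ has both outer terms in $\nu$; since $\nu \subseteq {}^{\perp}\mathcal{A}$ and $\nu \subseteq \mathcal{A}$, we have $\Ext_{R^{\rm o}}^{1}(Z_{m-1}(X_\bullet),Z_m(X_\bullet))=0$, so the sequence splits. The splittings of these short exact sequences assemble into a contracting homotopy (equivalently, an isomorphism $X_\bullet \cong \bigoplus_m D^{m}(Z_{m-1}(X_\bullet))$), exhibiting $X_\bullet$ as contractible with components $Z_m(X_\bullet) \oplus Z_{m-1}(X_\bullet) \in \nu$.

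The only place where care is needed is in the identification $\mathrm{dg}\,\widetilde{{}^{\perp}\mathcal{A}} = {}^{\perp}\widetilde{\mathcal{A}}$ (which uses heredity to reduce $\perp$ to $\perp_1$) and in assembling the splittings compatibly into a chain homotopy in the last paragraph; everything else consists of transporting standard closure properties from modules to complexes degreewise and cyclewise.
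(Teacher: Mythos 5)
Your proposal is correct and follows essentially the same route as the paper: cite \cite{TZhao} for the duality pair of complexes, cite \cite{onaquestion} to lift the hereditary complete cotorsion pair $({}^\perp\mathcal{A},\mathcal{A})$ to $({}^\perp\widetilde{\mathcal{A}},\widetilde{\mathcal{A}})$, and identify the core with the contractible complexes having components in $\nu$ via the splitting of the short exact sequences on cycles. The paper's own proof is much terser (it records only the two citations and declares heredity obvious, leaving the core identification implicit), so in spirit you have reproduced it while filling in the routine details. One small note: your detour through ``cogenerated by a set'' to apply Proposition \ref{prop:cores_dgdwex_models} is unnecessary — the Yang–Ding lifting result only needs completeness, which is built into Definition \ref{def:bicomplete}, and your direct splitting argument at the end already establishes the contractibility description without that proposition; the ``Remark \ref{rem:GF_cogenerated}-style'' justification for $({}^\perp\mathcal{A},\mathcal{A})$ being cogenerated by a set is not actually available from the paper's hypotheses and should simply be dropped.
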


\begin{proof}
By \cite[Thm. 4.2.1 - preprint version]{TZhao}, $(\widetilde{\mathcal{L}},\widetilde{\mathcal{A}})$ is a perfect and symmetric (and so complete), product closed duality pair of complexes. On the other hand, since $({}^{\perp}\mathcal{A},\mathcal{A})$ is a hereditary complete cotorsion pair in $\mathsf{Mod}(R^{\rm o})$, by \cite[Thm. 2.4]{onaquestion} we have that $({}^{\perp}\widetilde{\mathcal{A}},\widetilde{\mathcal{A}})$ is a complete cotorsion pair in $\Ch(R^{\rm o})$, which is clearly hereditary.
\end{proof}

The existence of the following model structures are a consequence of Theorems \ref{dualidad_complejos} and \ref{theo:GF_model_structure_Ch} and Lemma \ref{lem:contractible}, and the descriptions of their homotopy categories follow as in the proof of Proposition \ref{prop:homotopy_models_KKac}.

\begin{corollary}\label{coro:last_model}
There exist the following hereditary and cofibrantly generated abelian model structures on $\Ch(R)$, 
\begin{align*}
\mathfrak{M}^{{\rm GF}}_{(\widetilde{\mathcal{L}},\widetilde{\nu})} & := (\Ch(\mathcal{GF}_{(\mathcal{L},\nu)}),\mathscr{W},{\rm dg} \ \widetilde{\mathcal{L}^{\perp}}), \\
\mathfrak{M}^{{\rm GF\mbox{-}ex}}_{(\widetilde{\mathcal{L}},\widetilde{\nu})} & := (\Ch(\mathcal{GF}_{(\mathcal{L},\nu)}) \cap \mathscr{E},\mathscr{W}_{\rm ex},{\rm dg} \ \widetilde{\mathcal{L}^{\perp}}).
\end{align*}
Their cores coincide with the subcategory of contractible $R$-complexes with components in $\mathcal{L} \cap \mathcal{L}^\perp$, and their homotopy categories are triangle equivalent to the derived categories 
\[
\frac{K(\mathcal{GF}_{(\mathcal{L},\nu)})}{\widetilde{\mathcal{L}}} \text{ \ and \ } \frac{K_{\rm ac}(\mathcal{GF}_{(\mathcal{L},\nu)})}{\widetilde{\mathcal{L}}}.
\] 
\end{corollary}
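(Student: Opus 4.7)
The plan is to produce $\mathfrak{M}^{{\rm GF}}_{(\widetilde{\mathcal{L}},\widetilde{\nu})}$ as a direct application of Theorem~\ref{theo:GF_model_structure_Ch} to the chain complex duality pair provided by Lemma~\ref{lem:contractible}, and then to obtain $\mathfrak{M}^{{\rm GF}\mbox{-}{\rm ex}}_{(\widetilde{\mathcal{L}},\widetilde{\nu})}$ by a separate appeal to Proposition~\ref{prop:HoveyTriple} in which the ``cofibrant'' cotorsion pair is replaced by its intersection with $\mathscr{E}$. For the first model, Lemma~\ref{lem:contractible} guarantees that $(\widetilde{\mathcal{L}},\widetilde{\mathcal{A}})$ is a product closed bicomplete duality pair of complexes with $\mathscr{V}:={}^{\perp}\widetilde{\mathcal{A}}\cap\widetilde{\mathcal{A}}=\widetilde{\nu}$, so feeding this into Theorem~\ref{theo:GF_model_structure_Ch} with $n=0$ yields a unique hereditary and cofibrantly generated abelian model structure $(\mathcal{GF}_{(\widetilde{\mathcal{L}},\widetilde{\nu})},\mathscr{W},(\widetilde{\mathcal{L}})^{\perp})$ on $\Ch(R)$. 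The cofibrant class is identified with $\Ch(\mathcal{GF}_{(\mathcal{L},\nu)})$ by Theorem~\ref{dualidad_complejos}, while the hereditary complete cotorsion pair $(\widetilde{\mathcal{L}},{\rm dg}\ \widetilde{\mathcal{L}^{\perp}})$ induced in $\Ch(R)$ from $(\mathcal{L},\mathcal{L}^{\perp})$ (via \cite[Prop.~3.6]{GillespieCh} together with \cite[Thm.~2.4]{onaquestion}) gives $(\widetilde{\mathcal{L}})^{\perp}={\rm dg}\ \widetilde{\mathcal{L}^{\perp}}$. This completes $\mathfrak{M}^{{\rm GF}}_{(\widetilde{\mathcal{L}},\widetilde{\nu})}$.

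For the exact version, I would feed Proposition~\ref{prop:HoveyTriple} the two hereditary complete cotorsion pairs
\[
(\Ch(\mathcal{GF}_{(\mathcal{L},\nu)})\cap\mathscr{E},\,(\Ch(\mathcal{GF}_{(\mathcal{L},\nu)})\cap\mathscr{E})^{\perp_{1}}) \text{ \ and \ } (\widetilde{\mathcal{L}},\,{\rm dg}\ \widetilde{\mathcal{L}^{\perp}}).
\]
Both are cogenerated by a set: the first since $(\mathcal{GF}_{(\mathcal{L},\nu)},\mathcal{GC}_{(\mathcal{L},\nu)})$ is cogenerated by a set (Remark~\ref{rem:GF_cogenerated}, which is valid for complexes by Remark~\ref{rem:validity_for_complexes}) and the ``ex'' version of the induced cotorsion pair construction preserves cogeneration by a set (\cite[Prop.~3.3]{GillespieDW}, \cite[Thm.~7.2.15]{EJ002}); and the second by \cite[Thm.~3.5]{YLmodelsCh}. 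The required containments $\widetilde{\mathcal{L}}\subseteq\Ch(\mathcal{GF}_{(\mathcal{L},\nu)})\cap\mathscr{E}$ (since $\mathcal{L}$ is closed under extensions and $\mathcal{L}\subseteq\mathcal{GF}_{(\mathcal{L},\nu)}$) and its perpendicular dual are immediate. The compatibility condition reduces to equality of the two cores, and here I would apply Proposition~\ref{prop:cores_dgdwex_models} twice: first to $(\mathcal{L},\mathcal{L}^{\perp})$ to see that the core of $(\widetilde{\mathcal{L}},{\rm dg}\ \widetilde{\mathcal{L}^{\perp}})$ equals $\widetilde{\mathcal{L}\cap\mathcal{L}^{\perp}}$, then to $(\mathcal{GF}_{(\mathcal{L},\nu)},\mathcal{GC}_{(\mathcal{L},\nu)})$ to see that the core of the ``ex'' pair equals $\widetilde{\mathcal{GF}_{(\mathcal{L},\nu)}\cap\mathcal{GC}_{(\mathcal{L},\nu)}}$, which collapses to $\widetilde{\mathcal{L}\cap\mathcal{L}^{\perp}}$ by Proposition~\ref{prop:compatibilidadGF} with $n=0$. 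Proposition~\ref{prop:HoveyTriple} then produces $\mathfrak{M}^{{\rm GF}\mbox{-}{\rm ex}}_{(\widetilde{\mathcal{L}},\widetilde{\nu})}$, and its core is precisely $\widetilde{\mathcal{L}\cap\mathcal{L}^{\perp}}$, which coincides with the class of contractible $R$-complexes with components in $\mathcal{L}\cap\mathcal{L}^{\perp}$ (contractibility follows from exactness plus $\Ext^{1}(Z_{m-1},Z_{m})=0$, the reverse containment from closure of $\mathcal{L}\cap\mathcal{L}^{\perp}$ under direct summands).

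For the homotopy categories I would mimic the proof of Proposition~\ref{prop:homotopy_models_KKac}: by \cite[Prop.~5.2, Coroll.~5.4]{GillespieModelsonexact} each homotopy category is triangle equivalent to the homotopy category of the submodel structure on the subcategory of cofibrant objects, namely $\Ch(\mathcal{GF}_{(\mathcal{L},\nu)})$ for the first model and $\Ch(\mathcal{GF}_{(\mathcal{L},\nu)})\cap\mathscr{E}$ for the second. In both cases, the class of trivially cofibrant objects is the core-inducing class $\widetilde{\mathcal{L}}$, and the induced Frobenius-type submodel presents its homotopy category as the Verdier quotient of $K(\mathcal{GF}_{(\mathcal{L},\nu)})$, respectively $K_{\rm ac}(\mathcal{GF}_{(\mathcal{L},\nu)})$, by the localizing subcategory $\widetilde{\mathcal{L}}$. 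The main obstacle in the whole argument is the core identification for the ``ex'' cotorsion pair: one must push Gillespie's induced cotorsion pair machinery through at the $\Ch(\cdot)\cap\mathscr{E}$-level, which is subtler than the ``dg'' or ``dw'' analogs, and combine it with the inner equality $\mathcal{GF}_{(\mathcal{L},\nu)}\cap\mathcal{GC}_{(\mathcal{L},\nu)}=\mathcal{L}\cap\mathcal{L}^{\perp}$ from Proposition~\ref{prop:compatibilidadGF}; the remaining steps are formal consequences of the previously developed theory.
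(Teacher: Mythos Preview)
Your proposal is correct and follows essentially the same route as the paper. The paper's proof is a one-sentence pointer to Theorems~\ref{dualidad_complejos} and~\ref{theo:GF_model_structure_Ch} and Lemma~\ref{lem:contractible} (plus Proposition~\ref{prop:homotopy_models_KKac} for the homotopy categories), and you have unpacked exactly those references: the first model drops out of Theorem~\ref{theo:GF_model_structure_Ch} applied to $(\widetilde{\mathcal{L}},\widetilde{\mathcal{A}})$ with the identifications from Theorem~\ref{dualidad_complejos}, while your treatment of the exact variant via Proposition~\ref{prop:HoveyTriple}, Proposition~\ref{prop:cores_dgdwex_models}, and Proposition~\ref{prop:compatibilidadGF} spells out what the paper leaves implicit (it is the same mechanism used to build $\mathfrak{M}_{\rm ex}$ just before Proposition~\ref{prop:homotopy_models_KKac}).
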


Regarding the model structures appearing in the previous corollary, the class $\mathcal{L}$ has a more important role in the description of their fibrant objects. Also, the hereditary cotorsion pair $(\mathcal{L},\mathcal{L}^\perp)$, cogenerated by a set, induces the hereditary and cofibrantly generated abelian model structure on $\Ch(R)$ given by 
\[
\mathfrak{M}^{\mathcal{L}}_{\rm dg} := ({\rm dg} \ \widetilde{\mathcal{L}},\mathscr{E}, {\rm dg} \ \widetilde{\mathcal{L}^\perp}),
\] 
whose homotopy category is triangle equivalent to $D(R)$. Moreover, using the cotorsion pairs $(\Ch(\mathcal{L}),(\Ch(\mathcal{L}))^\perp)$ and $(\Ch(\mathcal{L}) \cap \mathscr{E},(\Ch(\mathcal{L}) \cap \mathscr{E})^\perp)$, compatible with $(\widetilde{\mathcal{L}}, {\rm dg} \ \widetilde{\mathcal{L}^\perp})$, one can also obtain the following result.

\begin{corollary}\label{coro:seriously_this_is_the_last}
There exist the following hereditary and cofibrantly generated abelian model structures on $\Ch(R)$,
\begin{align*}
\mathfrak{M}^{\mathcal{L}}_{\rm dw} & := (\Ch(\mathcal{L}),\mathscr{W}^{\mathcal{L}}_{\rm dw},{\rm dg} \ \widetilde{\mathcal{L}^{\perp}}), \\ 
\mathfrak{M}^{\mathcal{L}}_{\rm ex} & := (\Ch(\mathcal{L}) \cap \mathscr{E},\mathscr{W}^{\mathcal{L}}_{\rm ex},{\rm dg} \ \widetilde{\mathcal{L}^{\perp}}).
\end{align*}
Their cores coincide with the subcategory of contractible $R$-complexes with components in $\mathcal{L} \cap \mathcal{L}^\perp$, and their homotopy categories are triangle equivalent to the derived categories 
\[
\frac{K(\mathcal{L})}{\widetilde{\mathcal{L}}} \text{ \ and \ } \frac{K_{\rm ac}(\mathcal{L})}{\widetilde{\mathcal{L}}}.
\] 
\end{corollary}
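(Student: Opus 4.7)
The plan is to apply Proposition \ref{prop:HoveyTriple} twice, pairing the hereditary cotorsion pair $(\widetilde{\mathcal{L}}, {\rm dg}\ \widetilde{\mathcal{L}^\perp})$ successively with $(\Ch(\mathcal{L}),(\Ch(\mathcal{L}))^\perp)$ and with $(\Ch(\mathcal{L}) \cap \mathscr{E},(\Ch(\mathcal{L}) \cap \mathscr{E})^\perp)$. All three of these cotorsion pairs are hereditary and cogenerated by a set, as recorded in the paragraphs preceding Corollary \ref{coro:derived} (via \cite{onaquestion,EJ002,YLmodelsCh}); hence the resulting model structures will automatically be hereditary and cofibrantly generated, and it only remains to verify the inclusion and compatibility hypotheses of Proposition \ref{prop:HoveyTriple}.

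First I would check the containments. For $\mathfrak{M}^{\mathcal{L}}_{\rm dw}$ take $\mathcal{Q} = \Ch(\mathcal{L})$, $\mathcal{Q}' = \widetilde{\mathcal{L}}$, $\mathcal{R} = {\rm dg}\ \widetilde{\mathcal{L}^\perp}$, $\mathcal{R}' = (\Ch(\mathcal{L}))^\perp$. The inclusion $\widetilde{\mathcal{L}} \subseteq \Ch(\mathcal{L})$ is immediate from the definition of an $\mathcal{L}$-complex, while $(\Ch(\mathcal{L}))^\perp \subseteq {\rm dg}\ \widetilde{\mathcal{L}^\perp}$ follows by taking $\perp_1$-orthogonals of $\widetilde{\mathcal{L}} \subseteq \Ch(\mathcal{L})$ and using the identification ${\rm dg}\ \widetilde{\mathcal{L}^\perp} = \widetilde{\mathcal{L}}^{\perp_1}$ from \cite[Thm. 3.12]{GillespieCh}. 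The analogous inclusions for $\mathfrak{M}^{\mathcal{L}}_{\rm ex}$ are proved identically, also using that $\widetilde{\mathcal{L}} \subseteq \mathscr{E}$.

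The compatibility condition $\mathcal{Q} \cap \mathcal{R}' = \mathcal{Q}' \cap \mathcal{R}$ is exactly what Proposition \ref{prop:cores_dgdwex_models} delivers, applied to the hereditary and set-cogenerated cotorsion pair $(\mathcal{L},\mathcal{L}^\perp)$ in $\mathsf{Mod}(R)$: that proposition asserts that the cores of $(\widetilde{\mathcal{L}}, {\rm dg}\ \widetilde{\mathcal{L}^\perp})$, $(\Ch(\mathcal{L}),(\Ch(\mathcal{L}))^\perp)$, and $(\Ch(\mathcal{L}) \cap \mathscr{E},(\Ch(\mathcal{L}) \cap \mathscr{E})^\perp)$ all coincide with $\widetilde{\mathcal{L} \cap \mathcal{L}^\perp}$, the class of contractible $R$-complexes with components in $\mathcal{L} \cap \mathcal{L}^\perp$. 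This single input simultaneously verifies compatibility and yields the asserted description of the core of each of the two new model structures. Proposition \ref{prop:HoveyTriple} then produces the desired triples, with the respective trivial classes $\mathscr{W}^{\mathcal{L}}_{\rm dw}$ and $\mathscr{W}^{\mathcal{L}}_{\rm ex}$ described by the exact sequence formulas there.

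For the homotopy categories I would imitate the argument of Proposition \ref{prop:homotopy_models_KKac} verbatim: restrict each structure to its cofibrant subcategory (yielding submodel structures on $\Ch(\mathcal{L})$ and on $\Ch(\mathcal{L}) \cap \mathscr{E}$), and invoke \cite[Prop. 5.2 \& Coroll. 5.4]{GillespieModelsonexact} to conclude that the homotopy category is the Verdier quotient of $K(\mathcal{L})$, respectively $K_{\rm ac}(\mathcal{L})$, by the localizing subcategory of trivially cofibrant objects, which in both cases is $\widetilde{\mathcal{L}}$. I do not anticipate a substantive obstacle, as this corollary runs completely parallel to Corollaries \ref{coro:rel_derived} and \ref{coro:last_model}; the only minor point of care is noting that the cofibrant--trivial objects are $\Ch(\mathcal{L}) \cap \widetilde{\mathcal{L}} = \widetilde{\mathcal{L}}$ and $(\Ch(\mathcal{L}) \cap \mathscr{E}) \cap \widetilde{\mathcal{L}} = \widetilde{\mathcal{L}}$ respectively, so that the quotient is by $\widetilde{\mathcal{L}}$ in both cases.
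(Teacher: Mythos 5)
Your proof is correct and follows exactly the route the paper sketches in the paragraph preceding the corollary: form each triple via Proposition \ref{prop:HoveyTriple} by pairing $(\widetilde{\mathcal{L}},{\rm dg}\ \widetilde{\mathcal{L}^\perp})$ with $(\Ch(\mathcal{L}),(\Ch(\mathcal{L}))^\perp)$ and with $(\Ch(\mathcal{L})\cap\mathscr{E},(\Ch(\mathcal{L})\cap\mathscr{E})^\perp)$, verify compatibility from Proposition \ref{prop:cores_dgdwex_models}, and identify the homotopy categories by the same restriction-to-cofibrants argument as Proposition \ref{prop:homotopy_models_KKac}. The details you add (the orthogonality argument for $(\Ch(\mathcal{L}))^\perp \subseteq {\rm dg}\ \widetilde{\mathcal{L}^\perp}$, and the identification of the trivially cofibrant objects as $\widetilde{\mathcal{L}}$ in both cases, using $\widetilde{\mathcal{L}} \subseteq \Ch(\mathcal{L}) \cap \mathscr{E}$) are exactly the steps the paper leaves implicit.
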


%%%%%%%%%%%%%%%%%%%%%%%%%%%%%%%%%%%%%%%%%%%%%
%%%%%%%%%%%%%%%%%%%%%%%%%%%%%%%%%%%%%%%%%%%%%

\subsection*{Recollements of homotopy categories}

Recollements were introduced in \cite{PerverseSheaves} by Beilinson, Bernstein and Deligne. These are arrays of triangulated categories and functors described as follows.

Let $\mathcal{T}' \xrightarrow{F} \mathcal{T} \xrightarrow{G} \mathcal{T}''$ be a sequence of triangulated categories and functors. One says that this sequence is a \emph{localization sequence} if there exist right adjoints $F_\rho \colon \mathcal{T} \to \mathcal{T}'$ and $G_\rho \colon \mathcal{T}'' \to \mathcal{T}$ such that the following three conditions are satisfied:
\begin{enumerate}
\item $F_\rho \circ F$ is naturally isomorphic to ${\rm id}_{\mathcal{T}'}$.

\item $G \circ G_\rho$ is naturally isomorphic to ${\rm id}_{\mathcal{T}''}$.

\item For any object $T \in \mathcal{T}$, one has that $G(T) = 0$ if, and only if, $T \simeq F(T')$ for some $T' \in \mathcal{T}'$. 
\end{enumerate}
We display the previous localization sequence as the following diagram:
\[
\begin{tikzpicture}[description/.style={fill=white,inner sep=2pt}]
\matrix (m) [matrix of math nodes, row sep=2.5em, column sep=2.5em, text height=1.25ex, text depth=0.25ex]
{ 
\mathcal{T}' & \mathcal{T} & \mathcal{T}'' \\
};
\path[->]
(m-1-1) edge node[above] {\footnotesize$F$} (m-1-2) (m-1-2) edge node[above] {\footnotesize$G$} (m-1-3) 
(m-1-2) edge [bend left=50] node[below] {\footnotesize$F_\rho$} (m-1-1)
(m-1-3) edge [bend left=50] node[below] {\footnotesize$G_\rho$} (m-1-2)
;
\end{tikzpicture}
\]
\emph{Colocalization sequences} are defined dually, by the existence of left adjoints $F_\lambda \colon \mathcal{T} \to \mathcal{T}'$ and $G_\lambda \colon \mathcal{T}'' \to \mathcal{T}$ satisfying similar properties, and will be displayed as
\[
\begin{tikzpicture}[description/.style={fill=white,inner sep=2pt}]
\matrix (m) [matrix of math nodes, row sep=2.5em, column sep=2.5em, text height=1.25ex, text depth=0.25ex]
{ 
\mathcal{T}' & \mathcal{T} & \mathcal{T}'' \\
};
\path[->]
(m-1-1) edge node[above] {\footnotesize$F$} (m-1-2) (m-1-2) edge node[above] {\footnotesize$G$} (m-1-3) 
(m-1-2) edge [bend right=50] node[above] {\footnotesize$F_\lambda$} (m-1-1)
(m-1-3) edge [bend right=50] node[above] {\footnotesize$G_\lambda$} (m-1-2)
;
\end{tikzpicture}
\]
The sequence $\mathcal{T}' \xrightarrow{F} \mathcal{T} \xrightarrow{G} \mathcal{T}''$ is a \emph{recollement} if it is both a localization and a colocalization sequence. It will be displayed as 
\[
\begin{tikzpicture}[description/.style={fill=white,inner sep=2pt}]
\matrix (m) [matrix of math nodes, row sep=2.5em, column sep=2.5em, text height=1.25ex, text depth=0.25ex]
{ 
\mathcal{T}' & \mathcal{T} & \mathcal{T}'' \\
};
\path[->]
(m-1-1) edge node[above] {\footnotesize$F$} (m-1-2) (m-1-2) edge node[above] {\footnotesize$G$} (m-1-3) 
(m-1-2) edge [bend left=50] node[below] {\footnotesize$F_\rho$} (m-1-1)
(m-1-3) edge [bend left=50] node[below] {\footnotesize$G_\rho$} (m-1-2)
(m-1-2) edge [bend right=50] node[above] {\footnotesize$F_\lambda$} (m-1-1)
(m-1-3) edge [bend right=50] node[above] {\footnotesize$G_\lambda$} (m-1-2)
;
\end{tikzpicture}
\]
One may think of (co)localization sequences as split exact sequences. The ``splittings'' $F_\rho$ and $G_\rho$, for instance, attach to $\mathcal{T}$ the triangulated categories $\mathcal{T}'$ and $\mathcal{T}''$. 

One can obtain recollements of homotopy categories of three hereditary abelian model structures under certain compatibility conditions. These are specified in \cite[Thm. 3.6]{GillespieMock}. Specifically, the \emph{Left Recollement Theorem} states that if 
\[
\mathfrak{M}_1 = (\mathcal{Q}_1,\mathcal{W}_1,\mathcal{R}_1), \text{ \ } \mathfrak{M}_2 = (\mathcal{Q}_2,\mathcal{W}_2,\mathcal{R}_2) \text{ \ and \ } \mathfrak{M}_3 = (\mathcal{Q}_3,\mathcal{W}_3,\mathcal{R}_3)
\]
are hereditary abelian model structures with the same core and such that $\mathcal{Q}_2 = \mathcal{W}_3 \cap \mathcal{Q}_1$ and $\mathcal{Q}_3 \subseteq \mathcal{Q}_1$, then there is a recollement
\[
\begin{tikzpicture}[description/.style={fill=white,inner sep=2pt}]
\matrix (m) [matrix of math nodes, row sep=2.5em, column sep=2.5em, text height=1.25ex, text depth=0.25ex]
{ 
{\rm Ho}(\mathfrak{M}_2) & {\rm Ho}(\mathfrak{M}_1) & {\rm Ho}(\mathfrak{M}_3) \\
};
\path[->]
(m-1-1) edge (m-1-2) (m-1-2) edge (m-1-3) 
(m-1-2) edge [bend left=50] (m-1-1)
(m-1-3) edge [bend left=50] (m-1-2)
(m-1-2) edge [bend right=50] (m-1-1)
(m-1-3) edge [bend right=50] (m-1-2)
;
\end{tikzpicture}
\]
The functors and adjunctions in this diagram are specified in \cite{GillespieMock}. 

The following is a consequence of Corollaries \ref{coro:derived}, \ref{coro:rel_derived}, \ref{coro:last_model} and \ref{coro:seriously_this_is_the_last}. It suffices the check the hypotheses in the Left Recollement Theorem to the following triples of model structures:
\begin{align*}
\mathfrak{M}^{\rm GF}_{\rm dw} & := (\Ch(\mathcal{GF}_{(\mathcal{L},\nu)}),\mathscr{W}^{\rm GF}_{\rm dw},{\rm dg} \ \widetilde{\mathcal{GC}_{(\mathcal{L},\nu)}}), \\ 
\mathfrak{M}^{\rm GF}_{\rm ex} & := (\Ch(\mathcal{GF}_{(\mathcal{L},\nu)}) \cap \mathscr{E},\mathscr{W}^{\rm GF}_{\rm ex},{\rm dg} \ \widetilde{\mathcal{GC}_{(\mathcal{L},\nu)}}), \\ 
\mathfrak{M}^{\rm GF}_{\rm dg} & := ({\rm dg} \ \widetilde{\mathcal{GF}_{(\mathcal{L},\nu)}}, \mathscr{E}, {\rm dg} \ \widetilde{\mathcal{GC}_{(\mathcal{L},\nu)}}). \\ {} \\
\mathfrak{M}^{{\rm GF}}_{(\widetilde{\mathcal{L}},\widetilde{\nu})} & := (\Ch(\mathcal{GF}_{(\mathcal{L},\nu)}),\mathscr{W},{\rm dg} \ \widetilde{\mathcal{L}^{\perp}}), \\
\mathfrak{M}^{{\rm GF\mbox{-}ex}}_{(\widetilde{\mathcal{L}},\widetilde{\nu})} & := (\Ch(\mathcal{GF}_{(\mathcal{L},\nu)}) \cap \mathscr{E},\mathscr{W}_{\rm ex},{\rm dg} \ \widetilde{\mathcal{L}^{\perp}}), \\
\mathfrak{M}^{\mathcal{L}}_{\rm dg} & := ({\rm dg} \ \widetilde{\mathcal{L}},\mathscr{E}, {\rm dg} \ \widetilde{\mathcal{L}^\perp}). \\ {} \\
\mathfrak{M}^{\mathcal{L}}_{\rm dw} & := (\Ch(\mathcal{L}),\mathscr{W}^{\mathcal{L}}_{\rm dw},{\rm dg} \ \widetilde{\mathcal{L}^{\perp}}), \\
\mathfrak{M}^{\mathcal{L}}_{\rm ex} & := (\Ch(\mathcal{L}) \cap \mathscr{E},\mathscr{W}^{\mathcal{L}}_{\rm ex},{\rm dg} \ \widetilde{\mathcal{L}^{\perp}}), \\
\mathfrak{M}^{\mathcal{L}}_{\rm dg} & := ({\rm dg} \ \widetilde{\mathcal{L}},\mathscr{E}, {\rm dg} \ \widetilde{\mathcal{L}^\perp}).
\end{align*}

\begin{corollary}\label{coro:reco1}
There exist the following recollements:
\[
\begin{tikzpicture}[description/.style={fill=white,inner sep=2pt}]
\matrix (m) [matrix of math nodes, row sep=2.5em, column sep=2.5em, text height=2.25ex, text depth=2.25ex]
{ 
\frac{K_{\rm ac}(\mathcal{GF}_{(\mathcal{L},\nu)})}{\widetilde{\mathcal{GF}_{(\mathcal{L},\nu)}}} & \frac{K(\mathcal{GF}_{(\mathcal{L},\nu)})}{\widetilde{\mathcal{GF}_{(\mathcal{L},\nu)}}} & D(R), \\
};
\path[->]
(m-1-1) edge (m-1-2) (m-1-2) edge (m-1-3) 
(m-1-2) edge [bend left=50] (m-1-1)
(m-1-3) edge [bend left=50] (m-1-2)
(m-1-2) edge [bend right=50] (m-1-1)
(m-1-3) edge [bend right=50] (m-1-2)
;
\end{tikzpicture}
\]
\[
\begin{tikzpicture}[description/.style={fill=white,inner sep=2pt}]
\matrix (m) [matrix of math nodes, row sep=2.5em, column sep=2.5em, text height=2.25ex, text depth=2.25ex]
{ 
\frac{K_{\rm ac}(\mathcal{GF}_{(\mathcal{L},\nu)})}{\widetilde{\mathcal{L}}} & \frac{K(\mathcal{GF}_{(\mathcal{L},\nu)})}{\widetilde{\mathcal{L}}} & D(R), \\
};
\path[->]
(m-1-1) edge (m-1-2) (m-1-2) edge (m-1-3) 
(m-1-2) edge [bend left=50] (m-1-1)
(m-1-3) edge [bend left=50] (m-1-2)
(m-1-2) edge [bend right=50] (m-1-1)
(m-1-3) edge [bend right=50] (m-1-2)
;
\end{tikzpicture}
\]
\[
\begin{tikzpicture}[description/.style={fill=white,inner sep=2pt}]
\matrix (m) [matrix of math nodes, row sep=2.5em, column sep=2.5em, text height=2.25ex, text depth=2.25ex]
{ 
\frac{K_{\rm ac}(\mathcal{L})}{\widetilde{\mathcal{L}}} & \frac{K(\mathcal{L})}{\widetilde{\mathcal{L}}} & D(R). \\
};
\path[->]
(m-1-1) edge (m-1-2) (m-1-2) edge (m-1-3) 
(m-1-2) edge [bend left=50] (m-1-1)
(m-1-3) edge [bend left=50] (m-1-2)
(m-1-2) edge [bend right=50] (m-1-1)
(m-1-3) edge [bend right=50] (m-1-2)
;
\end{tikzpicture}
\]
\end{corollary}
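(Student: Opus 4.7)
The plan is to apply Gillespie's Left Recollement Theorem \cite[Thm. 3.6]{GillespieMock} three times, once to each of the three triples of hereditary abelian model structures listed immediately before the statement. Concretely, for each triple arranged as $(\mathfrak{M}_1, \mathfrak{M}_2, \mathfrak{M}_3) = (\text{``dw''}, \text{``ex''}, \text{``dg''})$ with $\mathfrak{M}_i = (\mathcal{Q}_i, \mathcal{W}_i, \mathcal{R}_i)$, I will verify the three hypotheses of that theorem: (i) the three structures share a common core; (ii) $\mathcal{Q}_3 \subseteq \mathcal{Q}_1$; and (iii) $\mathcal{Q}_2 = \mathcal{W}_3 \cap \mathcal{Q}_1$. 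Once these hold, the theorem yields a recollement ${\rm Ho}(\mathfrak{M}_2) \to {\rm Ho}(\mathfrak{M}_1) \to {\rm Ho}(\mathfrak{M}_3)$.

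For hypothesis (i), I will invoke Corollaries \ref{coro:derived}, \ref{coro:rel_derived}, \ref{coro:last_model} and \ref{coro:seriously_this_is_the_last}, each of which explicitly identifies the core of the corresponding model with the class of contractible $R$-complexes with components in $\mathcal{L} \cap \mathcal{L}^\perp$; in particular, for the second recollement, where the ``dg'' model is $\mathfrak{M}^{\mathcal{L}}_{\rm dg}$ but the other two are $(\widetilde{\mathcal{L}},\widetilde{\nu})$-type, Proposition \ref{prop:cores_dgdwex_models} applied to the hereditary cotorsion pair $(\mathcal{L}, \mathcal{L}^\perp)$ combined with Lemma \ref{lem:contractible} is what guarantees the cores coincide. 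Hypothesis (ii) is essentially tautological in each case, since ${\rm dg} \, \widetilde{\mathcal{X}} \subseteq \Ch(\mathcal{X})$ by definition of the dg-class, and for the middle triple the additional containment ${\rm dg} \, \widetilde{\mathcal{L}} \subseteq \Ch(\mathcal{L}) \subseteq \Ch(\mathcal{GF}_{(\mathcal{L},\nu)})$ uses only $\mathcal{L} \subseteq \mathcal{GF}_{(\mathcal{L},\nu)}$. For hypothesis (iii), in every triple the class $\mathcal{W}_3$ of trivial objects of the dg-type model is precisely the class $\mathscr{E}$ of exact complexes, so the definitions of $\mathfrak{M}^{\rm GF}_{\rm ex}$, $\mathfrak{M}^{{\rm GF\mbox{-}ex}}_{(\widetilde{\mathcal{L}},\widetilde{\nu})}$ and $\mathfrak{M}^{\mathcal{L}}_{\rm ex}$ as cofibrants $=$ exact complexes in $\Ch(\mathcal{GF}_{(\mathcal{L},\nu)})$ or $\Ch(\mathcal{L})$ give the required equality immediately.

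With these three checks in hand, the Left Recollement Theorem produces the abstract recollement of homotopy categories. To obtain the concrete diagrams in the statement, I will identify each homotopy category: ${\rm Ho}(\mathfrak{M}^{\rm GF}_{\rm dg}) \simeq {\rm Ho}(\mathfrak{M}^{\mathcal{L}}_{\rm dg}) \simeq D(R)$ is built into Corollaries \ref{coro:derived} and \ref{coro:seriously_this_is_the_last} (since the trivial class is $\mathscr{E}$); the remaining four homotopy categories are the Verdier quotients $K(\mathcal{GF}_{(\mathcal{L},\nu)})/\widetilde{\mathcal{GF}_{(\mathcal{L},\nu)}}$, $K_{\rm ac}(\mathcal{GF}_{(\mathcal{L},\nu)})/\widetilde{\mathcal{GF}_{(\mathcal{L},\nu)}}$, $K(\mathcal{L})/\widetilde{\mathcal{L}}$, $K_{\rm ac}(\mathcal{L})/\widetilde{\mathcal{L}}$ supplied by Proposition \ref{prop:homotopy_models_KKac} and Corollary \ref{coro:rel_derived}, while the mixed quotients $K(\mathcal{GF}_{(\mathcal{L},\nu)})/\widetilde{\mathcal{L}}$ and $K_{\rm ac}(\mathcal{GF}_{(\mathcal{L},\nu)})/\widetilde{\mathcal{L}}$ come from Corollary \ref{coro:last_model}, which is the chain-complex analogue obtained by feeding the duality pair $(\widetilde{\mathcal{L}},\widetilde{\mathcal{A}})$ (Lemma \ref{lem:contractible}) into Theorem \ref{theo:GF_model_structure_Ch}.

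The main obstacle I anticipate is purely bookkeeping rather than conceptual: keeping straight, across the three different triples, which of the classes $\widetilde{\mathcal{GF}_{(\mathcal{L},\nu)}}$, $\widetilde{\mathcal{L}}$, $\mathscr{E}$, ${\rm dg}\,\widetilde{\mathcal{GC}_{(\mathcal{L},\nu)}}$ and ${\rm dg}\,\widetilde{\mathcal{L}^\perp}$ plays the role of trivials, and verifying that the Verdier quotient descriptions inherited from Proposition \ref{prop:homotopy_models_KKac} are compatible across the two middle recollements (where the ``dw'' quotient is taken by $\widetilde{\mathcal{L}}$ rather than by $\widetilde{\mathcal{GF}_{(\mathcal{L},\nu)}}$). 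This reduces in the end to the observation that the trivial objects of the second triple's two ``non-dg'' models are given by the class $\mathscr{W}$ described via Proposition \ref{prop:HoveyTriple}, whose cofibrant representatives lie in $\widetilde{\mathcal{L}}$ rather than $\widetilde{\mathcal{GF}_{(\mathcal{L},\nu)}}$, matching the denominator appearing in the second recollement.
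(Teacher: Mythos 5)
Your proposal reproduces the paper's argument: the paper also proves this corollary by applying Gillespie's Left Recollement Theorem to exactly these three triples $(\mathfrak{M}_{\rm dw}, \mathfrak{M}_{\rm ex}, \mathfrak{M}_{\rm dg})$, noting the common core from Corollaries \ref{coro:derived}--\ref{coro:seriously_this_is_the_last} and reading off the homotopy categories from the same corollaries and Proposition \ref{prop:homotopy_models_KKac}. Your verification of hypotheses (i)--(iii) and your identification of the Verdier-quotient descriptions (including the subtlety that the trivial-cofibrant class for $\mathfrak{M}^{\rm GF}_{(\widetilde{\mathcal{L}},\widetilde{\nu})}$ is $\widetilde{\mathcal{L}}$, not $\widetilde{\mathcal{GF}_{(\mathcal{L},\nu)}}$) are all consistent with the paper.
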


%%%%%%%%%%%%%%%%%%%%%%%%%%%%%%%%%%%%%%%%%%%%%
%%%%%%%%%%%%%%%%%%%%%%%%%%%%%%%%%%%%%%%%%%%%%

\subsection*{Quillen adjunctions between model structures on complexes}

We conclude this article comparing the recollements in Corollary \ref{coro:reco1} via derived adjunctions from Quillen adjunctions. Let us first recall these concepts.

Let $\mathfrak{M}$ and $\mathfrak{M}'$ be two model categories and $(F,U,\varphi)$ be an adjunction from $\mathfrak{M}$ to $\mathfrak{M}'$, where $F \colon \mathfrak{M} \to \mathfrak{M}'$, $U \colon \mathfrak{M}' \to \mathfrak{M}$, $F$ is a left adjoint of $U$, and $\varphi$ is a natural isomorphism $\Hom(FX,Y) \to \Hom(X,UY)$ expressing $F$ as a left adjoint of $U$. We call $(F,U,\varphi)$ a \emph{Quillen adjunction} if $F$ is a \emph{left Quillen functor}, that is, if $F$ preserves cofibrations and trivial cofibrations. Equivalently, $(F,U,\varphi)$ is a Quillen adjunction if $U$ is a \emph{right Quillen functor}, that is, $U$ preserves fibrations and trivial fibrations (see \cite[Lem. 1.3.4]{HoveyBook}). 

In the previous situation, one can define the \emph{total left} and \emph{total right derived functors} of $F$ and $U$, denoted $LF \colon {\rm Ho}(\mathfrak{M}) \to {\rm Ho}(\mathfrak{M}')$ and $RU \colon {\rm Ho}(\mathfrak{M}') \to {\rm Ho}(\mathfrak{M})$, as the composites
\begin{align*}
LF & = {\rm Ho}(\mathfrak{M}) \xrightarrow{{\rm Ho}(Q)} {\rm Ho}(\mathfrak{M}_{\rm cof}) \xrightarrow{{\rm Ho}(F)} {\rm Ho}(\mathfrak{M}'), \\ 
RU & = {\rm Ho}(\mathfrak{M}') \xrightarrow{{\rm Ho}(R)} {\rm Ho}(\mathfrak{M}'_{\rm fib}) \xrightarrow{{\rm Ho}(U)} {\rm Ho}(\mathfrak{M}), 
\end{align*} 
where $\mathfrak{M}_{\rm cof}$ and $\mathfrak{M}'_{\rm fib}$ are the restrictions of $\mathfrak{M}$ and $\mathfrak{M}'$ to their cofibrant and fibrant objects, respectively, $Q$ is the cofibrant replacement functor, and $R$ is the fibrant replacement functor. By \cite[Lem. 1.3.10]{HoveyBook}, $LF$ and $RU$ are part of an adjunction $L(F,U,\varphi) := (LF,RU,R\varphi)$ known as the \emph{derived adjunction}. The following result follows the spirit of \cite[Prop. 4.6 \& 4.7]{EIP20}

\begin{proposition}\label{prop:derived_ad}
The following assertions hold true:
\begin{enumerate}
\item The identity ${\rm id} \colon (\Ch(R),\mathfrak{M}^{{\rm GF}}_{(\widetilde{\mathcal{L}},\widetilde{\nu})}) \to (\Ch(R),\mathfrak{M}^{\rm GF}_{\rm dw})$ is a left Quillen functor, and so there is a derived adjunction between 
\begin{align*}
\frac{K(\mathcal{GF}_{(\mathcal{L},\nu)})}{\widetilde{\mathcal{L}}} & & \text{and} & & \frac{K(\mathcal{GF}_{(\mathcal{L},\nu)})}{\widetilde{\mathcal{GF}_{(\mathcal{L},\nu)}}}.
\end{align*}

\item The identity ${\rm id} \colon (\Ch(R),\mathfrak{M}^{\mathcal{L}}_{\rm dw}) \to (\Ch(R),\mathfrak{M}^{{\rm GF}}_{(\widetilde{\mathcal{L}},\widetilde{\nu})})$ is a left Quillen functor, and so there is a derived adjunction between 
\begin{align*}
\frac{K(\mathcal{L})}{\widetilde{\mathcal{L}}} & & \text{and} & & \frac{K(\mathcal{GF}_{(\mathcal{L},\nu)})}{\widetilde{\mathcal{L}}}.
\end{align*}

\item The identity ${\rm id} \colon (\Ch(R),\mathfrak{M}^{{\rm GF\mbox{-}ex}}_{(\widetilde{\mathcal{L}},\widetilde{\nu})}) \to (\Ch(R),\mathfrak{M}^{\rm GF}_{\rm ex})$ is a left Quillen functor, and so there is a derived adjunction between
\begin{align*}
\frac{K_{\rm ac}(\mathcal{GF}_{(\mathcal{L},\nu)})}{\widetilde{\mathcal{L}}} & & \text{and} & & \frac{K_{\rm ac}(\mathcal{GF}_{(\mathcal{L},\nu)})}{\widetilde{\mathcal{GF}_{(\mathcal{L},\nu)}}}.
\end{align*}

\item The identity ${\rm id} \colon (\Ch(R),\mathfrak{M}^{\mathcal{L}}_{\rm ex}) \to (\Ch(R),\mathfrak{M}^{{\rm GF}\mbox{-}ex}_{(\widetilde{\mathcal{L}},\widetilde{\nu})})$ is a left Quillen functor, and so there is a derived adjunction between 
\begin{align*}
\frac{K_{\rm ac}(\mathcal{L})}{\widetilde{\mathcal{L}}} & & \text{and} & & \frac{K_{\rm ac}(\mathcal{GF}_{(\mathcal{L},\nu)})}{\widetilde{\mathcal{L}}}.
\end{align*}
\end{enumerate}
\end{proposition}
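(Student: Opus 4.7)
The plan is to reduce all four statements to a single formal pattern and then verify two inclusions of classes of complexes in each case. For any two abelian model structures $\mathfrak{M}=(\mathcal{Q},\mathcal{W},\mathcal{R})$ and $\mathfrak{M}'=(\mathcal{Q}',\mathcal{W}',\mathcal{R}')$ on $\Ch(R)$, the identity is left Quillen from $\mathfrak{M}$ to $\mathfrak{M}'$ iff it preserves cofibrations and trivial cofibrations, which amounts to $\mathcal{Q}\subseteq\mathcal{Q}'$ and $\mathcal{Q}\cap\mathcal{W}\subseteq\mathcal{Q}'\cap\mathcal{W}'$. Because every model structure involved is hereditary, the trivially cofibrant class is recoverable from the fibrant side as ${}^{\perp_{1}}\mathcal{R}$, so the second containment can equivalently be checked as $\mathcal{R}'\subseteq\mathcal{R}$ and read off by taking ${}^{\perp_{1}}$.

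For parts (1) and (3) the cofibrant classes of source and target coincide---both equal $\Ch(\mathcal{GF}_{(\mathcal{L},\nu)})$ in (1) and $\Ch(\mathcal{GF}_{(\mathcal{L},\nu)})\cap\mathscr{E}$ in (3)---so the first containment is automatic. The required fibrant containment ${\rm dg}\,\widetilde{\mathcal{GC}_{(\mathcal{L},\nu)}}\subseteq{\rm dg}\,\widetilde{\mathcal{L}^{\perp}}$ will follow from $\mathcal{L}\subseteq\mathcal{GF}_{(\mathcal{L},\nu)}$ by reversing orthogonals: this yields $\mathcal{GC}_{(\mathcal{L},\nu)}=\mathcal{GF}_{(\mathcal{L},\nu)}^{\perp}\subseteq\mathcal{L}^{\perp}$ by the hereditariness of the cotorsion pairs (Corollary \ref{resolvente} and Remark \ref{rmk:hereditarysym}), and such inclusions pass to the ${\rm dg}$-closures.

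Parts (2) and (4) are the dual situation: the fibrant classes agree (both equal ${\rm dg}\,\widetilde{\mathcal{L}^{\perp}}$), so the trivially cofibrant classes automatically coincide as ${}^{\perp_{1}}({\rm dg}\,\widetilde{\mathcal{L}^{\perp}})$, and only the inclusion $\mathcal{Q}\subseteq\mathcal{Q}'$ needs attention. This reduces to $\Ch(\mathcal{L})\subseteq\Ch(\mathcal{GF}_{(\mathcal{L},\nu)})$ for (2) and $\Ch(\mathcal{L})\cap\mathscr{E}\subseteq\Ch(\mathcal{GF}_{(\mathcal{L},\nu)})\cap\mathscr{E}$ for (4), both immediate from $\mathcal{L}\subseteq\mathcal{GF}_{(\mathcal{L},\nu)}$.

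Once the four identities are verified to be left Quillen, the derived adjunctions come formally from \cite[Lem. 1.3.10]{HoveyBook}, and the homotopy categories on each side of each adjunction are the Verdier quotients already identified in Corollaries \ref{coro:rel_derived}, \ref{coro:last_model} and \ref{coro:seriously_this_is_the_last}. No substantive obstacle is expected here; the only bookkeeping point is that across the four items the single nontrivial containment $\mathcal{L}\subseteq\mathcal{GF}_{(\mathcal{L},\nu)}$ must be read alternately as a cofibrant enlargement and as a fibrant shrinking, with the complementary inequality being vacuous in each case.
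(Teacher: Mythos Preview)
Your proposal is correct and follows essentially the same approach as the paper: both reduce the left Quillen condition for the identity to the two class containments $\mathcal{Q}\subseteq\mathcal{Q}'$ and $\mathcal{Q}\cap\mathcal{W}\subseteq\mathcal{Q}'\cap\mathcal{W}'$, and both ultimately appeal to the single inclusion $\mathcal{L}\subseteq\mathcal{GF}_{(\mathcal{L},\nu)}$. The only cosmetic difference is that in parts (1) and (3) the paper verifies the trivially cofibrant containment directly as $\widetilde{\mathcal{L}}\subseteq\widetilde{\mathcal{GF}_{(\mathcal{L},\nu)}}$, whereas you pass to the fibrant side and check ${\rm dg}\,\widetilde{\mathcal{GC}_{(\mathcal{L},\nu)}}\subseteq{\rm dg}\,\widetilde{\mathcal{L}^{\perp}}$; these are equivalent via the cotorsion pair identity $\mathcal{Q}\cap\mathcal{W}={}^{\perp_1}\mathcal{R}$, so neither route is more or less direct.
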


\begin{proof}
We only prove part (1), as the rest of the statements follow in the same way. First, it is immediate that ${\rm id} \colon (\Ch(R),\mathfrak{M}^{{\rm GF}}_{(\widetilde{\mathcal{L}},\widetilde{\nu})}) \to (\Ch(R),\mathfrak{M}^{\rm GF}_{\rm dw})$ is a left adjoint to the identity ${\rm id} \colon (\Ch(R),\mathfrak{M}^{\rm GF}_{\rm dw}) \to (\Ch(R),\mathfrak{M}^{{\rm GF}}_{(\widetilde{\mathcal{L}},\widetilde{\nu})})$. Secondly, the structures $\mathfrak{M}^{{\rm GF}}_{(\widetilde{\mathcal{L}},\widetilde{\nu})}$ and $\mathfrak{M}^{\rm GF}_{\rm dw}$ have the same class of cofibrations (monomorphisms with cokernel in $\Ch(\mathcal{GF}_{(\mathcal{L},\nu)})$), and every trivial cofibration in the former structure (a monomorphism with cokernel in $\widetilde{\mathcal{L}}$) is a trivial cofibration in the latter (a monomorphism with cokernel in $\widetilde{\mathcal{GF}_{(\mathcal{L},\nu)}}$). Hence, the result follows. 
\end{proof}

%\begin{example}
%In particular, for the product closed bicomplete duality pair 
%\[
%(\mathcal{LV}(R),\mathcal{AC}(R^{\rm o}))
%\] 
%(see Example \ref{ex:FPinfty}), all of the previous results can be applied to obtain model category structures on $\mathsf{Mod}(R)$ and $\Ch(R)$. The derived subcategories 
%\begin{align*}
%\frac{K(\mathcal{LV}(R))}{\widetilde{\mathcal{LV}(R)}} & & \text{and} & & \frac{K_{\rm ac}(\mathcal{LV}(R))}{\widetilde{\mathcal{LV}(R)}}, \\
%\frac{K(\mathcal{GF}_{(\mathcal{LV}(R),{}^\perp\mathcal{AC}(R^{\rm o}) \cap \mathcal{AC}(R^{\rm o}))})}{\widetilde{\mathcal{LV}(R)}} & & \text{and} & & \frac{K_{\rm ac}(\mathcal{GF}_{(\mathcal{LV}(R),{}^\perp\mathcal{AC}(R^{\rm o}) \cap \mathcal{AC}(R^{\rm o}))})}{\widetilde{\mathcal{LV}(R)}}, \\
%\frac{K(\mathcal{GF}_{(\mathcal{LV}(R),{}^\perp\mathcal{AC}(R^{\rm o}) \cap \mathcal{AC}(R^{\rm o}))})}{\widetilde{\mathcal{GF}_{(\mathcal{LV}(R),{}^\perp\mathcal{AC}(R^{\rm o}) \cap \mathcal{AC}(R^{\rm o}))}}} & & \text{and} & & \frac{K_{\rm ac}(\mathcal{GF}_{(\mathcal{LV}(R),{}^\perp\mathcal{AC}(R^{\rm o}) \cap \mathcal{AC}(R^{\rm o}))})}{\widetilde{\mathcal{GF}_{(\mathcal{LV}(R),{}^\perp\mathcal{AC}(R^{\rm o}) \cap \mathcal{AC}(R^{\rm o}))}}}, 
%\end{align*}
%are descriptions of the homotopy categories of these model structures, and are related via recollements and Quillen adjunctions. 

%Similar conclusions are also valid for the product closed bicomplete duality pair 
%\[
%(\mathcal{WF}_n(R),\mathcal{WI}_n(R^{\rm o}))
%\] 
%from Example \ref{ex:weakn}. 
%\end{example}

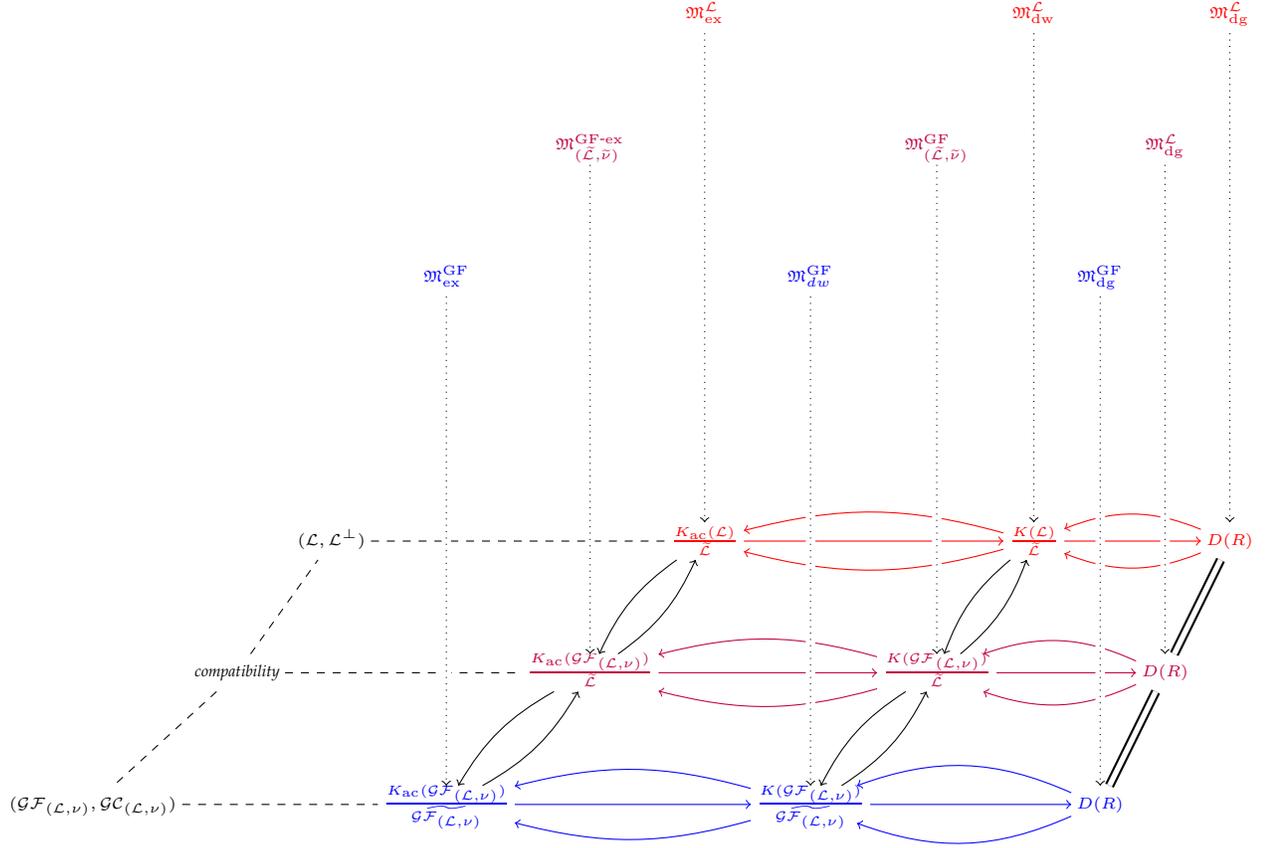
\begin{figure}[h!]
\tiny
\begin{tikzpicture}[description/.style={fill=white,inner sep=2pt}]
\matrix (m) [matrix of math nodes, row sep=6em, column sep=0.5em, text height=2.25ex, text depth=1.25ex]
{ 
{} & {} & {} & & & {\color{red}{ \mathfrak{M}^{\mathcal{L}}_{\rm ex} }} & & & {\color{red}{ \mathfrak{M}^{\mathcal{L}}_{\rm dw} }} & & & {\color{red}{ \mathfrak{M}^{\mathcal{L}}_{\rm dg} }} \\
{} & {} & & & {\color{purple}{ \mathfrak{M}^{{\rm GF\mbox{-}ex}}_{(\widetilde{\mathcal{L}},\widetilde{\nu})} }} & & & {\color{purple}{ \mathfrak{M}^{{\rm GF}}_{(\widetilde{\mathcal{L}},\widetilde{\nu})} }} & & & {\color{purple}{ \mathfrak{M}^{\mathcal{L}}_{\rm dg} }} \\
{} & & & {\color{blue}{ \mathfrak{M}^{\rm GF}_{\rm ex} }} & & & {\color{blue}{ \mathfrak{M}^{\rm GF}_{dw} }} & & & {\color{blue}{ \mathfrak{M}^{\rm GF}_{\rm dg} }} \\
{} \\
{} & {} & (\mathcal{L},\mathcal{L}^\perp) & & & {\color{red}{ \frac{K_{\rm ac}(\mathcal{L})}{\widetilde{\mathcal{L}}} }} & & & {\color{red}{ \frac{K(\mathcal{L})}{\widetilde{\mathcal{L}}} }} & & & {\color{red}{ D(R) }} \\
{} & \emph{compatibility} & & & {\color{purple}{ \frac{K_{\rm ac}(\mathcal{GF}_{(\mathcal{L},\nu)})}{\widetilde{\mathcal{L}}} }} & & & {\color{purple}{ \frac{K(\mathcal{GF}_{(\mathcal{L},\nu)})}{\widetilde{\mathcal{L}}} }} & & & {\color{purple}{ D(R) }} \\
(\mathcal{GF}_{(\mathcal{L},\nu)},\mathcal{GC}_{(\mathcal{L},\nu)}) & & & {\color{blue}{ \frac{K_{\rm ac}(\mathcal{GF}_{(\mathcal{L},\nu)})}{\widetilde{\mathcal{GF}_{(\mathcal{L},\nu)}}} }} & & & {\color{blue}{ \frac{K(\mathcal{GF}_{(\mathcal{L},\nu)})}{\widetilde{\mathcal{GF}_{(\mathcal{L},\nu)}}} }} & & & {\color{blue}{ D(R) }} \\
};
\path[->]
(m-5-6) edge [red] (m-5-9) (m-5-9) edge [red] (m-5-12)
(m-5-9) edge [red, bend left=15] (m-5-6)
(m-5-9) edge [red, bend right=15] (m-5-6)
(m-5-12) edge [red, bend left=22] (m-5-9)
(m-5-12) edge [red, bend right=22] (m-5-9)
(m-5-6) edge [bend right=15] (m-6-5)
(m-6-5) edge [bend right=15] (m-5-6)
(m-5-9) edge [bend right=15] (m-6-8)
(m-6-8) edge [bend right=15] (m-5-9)
(m-6-5) edge [purple] (m-6-8) (m-6-8) edge [purple] (m-6-11)
(m-6-8) edge [purple, bend left=15] (m-6-5)
(m-6-8) edge [purple, bend right=15] (m-6-5)
(m-6-11) edge [purple, bend left=22] (m-6-8)
(m-6-11) edge [purple, bend right=22] (m-6-8)
(m-6-5) edge [bend right=15] (m-7-4)
(m-7-4) edge [bend right=15] (m-6-5)
(m-6-8) edge [bend right=15] (m-7-7)
(m-7-7) edge [bend right=15] (m-6-8)
(m-7-4) edge [blue] (m-7-7) (m-7-7) edge [blue] (m-7-10)
(m-7-7) edge [blue, bend left=15] (m-7-4)
(m-7-7) edge [blue, bend right=15] (m-7-4)
(m-7-10) edge [blue, bend left=22] (m-7-7)
(m-7-10) edge [blue, bend right=22] (m-7-7)
;
\path[dashed,-]
(m-6-2) edge (m-5-3) edge (m-7-1)
(m-6-2) edge (m-6-5) (m-5-3) edge (m-5-6) (m-7-1) edge (m-7-4)
;
\path[-,font=\scriptsize]
(m-1-6) edge [white, double, thick, double distance=2pt] (m-5-6)
(m-1-9) edge [white, double, thick, double distance=2pt] (m-5-9)
(m-1-12) edge [white, double, thick, double distance=2pt] (m-5-12)
(m-2-5) edge [white, double, thick, double distance=2pt] (m-6-5)
(m-2-8) edge [white, double, thick, double distance=2pt] (m-6-8)
(m-2-11) edge [white, double, thick, double distance=2pt] (m-6-11)
(m-3-4) edge [white, double, thick, double distance=2pt] (m-7-4) 
(m-3-7) edge [white, double, thick, double distance=2pt] (m-7-7) 
(m-3-10) edge [white, double, thick, double distance=2pt] (m-7-10)
(m-5-12) edge [double, thick, double distance=2pt] (m-6-11)
(m-6-11) edge [double, thick, double distance=2pt] (m-7-10)
;
\path[dotted,->]
(m-1-6) edge (m-5-6) (m-1-9) edge (m-5-9) (m-1-12) edge (m-5-12)
(m-2-5) edge (m-6-5) (m-2-8) edge (m-6-8) (m-2-11) edge (m-6-11)
(m-3-4) edge (m-7-4) (m-3-7) edge (m-7-7) (m-3-10) edge (m-7-10)
;
\end{tikzpicture}
\caption{Display of all model structures on $\Ch(R)$, their homotopy categories, recollements and derived adjunctions between them.}
\end{figure}

%%%%%%%%%%%%%%%%%%%%%%%%%%%%%%%%%%%%%%%%%%%%%%%%%%
%%%%%%%%%%%%%%%%%%%%%%%%%%%%%%%%%%%%%%%%%%%%%%%%%%

\section*{Acknowledgements}

The authors want to thank professor Raymundo Bautista (Centro de Ciencias Matem\'aticas - Universidad Nacional Aut\'onoma de M\'exico) for several helpful discussions on the results of this article.

%%%%%%%%%%%%%%%%%%%%%%%%%%%%%%%%%%%%%
%%%%%%%%%%%%%%%%%%%%%%%%%%%%%%%%%%%%%
%%%%%%%%%%%%%%%%%%%%%%%%%%%%%%%%%%%%%
%%%%%%%%%%%%%%%%%%%%%%%%%%%%%%%%%%%%%

\section*{Funding}
The first author was supported by Postdoctoral Fellowship from Programa de Desarrollo de las Ciencias B\'asicas (PEDECIBA) - Ministerio de Educaci\'on y Cultura de la Rep\'ublica Oriental del Uruguay, and is currently sponsored by a postdoctoral fellowship from CONACHyT - Universidad Michoacana de San Nicol\'as de Hidalgo. The second author is supported by the Agencia Nacional de Investigaci\'on e Innovaci\'on (ANII) and PEDECIBA.

%%%%%%%%%%%%%%%%%%%%%%%%%%%%%%%%%%%%%
%%%%%%%%%%%%%%%%%%%%%%%%%%%%%%%%%%%%%
%%%%%%%%%%%%%%%%%%%%%%%%%%%%%%%%%%%%%
%%%%%%%%%%%%%%%%%%%%%%%%%%%%%%%%%%%%%

\bibliographystyle{plain}
\bibliography{biblio_relativeGF}

\end{document}